\documentclass{article}

\usepackage{graphicx} 
\usepackage[margin=1in]{geometry}
\usepackage[section]{placeins}
\usepackage{amsfonts, amssymb, amsmath, amsthm,esint,multicol}
\usepackage[colorlinks=true, pdfstartview=FitV, linkcolor=blue,
            citecolor=blue, urlcolor=blue]{hyperref}
\usepackage[usenames,dvipsnames,table]{xcolor}
\usepackage{accents}
\usepackage{comment}
\usepackage[capitalize,nameinlink,noabbrev]{cleveref}
\usepackage{enumerate}
\usepackage{enumitem}

\usepackage{bm}
\usepackage{authblk} 

\usepackage{bbm}
\usepackage{mathtools} 
\usepackage{tabularx}

\usepackage{listings}
\usepackage[textsize=tiny]{todonotes}
\usepackage{tikz}
\usetikzlibrary{shapes.misc}
\usepackage{etoolbox}
\usepackage{appendix}
\usepackage{subcaption}
\usepackage{wrapfig}
\usepackage{xr}
\usepackage{booktabs}
\usepackage[section]{algorithm}      
\usepackage[noend]{algpseudocode}    

\numberwithin{equation}{section}

\DeclareFontFamily{U}{mathx}{}
\DeclareFontShape{U}{mathx}{m}{n}{<-> mathx10}{}
\DeclareSymbolFont{mathx}{U}{mathx}{m}{n}
\DeclareMathAccent{\widehat}{0}{mathx}{"70}
\DeclareMathAccent{\widecheck}{0}{mathx}{"71}

\definecolor{Red}{rgb}{0.7,0,0.1}
\definecolor{Green}{rgb}{0,0.7,0}
\definecolor{lightblue}{RGB}{185,220,245}
\definecolor{softgreen}{RGB}{190,230,190}
\definecolor{softorange}{RGB}{245,205,160}
\definecolor{darkgreen}{RGB}{0,110,70}
\definecolor{myblue}{RGB}{25,70,180}
\definecolor{myred}{RGB}{180,40,40}
\definecolor{mgGreen}{RGB}{144, 200, 165}      
\definecolor{mgPurple}{RGB}{180, 150, 210}     
\definecolor{mgGold}{RGB}{240, 210, 140}        
\definecolor{matchBlue}{RGB}{26,91,200}     
\definecolor{mismatchRed}{RGB}{180,40,40}   

\newtheorem{Theorem}{Theorem}[section]
\newtheorem{Proposition}[Theorem]{Proposition}
\newtheorem{Lemma}[Theorem]{Lemma}

\newtheorem{Definition}[Theorem]{Definition}
\newtheorem{Remark}[Theorem]{Remark}

\newtheorem{Assumption}[Theorem]{Assumption}

\crefname{Theorem}{Theorem}{Theorems}
\crefname{figure}{Figure}{Figures}

\newcommand{\bx}{\mathbf{x}}
\newcommand{\by}{\mathbf{y}}
\newcommand{\bz}{\mathbf{z}}

\newcommand{\bA}{\mathbf{A}}

\newcommand{\bw}{\mathbf{w}}
\newcommand{\bX}{\mathbf{X}}
\newcommand{\bZ}{\mathbf{Z}}

\newcommand{\baX}{\bar{X}}

\newcommand{\baQ}{\bar{Q}}
\newcommand{\balpha}{\bar{\alpha}}
\newcommand{\baf}{\bar{f}}
\newcommand{\bg}{\mathbf{g}}
\newcommand{\btheta}{\bm{\theta}}
\newcommand{\bI}{\mathbf{I}}

\newcommand{\td}{\tilde{d}}
\newcommand{\tx}{\tilde{x}}
\newcommand{\ty}{\tilde{y}}
\newcommand{\tz}{\tilde{z}}

\newcommand{\tX}{\tilde{X}}
\newcommand{\tY}{\tilde{Y}}
\newcommand{\tu}{\tilde{u}}
\newcommand{\tU}{\tilde{U}}

\newcommand{\tZ}{\tilde{Z}}
\newcommand{\txi}{\tilde{\xi}}
\newcommand{\tXi}{\tilde{\Xi}}
\newcommand{\trho}{\tilde{\rho}}

\newcommand{\tbA}{\tilde{\mathbf{A}}}

\newcommand{\fC}{\mathfrak{C}}

\newcommand{\cB}{\mathcal{B}}
\newcommand{\cC}{\mathcal{C}}
\newcommand{\cE}{\mathcal{E}}
\newcommand{\cF}{\mathcal{F}}

\newcommand{\cI}{\mathcal{I}}
\newcommand{\cM}{\mathcal{M}}
\newcommand{\cN}{\mathcal{N}}
\newcommand{\cP}{\mathcal{P}}

\newcommand{\cU}{\mathcal{U}}

\newcommand{\rr}{\sqrt{1-\rho^2}}

\newcommand{\R}{\mathbb{R}}
\newcommand{\E}{\mathbb{E}\,}
\newcommand{\N}{\mathbb{N}}
\newcommand{\bbP}{\mathbb{P}}

\newcommand{\tr}{\operatorname{tr}}

\newcommand{\Var}{\operatorname{Var}}

\newcommand{\eps}{\varepsilon}

\newcommand{\qsp}{\mathcal{X}} 
\newcommand{\mk}{P} 
\newcommand{\qk}{Q} 
\newcommand{\ar}{\alpha} 
\newcommand{\har}{\hat{\alpha}} 
\newcommand{\Pot}{\Phi} 
\newcommand{\Fbar}{\overline{F}}

\newcommand{\Wass}{W} 
\newcommand{\tbeta}{\tilde{\beta}}

\newcommand{\tJ}{\tilde{J}}

\newcommand{\Lip}{\operatorname{Lip}}

\usetikzlibrary{arrows.meta,decorations.pathreplacing}

\title{On the mixing properties of some preconditioned multiproposal Markov Chain Monte Carlo algorithms}

\author[1]{Giulia Carigi}
\author[1]{Nathan E. Glatt-Holtz}
\author[2]{Cecilia F. Mondaini}
\author[1,3]{Guillermina Senn}

\affil[1]{Department of Statistics, Indiana University}
\affil[2]{Department of Mathematics, Drexel University}
\affil[3]{Department of Mathematical Sciences, Norwegian University of Science and Technology}

\date{\today}

\begin{document}

\maketitle

\begin{abstract}
  We study two recently discovered ``dimension-free'' Monte Carlo sampling algorithms, the multiproposal and multiple-try preconditioned Crank–Nicolson methods (mpCN and MTpCN). These methods were designed to address certain non-parametric (i.e.\ infinite-dimensional) sampling problems, defined relative to a Gaussian reference measure, by combining proposal and acceptance mechanisms that take non-trivial advantage of parallel computing architectures.

We provide the first rigorous analysis of both algorithms, establishing exponential convergence to the target measure through the weak Harris framework, both for a finite number of proposals and in the infinite-proposal limit. The resulting mixing rates are independent of the dimension and uniform in the number of proposals, and apply to targets with bounded, Lipschitz log-likelihoods, without requiring convexity. At the center of the analysis are two new coupling constructions, together with analytical tools of independent interest, yielding Wasserstein contraction estimates, $L^2$ spectral gaps, and associated statistical guarantees (laws of large numbers, central limit theorems, and non-asymptotic concentration bounds) for the corresponding Monte Carlo estimators.

These theoretical results are complemented by a numerical study on benchmark problems with complex posterior geometries and high-dimensional structure, comparing mpCN and MTpCN against standard pCN and independent parallel-chain implementations. The experiments indicate that the multiproposal methods can offer a shorter warm-up phase and greater robustness to the choice of tuning parameters as the number of proposals grows.
\end{abstract}

\medskip
\noindent\textbf{Keywords}: Multiproposal and Preconditioned Markov chain
Monte Carlo (MCMC), weak Harris theorem, preconditioned Crank-Nicolson (pCN).\\
\textbf{MSC 2020 Classifications}: 65C40, 60J05, 60B10, 65Y05, 60J22

\setcounter{tocdepth}{1}
\tableofcontents

\section{Introduction}
Markov Chain Monte Carlo (MCMC) algorithms are an essential computational tool for sampling from complex probability distributions arising in Bayesian statistics, applied mathematics and across the physical and social sciences. While MCMC methods trace their origins to the dawn of computing in the 1940s, this approach remains an active area of fundamental research, driven by a rapidly developing technological landscape and the basic scientific need to accurately resolve ever more complex statistical models.
 
In recent years two developments in MCMC research have been especially notable. The nontrivial intersection of these two developments leads to exciting new questions which we begin to address here. The first development involves a class of infinite dimensional (i.e. non-parametric) probability distributions $\mu$ which are defined relative to a Gaussian reference measure $\mu_0$; in the Bayesian context, $\mu$ is a posterior and $\mu_0$ a Gaussian prior. This class of measures includes a broadly applicable category of statistical models for uncertainty in physics-informed data which can be formulated within the framework of the Bayesian approach to PDE inverse problems; cf. \cite{stuart2010inverse, dashti2017bayesian, borggaard2020bayesian}. For such measures a careful combination of preconditioning and suitable numerical discretizations of an infinite dimensional dynamics related to $\mu$ led to the discovery of the now widely used preconditioned Crank–Nicolson algorithm \cite{NealpCNbeforepCN, beskos2008mcmc, cotter2013mcmc} as well as `gradient informed' variants related to Hamiltonian and (Overdamped) Langevin dynamics defined for such $\mu$ (see \eqref{eq:intro:target} below). The second development is the so-called multiproposal (or sometimes parallel) paradigm described in \cite{liu2000multiple,tjelmeland2004using,calderhead2014general,glatt2024parallel,pozza_zanella_2025} in which a cloud of proposals is drawn at each step, thereby making a delicate, nontrivial use of parallelism.

Our contributions herein are centered on two promising, recently discovered methods at the intersection of these two research directions, namely the multiproposal and multiple-try preconditioned Crank-Nicolson algorithms (mpCN and MTpCN, respectively); see \cite{glatt2024parallel, glatt2025+} as well as \cref{alg:mpcn,alg:mtpcn} below. These new methods are particularly well adapted to the rich class of infinite-dimensional target measures that such preconditioned methods were designed for while taking non-trivial advantage of modern parallel computing architectures. We establish rigorous mixing results for the resulting Markov chains, with rates that are independent of the dimension and uniform in the number of proposals, properties which are key to ensuring vanishing Monte Carlo error, asymptotic confidence intervals, and non-asymptotic concentration guarantees. Our analysis applies to targets with bounded and Lipschitz log-likelihoods---and, in the infinite-proposal limit, merely Lipschitz ones---without requiring convexity or additional smoothness. A central analytical contribution is the development of two novel coupling constructions, one for a finite number of proposals and one for the infinite-proposal limit. Overall we expect the analytical approach developed here to serve as a general blueprint for studying mixing in other sampling algorithms and, more broadly, in general state space Markov chains.

Our theoretical analysis is complemented by numerical experiments which underline significant advantages of mpCN and MTpCN over the standard pCN method. Specifically, these advantages are manifested as a shorter warm-up (or burn-in) phase and an increased robustness to algorithmic parameter tuning. Both features can be decisive for the kind of demanding, large-scale, PDE-informed problems (e.g.~\cite{borggaard2020bayesian, borggaard2023statistical}) that mpCN and MTpCN were designed for, where burn-in and tuning can represent a substantial computational bottleneck in practice.

Regarding the scope of our multiproposal algorithms, it is worth noting the very interesting recent contribution \cite{pozza_zanella_2025}. This work provides a general analysis of multiproposal methods through spectral gap comparisons at stationarity, within a framework that can be shown to cover the algorithms studied here. While \cite{pozza_zanella_2025} offers a rather cautious perspective on the advantages of multiproposal methods, our results complement this perspective by indicating that such methods can nevertheless exhibit substantial advantages, particularly during the burn-in phase. More broadly, our findings highlight several open questions in the comparison between embarrassingly parallel chains and multiproposal methods.

To complete this introduction and place our contributions in context, we next briefly recall the general framework and recent challenges in the theory of MCMC methods. We then provide an overview of our main results, a sketch of the principal elements of our proofs, and a summary of our numerical case studies.

\subsection{Background and Motivation}

Ultimately, MCMC algorithms are stochastic numerical methods to resolve high-dimensional integration problems with costly, analytically intractable integrands. In other words, given a target probability distribution $\mu$ on a parameter space $\qsp$, we want to compute
\begin{equation}\label{eq:intro:target_integral}
    I_\mu(f) = \int_{\qsp} f(x) \, \mu(dx),
\end{equation}
over a suitable class of observables $f:\qsp \to \R$. Successful MCMC algorithms construct a Markov chain $(X_n)_{n\in \N}$ in $\qsp$ that holds $\mu$ invariant so that the ergodic averages
\begin{equation}\label{eq:intro:ergodic_ave}
     \hat{f}_N := \frac{1}{N}\sum_{n = 1}^N f(X_n), \quad N\in \N,
\end{equation}
provide an accurate approximation of $I_\mu(f)$. Results ensuring geometric ergodicity (or mixing) for the constructed chain are fundamental for assessing how the accuracy of the approximation in \eqref{eq:intro:ergodic_ave} depends on the number of `samples' $N$.

An enormous body of work has focused on designing effective MCMC algorithms. The field traces back to the development of the Metropolis--Hastings (MH) approach \cite{metropolis1953equation,hastings1970monte}, in which draws from a proposal kernel $Q$, followed by an accept-reject step with acceptance probabilities $\alpha$, yield a $\mu$-reversible chain. In the ensuing decades many variants of the MH paradigm have been formulated by tailoring the proposal kernel and the acceptance rule, ranging from the original Random Walk Metropolis approach to more modern methods such as the Metropolis-Adjusted Langevin Algorithm (MALA) and Hamiltonian Monte Carlo (HMC), which take advantage of dynamical invariants and the local structure of the target measure; see \cite{liu2008, RobertCasellabook} for general background.

Notwithstanding the tremendous successes of the aforementioned Metropolis-type MCMC methods, high-dimensional settings, often reflecting non-parametric problems such as those arising in Bayesian inverse problems \cite{stuart2010inverse,borggaard2020bayesian}, remain an important frontier. Indeed, classical algorithms, including all of the methods mentioned above, tend to suffer from the curse of dimensionality, with mixing rates which deteriorate as the dimension increases \cite{cotter2013mcmc,hairer2014spectral}. Since the overall computational cost depends critically on the number of steps required to resolve the target $\mu$ with the desired level of accuracy, it is essential to formulate methods whose mixing properties remain stable with respect to an increase in dimension. As already noted, in this work we bring together two active directions in modern MCMC research: methods tailored for non-parametric problems on the one hand, and multiproposal strategies designed to better exploit modern parallel computing on the other.

With this bottleneck in mind, a number of algorithms have been introduced that are well-defined even on functional spaces \cite{cotter2013mcmc, Coullon21} and therefore tend to exhibit the desired dimension-robust behavior. This is an interconnected family of methods \cite{glatt2020accept, glatt2024sacred} often referred to as the preconditioned or Hilbert space approach; the former moniker refers to the preconditioning, by the covariance operator $\cC$ of the Gaussian prior $\mu_0$, of a continuous-time dynamical system related to the target $\mu$. In particular, the Hilbert space approach includes the preconditioned HMC (or $\infty$HMC) \cite{Beskosetal2011, bou2018geometric,glatt2020accept} and the preconditioned Crank–Nicolson (pCN) algorithm \cite{NealpCNbeforepCN, beskos2008mcmc, cotter2013mcmc}, the latter being the method we extend here.  While the former may be expected to mix faster on a per-sample basis than the latter, $\infty$HMC requires costly and sometimes numerically or even analytically intractable gradient evaluations. In any case, both pCN and preconditioned HMC have been rigorously shown to mix at dimension-independent rates under suitable assumptions \cite{hairer2014spectral, bou2020hmc, glatt2021mixing}.

Beyond high-dimensional considerations, another fundamental challenge in algorithm design is the efficient use of modern computational resources. In particular, many MCMC algorithms remain inherently sequential and therefore fail to fully exploit parallel architectures. This motivates the second direction considered here: multiproposal MCMC methods, which generate a `cloud' of candidate states at each iteration and select among them using suitable acceptance probabilities. Ensuring that such algorithms preserve the desired target distribution and possess good convergence properties, however, requires careful design of the acceptance mechanism. A far-from-exhaustive list of contributions on the development, refinement, and implementation of such algorithms includes \cite{liu2000multiple, neal2003slice, frenkel2004speed, tjelmeland2004using, delmas2009does, calderhead2014general, luo2019multiple, glatt2024parallel, pozza_zanella_2025, Lin_quantum25, senn2026mess}. In particular, the constructions introduced in \cite{liu2000multiple} and \cite{tjelmeland2004using} to incorporate multiple proposals in the Random Walk Metropolis algorithm inspired analogous extensions of pCN in \cite{glatt2025+,glatt2024parallel}, leading respectively to the Multiple-Try pCN (MTpCN) and the Multiproposal pCN (mpCN) algorithms we are interested in here.

As regards this family of multiproposal methods, we note that it is often revealing to consider the limiting regime in which the number of proposals $p$ tends to infinity. This $p \to \infty$ limit offers a benchmark for the ultimate attainable performance of a given method as the degree of parallelism increases. Moreover, the $p = \infty$ Markov kernel can illuminate surprising structure in multiproposal chains with a large number of proposals, including asymptotic unbiasedness as $p$ grows and unexpected relationships to other sampling methods. The broad significance of this limit was explored systematically in the recent contribution \cite{glatt2025+} of two of the coauthors, where the $p = \infty$ limiting kernels for mpCN and MTpCN were derived. With this in mind, we analyze both the finite- and infinite-proposal regimes, establishing their mixing properties within a unified framework suitable for high- and infinite-dimensional target measures.

In rigorously assessing the convergence and efficiency of MCMC algorithms, the literature broadly follows two complementary approaches: functional inequalities and Harris theorems (or weak Harris theorems for high-dimensional targets). In the former approach, when the target distribution possesses sufficient structure, for instance log-concavity, strong convexity, or unimodality, one can establish the existence of a \textit{spectral gap} of the Markov kernel in the space $L^2_\mu$ of $\mu$-square-integrable functions. Such results are typically obtained through functional inequalities, including Poincar\'{e} and Cheeger inequalities \cite{Cheeger88, pavliotis, andrieu2024explicit}, and yield, under reversibility conditions, quantitative information on convergence rates and on the dependence of the spectral gap on algorithmic parameters. Consequences of the spectral gap include central limit theorems for all $L^2_\mu$ observables and explicit control of the asymptotic variance \cite{Kipnis86}, both closely related to the efficiency of Monte Carlo estimators.

On the other hand, for more general targets and algorithms, such as Metropolis--Hastings schemes in non-convex settings, establishing an $L^2_\mu$ spectral gap directly is often difficult or intractable. This is where the so-called Harris approach becomes advantageous. In finite dimensions, one instead commonly proves geometric ergodicity, namely convergence of the Markov transition kernels to the invariant measure in total variation. These arguments rely on Lyapunov and minorization conditions, which respectively provide a mechanism for controlling the chain when it is far from and close to the center of the state space. See e.g.~\cite{meyn_tweedie, RobertsRosenthal, Rudolf_ESS21}. While this approach still yields convergence of ergodic averages and associated limit theorems, it generally provides weaker quantitative control on the actual rates of equilibration, particularly as a function of algorithmic or target-measure parameters.

A fundamental additional difficulty in the infinite-dimensional setting we are concerned with here is that total variation distances become poorly suited to the analysis, since the relevant measures (for example, the kernels started from different initial states) are often mutually singular. To address this issue, Harris-type methods were extended to infinite-dimensional settings in \cite{hairer2008spectral, HMS11, glattholtz2025longterm}, leading to convergence results formulated in Wasserstein distances. This weak Harris framework has since proved to be a flexible and powerful tool for the analysis of Markov processes on infinite-dimensional spaces, including stochastic PDEs (e.g. \cite{Kuksin_Shirikyan_2012, butkovsky2020, carigi2022exponential}) and dimension-free MCMC algorithms \cite{hairer2014spectral,glatt2021mixing}.

One of the main advantages of the weak Harris approach is that it applies under comparatively mild structural assumptions on the target measure, while still yielding strong ergodic consequences. In the reversible setting, Wasserstein contraction estimates can moreover be used to recover $L^2_\mu$-spectral gaps, as shown in \cite{hairer2014spectral}. By contrast, approaches based on Poincar\'{e} or Cheeger-type inequalities often provide sharper quantitative information on convergence rates and spectral gaps, but typically require stronger assumptions on the target distribution, which may fail in more complex applications.

In this work, we rely on the weak Harris framework to study convergence properties of the multiproposal extensions of pCN, namely mpCN and MTpCN. This approach allows us to establish convergence under minimal assumptions on the target measure. The resulting Wasserstein contraction estimates imply strong statistical consequences, including strong laws of large numbers and central limit theorems for observables that are locally $\tfrac{1}{2}$-H\"{o}lder continuous with respect to the metric on the state space $\qsp$. In the reversible setting, they also yield an $L^2_\mu$-spectral gap, thereby connecting the probabilistic and functional-analytic perspectives.

In summary, the interplay between dimension-robust MCMC design, multiproposal sampling strategies, and quantitative convergence properties in high- and infinite-dimensional settings forms the core of the present work. We provide a detailed overview of the analytical results and numerical experiments in the following section.

\subsection{Overview of the main results}\label{subsec:overview:results}
We fix our state space $\qsp$ as a separable Hilbert space, and consider any target probability measure $\mu$ on $\qsp$ that is absolutely continuous with respect to a given reference measure $\mu_0 = \cN(0,\cC)$, namely $\mu_0$ is a Gaussian measure with zero mean and covariance operator $\cC$ on $\qsp$. We write any such target measure $\mu$ in the Gibbsian form
\begin{equation}\label{eq:intro:target}
	\mu(dx) = \frac{1}{Z} \exp\left(- \Pot(x)\right)\, \mu_0(dx), \qquad Z = \int_{\qsp} \exp\left(- \Pot(x)\right)\, \mu_0(dx),
\end{equation}
where $\Pot:\qsp\to \R$ is a suitable potential function. In a Bayesian context, the measure $\mu_0$ is called the prior, and $\Phi$ the negative log-likelihood. 

With the aim of sampling from such a target measure $\mu$, the standard preconditioned Crank-Nicolson algorithm has a proposal mechanism derived from an Ornstein-Uhlenbeck dynamics that is appropriately defined so as to leave the reference Gaussian measure $\mu_0$ invariant. Taking a Crank-Nicolson time discretization of such a dynamics yields, for a given initial state $x_0 \in \qsp$, a proposal of the form 
\begin{align}\label{prop:pCN}
	X = \rho x_0 + \sqrt{1 - \rho^2} \xi, \quad \xi \sim \mu_0,
\end{align}
where $\rho \in [0,1)$ is a given parameter depending on the associated time step. This proposed state $X$ is then accepted with probability 
\begin{align}\label{eq:intro:pcn:alpha}
	\alpha(x_0, X) = 1 \wedge \exp ( \Phi(x_0) - \Phi(X) ).
\end{align}

In the multiproposal pCN (mpCN) algorithm, derived in \cite{glatt2024parallel} and inspired by a conditionally independent proposal structure developed earlier in \cite{tjelmeland2004using}, one instead generates a cloud of $p$ proposed states, for a fixed number $p \geq 1$, as follows.  The proposal mechanism consists in first drawing $\bar{X}$ as in \eqref{prop:pCN}, namely $\bar{X} = \rho x_0 + \sqrt{1- \rho^2} \xi_0$, $\xi_0 \sim \mu_0$, and then drawing $p$ independent states $X_j \sim \cN(\rho \bar{X}, (1 - \rho^2)\cC)$, $j=1, \ldots, p$, around $\bar{X}$ so that 
\begin{align*}
	X_j = \rho \bar{X} + \sqrt{1- \rho^2} \xi_j, \quad \xi_j \sim \mu_0, \quad j=1, \ldots, p.
\end{align*}
A state $X_j$ is then selected among $X_1, \ldots, X_p$ according to the Barker-like acceptance probability
\begin{equation}\label{eq:intro:alpha}
	\alpha_j(x_0, X_1, \ldots, X_p) :=\frac{e^{- \Pot(X_j)}}{e^{- \Pot(x_0)}+ \sum_{k = 1}^p e^{- \Pot(X_k)}}, \quad j = 1, \ldots, p,
\end{equation}
or else the cloud of proposals is rejected, and hence the chain stays at the current point $x_0$, with probability 
\begin{equation}\label{eq:intro:alpha0}
\alpha_0(x_0, X_1, \ldots, X_p) = 1 - \sum_{j =1}^p  \alpha_j(x_0, X_1, \ldots, X_p) = \frac{e^{- \Pot(x_0)}}{e^{- \Pot(x_0)}+ \sum_{k = 1}^p e^{- \Pot(X_k)}}.
\end{equation}
The mpCN procedure is summarized in \cref{alg:mpcn}. In \cite{glatt2024parallel}, it was shown that the mpCN is an unbiased algorithm, via a general involutive framework developed therein that in fact yields reversibility to for broad class of MCMC algorithms.

\begin{algorithm}
	\caption{Multiproposal pCN (mpCN)}
	\begin{algorithmic}[1]\label{alg:mpcn}
		\State Select the algorithmic parameter $\rho \in [0,1)$.
		\State Choose an initial point $X^{(0)} \in \qsp$.
		\For{$k \geq 1$}
		\State Draw $\xi_0^{(k)} \sim \mathcal{N}(0,\mathcal{C})$ 
		\State Compute $\baX^{(k)} = \rho X^{(k-1)} + \sqrt{1-\rho^2}\,\xi_0^{(k)}$, 
		\State Draw $\xi_1^{(k)}, \ldots, \xi_p^{(k)}\sim \cN(0,\cC)$ iid. 
		\State Compute $X_0^{(k)} = X^{(k-1)}$ and 
		$X_j^{(k)} = \rho \baX^{(k)} + \sqrt{1-\rho^2}\,\xi_j^{(k)}$, 
		\State Set $X^{(k)} = X_j^{(k)}$, $j = 0, \ldots, p$, with probabilities $\alpha_j\big(X_0^{(k)},\ldots,X_p^{(k)}\big)$ as in \eqref{eq:intro:alpha} or \eqref{eq:intro:alpha0}
		\State $k \rightarrow k+1$
		\EndFor
	\end{algorithmic}
\end{algorithm}

The second algorithm we analyze here, the Multiple Try pCN (MTpCN) from \cite{glatt2025+}, adapts the strategy introduced in \cite{liu2000multiple} to create another unbiased multiproposal extension of pCN. In MTpCN, starting again from a state $x_0 \in \qsp$, a cloud of $p$ independent proposals is first drawn as $X_j \sim \cN(\rho x_0, \sqrt{1 - \rho^2} \cC),$ $j=1, \ldots, p.$ A state $Y = X_j$ is then selected among $X_1, \ldots, X_p$ with probability
\begin{equation}\label{eq:intro:mtpcn:beta}
	\beta_j(X_1, \ldots, X_p) := \frac{\exp(-\Pot(X_j))}{
		\sum_{l=1}^p \exp(-\Pot(X_l))}, \quad j=1, \ldots, p.
\end{equation}
Next, a second cloud of $p-1$ states is generated as $Z_j \sim \cN(\rho Y, \sqrt{1 - \rho^2} \cC),$ $j=1, \ldots, p-1.$ A Metropolis-Hastings accept-reject step is then performed comparing the first and second clouds to determine whether the previously selected state $Y$ is accepted or not. Specifically, $Y$ is accepted with probability
\begin{equation}\label{eq:intro:mtpcn:balpha}
	\balpha(x_0,X_1, \ldots, X_p, Z_1, \ldots, Z_{p-1}) = 1\wedge \frac{\sum_{l=1}^p\exp(-\Pot(X_l))}{\exp(-\Pot(x_0)) + \sum_{l = 1}^{p-1}\exp(-\Pot(Z_l))}
\end{equation}
and rejected otherwise. As shown in \cite{glatt2025+} again under the involutive framework from \cite{glatt2024parallel}, this scheme also ensures reversibility with respect to the target measure $\mu$. We summarize this second procedure in \cref{alg:mtpcn}.

\begin{algorithm}
	\caption{Multiple-Try Preconditioned Crank–Nicolson (MTpCN)}
	\begin{algorithmic}[1]\label{alg:mtpcn}
		\State Select the algorithmic parameter $\rho \in [0,1)$.
		\State Choose an initial point $X^{(0)} \in \qsp$.
		\For{$k \ge 1$}
		\State Draw $\xi_1^{(k)}, \ldots, \xi_p^{(k)}\sim \cN(0,\cC)$ i.i.d.
		\State Compute $X_j^{(k)} = \rho X^{(k-1)} + \rr \xi_j^{(k)}$, $j =1, \ldots, p$.
		\State Select $Y^{(k)} := X_j^{(k)}$ among $X_1^{(k)}, \ldots, X_p^{(k)}$ with probabilities $\beta_j(X_1^{(k)}, \ldots, X_p^{(k)})$ as in \eqref{eq:intro:mtpcn:beta}.
		\State Draw $\txi_1^{(k)}, \ldots, \txi_{p-1}^{(k)}\sim \cN(0,\cC)$ i.i.d.
		\State Compute $Z_j^{(k)} = \rho Y^{(k)} + \rr \txi_j^{(k)}$, $j =1, \ldots, p-1$.
		\State Set $X^{(k)} := Y^{(k)}$ with probability 
		$ \balpha(X^{(k-1)},X_1^{(k)} \ldots, X_p^{(k)}, Z_1^{(k)}, \ldots, Z_{p-1}^{(k)}) $ as in \eqref{eq:intro:mtpcn:balpha}.
		\State Otherwise, take $X^{(k)} := X^{(k-1)}.$
		\State $k \to k+1$
		\EndFor
	\end{algorithmic}
\end{algorithm}

In \cref{thm:intro:spectralgap} below, we summarize our main results for both the mpCN and the MTpCN algorithms. For this purpose, we denote by $\lbrace X^{(n)} \,:\, n\in \N\rbrace$ the Markov chain generated by either one of these algorithms for a given fixed number $p \geq 1$ of proposals and algorithmic parameter $\rho \in [0,1)$ and by $P_p$ its associated transition kernel, so that $X^{(n)} \sim P_p(X^{(n-1)}, \cdot )$, $n \in \N.$ Analytical formulations of the kernels associated to mpCN and MTpCN algorithms are presented in \eqref{eq:2:mpcn:kernel} and \eqref{eq:2:mtpcn:kernel}, respectively below.

Note that in our mixing results we obtain the convergence of  $P_p$ towards the target measure $\mu$ relative to, two different notions of distance. Firstly, we consider a Wasserstein distance $W_{\td}$ on the space of probability measures on $\qsp$, defined relative to a suitable semidistance $\td$ on $\qsp$.  This semidistance $\td$ taken from \cite{HMS11}
is specially adapted to mechanisms coupling two processes at large intermediate and small scales; see \eqref{eq:td} below. Secondly, we show convergence with respect to the norm in $L^2_\mu$, the space of real-valued and $\mu$-square-integrable functions on $\qsp$, namely $f: \qsp \to \R$ with $\int f^2 d \mu$ finite. See \cref{sec:prelim}  below for precise definitions regarding these notions of distance. In addition, we also obtain a Strong Law of Large Numbers (SLLN), a Central Limit theorem (CLT), and a concentration type inequality for the empirical Monte Carlo error
 \begin{equation}\label{eq:intro:error}
	\cE_{N,p}(f) := \frac{1}{N} \sum_{n = 1}^N f(X^{(n)}) - \int f\, d\mu,
\end{equation}
for any suitable test function $f$.

\begin{Theorem}\label{thm:intro:spectralgap}
	Let $\mu$ be the target measure in \eqref{eq:intro:target}, with the corresponding potential $\Phi$ assumed \emph{globally bounded} and \emph{Lipschitz}. Denote by $\lbrace X^{(n)} \; : \; n\in \N\rbrace$ the Markov chain generated by either the mpCN or the MTpCN algorithms, with a fixed number $p \geq 1$ of proposals. Then, there exists a semidistance $\td$ (explicit definition in \cref{thm:our_harris}) such that the following results hold for both mpCN and MTpCN:
	\begin{enumerate}
		\item \emph{Wasserstein convergence}: There exist $\lambda\in (0,1)$, a number of iterations $n_1\in \N$, and, for each initial state $X^{(0)}=x_0\in \qsp$, a positive constant $C= C(x_0)$ such that 
		\begin{equation}\label{eq:intro:harris:exp}
			\sup_{p \geq 1} W_{\td}(P_p^{n}(x_0, \cdot), \mu )\leq C(x_0) \lambda^n, \quad \mbox{for all} \,\,\, n \geq n_1.
		\end{equation}
		\item \emph{$L^2_\mu$ spectral gap}: for $\lambda$ as in item 1,
		\begin{equation}\label{eq:intro:specgapL2}
			\sup_{p \geq 1} \|P_p^n f - \smallint f \, d\mu\|_2 \leq \lambda^n\|f - \smallint f \, d\mu\|_2, \quad \mbox{for all } \,\, f\in  L^2_\mu \,\, \mbox{and} \,\, n\in \N,
		\end{equation}
		where $\|g\|_2^2 = \int g^2 \, d\mu$.
		\end{enumerate}
		Moreover, regarding the Monte Carlo error $\cE_{N,p}(f)$, \eqref{eq:intro:error}, we have:
        \begin{itemize}
		\item[3] \emph{SLLN and CLT}: $\cE_{N,p}(f)$ vanishes as $N\to \infty$ and $\sqrt{N}\cE_{N,p}(f)$ is asymptotically normal, whenever $f: \qsp \to \R$ is a Lipschitz function with respect to $\td$ and $X^{(0)}\sim \delta_{x_0}$, for any $x_0\in \qsp$, or $f\in L^2_\mu$ and $X^{(0)}\sim \mu$; 
		\item[4] \emph{Hoeffding's inequality}: for $f\in L^2_{\mu}$ bounded, $f\in [a,b]$, and $X^{(0)}\sim \mu$,
		\begin{equation*}
			\sup_{p \geq 1} \bbP\left( |\cE_{N,p}(f)| > \varepsilon \right) \leq 2\exp\left(- \frac{1 - \lambda}{1 +\lambda} \frac{2\varepsilon^2 N}{(b-a)^2}\right),\quad \text{for any }\varepsilon>0,
		\end{equation*}
		where $\lambda$ is as in item 1.
	\end{itemize}
\end{Theorem}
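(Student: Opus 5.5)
The plan is to apply the weak Harris theorem of \cite{HMS11}, in the form recorded later as \cref{thm:our_harris}, to each kernel $P_p$ with constants independent of $p$. This requires three ingredients, each to be checked uniformly in $p\ge 1$ and for both mpCN and MTpCN:
\begin{enumerate}
\item[(i)] a Lyapunov function, $V(x)=\|x\|^2$ or $e^{\eta\|x\|}$;
\item[(ii)] a $d$-contraction for a distance $d(x,y)=1\wedge \|x-y\|/\varepsilon$;
\item[(iii)] $d$-smallness of the level sets $\{V\le K\}$.
\end{enumerate}
The semidistance is then $\td(x,y)=\sqrt{d(x,y)(1+V(x)+V(y))}$. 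Items 1–2 follow from the conclusion of the Harris theorem. Item 2 uses reversibility of $P_p$ with respect to $\mu$, established in \cite{glatt2024parallel,glatt2025+}, together with the argument of \cite{hairer2014spectral}, which converts Wasserstein contraction into an $L^2_\mu$ gap with the same $\lambda$. Items 3–4 are then standard. The SLLN and CLT for $\td$-Lipschitz observables follow from the Wasserstein contraction via the results of \cite{Kuksin_Shirikyan_2012}, and for $f\in L^2_\mu$ with stationary start they follow from the spectral gap via \cite{Kipnis86}. The Hoeffding bound is the spectral-gap Hoeffding inequality for reversible chains, with rate $(1-\lambda)/(1+\lambda)$.

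\textbf{Lyapunov condition.} Since $\Phi$ is bounded, every acceptance and selection weight is bounded below. In mpCN, $\alpha_j\ge e^{-2\|\Phi\|_\infty}/(p+1)$, and summing over $j\ge1$ gives a total move probability of at least $p/(p+1)\,e^{-2\|\Phi\|_\infty}\ge \tfrac12 e^{-2\|\Phi\|_\infty}$. Conditionally on moving, the new state has the form $\rho^2 x_0+$ a Gaussian-type term with second moment bounded by $C\,\tr\cC$. Moving and staying are not independent of the selected index, but each $X_j$ satisfies $\E\|X_j\|^2\le\rho^4\|x_0\|^2+C$. Bounding the weights pointwise therefore gives
\[
P_pV(x_0)\le\bigl(1-c(1-\rho^4)\bigr)V(x_0)+C,
\]
with $c$ and $C$ independent of $p$.

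MTpCN is handled the same way. Here $\balpha\ge e^{-2\|\Phi\|_\infty}$, and the selected $Y$ is a weighted choice among i.i.d.\ proposals. One point needs care: selection biases which proposal is chosen, so the chosen point need not have the same moments as a single proposal. Since $\beta_j\le e^{2\|\Phi\|_\infty}/p$, we get
\[
\E\|Y\|^2\le e^{2\|\Phi\|_\infty}\,\E\|X_1\|^2,
\]
which is uniform in $p$. Strict contraction is recovered from the $\rho^2$ factor, either by iterating to $P_p^{n}$ with $n$ fixed or by using the exponential Lyapunov function.

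\textbf{Contraction and smallness.} Both are obtained from a synchronous coupling: the same Gaussian noises $\xi_j$ are used from $x$ and from $y$. Then corresponding proposals satisfy $\|X_j^x-X_j^y\|\le\rho\|x-y\|$ (respectively $\rho^2\|x-y\|$ for mpCN), which contracts. The accept/select decisions are coupled through a common uniform variable, i.e.\ a maximal coupling of the two categorical laws. The probability that the two chains choose different indices is bounded by the total variation between the weight vectors $(\alpha_j)$. Because $\Phi$ is Lipschitz and bounded, this is at most $C\|x-y\|$ uniformly in $p$: each weight perturbs by a factor $e^{\pm L\|\cdot\|}$, and the normalized vectors therefore differ in $\ell^1$ by $O(\|x-y\|)$ with no dependence on $p$. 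This gives
\[
W_d(P_p(x,\cdot),P_p(y,\cdot))\le \tfrac12\, d(x,y)\quad\text{for } d(x,y)<1,
\]
after choosing $\varepsilon$ small. For smallness, given $x,y$ in a ball, we shift the noise $\xi_0$ (for MTpCN, all $\xi_j$) by $\rho(y-x)/\sqrt{1-\rho^2}$, a Cameron–Martin-type shift, to try to land at the same point. If $x-y$ is not in the Cameron–Martin space, the standard alternative is to iterate the contracting synchronous coupling $n_1$ times, so that $\|X_n^x-X_n^y\|\le\rho^n\cdot$ diameter with positive probability uniform in $p$. This yields $d$-smallness with $1-d$-overlap bounded below.

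\textbf{Main obstacle.} I expect the hardest step to be the uniform-in-$p$ control of the index-mismatch probability in the coupling, especially for MTpCN. There the two-stage selection and the second cloud $Z$ enter the ratio $\balpha$, and after a mismatched selection the two chains' states may be far apart. My first attempt will be to bound the $\ell^1$ distance of the normalized weight vectors by $2(e^{2L\rho\|x-y\|}-1)$ via a ratio argument. For the mismatch branch, I will absorb the resulting cost into the weight $1+V$ of $\td$, which is why the semidistance with $\sqrt{\cdot}$ is needed rather than $d$ alone.
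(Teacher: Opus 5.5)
Your architecture is the paper's: $p$-uniform Lyapunov, $d_\varepsilon$-contraction and $d_\varepsilon$-smallness estimates fed into \cref{thm:our_harris}, then \cref{thm:L2gap} via reversibility, then the standard limit theorems and the Hoeffding bound. Your ratio argument for the $\ell^1$ distance between the normalized weights is a clean alternative to \eqref{ineq:sp}--\eqref{ineq:den}, and it does not even use boundedness. For mpCN it gives $e^{2L_\Phi\|x-y\|}-1$, because the $j=0$ weight moves by $L_\Phi\|x-y\|$, not $L_\Phi\rho\|x-y\|$.

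The contraction step, however, is wrong as stated. You cannot get $W_{d_\varepsilon}\le\frac12 d_\varepsilon$ by shrinking $\varepsilon$. On the event that both chains stay put, $d_\varepsilon$ does not decrease. More generally, any coupling charges $(x,y)$ with mass at least $P_p(x,\{x\})+P_p(y,\{y\})-1$, and $P_p(x,\{x\})=\E\alpha_0$ can be arbitrarily close to $1$ for bounded Lipschitz $\Phi$. What your coupling actually yields is
\begin{itemize}
\item for mpCN, $\E\har_0+\rho^2\sum_{j\ge1}\E\har_j+O(\varepsilon)\le 1-(1-\rho^2)\frac{p}{p+1}e^{-2\|\Phi\|_\infty}+O(\varepsilon)$;
\item for MTpCN, $1-(1-\rho)e^{-2\|\Phi\|_\infty}+O(\varepsilon)$.
\end{itemize}
It is the uniform lower bound on the probability of moving, which you only invoke in the Lyapunov step, that makes this $<1$. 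That is all \cref{thm:our_harris} needs.

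Your plan for the MTpCN mismatch is also misdirected. The contraction hypothesis concerns $d_\varepsilon$ alone, and the factor $1+V$ in $\td$ plays no role there. Instead, split $\E|\balpha(x_0,\dots)-\balpha(\tx_0,\dots)|$ according to whether the selected indices agree.
\begin{itemize}
\item If they agree, $\|Z_l-\tZ_l\|=\rho^2\|x-y\|$, so the logarithm of the ratio inside $\balpha$ moves by at most $L_\Phi(1+\rho)\|x-y\|$. Since $s\mapsto 1\wedge e^s$ is $1$-Lipschitz, so does $\balpha$.
\item If they disagree, bound by $1$ times the mismatch probability, which is again $O(\|x-y\|)=O(\varepsilon\, d_\varepsilon(x,y))$.
\end{itemize}
This is \eqref{eq:mtm:diffalpha}.

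Two smaller repairs are needed.

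\textbf{MTpCN Lyapunov step.} The bound $\E\|Y\|^2\le e^{2\|\Phi\|_\infty}\E\|X_1\|^2$ puts the factor $e^{2\|\Phi\|_\infty}$ on $\rho^2\|x_0\|^2$ as well. For $V=\|x\|^2$ there is then no contraction when $\rho^2e^{2\|\Phi\|_\infty}\ge 1$, and iterating does not help. Instead, apply Young's inequality pointwise, $\|X_j\|^2\le(1+\delta)\rho^2\|x_0\|^2+C_\delta\|\xi_j\|^2$, and use $\beta_j\le e^{2\|\Phi\|_\infty}/p$ only on the noise term. This gives $P_pV\le\bigl[1-(1-(1+\delta)\rho^2)\E\balpha\bigr]V+C$. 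The same care is needed for mpCN, where under noise-dependent weights the cross term with $x_0$ no longer averages out. Your exponential alternative does work, since $e^{\eta\rho\|x\|}=V^\rho$ is sublinear in $V$.

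\textbf{Smallness.} The Cameron--Martin shift cannot give a bound uniform over the norm ball $S$. For generic $x-y$ the shifted Gaussians are mutually singular, and otherwise the overlap degrades with $\|\cC^{-1/2}(x-y)\|$. So the iterated synchronous coupling is the argument to use. There, the positive probability of $n$ consecutive same-index moves does not come from your total-variation estimate, since points of $S$ are not close. It comes from the pointwise bound $\min\{\alpha_j,\tilde\alpha_j\}\ge e^{-2\|\Phi\|_\infty}/(p+1)$. This gives $\bbP(\Lambda^{(n)})\ge\bigl(\frac{p}{p+1}e^{-2\|\Phi\|_\infty}\bigr)^n$ for mpCN and $e^{-4n\|\Phi\|_\infty}$ for MTpCN. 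One then takes $n$ with $2r_s\rho^{2n}<\varepsilon$ (resp.\ $2r_s\rho^{n}<\varepsilon$).

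With these fixes your argument coincides with the paper's.
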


These results can also be extended to target measures $\mu$ with an unbounded potential $\Phi$. In this case, however, the spectral gap $\lambda$ and the time $n_1$ in \cref{thm:intro:spectralgap} deteriorate as the number of proposals increases. We present this extension for mpCN in details in \cref{app:unbound}. We expect the results for MTpcN to follow a similar strategy.

We also consider the kernels corresponding to the limiting case of an infinite number of proposals, denoted $P_\infty$, introduced in \eqref{eq:2:inftypCN} for mpCN and in \eqref{eq:2:mtminfty:kernel} for MTpCN. We show that these kernels exhibit Wasserstein contraction, respectively in \cref{thm:3:inftypcn} and \cref{thm:3:inftyMTpcn}. Interestingly, analyzing the infinite-proposal kernel directly allows us to establish this result for target measures with Lipschitz but possibly unbounded potential $\Phi$. This highlights a difference between two analytical perspectives: one may first establish mixing properties for kernels with a finite number of proposals and then pass to the limit as $p \to \infty$, or instead study the limiting kernel itself. In the present setting, the latter viewpoint avoids the deterioration of certain estimates with increasing $p$ that arises in the finite-proposal analysis.
The complete statements and extensions are given in \cref{sec:results}.

\subsection{Elements of the proofs}
The proofs of our main results (which we summarized as \cref{thm:intro:spectralgap}) proceed in two distinct steps. First, we establish exponential mixing properties in a suitable Wasserstein distance $W_{\td}$ for the Markov kernels associated with the proposed algorithms. This Wasserstein mixing then allows us to derive the $L^2_\mu$ spectral gap standard in this literature (see \cref{thm:L2gap}). Second, we leverage these estimates to deduce items 3 and 4 in \cref{thm:intro:spectralgap}, namely statistical properties of the resulting Monte Carlo estimators, including the strong law of large numbers, the central limit theorem, and non-asymptotic concentration bounds.
 
 Note that the latter steps mostly rely on existing results in the literature: once suitable contraction and spectral gap estimates are available, standard arguments yield convergence of ergodic averages and asymptotic normality. For observables in $\Lip(\td)$, these results are summarized for example in \cite[Appendix~A]{glatt2021mixing}. For observables in $L^2_\mu$, the CLT \cite{Kipnis86,latuszynski2013clts} and SLLN follow classically from the derived spectral gap. We provide a proof of the SLLN in \cref{app:L2} for completeness as, while it is standard for Harris recurrent or geometrically ergodic Markov chains \cite{meyn_tweedie}, an explicit formulation based on an $L^2_\mu$ spectral gap does not seem to be readily available in the literature. Finally, regarding the Hoeffding-type bound we rely on recent results \cite{Hoeffding} which allow one to derive concentration inequalities for bounded observables under the same $L^2_\mu$ spectral gap assumptions.
    
The core of the analysis is therefore devoted to establishing the mixing properties in items 1 and 2 in \cref{thm:intro:spectralgap}. Our approach is based on the weak Harris theorem \cite{HMS11}, which provides a flexible framework to obtain exponential convergence in Wasserstein distance for Markov chains on general state spaces. To apply this theorem, we verify three key ingredients for the transition kernels of the algorithms under consideration: the existence of a Lyapunov function, a suitable notion of small set, and a contraction property with respect to an appropriate distance-like function $d$. While the Lyapunov structure follows from relatively standard estimates, establishing contraction requires a more delicate coupling argument. In particular, we construct an explicit coupling between the kernels $P_p(X^{(0)},\cdot)$ and $P_p(\tX^{(0)},\cdot)$ for two initial points $X^{(0)} \neq \tX^{(0)}$ close to each other in $d$, and show that the expected distance between the coupled chains decreases after one step.

Our construction draws on the coupling strategy introduced in \cite{hairer2014spectral} for the single-proposal pCN algorithm, but the multiproposal setting requires us to go well beyond it. A single step of the chain now involves an entire cloud of $p$ proposals together with a Barker-type selection rule \eqref{eq:intro:alpha}, in place of the classical Metropolis--Hastings accept--reject mechanism \eqref{eq:intro:pcn:alpha} of pCN. The coupling we develop must therefore control all $p$ proposals together with the selection step, ensuring that the two chains select ``compatible'' outcomes with sufficiently high probability while keeping the resulting contraction constants uniform in $p$; it is this cloud-and-selection structure, which has no counterpart in the single-proposal analysis, that makes the construction considerably more delicate. We regard it as a significant technical contribution of the present work.

The construction for mpCN is given in details in \cref{prop:contr:fin:p:1} and it proceeds in two stages. First, we couple the proposal clouds synchronously, 
using the same Gaussian random variables $\xi_0^{(1)}, \ldots \xi_p^{(1)}$ to generate the proposals 
\begin{align*}
    X_j^{(1)} &= \rho \left(\rho X^{(0)} + \rr \xi_0^{(1)}\right) + \rr \xi_j^{(1)},  \quad j =1, \ldots, p,
\end{align*}
and $\tX_j^{(1)}$, $j =1, \ldots, p$, respectively with initial condition $\tX^{(0)}$, so that the distance between proposals with the same index $j$ is directly controlled by $\rho^2\|X^{(0)} - \tX^{(0)}\|$. Second, we couple the selection step by constructing a coupling of the Barker-type probabilities \eqref{eq:intro:alpha} and their counterparts for $\tX^{(0)}$, through a shared uniform random variable. This ensures that proposals with the same index are selected whenever possible, and separates the cases where the indices of selected proposals differ. 

This leads to a decomposition of the coupling into two regimes: on the event where both chains select the same index, contraction follows from the synchronous coupling of the proposals; on the complementary event, where different indices are selected, the error is controlled through quantitative bounds on the discrepancy between the acceptance probabilities. These bounds rely on the Lipschitz continuity of $\Phi$ and are sufficiently sharp to ensure an overall contraction, while boundedness of $\Phi$ ensures that the corresponding constants remain uniform in the number of proposals $p$.

The boundedness of the potential $\Phi$ is used in several other steps when verifying the conditions of the weak Harris theorem. In fact, it yields uniform bounds on the acceptance probabilities \eqref{eq:intro:alpha} ensuring that each $\alpha_j$ is bounded away from zero and one uniformly in the proposal cloud, which in turn ensures that the constants appearing in the contraction and Lyapunov conditions remain uniform in the number of proposals.
Removing this assumption is possible, as showed in \cref{app:unbound}, while losing uniform boundedness in $p$ of the relevant constants.

The MTpCN algorithm combines elements of the mpCN proposal mechanism with a Metropolis--Hastings accept--reject structure. The first stage, corresponding to the selection of a proposal from the cloud, is treated using the same ideas developed for mpCN. The second stage, corresponding to the accept–reject step, is handled using techniques closer to the single-proposal setting, where one couples the acceptance decisions through a shared uniform random variable and exploits bounds on the acceptance ratio. The combination of these two components requires additional care, but ultimately yields a contraction estimate of a similar form as in the multiproposal case.

Finally, we also study the limiting kernels corresponding to an infinite number of proposals. We illustrate the strategy starting from the $\infty$-pCN kernel which has the following explicit formula 
\begin{equation}\label{eq:intro:inftypCN0}
    P_\infty(x, dy) = \int_{\qsp^2} \baQ(z,dy) Q(x, d z) \quad \text{with }\baQ(z,dy)= \dfrac{\exp(-\Pot(y)) Q(z, d y)}{\int \exp(-\Pot(u)) Q(z, d u) }.
\end{equation}
Here we recognize the first step according to the proposal, which we referred to as Tjelmeland correction, corresponding to generating $\overline{X}^{(k)}$ in \cref{alg:mpcn}, and an internal kernel $\baQ(z,dy)$ corresponding to taking the limit for $p\to \infty$ of the proposal and acceptance steps after the Tjelmeland correction. Note how, in this regime, the accept–reject structure present in the finite-$p$ algorithms disappears, and the coupling strategy used in that setting cannot be applied. 
Starting from two distinct initial points, we first couple the outer proposal step synchronously, as in the finite-$p$ case, but the main difficulty is then to couple $\baQ(z, \cdot), \, \baQ(\tz, \cdot)$ with different initial points $z\neq \tz$.
To address this, we introduce an auxiliary coupling that effectively reintroduces an accept–reject mechanism at this level. This is achieved by constructing a coupling of $\baQ(z, \cdot), \, \baQ(\tz, \cdot)$ using their Radon--Nikodym derivatives, allowing us to compare the two chains through a controlled accept–reject procedure. The coupling construction is carried through for two generic probability measures \cref{lemma:AR_coupling} making it a versatile tool also for other applications. 

Finally, for the limiting Multiple-Try kernel \eqref{eq:2:mtminfty:kernel}, the limiting kernel still retains an explicit accept–reject step, so the coupling can be built by combining two ingredients.
We treat the internal reweighted proposal step using the same coupling introduced for the $\infty$-pCN kernel, and we couple the outer accept–reject step as in the classical pCN setting.

\subsection{Summary of numerical case studies}
We supplement the theoretical analysis with extensive numerical tests designed to assess the practical performance of the proposed algorithms.
We consider three inverse problems: two low-dimensional examples with non-trivial likelihood structures, and a higher-dimensional toy problem inspired by PDE-based models for fluid flows. In the first two problems, we consider a two-dimensional parameter $x =(x_1, x_2) \in \R^2$, and corresponding forward maps $f_1, f_2 :\mathbb{R}^2 \to \mathbb{R}^2$ that generate multiwell and polar-twist surfaces in $\mathbb{R}^{3}$ (see \cref{sec:multiwell,sec:polar}).
The third problem (\cref{sec:matrix_inversion}) is formulated as a functional inverse problem motivated by sparse indirect observations of a stationary fluid flow under damping and external forcing; in its discretized version, the forward map reduces to a matrix inversion problem.

Across these examples, the simulations are designed to investigate stationary mixing using standard proxies such as the Mean Square Jumping Distance (MSJD) and the Effective Sample Size (ESS). In addition, for the high-dimensional toy solute transport problem, we analyze the warm-up behavior of the algorithms. Although the burn-in phase is brief in this setting, the problem's high dimensionality and geometric complexity make it clearly observable. This makes it a testbed for understanding algorithmic behavior in more challenging, large-scale data-driven models, where warm-up can constitute a significant portion of the total computational cost.
Overall, we observe that the mixing performance of mpCN and MTpCN improves as the number of proposals $p$ increases, in both low- and high-dimensional settings, consistently outperforming the single-proposal pCN in terms of these proxies.

The multiwell posterior (\cref{sec:multiwell}) provides a first test case to visualize the balance between global and local exploration. For a wide range of values of $\rho$, mpCN successfully explores all posterior modes, indicating good global mixing. However, for sufficiently large $\rho$, the proposals do not contain enough randomness and the algorithm loses the ability to transition between modes in directions not informed by the current state, defaulting to essentially local exploration.

The polar twist example (\cref{sec:polar}), characterized by thin and curved high-probability regions, highlights the dependence of mixing on the algorithmic parameter $\rho$. Also in this case, mixing deteriorates as $\rho$ increases, reflecting the need for sufficiently exploratory proposals to reconstruct the geometry of the target. Comparing mpCN and MTpCN, we observe similar qualitative behavior, with mpCN mixing better than MTpCN. Given also the additional likelihood evaluations needed for MTpCN, we focus on mpCN for the higher dimensional problem.

The toy solute transport problem (\cref{sec:matrix_inversion}) allows for a more detailed study of both the warm-up and stationary regimes in comparison with the baseline strategy of running multiple chains in parallel. The results show that mpCN converges to stationarity significantly faster than both single-chain pCN and an embarrassingly parallel strategy. This improved warm-up behavior is reflected in faster stabilization of traceplots (\cref{fig:mpcn_pcn_traceplots}) and in a more rapid decay of the mean square error of ergodic averages \eqref{eq:MSE} as seen in \cref{fig:running_mse}.

In contrast, when assessing mixing at stationarity, parallel pCN chains outperform multiproposal methods in terms of ESS and MSJD (\cref{fig:sweep_solute_ep}), in agreement with observations in \cite{pozza_zanella_2025}. While the stationary mixing of mpCN with $p$ proposals is bounded by that of $p$ independent parallel pCN chains (when compared under an equal computational and storage budget), the improved burn-in behavior of multiproposal methods can lead to significant practical gains in complex, high-dimensional problems. Finally, we observe that increasing $p$ improves the robustness of mpCN with respect to the choice of $\rho$, in the sense that a wider range of parameter values yields near-optimal performance (see \cref{fig:fraction}), allowing more and more flexibility in the tuning of the algorithm parameters. 

\subsection{Organisation of the manuscript}
The remainder of the paper is organized as follows. In \cref{sec:algorithms}, we introduce the multiproposal preconditioned Crank–Nicolson (mpCN) and Multiple-Try preconditioned Crank–Nicolson (MTpCN) algorithms, together with their infinite-proposal limits. We also review the weak Harris framework and the Wasserstein contraction tools that form the basis of our analysis.
\cref{sec:results} contains the statements of the main theoretical results of the paper. 
The proofs of these main results are presented in \cref{sec:proofs}. We first analyze the mpCN algorithm, treating separately the finite- and infinite-proposal regimes, and then turn to the corresponding results for the MTpCN algorithm.
\cref{sec:numerics} provides numerical experiments illustrating the behavior of the proposed methods as described above. 
We conclude with a brief discussion in \cref{sec:outlook} about possible extensions and open questions.

The appendices collect several technical and supplementary results. \cref{app:unbound} is concerned with extending \cref{subsec:proofs_mpcN}, the results for mpCN with bounded and Lipschitz potential, to potentially unbounded ones. \cref{app:proofs} collects proofs of some technical results used in the body of the paper and \cref{ap:numerics} provides additional numerical results. In \cref{app:L2}, we give a complete, self-contained proof that the Wasserstein spectral gap implies the $L^2_\mu$ spectral gap, filling in the details of the elegant but condensed argument first presented in \cite{hairer2014spectral}. Last, in \cref{app:harris} we review the original proof of the weak Harris theorem \cite{HMS11} for generic kernels with discrete times to ensure Wasserstein spectral gaps not only at one time but for a sequence of times.

\section{Preliminaries}\label{sec:algorithms}
Let $\qsp$ be a separable Hilbert space with inner product $\langle \cdot, \, \cdot\rangle$ and associated norm $\|\cdot\|$. Denote by $\cB(\qsp)$ the $\sigma$-algebra of Borel sets in $\qsp$. Take $\mu_0= \cN(0, \cC)$ to be a reference Gaussian measure on $\qsp$ with zero mean and covariance operator $\cC: \qsp\to \qsp$ being linear, symmetric, strictly positive-definite, and trace-class. Under this setting, we consider a class of target measures $\mu$ that are absolutely continuous with respect to $\mu_0$ of the form in \eqref{eq:intro:target}.

We recall that the Markov transition kernel of a standard Metropolis-Hastings algorithm is defined as a mapping $P: \qsp\times \cB(\qsp) \to [0,1]$ written as
\begin{equation*}
	P(x, dy) =   \alpha(x, y)Q(x, dy) + \delta_x(dy)\int (1 - \alpha(x, z)) \,Q(x, dz),
\end{equation*}
where $Q: \qsp\times \cB(\qsp) \to [0,1]$ represents the associated \textit{proposal kernel} and $\alpha: \qsp \times \qsp \to  [0,1]$ the  \textit{acceptance probability}. In explicit terms, given an initial state $x \in \qsp$, a proposal state $Y$ is drawn from $Q(x, \cdot)$, which is then accepted with probability $\alpha(x,Y)$, and rejected otherwise. As such, the next state in the Markov chain generated by the algorithm can also be written as 
\begin{equation}\label{eq:2:pcn:X1}
	X^{(1)} = Y \,\mathbbm{1}_{U\leq \alpha(x, Y)} + x \,\mathbbm{1}_{U> \alpha(x, Y)}
\end{equation}
where $U$ is a uniform random variable on the interval $[0,1]$, i.e.~$U \sim \cU([0,1])$, which is chosen independently of $Y$.

With this notation, we have for the pCN algorithm recalled in \cref{subsec:overview:results} that the proposal kernel is given by
\begin{equation}\label{eq:2:proposal}
	Q(x, dy) = \cN(\rho x, \rr \cC)(dy), \quad x \in \qsp,
\end{equation}
with algorithmic parameter $\rho\in [0,1)$, and the acceptance probability is 
\begin{equation}\label{eq:2:accept}
	\alpha(x,y) =  1 \wedge \exp\left(- \Pot(y)+ \Pot(x)\right), \quad x,y \in \qsp.
\end{equation}
Note that, for $\rho = 0$, the pCN algorithm reduces to an independence sampler from the reference measure $\mu_0$, whereas for $\rho = 1$ the pCN kernel degenerates to $\delta_x$, keeping the chain frozen at the initial point. 

Finally, it is useful to notice that the kernel $Q$ in \eqref{eq:2:proposal} can also be written as the pushforward of the reference measure $\mu_0$ by the function $F: \qsp \times \qsp \to \qsp$ defined as
\begin{equation}\label{eq:2:defF}
	F(x, w) = \rho x + \rr w,
\end{equation}
so that a random variable $Y \sim Q(x, \cdot) = F(x, \cdot)^*\mu_0$ can be written explicitly as 
\begin{equation*}
	Y = F(x, \xi) = \rho x + \rr \xi, \quad \text{with }\xi\sim \mu_0.
\end{equation*}

\subsection{Multiproposal pCN}
Within the setting of \cref{alg:mpcn}, fix $p \geq 1$ the number of proposals per iteration, the algorithmic parameter $\rho\in [0,1),$ and the starting point $x_0\in \qsp$.
Given independent and identically distributed random variables $\xi_j \sim \mu_0$, $j=0,\ldots, p,$ and recalling the notation \eqref{eq:2:defF}, the $p$ proposals generated in one iteration of \cref{alg:mpcn} can be written in terms of the preliminary draw
\begin{align}
	\baX &= F(x_0, \xi_0) = \rho x_0 + \rr \xi_0
\end{align}
as
\begin{align}
	 \begin{split}
		X_j &= F(\baX, \xi_j) = F(F(x_0, \xi_0), \xi_j) \\
		& = \rho^2 x_0 + \rho \rr \xi_0 + \rr \xi_j, \quad j = 1, \ldots, p.
	\end{split}\label{eq:2:proposals}
\end{align}

Then, recalling the definition of the acceptance probabilities $\alpha_j$, $j=0, \ldots,p,$ in \eqref{eq:intro:alpha}, the next state in the constructed Markov chain can be written as follows
\begin{equation}\label{eq:2:mpcn:chain}
    X^{(1)} = x_0 \mathbbm{1}_{U\in I_0} + \sum_{j = 1}^p X_j \mathbbm{1}_{U\in I_j}, \quad I_j = \left[ \sum_{k = 0}^{j-1}\alpha_k (x_0, X_1, \ldots, X_p), \sum_{k = 0}^{j}\alpha_k (x_0, X_1, \ldots, X_p)\right) , 
\end{equation}
 where we set $\alpha_{-1} = 0$, and $U\sim \cU([0,1])$ is independent of the samples $\xi_0, \ldots, \xi_p$ from the reference measure $\mu_0$. Note that the interval $I_j$ 
 has length $\alpha_j,$ ensuring that each $X_j$ has an associated probability $\alpha_j$ of being accepted. 

The mpCN transition kernel associated to the chain $\lbrace X^{(k)}, \; k \in \N\rbrace$ can be expressed in integral form as
\begin{equation}\label{eq:2:mpcn:kernel}
      P_p(x_0, dy) = \sum_{j=0}^p \int_{\qsp^{p+1}}\frac{e^{- \Pot(x_j)}}{\sum_{l=0}^p e^{- \Pot(x_l)}} \delta_{x_j}(dy)\prod_{k= 1}^p Q(z, dx_k) Q(x_0, dz).
\end{equation}

Alternatively, we can write the kernel $P_p$ as the pushforward by a suitable mapping of $\mu_0$ and the uniform distribution on $[0,1]$, thus providing a better connection with the expression \eqref{eq:2:mpcn:chain}. For this purpose, consider the function $\Fbar(x_0, \cdot): \qsp^{p+1} \to \qsp^p$ defined for $\bw = (w_0, w_1, \ldots, w_p)\in \qsp^{p+1}$ as 
\begin{align}\label{eq:mpcn:Fbar}
	\Fbar\left(x_0, \bw \right) = (F(F(x_0,w_0),w_1), \ldots, F(F(x_0,w_0),w_p)),
\end{align}
where $F$ is as in \eqref{eq:2:defF}.
With this notation, we have that
\begin{align*}
	\int_\qsp \prod_{j=1}^p \qk(z, d x_j) \qk(x_0, d z) = \Fbar(x_0, \cdot)^* \mu_0^{\otimes (p+1)} (d x_1, \ldots, d x_p).
\end{align*}
Moreover, define the intervals 
\begin{equation*}
    \bar{I}_j= \bar{I}_j\left(x_0,\bw\right) =  \left[ \sum_{k = 0}^{j-1}\alpha_k \left(x_0, \Fbar(x_0, \bw)\right), \sum_{k = 0}^{j}\alpha_k \left(x_0,\Fbar(x_0, \bw)\right)\right),\quad j = 1, \ldots, p,
\end{equation*}
 and the function $D(x_0, \cdot, \cdot): \qsp^{p+1} \times [0,1] \to \qsp$ as
\begin{align*}
	D(x_0, \bw, \zeta) = \begin{cases}
		F(F(x_0, w_0), w_j)  \quad \mbox{ if } \zeta \in \bar{I}_j, \,\, j = 1 , \ldots, p, \\
		x_0  \hspace{2.55cm}\mbox{ if } \zeta \in \left[ 0, \ar_0 \right).
	\end{cases}
\end{align*}
It is not difficult to show that
\begin{align}\label{eq:P:mpCN:pfwd:mu0:U}
	P_p(x_0, d y) = D(x_0, \cdot, \cdot)^*(\mu_0^{\otimes (p + 1)} \otimes \cU([0,1]))(d y).
\end{align}

In the next lemma, we introduce other two formulations of the mpCN kernel \eqref{eq:2:mpcn:kernel} that will allow us to heuristically illustrate its behavior in the limit of many proposals. Its proof is given in \cref{app:proofs}.

\begin{Lemma}\label{lemma:Pp1}
    The mpCN kernel \eqref{eq:2:mpcn:kernel} can be written in the following equivalent formulations:
    \begin{enumerate}
        \item  The first formulation reads
        \begin{equation}\label{eq:Pp2_AR}
         P_p (x_0, d y) = \int_{\qsp} \alpha_1'(x_0, y, z) Q(z, dy) Q(x_0, dz) + \delta_{x_0}(dy) \int_{\qsp^2} \alpha_0'(x_0, x_1, z) Q(z, dx_1)Q(x_0, dz),
    \end{equation}
    with
    \begin{equation}\label{def:a0p:a1p}
        \alpha_0'(x_0, x_1, z) = e^{-\Pot(x_0)} \gamma_p(x_0, x_1, z) \quad \text{and} \quad \alpha_1'(x_0, x_1, z) = p e^{-\Pot(x_1)} \gamma_p(x_0, x_1, z),
    \end{equation}
    where
    \begin{equation*}
        \gamma_p(x_0, x_1, z) = \int_{\qsp^{p-1}} \left( e^{-\Pot(x_0)} + e^{-\Pot(x_1)} + \sum_{l=2}^p e^{-\Pot(x_l)} \right)^{-1}\prod_{k=2}^p Q(z, dx_k).
    \end{equation*}
    \item The second formulation reads
    \begin{equation*}
        \begin{split}
             P_p (x_0, d y) = T_1(p) + T_2(p) - T_3(p) 
        \end{split}
    \end{equation*}
    with 
    \begin{align*}
        T_1(p) &=  \int_{X}  \frac{ pe^{-\Pot(y)}}
      { e^{-\Pot(x_0)  } + p \int e^{-\Pot(u)}  Q(z, d u)}
     Q(z, d y) Q(x_0, dz),\\
      T_2 (p)&= \int_{X}  \frac{e^{-\Pot(y)} }
      { e^{-\Pot(x_0)  } + p \int e^{-\Pot(u)}  Q(z, d u)} \delta_{x_0}(dy) Q(x_0, dz),\\
       T_3 (p)&=  \sum_{j=0}^p \int_{X} \int_{X^p}  \delta_{x_j} (d y) \frac{e^{-\Pot(x_j)}}{\sum_{k=0}^p e^{-\Pot(x_k)}}
    \frac{ p^{-1} \sum_{l = 1}^p (e^{-\Pot(x_l)} - \int e^{-\Pot(u)}Q(z, d u) )}{p^{-1}e^{-\Pot(x_0)  } + \int e^{-\Pot(u)}Q(z, d u)}
   \prod_{l =1}^p 
   Q(z, d x_l) Q(x_0, dz).
    \end{align*}
    \end{enumerate}
\end{Lemma}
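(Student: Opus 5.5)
The plan is to derive both formulations directly from \eqref{eq:2:mpcn:kernel}, using only two facts: the proposals $x_1,\dots,x_p$ are conditionally i.i.d.\ given $z$, and the weights are symmetric in those indices. The computations are elementary, so the work is in organising the terms correctly.

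\emph{First formulation.} I would split the sum in \eqref{eq:2:mpcn:kernel} into the term $j=0$ and the terms $j\ge1$.

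For $j=0$ the factor $\delta_{x_0}(dy)$ does not depend on the integration variables. Integrating out $x_2,\dots,x_p$ against $\prod_{k\ge2}Q(z,dx_k)$ turns the weight $e^{-\Pot(x_0)}/\sum_{l=0}^p e^{-\Pot(x_l)}$ into $e^{-\Pot(x_0)}\gamma_p(x_0,x_1,z)=\alpha_0'(x_0,x_1,z)$. This leaves exactly the $\delta_{x_0}$ term in \eqref{eq:Pp2_AR}.

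For $j\ge1$, I would use that the measure $\prod_{k=1}^pQ(z,dx_k)$ is invariant under permutations of $(x_1,\dots,x_p)$, and so is the denominator. Relabelling $x_j\leftrightarrow x_1$ therefore shows that all $p$ terms are equal to the $j=1$ term. Integrating out $x_2,\dots,x_p$ in that term produces $p\,e^{-\Pot(x_1)}\gamma_p(x_0,x_1,z)\,\delta_{x_1}(dy)\,Q(z,dx_1)$. Integrating $\delta_{x_1}(dy)$ against $Q(z,dx_1)$ then replaces $x_1$ by $y$, which gives $\alpha_1'(x_0,y,z)Q(z,dy)Q(x_0,dz)$. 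Fubini applies throughout because all integrands are nonnegative and bounded by $1$.

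\emph{Second formulation.} Write $S=\sum_{k=0}^pe^{-\Pot(x_k)}$ and $m(z)=\int e^{-\Pot(u)}Q(z,du)$, and set $A=e^{-\Pot(x_0)}+p\,m(z)$. Then $S-A=\sum_{l=1}^p\bigl(e^{-\Pot(x_l)}-m(z)\bigr)$, and the algebraic identity
\[
\frac{1}{S}=\frac{1}{A}-\frac{S-A}{SA}
\]
is exact. I would substitute it into each weight $e^{-\Pot(x_j)}/S$.

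The $1/A$ part no longer depends on the other proposals. Its $j=0$ term, after integrating out $x_1,\dots,x_p$, gives $e^{-\Pot(x_0)}\delta_{x_0}(dy)/A$, and $e^{-\Pot(x_0)}\delta_{x_0}(dy)=e^{-\Pot(y)}\delta_{x_0}(dy)$, so this is $T_2(p)$. Its $j\ge1$ terms each integrate to $e^{-\Pot(y)}Q(z,dy)/A$, and summing the $p$ of them gives $T_1(p)$.

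The remainder is $-\sum_j \delta_{x_j}(dy)\,e^{-\Pot(x_j)}(S-A)/(SA)$. Dividing numerator and denominator by $p$ rewrites $(S-A)/A$ as
\[
\frac{p^{-1}\sum_{l=1}^p\bigl(e^{-\Pot(x_l)}-m(z)\bigr)}{p^{-1}e^{-\Pot(x_0)}+m(z)},
\]
which is precisely the expression appearing in $T_3(p)$.

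I do not expect a real obstacle. The only care needed is in the symmetrisation step, where one must check that the weights, and not just the measure, are permutation-invariant in the proposal indices, and in matching normalisations such as the factor $p$ and the $p^{-1}$ scaling in $T_3(p)$.
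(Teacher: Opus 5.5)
Your proposal is correct and follows essentially the same route as the paper. For the first formulation you isolate the $j=0$ term and use the permutation symmetry of the proposals to reduce the $j\ge 1$ terms to $p$ copies of the $j=1$ term. For the second, your identity $1/S = 1/A - (S-A)/(SA)$ is exactly what the paper obtains by multiplying and dividing by $e^{-\Pot(x_0)}+p\int e^{-\Pot(u)}Q(z,du)$ and adding and subtracting $\sum_{k=1}^p e^{-\Pot(x_k)}$.

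One small point, shared with the paper: the remainder term is signed, so Tonelli alone does not justify the interchanges there. You also need $\int e^{-\Pot(u)}Q(z,du)<\infty$, which holds in the paper's bounded or Lipschitz settings.
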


Note that $\alpha_0'$ and $\alpha_1'$ in \eqref{def:a0p:a1p} do not represent acceptance probabilities. In fact, $\alpha_1'$ may be larger than one. However, the following relation holds
\begin{equation*}
    \int_{\qsp^2} (\alpha_0'(x_0, x_1, z) + \alpha_1'(x_0,x_1,z)) Q(z, dx_1)Q(x_0, dz ) =1 \quad \mbox{for all }\,\, x_0 \in \qsp.
\end{equation*}

Formally, given the formulation \eqref{eq:Pp2_AR}, thanks to the strong Law of Large Numbers, we can expect the following convergence
\begin{align*}
    \int_{\qsp} &\alpha_1'(x_0, y, z) Q(z, dy) Q(x_0, dz) \\
    &=  \int_{\qsp} \int_{\qsp^{p-1}} \frac{ e^{-\Pot(y)} }{\frac{1}{p}e^{-\Pot(x_0)} + \frac{1}{p}e^{-\Pot(y)} + \frac{1}{p}\sum_{l=2}^p e^{-\Pot(x_l)}}\prod_{k=2}^p Q(z, dx_k) Q(z, dy) Q(x_0, dz)\\
    &\to  \int_{\qsp} \frac{ e^{-\Pot(y)}}{\int e^{-\Pot(u)} Q(z, du)} Q(z, dy) Q(x_0, dz), \quad \text{ as } p\to \infty,
\end{align*}
while the term representing the rejection will vanish as the number of proposals goes to infinity, namely
\begin{align*}
   \delta_{x_0}(dy) \int_{\qsp^2} &\alpha_0'(x_0, x_1, z) Q(z, dx_1)Q(x_0, dz) \\
    &=  \frac{1}{p} \delta_{x_0}(dy)\int_{\qsp^2} \int_{\qsp^{p-1}} \frac{ e^{-\Pot(x_0)} }{\frac{1}{p} \left(e^{-\Pot(x_0)} + e^{-\Pot(y)} + \sum_{l=2}^p e^{-\Pot(x_l)}\right)}\prod_{k=2}^p Q(z, dx_k) Q(z, dy) Q(x_0, dz)\\
    &\to  0, \quad \text{ as } p\to \infty.
\end{align*}

Similarly, if we take the limit for $p\to \infty$ of the three terms in the second formulation in  \cref{lemma:Pp1}, we have
\begin{align*}
    T_1(p) \to \int \frac{e^{-\Pot(y)}}{\int e^{-\Pot(u)}  Q(z, d u)}
     Q(z, d y) Q(x_0, dz),
\end{align*}
the second term $T_2(p)$ converges to zero, and, by the strong Law of Large Numbers, we expect that $T_3(p)$ converges to zero as well, so that 
\begin{equation}\label{eq:2:inftypCN0}
    P_p(x_0, dy)\to P_\infty(x_0, dy) := \int \frac{e^{-\Pot(y)}}{\int e^{-\Pot(u)}  Q(z, d u)}
     Q(z, d y) Q(x_0, dz) \quad \text{ as } p \to \infty.
\end{equation}
This limit is rigorously proved in \cite{glatt2025+} under appropriate conditions on the potential $\Phi$, in the sense of weak convergence and in total variation distance, including explicit rates of convergence with respect to the latter. The algorithm associated to the limiting kernel $P_\infty$ in \eqref{eq:2:inftypCN0} is called $\infty$-pCN.

In the rest of the article we will consider a more general version than \eqref{eq:2:inftypCN0} where the algorithmic parameter $\rho$ can be different between the internal and external kernel, namely 
\begin{equation}\label{eq:2:inftypCN}
    P_\infty(x_0, dy) = \int \dfrac{\exp(-\Pot(y)) Q_1(z, d y)}{\int \exp(-\Pot(u)) Q_1(z, d u) } Q_2(x_0, d z),
\end{equation}
where the  kernels $Q_1,\, Q_2$ are defined as 
\begin{equation*}
    Q_i(x, dz) = \cN(\rho_i x, (1-\rho_i^2)\cC)(dz), \quad i=1,2,
\end{equation*}
and $\rho_i\in [0,1)$, $i=1,2$.
In addition, we denote
\begin{equation}\label{eq:2:baQ}
    \baQ_1(z, dy):= \dfrac{\exp(-\Pot(y)) Q_1(z, d y)}{\int \exp(-\Pot(u)) Q_1(z, d u) },
\end{equation}
so that 
\begin{equation*}
     P_\infty(x_0, dy) = \int \baQ_1(z, dy) Q_2(x_0, dz).
\end{equation*}
\begin{Remark}
    According to \cite{glatt2025+}, 
    choosing $\rho_1 = \rho_2$ provides an unbiased algorithm.  The situation
    where these two parameters are different would be expected to violate reversibility in general, but they are retained here for generality.
\end{Remark} 

\begin{Remark}
    Observe that $\baQ_1(z, \cdot)$ is the invariant measure of the following Langevin dynamics 
\begin{equation}\label{eq:Langevin1}
    dY_t = -\left[ Y_t - \rho_1 z+ \frac{(1 - \rho_1^2)}{2} \cC \nabla \Pot(Y_t)\right]\, dt + \sqrt{(1 - \rho_1^2) \cC} dW_t.
\end{equation}
We will not directly exploit this connection with the Langevin dynamics \eqref{eq:Langevin1} in the results we present for the $\infty$-pCN, nonetheless we show in \cref{app:langevin:lyap} how to ensure one of the key ingredients for mixing, the Lyapunov structure, thanks to the mixing properties of the Langevin dynamics. 
\end{Remark}

\subsection{Multiple Try pCN}
We turn next to \cref{alg:mtpcn} and again fix $p \geq 1$ as the number of proposals, the algorithmic parameter $\rho\in [0,1),$ and a starting state $x_0\in \qsp$. The next state in the MTpCN Markov chain can be expressed as follows. Consider independent random variables $\xi_j \sim \mu_0$, $j=1, \ldots, p$, $\txi_k \sim \mu_0$, $k=1, \ldots, p-1$, and $U, \tU \sim \cU([0,1])$. Then, recalling the definition of $F$ in \eqref{eq:2:defF}, we set
\begin{align*}
    X_j = F(x_0, \xi_j) = \rho x_0 + \rr \xi_j, \quad j=1, \ldots, p,
\end{align*}
and, similarly to \eqref{eq:2:mpcn:chain} for mpCN, define 
\begin{equation*}
    Y = \sum_{j = 1}^p X_j \mathbbm{1}_{U\in \bar{I}_j},  \quad  \bar{I}_j = \left[ \sum_{k = 1}^{j-1} \beta(X_1, \ldots ,X_p ), \sum_{k = 1}^{j} \beta(X_1, \ldots ,X_p)\right),
\end{equation*}
with $\beta$ as given in \eqref{eq:intro:mtpcn:beta}.

Next, we define the auxiliary variables $Z_1, \ldots, Z_{p-1}$ drawn independently from $Q(Y, \cdot)$, namely 
\begin{equation*}
    Z_k = F(Y, \txi_k) = \rho Y + \sqrt{1 - \rho^2} \txi_k, \quad k = 1, \ldots, p-1.
\end{equation*}
Finally, the next state of the chain constructed via \cref{alg:mtpcn} reads
\begin{equation}\label{eq:mtpcn:X1}
    X^{(1)} = Y \mathbbm{1}_{\tU \leq \balpha(x_0, X_1, \ldots, X_p, Z_1, \ldots, Z_{p-1})} + x_0 \mathbbm{1}_{\tU > \balpha(x_0,X_1, \ldots, X_p, Z_1, \ldots, Z_{p-1})},
\end{equation}
with $\balpha$ as defined in \eqref{eq:intro:mtpcn:balpha}.

The MTpCN transition kernel $P_p: \qsp\times \cB(\qsp)\to [0,1]$ associated to the chain just constructed has been rigorously derived in \cite{glatt2025+}, and it explicitly reads
\begin{multline}\label{eq:2:mtpcn:kernel}
    P_p(x_0, dy) = \sum_{j = 1}^p \int_{\qsp^{2p-1}} \beta_j(x_1, \ldots, x_p) \left[ \balpha(x_0, \ldots, x_p, z_1, \ldots, z_{p-1}) \delta_{x_j}(dy)\right.\\ \left.
    +  (1 -  \balpha(x_0, \ldots, x_p, z_1, \ldots, z_{p-1})) \delta_{x_0}(dy)\right]\prod_{k=1}^{p-1}Q(x_j, dz_k)\prod_{m=1}^{p}Q(x_0, dx_m),
\end{multline}
where we recall that
\begin{equation}\label{eq:2:mtpcn:balpha}
    \balpha(x_0, \ldots, x_p, z_1, \ldots, z_{p-1}) = 1\wedge \frac{\sum_{l=1}^p\exp(-\Pot(x_l))}{\exp(-\Pot(x_0)) + \sum_{l = 1}^{p-1}\exp(-\Pot(z_l))},
\end{equation}
and 
\begin{equation}\label{eq:2:mtpcn:beta}
      \beta_j(x_1, \ldots, x_p) := \frac{\exp(-\Pot(x_j))}{
        \sum_{l=1}^p \exp(-\Pot(x_l))}.
\end{equation}

We clearly see from \eqref{eq:mtpcn:X1} that this algorithm has elements of a classic accept-reject Metropolis-Hastings mechanism, as it reads similarly to the chain for pCN in \eqref{eq:2:pcn:X1}, but with a proposal $Y$ similar to mpCN as in \eqref{eq:2:mpcn:chain}. The strategy to show spectral gap results for this kernel will then be a mixture of the approach for the single proposal pCN developed in \cite{hairer2014spectral} and the approach we will develop for multiproposal pCN. This will also be true for the limit kernel $P_\infty$ that we now introduce for Multiple Try pCN. 

From the convergence result in \cite{glatt2025+} regarding a general form of a Multiple Try Metropolis algorithm, it follows that the $p \to \infty$ limit of the MTpCN kernel \eqref{eq:2:mtpcn:kernel} is given by 
\begin{equation}\label{eq:2:mtminfty:kernel}
    P_\infty(x_0, dy) = \alpha(x_0, y)\baQ(x_0, dy) + \delta_{x_0}(dy)\int \left(1 - \alpha(x_0, u)\right) \baQ(x_0, du),
\end{equation}
where $\baQ$ is defined similarly as in \eqref{eq:2:baQ}, namely 
\begin{equation*}
    \baQ(x_0, dy) =  \dfrac{\exp(-\Pot(y)) Q(x_0, d y)}{\int \exp(-\Pot(u)) Q(x_0, d u) },
\end{equation*}
and the acceptance probability is
\begin{equation}\label{eq:2:mtminfty:alpha}
    \alpha(x_0, y) = 1 \wedge \frac{\int \exp(-\Pot(z))Q(x_0, dz)}{\int \exp(-\Pot(u)) Q(y, du)}.
\end{equation}

\subsection{Wasserstein contraction}\label{sec:prelim}

Let $d:\qsp\times  \qsp\to \R_+$ be a distance-like function on a Polish space $\qsp$, namely a symmetric and lower semi-continuous mapping such that $d(x,y) = 0$ if and only if $x = y$. Then $d$ induces a Wasserstein semimetric $W_d: \cM_1(\qsp)\times \cM_1(\qsp) \to \R_+$ on the space of probability measures $\cM_1(\qsp)$, which is defined for any two measures $\nu_1, \nu_2 \in  \cM_1(\qsp)$ as
\begin{equation}\label{eq:intro:wass}
    W_{d}(\nu_1, \nu_2) = \inf_{\pi\in \mathfrak{C}(\nu_1, \nu_2)} \int_{\qsp^2} d(x,y)\, \pi(dx, dy),
\end{equation}
where $\mathfrak{C}(\nu_1, \nu_2)$ denotes the set of all couplings of $\nu_1$ and $\nu_2$, namely all probability measures on the product space $\qsp \times \qsp$ with marginals $\nu_1$ and $\nu_2.$ Note that when $d$ additionally satisfies the triangular inequality property, so that it is a bona fide metric, then $W_d$ coincides with the 1-Wasserstein distance associated to $d$. 

We also consider the space of Lipschitz continuous functions with respect to the distance-like function $d$, denoted $\Lip(d)$, and its associated seminorm, defined for any $f \in \Lip(d)$ as
\begin{equation}\label{eq:lipschitz:norm}
	\|f \|_d := \sup_{x\neq y}\frac{|f(x) - f(y)|}{d(x,y)}.
\end{equation}

Recall that the action of a Markov kernel $P: \qsp \times \cB(\qsp) \to [0,1]$ on a measurable function $f: \qsp \to \R$ is defined as
\begin{align*}
	 P f(x) = \int f(y) \, P(x, dy), \quad x \in \qsp,
\end{align*}
whereas the action of $P$ on a measure $\nu \in \cM_1(\qsp)$ is given by
\begin{align*}
    \nu P(A) = \int P(x, A)\, \nu(dx), \quad A \in \cB(\qsp).
\end{align*}

In order to state the weak Harris theorem, we first recall the following definitions from \cite{HMS11}, starting from a generalisation of small sets: 
\begin{Definition}[$d$-small set]\label{def:smallness}
    Let $P$ be a Markov kernel on $\qsp$ and $d: \qsp \times \qsp \to [0,1]$ a distance-like function. Then, a set $S\subset \qsp$ is a $d$-small set if there exists $0 < s < 1$ such that 
    \begin{equation}
        W_d(P(x, \cdot), P(y, \cdot))\leq s \quad \mbox{for all }\,\, x, y \in S.
    \end{equation}
\end{Definition}
Typical candidates for small sets are sub-level sets of the so-called Lyapunov functions:
\begin{Definition}[Lyapunov function]\label{def:lyapunov}
    A measurable function $V: \qsp \to [0, \infty)$ over a Polish space $\qsp$ is a Lyapunov function for a Markov kernel $P$ on $\qsp$ if there exist constants $0\leq l_V< 1$ and $K_V>0$ such that   
    \begin{equation}\label{eq:2:lyap}
        P^nV(x) = \int V(y) P^n(x, dy) \leq l_V^n V(x) + K_V 
    \end{equation}
    for all $x\in \qsp$ and $n\in \N$.
\end{Definition}
Note that, for the Lyapunov property to hold, it is enough to show \eqref{eq:2:lyap} is satisfied with $n = 1$, since upon iterating it is easy to deduce that \eqref{eq:2:lyap} is valid for any $n \in \N$, with a possibly different constant $K_V$. More precisely,
\begin{equation*}
    P^nV(x) \leq l_V^n V(x) + \frac{K_V}{1 - l_V}.
\end{equation*}
In the classic theory of Markov Chains in finite dimension, showing the existence of a Lyapunov function with a sub-level set that is small with respect to the total variation distance is enough to apply the standard Harris theorem (see e.g. \cite{meyn_tweedie, YetAnother}). This ensures that there exists $\lambda <1$ and $C_1= C_1(x)>0$ such that for all $x\in \qsp$
\begin{equation*}
    d_{TV}(P^n(x, \cdot), \mu) \leq C_1(x) \lambda^n \quad \text{for all }n \in \N,
\end{equation*}
where $d_{TV}$ denotes the total variation distance. This property is typically referred to as \textit{geometric ergodicity} for the kernel $P$. 
However, in the infinite dimensional context, the smallness in total variation distance can easily fail: $P(x, \cdot), P(y, \cdot)$ with different initial points, $x\neq y$, are typically mutually singular, leading to $d_{TV}(P(x, \cdot), P(y, \cdot))=1$. The more general definition of smallness given in \cref{def:smallness} allows for alternative notions of distance that are more suitable in the infinite dimensional setting. 

Moreover, a third condition, which ensures contraction directly at small scales, is required: 
\begin{Definition}[$d$-contraction]
    Let $P$ be a Markov kernel on $\qsp$ and $d: \qsp \times \qsp \to [0,1]$ a distance-like function. Then $P$ is $d$\textit{-contracting} if there exists $0< c<1$ such that 
    \begin{equation}
         W_d(P(x, \cdot), P(y, \cdot))\leq c \,d(x,y)
    \end{equation}
    for all $x,y\in \qsp$ such that $d(x,y)<1$. 
\end{Definition}

We are ready to state the following extended version of the weak Harris theorem:
\begin{Theorem}
    \label{thm:our_harris}
        Let $P$ be a Markov kernel over a Polish space $\qsp$ with invariant measure $\mu$ and with continuous Lyapunov function $V$. Define the sub-level set $S = \lbrace x\in \qsp \, : \, V(x) \leq 4K_V \rbrace$. Suppose there exists a distance-like function $d: \qsp \times \qsp \to [0,1]$ and $n_0>0$ such that for all $n\leq n_0$
        \begin{enumerate}
            \item there exists $c(n) >0$ such that for all $x,y\in \qsp$ with $d(x,y)<1$ 
            \[ W_d(P^n(x, \cdot), P^n(y, \cdot)) \leq c(n) d(x,y);\]
            \item there exists $s(n) >0$ such that for all $x,y\in S$ 
            \[  W_d(P^n(x, \cdot), P^n(y, \cdot)) \leq s(n); \]
        \end{enumerate} 
        and such that $c(n_0)<1$ and $s(n_0)<1$, namely $P^{n_0}$ is $d$-contracting and has $S$ as $d$-small set.
        
         Then $P$ has at most one invariant measure. Moreover, there exists $n_1 > n_0$ and $\lambda<1$ such that for all $n \geq n_1$
        \begin{equation}
        \label{eq:2:harris:contraction}
           W_{\td}(\nu_1 P^{n}, \nu_2 P^{n})\leq \lambda^n W_{\td} (\nu_1,\nu_2),\quad \nu_1, \nu_2\in \cM_1(\qsp),
        \end{equation}
         where $\td(x,y)^2 = d(x,y)(1 + V(x) + V(y))$, so that, for each $x\in \qsp$ there exists $C= C(x)$ such that 
        \begin{equation}\label{eq:2:harris:exp}
          W_{\td}(P^{n}(x, \cdot), \mu )\leq C(x) \lambda^n, \quad \text{for all } n \geq n_1.
        \end{equation}
       In addition, for observables $f\in \Lip(\td)$, it holds
        \begin{align}\label{eq:2:harris:exp1}
            \| P^nf - \mu(f) \|_{\td} \leq \lambda^n\|f - \mu(f)\|_{\td} \quad \text{for all } n\geq n_1,
        \end{align}
        where $\|\cdot \|_{\td}$ is the Lipschitz seminorm associated to $\td$ as in \eqref{eq:lipschitz:norm} and $\mu(f):= \int f \,d\mu$.
    \end{Theorem}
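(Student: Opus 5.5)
The strategy is to follow the original proof of the weak Harris theorem in \cite{HMS11}, applied with care to the iterates $P^n$ rather than to a single kernel. We work throughout with $\td(x,y)^2=d(x,y)(1+V(x)+V(y))$. Write $R=P^{n_0}$, which is $d$-contracting with constant $c=c(n_0)<1$ and has $S$ as a $d$-small set with $s=s(n_0)<1$.

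\textbf{Step 1: one-step contraction for $R$.} Since $R$ inherits a Lyapunov bound $RV\le l_V^{n_0}V+K$, the usual three-case argument shows
\[
W_{\td}(R(x,\cdot),R(y,\cdot))\le \lambda_0\,\td(x,y)
\]
for some $\lambda_0<1$, provided the small-set level $4K_V$ and the constants $\beta$ are tuned as in \cite{HMS11}. The three cases are as follows.
\begin{enumerate}
\item If $d(x,y)<1$, take the near-optimal coupling from $d$-contraction. Cauchy--Schwarz together with the Lyapunov bound give $\E\td\le\sqrt{c\,d(x,y)\,(1+RV(x)+RV(y))}$. This is at most $\sqrt{c'}\,\td(x,y)$, after possibly replacing $V$ by $\beta V$ for a small $\beta$ and absorbing constants.
\item If $d(x,y)=1$ and $V(x)+V(y)\ge 4K_V$, the Lyapunov decay $RV(x)+RV(y)\le l(V(x)+V(y))+2K$ makes the factor $1+RV(x)+RV(y)$ smaller than $1+V(x)+V(y)$ by a fixed ratio. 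Since $d\le 1$, this gives contraction.
\item If $d(x,y)=1$ and $x,y\in S$, smallness gives $W_d\le s<1$, and on $S$ the weights $1+V$ are bounded. Hence we again get a uniform contraction factor.
\end{enumerate}
The contraction of $W_{\td}$ for general $\nu_1,\nu_2$ then follows by gluing the pointwise couplings over an optimal coupling of $\nu_1,\nu_2$. This uses measurable selection and the lower semicontinuity of $\td$, exactly as in \cite{HMS11}.

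\textbf{Step 2: from $R=P^{n_0}$ to all large $n$.} Write $n=kn_0+r$ with $0\le r<n_0$. For the remainder $P^r$ we need a non-contractive but finite bound $W_{\td}(\nu_1P^r,\nu_2P^r)\le M\,W_{\td}(\nu_1,\nu_2)$. This follows from the same three cases, now using items 1 and 2 for $n=r\le n_0$ (constants $c(r)$ and $s(r)$, not necessarily below one) together with the Lyapunov bound for $P^r$. With $M=\max_{r<n_0}M_r$ we get
\[
W_{\td}(\nu_1P^n,\nu_2P^n)\le M\lambda_0^{k}\,W_{\td}(\nu_1,\nu_2).
\]
Choose $\lambda\in(\lambda_0^{1/n_0},1)$ and take $n_1$ large enough that $M\lambda_0^{\lfloor n/n_0\rfloor}\le\lambda^n$ for all $n\ge n_1$. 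This proves \eqref{eq:2:harris:contraction}. I expect this bookkeeping, namely making $M$ finite from hypotheses 1 and 2 at non-optimal $n$, to be the main obstacle. If hypothesis 1 alone does not control $d(x,y)\ge1$ pairs off $S$, I would fall back on the trivial coupling, using $d\le1$ and $P^rV\le V+K$.

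\textbf{Step 3: consequences.} Uniqueness of the invariant measure follows because two invariant measures would satisfy $W_{\td}(\mu_1,\mu_2)\le\lambda^nW_{\td}(\mu_1,\mu_2)$. This quantity is finite when $\int V\,d\mu_i<\infty$, which holds for invariant measures by the Lyapunov bound, so $W_{\td}(\mu_1,\mu_2)=0$ and the two measures coincide. Taking $\nu_1=\delta_x$ and $\nu_2=\mu$ gives \eqref{eq:2:harris:exp} with
\[
C(x)=W_{\td}(\delta_x,\mu)\le\int\sqrt{1+V(x)+V(y)}\,\mu(dy)<\infty.
\]
Finally, \eqref{eq:2:harris:exp1} follows from duality. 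For any coupling, $|P^nf(x)-P^nf(y)|\le\|f\|_{\td}\,W_{\td}(P^n(x,\cdot),P^n(y,\cdot))\le\lambda^n\|f\|_{\td}\,\td(x,y)$. Since subtracting the constant $\mu(f)$ changes neither side, this is the claimed bound.
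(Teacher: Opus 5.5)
Your architecture is the paper's. Both arguments run the three-case estimate in the rescaled metric $\td_\beta$ at time $n_0$. Both then decompose $n=kn_0+r$, controlling the remainder by hypothesis 1 at $r<n_0$ and the trivial coupling; your fallback there is exactly right. Both finish with Dirac measures and Kantorovich duality. Step 1 strictly gives $\lambda_0<1$ for $\td_\beta$, not for $\td$, but the conversion constant $K_\beta/k_\beta$ is absorbed into $M$ as in the paper, so that is harmless.

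The genuine gap is Case 2 of Step 1. There $d(x,y)=1$ and no smallness is available. With $l=l_V^{n_0}$ and $W=V(x)+V(y)\ge 4K_V$, the only estimate is $W_{\td_\beta}^2\le 1+\beta RV(x)+\beta RV(y)\le 1+2\beta K_V+\beta l\,W$. Since $W\mapsto (1+2\beta K_V+\beta lW)/(1+\beta W)$ is monotone,
\[
\sup_{W\ge 4K_V}\frac{1+2\beta K_V+\beta l\,W}{1+\beta W}
=\max\Bigl\{\frac{1+2\beta K_V+4\beta l K_V}{1+4\beta K_V},\;l\Bigr\},
\]
which is $<1$ if and only if $l<\tfrac12$, for every $\beta>0$. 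The hypotheses do not force $l_V^{n_0}<\tfrac12$. The level $4K_V$ is fixed by the statement, and $\beta$ does not enter the condition, so neither knob you propose to tune repairs this. The paper's Case 2 has the same slip: its factor $l_V^n/4$ should read $4l_V^n$. So it does not supply the missing step either.

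The missing idea is to pass to a multiple of $n_0$ before running the three cases.
\begin{itemize}
\item Because $c(n_0)<1$ and $d\le 1$, you have $W_d(P^{n_0}(x,\cdot),P^{n_0}(y,\cdot))\le d(x,y)$ for \emph{all} $x,y$.
\item Gluing over a coupling, using lower semicontinuity of $d$ as in your Step 1, gives $W_d(\nu_1P^{n_0},\nu_2P^{n_0})\le W_d(\nu_1,\nu_2)$.
\item Hence $W_d(P^{mn_0}(x,\cdot),P^{mn_0}(y,\cdot))\le W_d(P^{n_0}(x,\cdot),P^{n_0}(y,\cdot))$. So for every $m\ge 1$, $P^{mn_0}$ is $d$-contracting with constant $c(n_0)$ and has $S$ as a $d$-small set with constant $s(n_0)$.
\item Since $P^nV\le l_V^nV+K_V$ holds with the same $K_V$ for all $n$, the level $4K_V$ is still the right one.
\end{itemize}
Choose $m$ with $l_V^{mn_0}<\tfrac12$ and run Step 1 for $R=P^{mn_0}$. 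In Step 2, write a remainder $r<mn_0$ as $r=jn_0+r'$ with $0\le r'<n_0$. Bound $P^{r'}$ and $P^{n_0}$ separately in $\td_\beta$, using hypothesis 1 when $d<1$ and the trivial coupling when $d=1$. Then $M$ stays finite, and the rest of your argument, including Step 3, goes through unchanged.
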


The original version of the weak Harris theorem as stated in \cite{HMS11} requires the kernel to have a Lyapunov function, and $d$-contractivity and $d$-smallness at a time $n_0>0$ to obtain Wasserstein contraction for $P^n$ for a \textit{fixed} $n\in \N$. In \cref{app:harris} we will show how conditions 1-2 for all $n\leq n_0$ in \cref{thm:our_harris} ensure the contraction for all large enough times.

\section{Main Results}\label{sec:results}

We endow the state space $\qsp$ with the distance 
\begin{equation}\label{eq:d_eps}
    d_\varepsilon(x, \tx) = 1\wedge \frac{\|x- \tx\|}{\varepsilon}, \quad x,\tx\in \qsp,
\end{equation}
dependent on an arbitrary parameter $\varepsilon\in (0,1)$ and define 
 \begin{equation}\label{eq:td}
     \td_\varepsilon (x, \tx) := \sqrt{d_\varepsilon(x, \tx)(1 + V(x)+ V(\tx))}
 \end{equation} 
 where $V$ will belong to either one of the following three class of functions:
 \begin{align}
     V(x) &= \| x\|^k, \hspace{-2cm}&&\hspace{-2cm} k\in \N, \notag\\
     V(x)&= \exp(v\|x\|),\hspace{-2cm}&&\hspace{-2cm} v>0 \label{eq:choiceV}\\
      V(x) &= \exp(v\|x\|^2), \hspace{-2cm}&&\hspace{-2cm}\text{with } v \text{ sufficiently small}.\notag
     \end{align}

The conditions to establish Wasserstein contraction in $\td_\varepsilon$ for both the mpCN and MTpCN kernels are detailed in the following theorem, proved in detail for the two algorithms respectively in \cref{sec:proof_mpc_finite} and \cref{sec:proof_mtm_finite}.

\begin{Theorem}\label{thm:3:mpcn1}
     Let $P_p$ be either the mpCN kernel \eqref{eq:2:mpcn:kernel} or the MTpCN kernel \eqref{eq:2:mtpcn:kernel} for a fixed  $\rho\in [0,1)$ and $p\geq 1$. Assume that the potential function $\Pot$ is globally bounded and globally Lipschitz with respect to the $\qsp$ norm $\|\cdot \|$ with constant $L_{\Pot}$. Then there exists $\varepsilon_* \in (0,1)$ so that, for any $\varepsilon\in (0,\varepsilon_*)$, there are $\lambda \in (0,1)$ and $n_1 \in \N$ such that, for every $\nu_1, \nu_2\in \cM_1(\qsp)$
    \begin{equation}
        \sup_{p \geq 1} W_{\td_\varepsilon}\left(\nu_1 P_p^n, \nu_2 P^n_p\right) \leq \lambda^{n} W_{\td_\varepsilon}\left( \nu_1, \nu_2\right) \quad \text{for all } n\geq n_1,
    \end{equation}
    with $\td_\varepsilon$ as in \eqref{eq:td} for any of the $V$ in \eqref{eq:choiceV}. Moreover, the parameters $\varepsilon_*, n_1$ and $\lambda$ are dependent on $\rho, \|\Pot\|_\infty, L_{\Pot}$, and, notably, bounded independently of the number of proposals $p\in \N$.
\end{Theorem}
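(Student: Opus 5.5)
We verify the three hypotheses of \cref{thm:our_harris} for $P_p$ with $d=d_\varepsilon$, a Lyapunov function $V$ from \eqref{eq:choiceV}, and some fixed $n_0$, keeping every constant independent of $p$. Invariance of $\mu$ under $P_p$ comes from reversibility, established in \cite{glatt2024parallel,glatt2025+}.

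\emph{Lyapunov step.} In both algorithms the next state is either $x_0$ or a point of the form $\rho^2x_0+\rho\rr\xi_0+\rr\xi_j$ (mpCN) or $\rho x_0+\rr\xi_j$ (MTpCN). Because $\Pot$ is bounded, the probability of leaving $x_0$ is bounded below uniformly in $p$ and in the cloud. For mpCN one has $1-\alpha_0\ge p e^{-\|\Pot\|_\infty}/(e^{\|\Pot\|_\infty}+pe^{-\|\Pot\|_\infty})\ge \tfrac12 e^{-2\|\Pot\|_\infty}=:\delta$, and an analogous bound holds for $\balpha$ in MTpCN. Each selected proposal is a weighted average of $p$ Gaussian points with weights comparable, within a factor $e^{2\|\Pot\|_\infty}$, to uniform weights. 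Hence $\E V(X_J)\le e^{2\|\Pot\|_\infty}\,\E V(\rho^2x_0+\dots)$, and the right-hand side is at most $\rho^{2k}\|x_0\|^k+C$ for polynomial $V$, with analogous bounds for the exponential choices; for $\exp(v\|x\|^2)$ this needs $v$ small so that Fernique applies. This gives $P_pV\le(1-\delta)V+\delta(\rho^{\cdot}+\eta)V+K$, and hence $l_V<1$ for large $\|x_0\|$. Boundedness on bounded sets then absorbs the remaining $x_0$ into $K_V$.

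\emph{Contraction step, for $\|x-\tx\|<\varepsilon$.} We use the coupling sketched in the introduction. The Gaussians $\xi_0,\dots,\xi_p$ are shared, so $\|X_j-\tX_j\|\le\rho^2\|x-\tx\|$, and the selection uses a common uniform $U$ in \eqref{eq:2:mpcn:chain}. Since $\Pot$ is Lipschitz, each weight satisfies $|\alpha_j-\tilde\alpha_j|\le C\alpha_j L_\Pot\|x-\tx\|$ with $C=C(\|\Pot\|_\infty)$. Because the intervals are nested, the probability that the selected indices differ is at most $\sum_j|\sum_{k\le j}(\alpha_k-\tilde\alpha_k)|$. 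This sum is not obviously $O(\|x-\tx\|)$ uniformly in $p$, so I would reorder the indices, or use a maximal coupling of the two categorical laws. The maximal coupling gives mismatch probability $\tfrac12\sum_j|\alpha_j-\tilde\alpha_j|\le C L_\Pot\|x-\tx\|$, uniformly in $p$; this is the main obstacle I expect, and the maximal coupling is what I would try first. On the matched event there are two cases: if both chains pick $j\ge1$ the new distance is $\rho^2\|x-\tx\|$, and if both pick $0$ it is unchanged, with probability at most $1-\delta$. On the mismatch event $d_\varepsilon\le1$. This yields
\[
\E d_\varepsilon \le \bigl[(1-\delta)+\delta\rho^2+ C L_\Pot\varepsilon\bigr]\,d_\varepsilon(x,\tx),
\]
which is less than $1$ once $\varepsilon<\varepsilon_*$. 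For MTpCN the same argument applies to $Y$, after which the accept step is coupled with a shared $\tU$. The difference $|\balpha-\tilde\balpha|$ is $O(L_\Pot\|x-\tx\|)$: the $Z$-clouds are synchronously coupled starting from the same index, and the ratio of sums is Lipschitz with constants depending only on $\|\Pot\|_\infty$. Iterating to $n\le n_0$ gives $c(n)$, with $c(n_0)<1$ already at $n_0=1$.

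\emph{Smallness step.} For $x,\tx\in S$, which is bounded, we use the same synchronous coupling. With probability at least $\delta^{n}$ both chains select nonzero indices at each of $n$ steps, and on that event the distance is at most $\rho^{2n}\cdot 2R$. Alternatively, since $\mu_0$-draws move both chains by a common shift, we can choose $n_0$ large so that $\rho^{2n_0}2R<\varepsilon/2$; a careful bound then gives mismatch probability strictly less than $1$. Together these give $W_{d_\varepsilon}\le 1-\tfrac12\delta^{n_0}\cdot(\text{match prob.})<1$ uniformly in $p$. With all constants depending only on $\rho,\|\Pot\|_\infty,L_\Pot$, \cref{thm:our_harris} delivers $\lambda,n_1$ independent of $p$.
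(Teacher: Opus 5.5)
Your strategy is the paper's. You share the Gaussian noise, maximally couple the two selection laws (exactly the paper's construction with $\har_j=\min(\alpha_j,\tilde\alpha_j)$ and the leftover intervals $J_k\cap\tJ_l$), get $p$-uniform constants from $\|\Pot\|_\infty$, use an iterated coupling for smallness, and feed everything into \cref{thm:our_harris}. Your one-step contraction estimate is correct. To use it, say explicitly that iterating the coupled step gives $c(n)\le\kappa$ for every $n$, since $W_{d_\varepsilon}(P_p(x',\cdot),P_p(y',\cdot))\le d_\varepsilon(x',y')$ for all pairs (trivially when $d_\varepsilon=1$). This matters because \cref{thm:our_harris} needs $c(n_0)<1$ at the same large $n_0$ used for smallness, not at $n_0=1$; the paper also leaves this implicit. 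However, two steps fail as written.

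\emph{Lyapunov step.} The bound $\E V(X_J)\le e^{2\|\Pot\|_\infty}\E V(\rho^2x_0+\cdots)$ puts the factor $e^{2\|\Pot\|_\infty}$ on the $V(x_0)$ part. The coefficient of $V(x_0)$ then becomes $(1-\delta)+e^{2\|\Pot\|_\infty}(1+\eta)\rho^{2k}$, which is not below $1$ in general. Restricting to large $\|x_0\|$ does not help, because this coefficient is multiplicative. The fix is to split pointwise first, $V(X_j)\le l_1V(x_0)+G(\xi_0,\xi_j)$. The coefficient of $V(x_0)$ is then $\E[\alpha_0+l_1(1-\alpha_0)]\le 1-(1-l_1)\delta$. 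Apply the weight comparison $\alpha_j\le e^{2\|\Pot\|_\infty}/(p+1)$ only to the $G$-term, after conditioning on the Gaussians. This gives the inequality you wrote, for all $x_0$.

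\emph{Smallness step.} The event you need is that both chains select the \emph{same} nonzero index at each step; on a mismatched pair $(X_k,\tX_l)$ with $k\neq l$ nothing contracts. Its conditional probability is $\sum_{j\ge1}\min(\alpha_j,\tilde\alpha_j)$. Your lower bound on $1-\alpha_0$ for each chain separately does not imply a bound on this, since the two chains could put their mass on different indices. It does follow from the two-sided bounds $e^{-2\|\Pot\|_\infty}/(p+1)\le\alpha_j,\tilde\alpha_j\le e^{2\|\Pot\|_\infty}/(p+1)$, which give $\sum_{j\ge1}\min(\alpha_j,\tilde\alpha_j)\ge\frac{p}{p+1}e^{-2\|\Pot\|_\infty}\ge\delta$. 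With $S\subset B(0,r_s)$ you then get $W_{d_\varepsilon}(P_p^n(x_0,\cdot),P_p^n(\tx_0,\cdot))\le 1-\delta^n(1-2r_s\rho^{2n}/\varepsilon)$. The ``alternative'' argument and the extra ``match prob.'' factor can be dropped.

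\emph{MTpCN.} The analogous smallness event is ``same index and both accepted'', with per-step probability at least $e^{-4\|\Pot\|_\infty}$. In the contraction step, the $Z$-clouds start from the same point only on the matched event. On the mismatch event you must bound $|\balpha-\tilde{\balpha}|\le 1$ and use $\E(1-s_p)=O(\|x_0-\tx_0\|)$, as the paper does.
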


As discussed previously, the assumptions on the function $\Pot$ can be relaxed to accommodate for unbounded functions. The price to pay will be the loss of uniform boundedness in the number of proposals, leading to a result increasingly worse with the number of the proposals. We present and prove this alternative result in \cref{app:unbound} for the mpCN algorithm.

For unbounded potentials, the Wasserstein contraction obtained for the multiproposal kernels does not, under the current approach, pass to the limit as the number of proposals increases. However, this limitation can be circumvented by analyzing the limiting kernel $P_\infty$ of mpCN directly, which allows one to establish contraction under weaker assumptions, requiring only Lipschitz continuity of the potential. This is formalized by the following theorem later proved in \cref{sec:proof_mpc_infinite}. 

\begin{Theorem}\label{thm:3:inftypcn}
     Let $P_\infty$ be the $\infty$-pCN kernel as in \eqref{eq:2:inftypCN} for a fixed  $\rho_1, \rho_2\in [0,1)$. Assume that the function $\Pot$ is globally Lipschitz with respect to the norm $\|\cdot \|$ with constant $L_{\Pot}$. Then there exists $\varepsilon^* \in (0,1)$ so that for any $\varepsilon\in (0,\varepsilon^*)$ there are $ \lambda \in (0,1)$ and $n_1 \in \N$ such that, for every $\nu_1, \nu_2\in \cM_1(\qsp)$
    \begin{equation}
        W_{\td_\varepsilon}\left(\nu_1 P_\infty^n, \nu_2 P^n_\infty\right) \leq \lambda^{n} W_{\td_\varepsilon}\left( \nu_1, \nu_2\right) \quad \text{for all } n\geq n_1,
    \end{equation}
    with $\td_\varepsilon$ as in \eqref{eq:td} for any of the functions $V$ in \eqref{eq:choiceV}. 
\end{Theorem}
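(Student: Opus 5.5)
We reduce the statement to \cref{thm:our_harris} applied to $P=P_\infty$ with $d=d_\varepsilon$, for $\varepsilon$ small. We therefore need three things: a Lyapunov function from \eqref{eq:choiceV}, $d_\varepsilon$-contraction of some iterate $P_\infty^{n_0}$ for pairs with $\|x-\tx\|<\varepsilon$, and $d_\varepsilon$-smallness of the sublevel set $S=\{V\le 4K_V\}$.

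\emph{Lyapunov step.} We write $Y\sim P_\infty(x,\cdot)$ as $Y\sim\baQ_1(Z,\cdot)$ with $Z=\rho_2x+\sqrt{1-\rho_2^2}\,\xi_0$. The density of $\baQ_1(z,\cdot)$ with respect to $Q_1(z,\cdot)$ is $e^{-\Pot(y)}/\int e^{-\Pot}dQ_1(z,\cdot)$. Since $\Pot$ is Lipschitz, $|\Pot(y)-\Pot(\rho_1 z)|\le L_\Pot\sqrt{1-\rho_1^2}\,\|\xi\|$ for $y=\rho_1z+\sqrt{1-\rho_1^2}\,\xi$. Hence this density is bounded by $\exp\bigl(L_\Pot\sqrt{1-\rho_1^2}(\|\xi\|+\E\|\xi'\|)\bigr)$ up to Jensen, and the bound is independent of $z$. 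Fernique's theorem then gives $\int\|y\|^k\baQ_1(z,dy)\le(\rho_1\|z\|+C)^k$ and the analogous exponential bounds, the Gaussian case requiring $v$ small. Composing with $Q_2$ yields $P_\infty V\le l_V V+K_V$, where the contraction factor comes from $\rho_1\rho_2<1$. Continuity of $V$ is clear.

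\emph{Contraction step (the main obstacle).} We couple the outer step synchronously, $Z=\rho_2x+\sqrt{1-\rho_2^2}\,\xi_0$ and $\tZ=\rho_2\tx+\sqrt{1-\rho_2^2}\,\xi_0$, so that $\|Z-\tZ\|=\rho_2\|x-\tx\|$. Then we must couple $\baQ_1(z,\cdot)$ and $\baQ_1(\tz,\cdot)$ so that $\E d_\varepsilon(Y,\tY)\le c\,d_\varepsilon(z,\tz)/\rho_2$ with $c<1$ overall. For this we use the accept–reject coupling \cref{lemma:AR_coupling}. First draw $\xi$ and set the synchronous candidate pair $y=\rho_1z+\sqrt{1-\rho_1^2}\,\xi$, $\ty=\rho_1\tz+\sqrt{1-\rho_1^2}\,\xi$. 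We take $Y=y\sim\baQ_1(z,\cdot)$ and accept $\ty$ as $\tY$ with probability $1\wedge\frac{d\baQ_1(\tz,\cdot)\circ\text{shift}}{d\baQ_1(z,\cdot)}$; otherwise $\tY$ is resampled from the residual measure. Written through the change of variables $\xi\mapsto\xi+\rho_1(z-\tz)/\sqrt{1-\rho_1^2}$, this Radon–Nikodym ratio involves the Cameron–Martin factor, which exists only if $z-\tz$ lies in the Cameron–Martin space. This need not hold, so the shift must be done in a finite-dimensional projection. Following \cite{hairer2014spectral}, we split $\qsp=\Pi_N\qsp\oplus(I-\Pi_N)\qsp$ and apply the shift only on the low modes, where it is a Girsanov-type shift of size $\lesssim \|\cC^{-1/2}\Pi_N\|\,\|z-\tz\|$. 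The high modes are coupled synchronously; there the distance contracts by $\rho_1$ and the density ratio is perturbed by at most $L_\Pot\|\cdot\|$. The resulting rejection probability is $\le C_N\|z-\tz\|$, with $C_N$ depending on $N$ and $L_\Pot$ (here $\Pot$ need not be bounded, since the ratio of normalizing constants is controlled by $\exp(L_\Pot\rho_1\|z-\tz\|)$). Consequently $\E d_\varepsilon(Y,\tY)\le \rho_1\rho_2\|x-\tx\|/\varepsilon+C_N\|x-\tx\|$. On the accepted event the low modes coincide and the high modes contract by $\rho_1$, while on rejection $d_\varepsilon\le1$. We choose $N$ first so that the high-mode part is fine, then $\varepsilon<\varepsilon^*$ small so that $\varepsilon C_N$ is small, which gives $c<1$ already for $n_0=1$. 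For $n\le n_0$ the first condition of \cref{thm:our_harris} then follows trivially.

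\emph{Smallness step.} For $x,\tx\in S$ we use the same coupling iterated $n$ steps: synchronous noise shrinks $\|x-\tx\|\le 2R_S$ by the factor $(\rho_1\rho_2)^n$ in the high modes. On the low modes we try to couple exactly with a probability bounded below, using a total-variation overlap of $\baQ_1(z,\cdot)$ and $\baQ_1(\tz,\cdot)$ in finite dimensions, which is strictly positive since the densities are mutually bounded on bounded sets. This gives $W_{d_\varepsilon}(P^{n_0}_\infty(x,\cdot),P^{n_0}_\infty(\tx,\cdot))\le s<1$ for $n_0$ large, and we re-derive contraction at that $n_0$ by iterating the one-step bound. \Cref{thm:our_harris} then yields the claimed contraction in $W_{\td_\varepsilon}$ for all $n\ge n_1$.
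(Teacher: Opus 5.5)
Your architecture is the paper's: \cref{thm:our_harris} fed by a Lyapunov bound, a one-step contraction from a synchronous $Q_2$ step followed by an accept--reject coupling of $\baQ_1(z,\cdot)$ and $\baQ_1(\tz,\cdot)$ via \cref{lemma:AR_coupling}, and smallness by iterating that coupling. Your Lyapunov step is fine.

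The contraction step diverges from the paper, and the divergence rests on a misconception: there is no Cameron--Martin obstruction. The translation $T_{\bz}(y)=y-\rho_1(z-\tz)$ satisfies $F_1(\tz,w)=T_{\bz}(F_1(z,w))$ with the \emph{same} noise $w$. Hence $T_{\bz}^*Q_1(z,\cdot)=Q_1(\tz,\cdot)$ exactly, and
\[
\frac{d\baQ_1(\tz,\cdot)}{dT_{\bz}^*\baQ_1(z,\cdot)}(y)=\frac{\cI_z}{\cI_{\tz}}\exp\bigl(-\Pot(y)+\Pot(T_{\bz}^{-1}(y))\bigr),
\]
in which no Gaussian density appears (\cref{lemma:eta_coupling}). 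The change of variables $\xi\mapsto\xi+\rho_1(z-\tz)/\sqrt{1-\rho_1^2}$ compares $Q_1(z,\cdot)$ and $Q_1(\tz,\cdot)$ at the same points. That is what you would need to make the two candidates \emph{coincide}, not to translate one onto the other. The paper therefore gets the pointwise bound $|\beta-1|\le 2L_\Pot\rho_1\|z-\tz\|$ and $\kappa=\rho_1\rho_2(1+2L_\Pot\varepsilon)$, with constants depending only on $\rho_1,\rho_2,L_\Pot$.

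Your low-mode/high-mode splitting is still a legitimate coupling. On $\Pi_N\qsp$ the Cameron--Martin factor is harmless, the rejection probability is $O(\|z-\tz\|)$ once you take moments of the linear exponent under the tilted laws, and the high modes contract by $\rho_1$. So your contraction bound holds. But the splitting buys nothing: any $N$ works, and $N=0$ is exactly the paper's coupling. Taking $N\ge1$ only makes $C_N$, and hence $\varepsilon^*$, depend on the spectrum of $\cC$, and trades a uniform bound on $\beta$ for a moment estimate.

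Your smallness step is the loosest part. A total-variation overlap of low-mode marginals is not by itself a coupling of the full measures $\baQ_1(z,\cdot)$ and $\baQ_1(\tz,\cdot)$. Also, ``mutually bounded on bounded sets'' does not give a per-step success probability that is uniform along the iterated chain, whose positions leave $S$. The clean fix is the paper's. Iterate the translation coupling and note that on the all-accept event $\|Z^{(k)}-\tZ^{(k)}\|\le 2r_s(\rho_1\rho_2)^k$. Then use the location-independent bound $\E\beta\ge\exp(-2L_\Pot\rho_1\|Z-\tZ\|)$ to bound $\bbP(\Lambda^{(n)})$ below. Within your own scheme the same repair works: after the first accepted step the low modes agree and stay equal under synchronous noise, so the Cameron--Martin factor disappears from then on.
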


This mechanism appears to be specific to the multiproposal setting: for the Multiple Try algorithm, an analogous extension to the infinite–proposal kernel with unbounded potentials is not captured by the present analysis, although we do not exclude that such a result may hold under a different approach. 

\begin{Theorem}\label{thm:3:inftyMTpcn}
     Let $P_\infty$ be the $\infty$-MTpcN kernel as in \eqref{eq:2:mtminfty:kernel} for a fixed  $\rho\in [0,1)$. Assume that the function $\Pot$ is globally bounded and globally Lipschitz with respect to the norm $\|\cdot \|$ with constant $L_{\Pot}$. Then there exists $\varepsilon^* \in (0,1)$ so that for any $\varepsilon\in (0,\varepsilon^*)$ there are $\lambda \in (0,1)$ and $n_1 \in \N$ such that, for every $\nu_1, \nu_2\in \cM_1(\qsp)$
    \begin{equation}
        W_{\td_\varepsilon}\left(\nu_1 P_\infty^n, \nu_2 P^n_\infty\right) \leq \lambda^{n} W_{\td_\varepsilon}\left( \nu_1, \nu_2\right) \quad \text{for all } n\geq n_1,
    \end{equation}
    with $\td_\varepsilon$ as in \eqref{eq:td} for any of the functions $V$ in \eqref{eq:choiceV}. 
\end{Theorem}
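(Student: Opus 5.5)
We will deduce \cref{thm:3:inftyMTpcn} from the weak Harris theorem, \cref{thm:our_harris}, applied to $P_\infty$ in \eqref{eq:2:mtminfty:kernel}. We must verify three things: a Lyapunov bound for each $V$ in \eqref{eq:choiceV}, $d_\varepsilon$-contraction of $P_\infty^n$ for $d_\varepsilon(x,\tx)<1$, and $d_\varepsilon$-smallness of the sublevel set $S$.

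\textbf{Preliminary bounds.} Boundedness of $\Pot$ gives
\[
e^{-2\|\Pot\|_\infty}\le \frac{d\baQ(x,\cdot)}{dQ(x,\cdot)}\le e^{2\|\Pot\|_\infty},
\qquad
\alpha(x,y)\ge e^{-2\|\Pot\|_\infty}=:\alpha_*>0 .
\]

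\textbf{Lyapunov bound.} We have $P_\infty V(x)\le (1-\alpha_*)\,V(x)+e^{2\|\Pot\|_\infty}\,QV(x)$. The Gaussian estimate $QV(x)\le \rho^{k}V(x)+K$ holds (or its exponential analogues, valid for $v$ small in the case $\exp(v\|x\|^2)$). This is not immediately contractive, since $e^{2\|\Pot\|_\infty}\rho^k$ may exceed $\alpha_*$. I would therefore argue more carefully. Split $P_\infty V(x)=\int \alpha V\,d\baQ+V(x)\int(1-\alpha)\,d\baQ$. The first term is at most $e^{2\|\Pot\|_\infty}QV(x)$. The rejection mass is at most $1-\alpha_*$. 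Then iterate $n_0$ steps so that the coefficient of $V(x)$ is below $1$. Alternatively, use the pCN argument of \cite{hairer2014spectral}: for $\|x\|$ large, the proposal lands in $\{V\le \rho' V(x)\}$ with probability near one, and it is accepted with probability at least $\alpha_*$. Either route yields $P_\infty V\le l_V V+K_V$, with constants depending only on $\rho$ and $\|\Pot\|_\infty$.

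\textbf{Contraction (main obstacle).} Fix $x,\tx$ with $\|x-\tx\|<\varepsilon$. The coupling has two stages. First, couple $\baQ(x,\cdot)$ and $\baQ(\tx,\cdot)$ using the accept--reject construction of \cref{lemma:AR_coupling}, which was developed for the $\infty$-pCN case. Concretely, draw $\xi\sim\mu_0$ synchronously and set $Y=\rho x+\rr\xi$ and $\tY=\rho\tx+\rr\xi$. Keep this synchronous pair with probability comparable to $1\wedge$ the ratio of Radon--Nikodym densities. The Lipschitz bound on $\Pot$, together with boundedness of the normalizing constants, makes this ratio $1-O(L_\Pot\|x-\tx\|)$ up to $\Pot$-dependent constants. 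Hence the synchronous pair is retained except on an event of probability $\le C\|x-\tx\|$; on that event we use any coupling. Second, couple the outer accept--reject step with a shared uniform $U$, exactly as in the pCN argument of \cite{hairer2014spectral}. The function $y\mapsto\int e^{-\Pot}\,dQ(y,\cdot)$ is Lipschitz with constant $\rho L_\Pot e^{\|\Pot\|_\infty}$ and bounded below by $e^{-\|\Pot\|_\infty}$. Consequently $|\alpha(x,Y)-\alpha(\tx,\tY)|\le C\|x-\tx\|$, using $\|Y-\tY\|=\rho\|x-\tx\|$. Collecting the cases:
\begin{itemize}
\item both chains accept (with the synchronous proposals) or both reject: the distance becomes $\rho\|x-\tx\|$ or stays $\|x-\tx\|$;
\item mismatched accept/reject or a failed first-stage coupling: probability $\le C\|x-\tx\|$, and $d_\varepsilon\le 1$.
\end{itemize}
This gives
\[
\mathbb E\, d_\varepsilon \le \bigl(1-\alpha_*(1-\rho)\bigr)\,\frac{\|x-\tx\|}{\varepsilon}+C\|x-\tx\| = \Bigl(1-\alpha_*(1-\rho)+C\varepsilon\Bigr)\, d_\varepsilon(x,\tx).
\]
The right-hand factor is below $1$ once $\varepsilon<\varepsilon^*:=\alpha_*(1-\rho)/(2C)$, so $n_0=1$ suffices here. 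Checking that the constant $C$ does not involve $\varepsilon$ is the delicate point of this step.

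\textbf{Smallness and conclusion.} For $x,\tx\in S$, couple synchronously for $n$ steps. With probability at least $\alpha_*^{\,n}$ both chains accept in every step, which gives $\|X_n-\tX_n\|\le\rho^n\|x-\tx\|\le 2\rho^n R_S$. Choose $n_0$ with $2\rho^{n_0}R_S<\varepsilon$. Then $W_{d_\varepsilon}\le 1-\alpha_*^{n_0}(1-2\rho^{n_0}R_S/\varepsilon)<1$, and for $n\le n_0$ we have trivially $s(n)\le1$ and $c(n)\le c^n$. \cref{thm:our_harris} then yields the claim.
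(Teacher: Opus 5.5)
Your architecture is the paper's. You verify the hypotheses of \cref{thm:our_harris} and couple $\baQ(x,\cdot)$, $\baQ(\tx,\cdot)$ through \cref{lemma:AR_coupling} with the shift $T_{\bx}(y)=y-\rho(x-\tx)$. You then couple the outer Metropolis step with a shared uniform. Your contraction factor $\bigl(1-\alpha_*(1-\rho)+C\varepsilon\bigr)$, with $C$ independent of $\varepsilon$, is essentially \cref{prop:mtminfty:contr}, up to one slip. Drawing $\xi\sim\mu_0$ and setting $Y=\rho x+\rr\xi$ samples $Q(x,\cdot)$, not $\baQ(x,\cdot)$. The correct construction is: draw $Y\sim\baQ(x,\cdot)$, offer $T_{\bx}(Y)$ to the second chain, and keep it with probability $\beta(\hat Y,T_{\bx}(Y))$, where $\hat Y\sim\baQ(\tx,\cdot)$ is independent. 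This gives $1-\E\beta\le 2L_\Pot\rho\|x-\tx\|$.

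The genuine gap is in the smallness step. In general there is no synchronous coupling of $\baQ(x,\cdot)$ and $\baQ(\tx,\cdot)$. They are not translates of each other, so a pair with $\tY=T_{\bx}(Y)$ almost surely has the wrong second marginal. Your bound $\alpha_*^{n}$ therefore counts only the outer acceptances. You must run the two-stage coupling at every step. Then lower-bound the probability that at each step the shifted candidate is retained \emph{and} both chains accept. For $x,\tx\in S$ the points are not close, so retention is no longer $1-O(\varepsilon)$, and boundedness is needed again: $\beta\ge e^{-4\|\Pot\|_\infty}$. Each step then succeeds with conditional probability at least $\alpha_*e^{-4\|\Pot\|_\infty}$. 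This gives $s(n)=1-e^{-6n\|\Pot\|_\infty}\bigl(1-2R_S\rho^n/\varepsilon\bigr)$, as in \cref{prop:mtminftypcn:small}.

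Two further corrections. First, your initial Lyapunov route fails. Iterating $P_\infty V\le aV+b$ only yields $a^{n_0}$, which is useless when $a=1-\alpha_*+e^{2\|\Pot\|_\infty}\rho^k\ge 1$. The loss comes from bounding $\int\alpha V\,d\baQ$ by $e^{2\|\Pot\|_\infty}QV$, which discards $\alpha$. Instead, put the pointwise bound $V(\rho x+\rr w)\le l_1V(x)+G(w)$, $l_1<1$, inside the $\alpha$-weighted integral. This gives $P_\infty V(x)\le V(x)\bigl[1-(1-l_1)\int\alpha(x,y)\,\baQ(x,dy)\bigr]+e^{2\|\Pot\|_\infty}\int G\,d\mu_0$. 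Then $\int\alpha\,d\baQ\ge\alpha_*$ closes the argument; this is the paper's proof. Your \cite{hairer2014spectral}-type alternative also works, but it needs exactly this control on $\{V(y)>\rho'V(x)\}$.

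Second, $c(n)\le c^n$ is not justified, because after one step the coupled chains can reach $d_\varepsilon=1$ with probability of order $\|x-\tx\|$. What holds, and suffices for \cref{thm:our_harris} at the smallness time $n_0$, is $c(n)\le c$. Since $d_\varepsilon\le 1$ and $c<1$, we have $W_{d_\varepsilon}(P_\infty(u,\cdot),P_\infty(v,\cdot))\le d_\varepsilon(u,v)$ for all $u,v$. Gluing couplings then propagates the one-step bound to every $n$.
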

Refer to \cref{sec:proof_mtm_infinite} for a full proof of this result.

As previously mentioned, we invoke \cref{thm:our_harris} to show all of the results above. In particular, in order to verify the $d$-smallness and $d$-contraction properties, one typically constructs a suitable coupling of the kernels. This is usually the most delicate part of the proof for any Markov kernel and it is explicitly constructed in \cref{prop:contr:fin:p:1} and \cref{prop:mtm:contr:fin:p:1} for mpCN and MTpCN, respectively.

For the $\infty$-pCN algorithm, the accept--reject mechanism is absent and a different strategy is devised. Specifically, we developed a coupling of the internal kernel $\bar{Q}_1$ \eqref{eq:2:baQ} which reintroduces an accept--reject step. More importantly, this construction turns out to be applicable to any pair of measures linked via an invertible mapping in the following sense:

\begin{Proposition}
    \label{lemma:AR_coupling}
    Let $\mu_1,\, \mu_2$ be two probability measures on a Polish space $\qsp$ and $T:\qsp \to \qsp$ an invertible map such that $T^*\mu_1$ is equivalent to $\mu_2$. Define the function $\beta: \qsp\times \qsp \to [0,1]$ as
    \begin{equation}
        \label{eq:beta}
        \beta(u,v) := 1 \wedge \frac{dT^* \mu_1}{d\mu_2}(u)\frac{d\mu_2}{dT^*\mu_1}(v).
    \end{equation}
    Then the following probability measure on the product space $\qsp\times \qsp$ 
    \begin{equation}
        \label{eq:general-coupling}
        \pi_\beta(du, dv) = \int [ \beta(w, T(u))\,\delta_{T(u)}(dv) + (1 - \beta(w, T(u)))\,\delta_{w}(dv) ] \, \mu_1(du)\mu_2(dw)
    \end{equation}
    is a coupling of $\mu_1$ and $\mu_2$. 
\end{Proposition}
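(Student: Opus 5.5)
We check the two marginals of $\pi_\beta$ separately. The first marginal is immediate: integrating out $v$, the bracket in \eqref{eq:general-coupling} is a probability measure in $v$ for each $(u,w)$. So $\int_{v}\pi_\beta(du,dv)=\int \mu_2(dw)\,\mu_1(du)=\mu_1(du)$. All the content is therefore in the second marginal.

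For the second marginal we pass to the image variable. Set $\nu := T^*\mu_1$ and $g := d\nu/d\mu_2$, which is strictly positive and finite $\mu_2$-a.e. since $\nu\sim\mu_2$; then $d\mu_2/d\nu = 1/g$. Hence $\beta(w,x) = 1\wedge g(w)/g(x)$. Writing $x=T(u)$, so that $x\sim\nu$ when $u\sim\mu_1$, the second marginal becomes
\begin{equation*}
\int \beta(w,x)\,\delta_x(dv)\,\nu(dx)\mu_2(dw) + \int (1-\beta(w,x))\,\delta_w(dv)\,\nu(dx)\mu_2(dw).
\end{equation*}
This has exactly the structure of a Metropolis--Hastings step. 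The chain currently sits at $w\sim\mu_2$, and it proposes $x$ from the \emph{independent} proposal $\nu$. It accepts with the independence-sampler ratio $1\wedge \frac{g(w)}{g(x)} = 1\wedge\frac{\mu_2(dx)\nu(dw)}{\mu_2(dw)\nu(dx)}$.

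We then apply the standard detailed-balance argument, done by hand. Fix a bounded measurable test function $f$. The second marginal applied to $f$ equals
\begin{equation*}
\int f(w)\,\mu_2(dw) + \iint \beta(w,x)\,\bigl(f(x)-f(w)\bigr)\,\nu(dx)\mu_2(dw),
\end{equation*}
so it suffices to show the double integral vanishes. Rewrite both measures against $\mu_2$, using $\nu(dx)=g(x)\mu_2(dx)$. The integrand becomes $\bigl(1\wedge \frac{g(w)}{g(x)}\bigr)g(x) = g(x)\wedge g(w) =: m(x,w)$, which is symmetric in $(x,w)$. Hence
\begin{equation*}
\iint m(x,w)\,f(x)\,\mu_2(dx)\mu_2(dw) = \iint m(x,w)\,f(w)\,\mu_2(dx)\mu_2(dw)
\end{equation*}
by swapping variables. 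These integrals are finite because $m\le g$ and $\int g\,d\mu_2 = 1$, so Fubini applies.

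The only delicate points are measure-theoretic. First, $\beta$ must be well defined, which needs the equivalence assumption: on the null set where $g\in\{0,\infty\}$ we set $\beta$ arbitrarily, and this does not affect the integrals. Second, measurability of $(u,w)\mapsto\beta(w,T(u))$ requires $T$ to be measurable, which we take as implicit in the statement. Invertibility of $T$ is not actually used beyond making $T^*\mu_1$ meaningful. We expect the main (mild) obstacle to be keeping the change of variables $x=T(u)$ rigorous, i.e.\ writing $\int h(T(u))\mu_1(du)=\int h\,d\nu$ for the relevant integrands. After that, the symmetry of $g(x)\wedge g(w)$ closes the argument.
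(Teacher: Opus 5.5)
Your proof is correct and follows essentially the same route as the paper: the first marginal is immediate, and the second reduces, after the change of variables to $T^*\mu_1$, to a detailed-balance identity obtained by rewriting the product measure through the Radon--Nikodym derivatives and swapping the two variables. Writing everything against $\mu_2\otimes\mu_2$ with the symmetric weight $g(x)\wedge g(w)$ is just a slightly cleaner packaging of the paper's identity $\beta(w,u)\,T^*\mu_1(du)\mu_2(dw)=\beta(u,w)\,\mu_2(du)\,T^*\mu_1(dw)$. Your side remarks are also accurate: invertibility of $T$ is never used, and the null sets where $g\in\{0,\infty\}$ are harmless.
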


\begin{proof}
    We must verify that the marginals of $\pi_\beta$ coincide with $\mu_1$ and $\mu_2$. Denoting by $\Pi_1,\Pi_2: \qsp^2\to \qsp$ the projection mappings onto the first and second components, respectively, this means verifying that
    \[
        \Pi_1\pi_\beta(du) = \mu_1(du)\quad \mbox{and} \quad \Pi_2 \pi_\beta(dv) = \mu_2(dv).
    \]
     Fix any bounded and measurable function $\varphi: \qsp \to \R$. Regarding the first identity, we have
    \begin{align*}
        &\int \varphi(u) \,\Pi_1\pi_\beta(du)  
        = \iint \varphi(u) \, \pi_\beta(du, dv) \\
        &\qquad\qquad= \iiint \varphi (u) \beta(w, T(u))\,\delta_{T(u)}(dv) \mu_1(du)\mu_2(dw)
        + \iiint \varphi (u) (1 - \beta(w, T(u)))\,\delta_{w}(dv)  \mu_1(du)\mu_2(dw)\\
        &\qquad\qquad= \int \varphi (u) \int [\beta(w, T(u)) + (1 - \beta(w, T(u))) ] \mu_2(dw) \, \mu_1(du) = \int \varphi(u)\mu_1(du).
    \end{align*}
    For the second marginal, we have
    \begin{align*}
         &\int \varphi(v) \,\Pi_2\pi_\beta(dv) 
         = \iint \varphi(v) \, \pi_\beta (du, dv) \\
         &\qquad= \iiint \varphi (v) \beta(w, T(u))\, \delta_{T(u)}(dv) \mu_1(du)\mu_2(dw)
         + \iiint \varphi(v) (1 - \beta(w, T(u)))\delta_{w}(dv)\, \mu_1(du)\mu_2(dw)\\
        &\qquad= \iint \varphi (T(u)) \beta(w, T(u))  \mu_1(du)\mu_2(dw) + \iint \varphi(w) (1 - \beta(w, T(u)))\,\mu_1(du)\mu_2(dw).
    \end{align*}
    Then we are left to show that 
    \begin{equation*}
        \iint \varphi (T(u)) \beta(w, T(u))  \mu_1(du)\mu_2(dw) = \iint  \varphi(w) \beta(w, T(u))\, \mu_1(du)\mu_2(dw)
    \end{equation*}
    or, equivalently, 
     \begin{equation*}
        \iint \varphi (u) \beta(w, u) \, T^*\mu_1(du)\mu_2(dw) = \iint  \varphi(w) \beta(w, u)\, T^*\mu_1(du) \mu_2(dw).
    \end{equation*}
    By the definition of $\beta$ in \eqref{eq:beta}, it follows that
    \begin{align*}
         \iint \varphi (u) \beta(w, u) &\, T^*\mu_1(du)\mu_2(dw) = \iint \varphi (u) \left(1 \wedge \frac{dT^* \mu_1}{d\mu_2}(w)\frac{d\mu_2}{dT^*\mu_1}(u) \right)\, T^*\mu_1(du)\mu_2(dw)\\
         &=  \iint \varphi (u) \left(1 \wedge \frac{dT^* \mu_1}{d\mu_2}(w)\frac{d\mu_2}{dT^*\mu_1}(u) \right)\, \frac{d(T^*\mu_1)}{d\mu_2}(u)\mu_2(du) \frac{d\mu_2}{d(T^*\mu_1)}(w)(T^*\mu_1)(dw)\\
         &= \iint \varphi (u) \left(\frac{d(T^*\mu_1)}{d\mu_2}(u) \frac{d\mu_2}{d(T^*\mu_1)}(w) \wedge  1 \right)\, \mu_2(du) (T^*\mu_1)(dw)\\
          &= \iint \varphi (u) \beta(u,w ) \, \mu_2(du) (T^*\mu_1)(dw),
    \end{align*}
    as desired.
\end{proof}

\begin{Remark}\label{rem:couplingRV}
    The explicit expression for the coupling in \eqref{eq:general-coupling} can also be written in terms of random variables as follows. Given $X\sim \mu_1$ and an independent $Z\sim \mu_2$, define
    \begin{equation}\label{eq:couplingRV}
        Y = \mathbbm{1}_{U\leq \beta(Z, T(X))} T(X) + \mathbbm{1}_{U > \beta(Z, T(X))} Z,
    \end{equation}
    where $U\sim \cU(0,1)$. Then $(X,Y)\sim \pi_\beta$.
\end{Remark}

Looking back at \cref{thm:intro:spectralgap}, the Wasserstein contraction provided by \cref{thm:3:mpcn1} ensures the convergence result in item 1. As a consequence, the SLLN and CLT for $\Lip(\td)$ observables and any deterministic initial condition as described in item 3 of \cref{thm:intro:spectralgap} also holds (see \cref{app:harris} for the complete statements and e.g.~\cite[Appendix~A]{glatt2021mixing} for the proof details). 

We observe that $\Lip(\td)$, the space of Lipschitz continuous functions with respect to $\td$, is contained in $L^2_\mu$. Indeed, if $f\in \Lip(\td)$ then 
\begin{equation*}
    |f(x)|\leq |f(0)| +  \|f\|_{\td} \, \td(x, 0) \leq |f(0)| + \|f\|_{\td} \sqrt{1 + V(x) + V(0)} \quad \text{for all }x \in \qsp,
\end{equation*}
and, consequently,
\begin{equation*}
    \|f\|_2^2\leq 2|f(0)|^2 + 2 \|f\|_{\td}^2 \left( 1 + V(0) + \int V(x) \mu(dx) \right).
\end{equation*}
The integrability of Lyapunov functions with respect to the invariant measure $\mu$ (see e.g. \cite{butkovsky2014}) then ensures that $f\in L^2_\mu$ as desired. 

The following result, derived in \cite[Section~2.2.2]{hairer2014spectral}, connects the Wasserstein contraction obtained in \cref{thm:our_harris} with the $L^2_\mu$-spectral gap. 
\begin{Theorem}\label{thm:L2gap}
    Let $P$ be a Markov operator with invariant probability measure $\mu$. Assume $P$ is $\mu$-reversible, namely
    \[ P(u, d \tu)\mu (d u) = P(\tu, du) \mu(d\tu).
    \]
    Suppose \cref{thm:our_harris} is satisfied with $\td$ such that $\Lip(\td)\cap L^\infty_\mu$ is a dense subset of $L^2_\mu$. Then 
    \begin{equation}\label{eq:specgapL2}
        \|P^n f - \mu(f)\|_2 \leq \lambda^n\|f - \mu(f)\|_2 \quad \text{ for all } n\in \N,
    \end{equation}
    for all $f\in L^2_\mu$.
\end{Theorem}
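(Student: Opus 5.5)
The plan is to follow the argument of \cite[Section~2.2.2]{hairer2014spectral}: prove the $L^2_\mu$ bound first for a dense class of nice observables, and then extend it to all of $L^2_\mu$.

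\emph{Step 1 (reduction).} Since $P$ is self-adjoint on $L^2_\mu$ by reversibility, $P^n$ restricted to the mean-zero subspace $L^2_{\mu,0}$ is self-adjoint with norm $\|P\|^n$. So it suffices to show that $r:=\|P|_{L^2_{\mu,0}}\|\le\lambda$. Indeed, for self-adjoint operators the norm equals the spectral radius, and
\[
r=\lim_{n\to\infty}\|P^n|_{L^2_{\mu,0}}\|^{1/n}.
\]
By the spectral theorem, for any fixed $f$ with spectral measure $\nu_f$ on $[-1,1]$,
\[
\|P^nf\|_2^2=\int t^{2n}\,\nu_f(dt),
\]
so $\|P^nf\|_2^{1/n}\to\sup\{|t|:t\in\operatorname{supp}\nu_f\}$. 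Moreover, the union of these supports over $f$ in a dense set $D$ is dense in the spectrum. Hence it is enough to prove
\[
\limsup_{n\to\infty}\|P^nf-\mu(f)\|_2^{1/n}\le\lambda \quad\text{for all } f\in D:=\Lip(\td)\cap L^\infty_\mu .
\]
To handle the density argument carefully, I would use the spectral projection $E$ onto $\{|t|>\lambda'\}$ with $\lambda'>\lambda$. If $E\neq0$, density of $D$ gives some $f\in D$ with $Ef\neq 0$. Then $\|P^nf\|_2\ge \lambda'^n\|Ef\|_2$, which contradicts the estimate above.

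\emph{Step 2 (exponential decay for nice $f$).} Fix $f\in D$ with $\mu(f)=0$. By \eqref{eq:2:harris:exp1},
\[
|P^nf(x)-P^nf(y)|\le\lambda^n\|f\|_{\td}\,\td(x,y)\quad\text{for } n\ge n_1.
\]
By invariance, $P^nf(x)=\int (P^nf(x)-P^nf(y))\,\mu(dy)$, and therefore
\[
|P^nf(x)|\le\lambda^n\|f\|_{\td}\int\td(x,y)\,\mu(dy)\le \lambda^n\|f\|_{\td}\sqrt{1+V(x)+\mu(V)}.
\]
Squaring and integrating gives
\[
\|P^nf\|_2^2\le \lambda^{2n}\|f\|_{\td}^2\bigl(1+2\mu(V)\bigr),
\]
which is finite because Lyapunov functions are $\mu$-integrable (as noted above). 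Taking $n$-th roots gives $\limsup\|P^nf\|_2^{1/n}\le\lambda$. Thus $r\le\lambda$, and then $\|P^nf-\mu(f)\|_2\le r^n\|f-\mu(f)\|_2\le\lambda^n\|f-\mu(f)\|_2$ for every $n$. This removes the restriction $n\ge n_1$ and the constant $C$.

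\emph{Main obstacle.} The delicate point is the passage from ``a decay rate with a $V$-dependent constant, on a dense class'' to ``an operator norm bound with constant one, for all $n$''. That passage uses self-adjointness in an essential way, through norm $=$ spectral radius and the spectral-projection argument. I would check that the projection argument is airtight: $\nu_f$ must charge $\{|t|>\lambda'\}$ whenever $Ef\ne0$, and $D$ must be dense in $L^2_{\mu,0}$. The latter holds because subtracting the constant $\mu(f)$ preserves membership in $D$. L^\infty-boundedness of $f$ is not actually needed in Step 2; only $f\in\Lip(\td)\subset L^2_\mu$ is used.
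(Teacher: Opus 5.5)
Your proof is correct, and it takes a genuinely different route from the paper's in both of its two main steps.

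\textbf{Decay on the dense class.} The paper already uses reversibility here. For $f\ge 0$ in $\Lip(\td)\cap L^\infty_\mu$ it writes
\[
\|P^nf-\mu(f)\|_2^2=\langle f,P^{2n}f\rangle-\mu(f)^2=\mu(f)\,\bigl[(\nu_fP^{2n})(f)-\mu(f)\bigr],\qquad \nu_f=f\mu/\mu(f).
\]
It bounds this by $\mu(f)\|f\|_{\td}\lambda^{2n}W_{\td}(\nu_f,\mu)$, using the Wasserstein contraction between general initial measures. Signed $f$ is then handled through $f^\pm$. Positivity is needed so that $\nu_f$ is a probability measure, and boundedness so that $W_{\td}(\nu_f,\mu)$ is finite. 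Your pointwise argument uses only the $\Lip(\td)$ contraction \eqref{eq:2:harris:exp1} (equivalently, the Wasserstein contraction from Dirac masses), together with invariance and $\mu(V)<\infty$. It is shorter and needs neither reversibility, nor the $f^\pm$ split, nor $f\in L^\infty_\mu$. It also yields the explicit constant $\|f\|_{\td}^2(1+2\mu(V))$.

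\textbf{Upgrade to constant one and all $n$.} The paper works vector by vector. In the multiplication-operator form of the spectral theorem it applies Jensen to get
\[
\|P^nf_0\|_2^2\le\|f_0\|_2^{2k/(n+k)}\|P^{n+k}f_0\|_2^{2n/(n+k)},
\]
lets $k\to\infty$, and then extends from $\Lip(\td)\cap L^\infty_\mu$ to $L^2_\mu$ by density in a separate step. Your spectral-projection argument instead shows directly that the spectrum of $P$ on $L^2_{\mu,0}$ lies in $[-\lambda,\lambda]$. Density is used only once, inside the contradiction, and the operator-norm bound on all of $L^2_\mu$ follows at once. The paper's interpolation inequality is simply the per-vector form of your statement: the log-convexity of $n\mapsto\int t^{2n}\,\nu_f(dt)$. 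Both routes use self-adjointness in the same essential way.

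One small slip: the norm of $P^n$ on $L^2_{\mu,0}$ is $\|P|_{L^2_{\mu,0}}\|^n$, not $\|P\|^n$.
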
 

Note that this result is stronger than \eqref{eq:2:harris:exp1} in \cref{thm:our_harris} not only for the space of functions to which it applies, but also because it is valid for \textit{all} $n\in \N$, rather than only for a large enough number of iterations. The elegant argument behind this implication goes back to \cite{hairer2014spectral}, where it is attributed to a private communication and presented in condensed form. Because several of its measure-theoretic steps are left implicit, and because this passage from Wasserstein contraction to an $L^2_\mu$ spectral gap carries important consequences, we provide a complete and self-contained proof in \cref{app:L2}. 

Ultimately, we want to ensure the convergence of the Monte Carlo error \eqref{eq:intro:error} for a desirable class of observables. From the $L^2_\mu$ spectral gap, we can derive the strong Law of Large Numbers and central limit theorem when the chain starts in stationarity, namely accounting for appropriate burn-in. 
We provide a full statement and proof of the SLLN in \cref{thm:SLLN:L2} for completeness, and in \cref{thm:CLT:L2} we recall the precise statement of the CLT as derived in \cite{Kipnis86,latuszynski2013clts} to ease reference.

Furthermore, more practically useful information on tail behavior can be obtained from non-asymptotic results, such as concentration inequalities. A recent work \cite{Hoeffding} provides conditions under which \textit{Hoeffding's inequality} holds for Markov chains on general, potentially infinite-dimensional state spaces such as the one considered here.

\begin{Theorem}[Hoeffding's inequality  \cite{Hoeffding}]
    Let $P$ be a Markov operator for which \cref{thm:L2gap} holds.
    Then, for any $m\in \N$ and for any $t\in \R$, uniformly for all bounded functions $f_k: \qsp \to [a_k, b_k]\subset \R$, $k = 1, \ldots, m$
    \begin{equation*}
        \E \exp\left[ t \sum_{k = 1}^{m} \left( f_k(X_{k}) - \mu(f_k)\right)\right] \leq \exp\left( \frac{1- \lambda}{1 + \lambda}\frac{t^2}{2}\sum_{k = 1}^{m} \frac{(b_k - a_k)^2}{4}\right) .
    \end{equation*}
     It follows that for $\varepsilon>0$
    \begin{equation*}
        \bbP \left(\sum_{k = 1}^{m} \left( f_k(X_{k}) -  \mu(f_k)\right) > \varepsilon\right) \leq \exp\left(- \frac{1+ \lambda}{1 - \lambda}\frac{2\varepsilon^2}{\sum_{k = 1}^{m} (b_k - a_k)^2} \right).
    \end{equation*}
\end{Theorem}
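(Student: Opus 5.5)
The plan is to establish the moment generating function bound first and then obtain the tail bound from it by a Chernoff argument. The two displays are paired consistently: the MGF variance factor $c:=\frac{1-\lambda}{1+\lambda}$ turns into the tail factor $1/c=\frac{1+\lambda}{1-\lambda}$. Indeed, Markov's inequality gives, for $t>0$ and $S=\sum_k (b_k-a_k)^2$,
\begin{equation*}
\bbP\Big(\sum_{k=1}^m (f_k(X_k)-\mu(f_k))>\varepsilon\Big)\le \exp\Big(-t\varepsilon+\frac{c\,t^2 S}{8}\Big),
\end{equation*}
and the choice $t=4\varepsilon/(cS)$ yields $\exp(-2\varepsilon^2/(cS))$, which is exactly the second display. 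So all the work is in the first display.

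For the MGF bound I would follow the operator-theoretic route of \cite{Hoeffding}. By \cref{thm:L2gap}, the operator $P$ is $\mu$-reversible and satisfies $\|P-\Pi\|_{L^2_\mu\to L^2_\mu}\le\lambda$, where $\Pi f=\mu(f)$. The first step is to write $P=(1-\lambda)\Pi+\lambda R$ with $R:=(P-(1-\lambda)\Pi)/\lambda$. Since $P$ is self-adjoint with $\Pi P=P\Pi=\Pi$ and $\|P-\Pi\|\le\lambda$, the operator $R$ satisfies $\|R\|_{L^2_\mu\to L^2_\mu}\le 1$.

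The second step is to set $E_k$ to be multiplication by $e^{t(f_k-\mu(f_k))}$ and to write
\begin{equation*}
\E \exp\Big[t\sum_{k=1}^m (f_k(X_k)-\mu(f_k))\Big]=\langle \mathbf 1, E_1PE_2P\cdots PE_m\mathbf 1\rangle_\mu,
\end{equation*}
using stationarity $X_0\sim\mu$ (with the obvious index shift). The third step is to expand each $P$ into its $\Pi$ part and its $R$ part and to bound the resulting terms blockwise. Each $\Pi$ factorises the product. Each $R$ block is bounded in $L^2_\mu$ by a product of $\|e^{tg_k/2}\|$-type quantities, and these are controlled by the classical Hoeffding lemma $\mu(e^{t(f-\mu f)})\le e^{t^2(b-a)^2/8}$ applied to each $f_k$.

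The main obstacle is the combinatorial resummation over patterns of $\Pi/R$ choices, which must produce the factor multiplying $\frac{t^2}{2}\sum_k\frac{(b_k-a_k)^2}{4}$. Following \cite{Hoeffding}, I would first try a two-state comparison. The weight $(1-\lambda)$ or $\lambda$ attached to each link means the sum is dominated by the MGF of a two-state chain that "resets" with probability $1-\lambda$. The effective variance of that chain is computed from the geometric series $1+2\sum_{j\ge1}\lambda^j$.

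At this point I expect a genuine difficulty with the orientation of the constant. The geometric series equals $\frac{1+\lambda}{1-\lambda}$, not $\frac{1-\lambda}{1+\lambda}$. A sanity check confirms that the orientation as printed cannot hold in general. Take $\lambda\to1$ with an essentially frozen chain, so that $\sum_k f_k(X_k)\approx m f(X_1)$. Its MGF grows like $e^{t^2m^2(b-a)^2/8}$, whereas the printed right-hand side would tend to $1$.

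I would therefore carry out the argument to obtain the MGF bound with factor $\frac{1+\lambda}{1-\lambda}$, and correspondingly the tail bound with factor $\frac{1-\lambda}{1+\lambda}$. This is the version in \cite{Hoeffding}, and it is the one consistent with item 4 of \cref{thm:intro:spectralgap}. The printed constants should be read as their reciprocals.
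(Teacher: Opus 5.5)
Your diagnosis of the statement is correct. The paper gives no proof here; it only quotes \cite{Hoeffding}, and the quotation swaps the two ratios. The cited result has $\frac{1+\lambda}{1-\lambda}$ in the moment generating function bound and $\frac{1-\lambda}{1+\lambda}$ in the tail, which is the orientation used in item 4 of \cref{thm:intro:spectralgap}. The printed statement also silently needs a stationary start ($X_0\sim\mu$, as in item 4), which you correctly add. A quicker check than your frozen chain: for $m=1$ the left side is $\mu\big(e^{t(f-\mu(f))}\big)$. If $f$ takes the values $a,b$ on two sets of $\mu$-mass $1/2$, this equals $\cosh\big(t(b-a)/2\big)\ge 1+t^2(b-a)^2/8$, which exceeds the printed right side for small $t$ whenever $\lambda>0$. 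Your Chernoff step is also right.

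The gap is in your derivation of the corrected moment generating function bound, which is where all the content of \cite{Hoeffding} lies. After expanding $P=(1-\lambda)\Pi+\lambda R$, the $\Pi$ factors do split the product. Inside an $R$-block, however,
$$\langle \mathbf 1, E_iRE_{i+1}\cdots RE_j\mathbf 1\rangle_\mu=\langle E_i^{1/2}\mathbf 1,(E_i^{1/2}RE_{i+1}^{1/2})\cdots(E_{j-1}^{1/2}RE_j^{1/2})E_j^{1/2}\mathbf 1\rangle_\mu,$$
and Hoeffding's lemma controls only the two end vectors, since $\|E_k^{1/2}\mathbf 1\|_2^2=\mu(e^{t(f_k-\mu(f_k))})$. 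The interior factors can only be bounded through operator norms, which involve $\sup e^{t(f_k-\mu(f_k))}=e^{t(\sup f_k-\mu(f_k))}$ and are first order in $t$. Bounded this way, the centring is lost: the summed bound is $1+ct+O(t^2)$ with $c>0$ in non-degenerate cases, so no resummation over $\Pi/R$ patterns can produce $e^{Ct^2}$.

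Even in the most favourable case $R=I$ (the kernel $\lambda I+(1-\lambda)\Pi$), a block of length $L$ is exactly $\mu(e^{tL(f-\mu(f))})$ and Hoeffding's lemma does apply. Yet the resulting bound $\lambda^{m-1}e^{t^2m^2(b-a)^2/8}$ for the all-$R$ pattern alone exceeds $\exp\big(\frac{1+\lambda}{1-\lambda}\frac{t^2m(b-a)^2}{8}\big)$ for large $t$ once $m>\frac{1+\lambda}{1-\lambda}$. The series $1+2\sum_{j\ge1}\lambda^j$ only identifies the asymptotic variance, not a bound valid for all $t$ and $m$.

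The missing idea is to never expand, and instead to bound one step at a time, merging the $\Pi$ part and the $\lambda$ part before taking norms. Write $E_k^{1/2}u=\alpha\mathbf 1+x$ and $E_{k+1}^{1/2}v=\beta\mathbf 1+y$ with $\mu(x)=\mu(y)=0$. Invariance and $\|P-\Pi\|\le\lambda$ give $|\langle E_k^{1/2}u,PE_{k+1}^{1/2}v\rangle_\mu|\le|\alpha\beta|+\lambda\|x\|_2\|y\|_2$. Since $\alpha^2+\lambda\|x\|_2^2=\langle E_k^{1/2}u,((1-\lambda)\Pi+\lambda I)E_k^{1/2}u\rangle_\mu$, Cauchy--Schwarz in $\R^2$ yields
\[
\big\|E_k^{1/2}PE_{k+1}^{1/2}\big\|\le(\theta_k\theta_{k+1})^{1/2},\qquad \theta_k:=\big\|E_k^{1/2}\big((1-\lambda)\Pi+\lambda I\big)E_k^{1/2}\big\|.
\]
Hence the moment generating function is at most $\prod_{k=1}^m\theta_k$, using $\mu(e^{t(f_k-\mu(f_k))})\le\theta_k$ at the endpoints.

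Each $\theta_k$ is the top eigenvalue of the diagonal-plus-rank-one operator $\lambda E_k+(1-\lambda)\langle\cdot,\varphi_k\rangle_\mu\varphi_k$, with $\varphi_k=e^{t(f_k-\mu(f_k))/2}$. Its first-order term in $t$ vanishes by centring. A convexity argument in its secular equation reduces it to a two-point law on $\{a_k,b_k\}$ with the same mean, and an explicit $2\times 2$ computation bounds it by $\exp\big(\frac{1+\lambda}{1-\lambda}\frac{t^2(b_k-a_k)^2}{8}\big)$. This one-step comparison with the tilted reset kernel is, in substance, the two-state comparison of \cite{Hoeffding} that your sketch invokes but does not supply.
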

A generalisation of the Hoeffing's inequality derived from the Wasserstein contraction rather than $L^2_\mu$ spectral gap is not present in the literature to the best of our knowledge and it will be subject of further studies.

\section{Proofs of the main results}\label{sec:proofs}

\subsection{Proofs for multiproposal pCN}\label{subsec:proofs_mpcN}
We start by treating the multiproposal pCN algorithm (\cref{alg:mpcn}) with kernel $P_p$ as defined in \eqref{eq:2:mpcn:kernel} for a finite number of proposals $p$, to then give the proof of \cref{thm:3:inftypcn} for the infinite proposal limit.

\subsubsection{Finite number of proposals}\label{sec:proof_mpc_finite}
First we show that the functions in \eqref{eq:choiceV} are indeed Lyapunov functions for $P_p$.
\begin{Proposition}\label{prop:mpcn:lyap}
    Fix $p \geq 1$, and assume the potential function $\Pot: \qsp \to \R$ is bounded. Then the functions $V(x) = \|x\|^n$, $n \in \N$, $V(x) = \exp(v\|x\|)$, $v>0$, and $V(x) = \exp(v\|x\|^2)$ with $v$ small enough, are Lyapunov functions as in \cref{def:lyapunov} for the mpCN Markov kernel $P_p$ with constants $l_V(p)$ and $K_V(p)$ that are uniformly bounded in the number of proposals $p$.
\end{Proposition}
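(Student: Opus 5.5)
The plan is to split the mpCN kernel into a rejection part and an acceptance part. Bounding the acceptance part by a sum over all $p$ proposals would create a factor of $p$, so we instead use the boundedness of $\Pot$ to turn the Barker weights into an average. From \eqref{eq:2:mpcn:kernel},
\begin{equation*}
P_pV(x_0) = \E\Big[\alpha_0 V(x_0) + \sum_{j=1}^p \alpha_j V(X_j)\Big],
\end{equation*}
with $X_j$ as in \eqref{eq:2:proposals}. Set $M = \|\Pot\|_\infty$. Every weight $e^{-\Pot(x_l)}$ lies in $[e^{-M}, e^{M}]$, which gives two uniform bounds. First, $\alpha_j \leq e^{2M}/(p+1)$ for $j\geq 1$. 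Second, $\alpha_0 \leq e^{2M}/(p+1) \leq 1-\delta$ for $p$ large, and for every $p\geq 1$ we have $\alpha_0 \leq e^{M}/(e^{M}+e^{-M}) =: 1-\delta$, with $\delta\in(0,1)$ independent of $p$. Therefore
\begin{equation*}
P_pV(x_0) \le (1-\delta')\,V(x_0) + \frac{e^{2M}}{p+1}\sum_{j=1}^p \E V(X_j) \le (1-\delta')V(x_0) + e^{2M}\,\E V(X_1),
\end{equation*}
where the second inequality uses that the $X_j$ are identically distributed. This step needs some care: $\alpha_0$ and the $X_j$ are dependent, but the bound $\alpha_0 V(x_0)\le (1-\delta)V(x_0)$ holds pointwise, so no independence is required. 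The rejection part therefore contributes a contraction factor $1-\delta<1$ that does not depend on $p$.

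The difficulty is that the acceptance term now carries the factor $e^{2M}$, which may exceed $1$. The direct estimate $\E V(X_1) \le l V(x_0) + K$ with $l=l(\rho)<1$ only yields the total coefficient $(1-\delta) + e^{2M} l$, and this need not be $<1$. I would handle this in one of two ways.

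\textbf{(a)} Use only the lower bound on $\alpha_0$, namely $\alpha_0 \geq \eta := e^{-2M}/(p+1)$. This is not uniform in $p$, so it is not suitable.

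\textbf{(b)} Verify the Lyapunov inequality for an iterate $P_p^{n}$, or pass through a nonlinear transform. The cleaner route is to exploit the structure of each class of $V$. For $V=\|x\|^k$, note that $X_1 = \rho^2 x_0 + \sqrt{1-\rho^4}\,\zeta$ with $\zeta\sim\mu_0$. Hence $\E V(X_1) \le (\rho^2+\eta)^k\|x_0\|^k + C_\eta$ for any $\eta>0$, and likewise $\E e^{v\|X_1\|} \le e^{v\rho^2\|x_0\|}\,\E e^{v\|\zeta\|}$. In both cases $\E V(X_1)$ is $o(V(x_0))$ or a small multiple of it when $\|x_0\|$ is large. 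For the exponential cases, $e^{v\rho^2\|x_0\|}/e^{v\|x_0\|}\to 0$. For $V=\|x\|^k$ I would first prove the bound with $V$ replaced by $V^{m}$, using $V(x_0)^m$ growth against $(\rho^2)^{km}$, or simply work with a large power $k$ so that $e^{2M}\rho^{2k}<\delta$; then monotonicity of moments and Jensen recover the bound for small $k$ via $\|x\|^k \le 1+\|x\|^{k'}$. A more robust alternative is to split on $\|x_0\|\le R$ versus $\|x_0\|>R$. On the ball everything is bounded by a constant $K(R)$. Off the ball, $e^{2M}\E V(X_1) \le \tfrac{\delta}{2} V(x_0) + C$ once $R$ is chosen large, uniformly in $p$. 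This gives $l_V = 1-\delta/2$.

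Finally, I would check the Gaussian integrability constants. For $V=e^{v\|x\|^2}$, the requirement $\E e^{v\|X_1\|^2} \le C e^{v\rho^4(1+\eta)\|x_0\|^2}$ needs $v(1-\rho^4)(1+\eta^{-1})$ to lie below the Fernique threshold $1/(2\|\cC\|)$. This is where ``$v$ small enough'' enters, and it is also the step I expect to be most delicate. All constants depend only on $\rho$, $M$, $\cC$ and $V$, never on $p$, which gives the required uniformity of $l_V(p)$ and $K_V(p)$.
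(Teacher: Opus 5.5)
Your first reduction, $P_pV(x_0)\le(1-\delta)V(x_0)+e^{2M}\E V(X_1)$ with $\delta=(1+e^{2M})^{-1}$, is correct and uniform in $p$. The obstacle you then work around is self-inflicted. You apply $\alpha_j\le e^{2M}/(p+1)$ to all of $V(X_j)$, including its $x_0$-dependent part, and so you throw away the identity $\sum_{j=0}^p\alpha_j=1$.

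The paper instead first uses the pointwise Young bound $V(X_j)\le l_1V(x_0)+G(\xi_0,\xi_j)$ with $l_1<1$, and only then weights by the $\alpha_j$:
\[
\alpha_0V(x_0)+\sum_{j=1}^p\alpha_jV(X_j)\le\bigl[1-(1-l_1)(1-\alpha_0)\bigr]V(x_0)+\frac{e^{2M}}{p+1}\sum_{j=1}^pG(\xi_0,\xi_j).
\]
Here the upper weight bound is applied only to the nonnegative, $x_0$-independent remainder $G$. The lower bound $1-\alpha_0\ge\frac{p}{p+1}e^{-2M}\ge\frac12e^{-2M}$ then gives the contraction. This handles all three classes and every power at once, with $l_V=1-(1-l_1)\frac{p}{p+1}e^{-2M}$.

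Your workarounds are not all sound.

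\textbf{Ball splitting.} Splitting on $\|x_0\|\le R$ works for the two exponential classes, because there $\E V(X_1)/V(x_0)\to0$. It fails for $V=\|x\|^k$. In that case $\E\|X_1\|^k/\|x_0\|^k\to\rho^{2k}$, so off any ball $e^{2M}\E V(X_1)$ is about $e^{2M}\rho^{2k}V(x_0)$. This is not below $\frac{\delta}{2}V(x_0)+C$ whenever $e^{2M}\rho^{2k}\ge\delta/2$, for instance $k=1$ with $\rho$ near $1$ and $M$ large.

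\textbf{Large-power reduction.} For the polynomial class you therefore genuinely need to pass through a large power $k'$. What makes that step work is Jensen's inequality together with concavity and subadditivity of $t\mapsto t^{k/k'}$. Writing $V_k(x)=\|x\|^k$,
\[
P_pV_k(x_0)\le\bigl(P_pV_{k'}(x_0)\bigr)^{k/k'}\le l'^{\,k/k'}V_k(x_0)+K'^{\,k/k'}.
\]
The inequality $\|x\|^k\le1+\|x\|^{k'}$ that you cite only bounds $P_pV_k$ by $\|x_0\|^{k'}$, which is useless here.

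With this repair your argument is valid and uniform in $p$. It is, however, case-by-case and relies on auxiliary choices (a large power $k'$, a radius $R$). Its constants are also less explicit than the paper's one-line convexity step.
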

\begin{proof}
  By definition of the multiproposal kernel, we can write, using the expression \eqref{eq:2:mpcn:chain} for the first step of the associated chain,
    \begin{align}\label{eq:PVx0:mpCN}
        PV(x_0) = \E V(X^{(1)}) = \E \sum_{j = 1}^p V(X_j)\mathbbm{1}_{U \in I_j }+ V(x_0) \bbP(U \in I_0).
    \end{align}
    Let us first assume $0 < \rho < 1$. By definition of the proposals $X_j$ in \eqref{eq:2:proposals} and Young's inequality, we can derive the bounds 
    \begin{align*}
        \|X_j\|^n 
        &\leq (1 + \delta)\rho^{2n} \|x_0\|^n + C_\delta (1 - \rho^2)^{n/2} \| \rho \xi_0 + \xi_j\|^n, && \hspace{-1cm }\text{for any  }\delta>0,\\
        e^{v\|X_j\|}&\leq \tfrac{1}{q}e^{q v\rho^2\|x_0\|} + \tfrac{q-1}{q} e^{\frac{v q}{q-1}\sqrt{1 - \rho^2} \|\rho \xi_0 + \xi_j\|}, && \hspace{-1cm }\text{for any }q>1,\\
        e^{v\|X_j\|^2}&\leq \tfrac{1}{q}e^{ q v  (1 + \delta) \rho^4 \|x_0\|^2} + \tfrac{q-1}{q} e^{\frac{q }{q-1} C_\delta v (1 - \rho^2) \|\rho \xi_0 + \xi_j\|^2 }, && \hspace{-1cm }\text{for any } q > 1 \text{ and } \delta > 0,
        \end{align*}
    where $C_\delta$ is a positive constant. It follows that, for the three different options of $V$,
    \begin{equation}\label{eq:mpcn:lyap:0}
        V(X_j) \leq l_1 V(x_0) + G(\xi_0, \xi_j).
    \end{equation}
    with $l_1<1$ and $G(\xi_0, \xi_j)$ defined as
    \begin{align}\label{eq:mpcn:lyap:l1}
        l_1 &= (1 + \delta)\rho^{2n},\quad \text{with } 0< \delta <  \rho^{-2n} - 1, &&  G(\xi_0, \xi_j) = 
             C_\delta (1 - \rho^2)^{n/2} \| \rho \xi_0 + \xi_j\|^n ;
            \\
         l_1 &=  q^{-1},\quad \text{with } 1< q < \rho^{-2}, &&  G(\xi_0, \xi_j) = \tfrac{q-1}{q} e^{\frac{v q}{q-1}\sqrt{1 - \rho^2} \|\rho \xi_0 + \xi_j\|} ; \label{eq:mpcn:lyap:l1:2}\\
         l_1 &=  q^{-1},\quad \text{with } 1< q < ( (1 + \delta)\rho^4)^{-1} \text{ and } \delta < \rho^{-4} - 1, &&  G(\xi_0, \xi_j) = \tfrac{q-1}{q} e^{\frac{q }{q-1} C_\delta v (1 - \rho^2) \|\rho \xi_0 + \xi_j\|^2 };\label{eq:mpcn:lyap:l1:3}
    \end{align}
    for the three candidate Lyapunov functions in \eqref{eq:choiceV}, respectively. 
    Then 
    \begin{align*}
        PV(x_0) &\leq l_1 V(x_0) \sum_{j = 1}^p \bbP(U \in I_j) + \sum_{j = 1}^p \E  [G(\xi_0, \xi_j)\mathbbm{1}_{U \in I_j } ] + V(x_0) \bbP(U \in I_0)\\
        &= V(x_0) \left[ 1 - (1 - l_1) \sum_{j = 1}^p \E \alpha_j (x_0, X_1, \ldots, X_p)\right] + \sum_{j = 1}^p \E [ G(\xi_0, \xi_j)\mathbbm{1}_{U \in I_j }].
    \end{align*}
    As $\Pot$ is assumed globally bounded, then, for any $j= 1, \ldots, p$ and $(x_0, x_1, \ldots, x_p) \in \qsp^{p+1}$,
    \begin{equation}\label{eq:bound_alphaj}
         \frac{e^{ -2\|\Pot\|_\infty}}{p+1} \leq \alpha_j(x_0, x_1, \ldots, x_p) = \frac{e^{- \Pot(x_j)}}{\sum_{k = 0}^p e^{- \Pot(x_k)}} \leq \frac{e^{ 2\|\Pot\|_\infty}}{p+1},
    \end{equation} 
    giving
    \begin{equation}\label{ineq:PV:x0:mpCN}
        PV(x_0) \leq V(x_0) \left[ 1 - (1 - l_1)\frac{p e^{-2\|\Pot\|_\infty}}{p+1}\right]+ \sum_{j = 1}^p \E [ G(\xi_0, \xi_j)\mathbbm{1}_{U \in I_j }].
    \end{equation}
    We are left to ensure that the second term on the right hand side of \eqref{ineq:PV:x0:mpCN} is well defined and bounded uniformly in $x_0\in \qsp$. 
    For $V(x) = \|x\|^n$, we have 
\begin{equation}\label{eq:G:V:x:pn}
     \sum_{j = 1}^p \E [ G(\xi_0, \xi_j)\mathbbm{1}_{U \in I_j } ] =  C_\delta (1 - \rho^2)^{n/2}\sum_{j = 1}^p \E [ \| \rho \xi_0 + \xi_j\|^n \mathbbm{1}_{U \in I_j }].
\end{equation}
Denoting by $\cF(\xi_0, \ldots, \xi_p)$ the $\sigma$-algebra generated by $\xi_0, \ldots, \xi_p$, observe that we can write the expectation terms in the right-hand side of \eqref{eq:G:V:x:pn} as
    \begin{align}\label{eq:mpcn:lyap:111}
        \E [ \| \rho \xi_0 + \xi_j\|^n \mathbbm{1}_{U \in I_j } ]
        &= \E \left[ \E[ \| \rho \xi_0 + \xi_j\|^n \mathbbm{1}_{U \in I_j } |\cF(\xi_0, \ldots, \xi_p)  ] \right]
        = \E \left[  \| \rho \xi_0 + \xi_j\|^n   \E[ \mathbbm{1}_{U \in I_j } |\cF(\xi_0, \ldots, \xi_p)  ]  \right]
        \notag\\
        &= \E [  \| \rho \xi_0 + \xi_j\|^n  \alpha_j(x_0, \Fbar(x_0, \xi_0, \xi_1, \ldots, \xi_p) ) ]
        \notag\\
        &= \int_{\qsp^{p+1}}  \|\rho w_0 + w_j\|^n \alpha_j(x_0, \Fbar(x_0, w_0, w_1, \ldots, w_p) ) \mu_0(dw_0)\ldots \mu_0(dw_p),
    \end{align}
    with $\Fbar$ as defined in \eqref{eq:mpcn:Fbar}. Invoking \eqref{eq:bound_alphaj}, it thus follows that, for all $j=1, \ldots,p$,
    \begin{align}\label{ineq:E:xi0:xij:Ij}
    	\E [\| \rho \xi_0 + \xi_j\|^n \mathbbm{1}_{U \in I_j } ]
    	\leq \frac{e^{ 2\|\Pot\|_\infty}}{(p+1)}\int_{\qsp^2} \|\rho w_0 + w_1\|^n \mu_0(dw_0)\mu_0(dw_1) = \frac{e^{ 2\|\Pot\|_\infty}}{(p+1)}\int_\qsp \|y\|^n \mu_1(dy),
    \end{align}
    where $\mu_1 := \cN(0, (1 + \rho^2)\cC)$. Hence, denoting the $n$-th moment of $\mu_1$ as $M_n = \int_\qsp \|y\|^n \mu_1(dy)$, we deduce from \eqref{eq:G:V:x:pn} and \eqref{ineq:E:xi0:xij:Ij} that
     \begin{equation}\label{ineq:G:V1}
    	\sum_{j = 1}^p \E [ G(\xi_0, \xi_j)\mathbbm{1}_{U \in I_j } ] 
    	\leq  C_\delta (1 - \rho^2)^{n/2}\frac{p e^{ 2\|\Pot\|_\infty}}{(p+1)}M_n.
    \end{equation}    

    Next, for $V(x) =\exp(v\|x\|)$, we have according to \eqref{eq:mpcn:lyap:l1:2} and by employing a similar argument as in the previous case that
    \begin{align}\label{ineq:G:V2}
     \sum_{j = 1}^p \E [ G(\xi_0, \xi_j)\mathbbm{1}_{U \in I_j } ]
     &=  \frac{q-1}{q} \sum_{j = 1}^p \E [ e^{\frac{v q}{q-1}\sqrt{1 - \rho^2} \|\rho \xi_0 + \xi_j\|} \mathbbm{1}_{U \in I_j } ] \\
     &\leq  \frac{q-1}{q}\frac{p e^{ 2\|\Pot\|_\infty}}{(p+1)} \int e^{\frac{v q}{q-1}\sqrt{1 - \rho^2} \|\rho w_0 + w_1\|} \mu_0(dw_0)\mu_0(dw_1) \\
     &= \frac{q-1}{q}\frac{p e^{ 2\|\Pot\|_\infty}}{(p+1)} \int e^{\frac{v q}{q-1}\sqrt{1 - \rho^2} \|y\|} \mu_1(d y)
     =:\frac{q-1}{q}\frac{p e^{ 2\|\Pot\|_\infty}}{(p+1)} M_{exp}.
    \end{align}

    Lastly, for $V(x) =\exp(v\|x\|^2)$, we have from \eqref{eq:mpcn:lyap:l1:3} that
\begin{align*}
     \sum_{j = 1}^p \E [ G(\xi_0, \xi_j)\mathbbm{1}_{U \in I_j } ]  
     &=  \frac{q-1}{q} \sum_{j = 1}^p \E [e^{\frac{q }{q-1} C_\delta v (1 - \rho^2) \|\rho \xi_0 + \xi_j\|^2 }  \mathbbm{1}_{U \in I_j } ] 
     \\
     &\leq \frac{q-1}{q} \frac{p e^{ 2\|\Pot\|_\infty}}{(p+1)} \int e^{\frac{q}{q-1} C_\delta v (1 - \rho^2) \|y\|^2} \mu_1(dy) =: \frac{q-1}{q} \frac{p e^{ 2\|\Pot\|_\infty}}{(p+1)} M_{exp}^{(2)}.
\end{align*}
Here we note that, as a consequence of Fernique's theorem, $M_{exp}^{(2)}$ is guaranteed to be finite as long as $v$ is chosen suitably small. Moreover, this further implies that the algebraic and exponential moments $M_n$ and $M_{exp}$ in \eqref{ineq:G:V1} and \eqref{ineq:G:V2} are finite for any $n \in \N$ and $v >  0$, respectively.

    In summary, we have then showed that for all $x_0\in \qsp$
    \begin{align}\label{eq:mpcn:lyap:p}
         PV(x_0) \leq  l(p) V(x_0) + K(p),
    \end{align}
    with 
    \begin{align}
        l(p) &=  1 - (1 - l_1)\frac{p e^{-2\|\Pot\|_\infty}}{p+1} \to 1 - (1 - l_1)e^{-2\|\Pot\|_\infty}&& \hspace{-1cm }\text{as }p\to \infty,
        \\
        K(p) &= C_1 \frac{p e^{ 2\|\Pot\|_\infty}}{(p+1)} \to C_1 e^{ 2\|\Pot\|_\infty} && \hspace{-1cm }\text{as }p\to \infty,
    \end{align}
    where $l_1$ is as in \eqref{eq:mpcn:lyap:l1}-\eqref{eq:mpcn:lyap:l1:3} and 
    \begin{align*}
        C_1 = \begin{cases}
             C_\delta (1 - \rho^2)^{n/2} M_n, &\quad 0< \delta< \rho^{-2n} - 1,\\
            \frac{q-1}{q} M_{exp},& \quad 1 < q< \rho^{-2}, \\
           \frac{q-1}{q} M_{exp}^{(2)}, &\quad  1< q < ( (1 + \delta)\rho^4)^{-1} \text{ and } \delta < \rho^{-4} - 1
        \end{cases}
    \end{align*}
    for the three functions in \eqref{eq:choiceV}, respectively.

    Finally, if $\rho=0$, then the proposals $X_j$ are nothing but independent draws $\xi_j$ from the reference gaussian measure $\mu_0$, with acceptance probabilities 
    \[\alpha_j(x_0, \xi_1, \ldots, \xi_p) = \frac{e^{-\Pot(\xi_j)}}{e^{-\Pot(x_0)} + \sum_{k = 1}^p e^{-\Pot(\xi_k)}} .\]
    Thus, from \eqref{eq:PVx0:mpCN} and the lower bound in \eqref{eq:bound_alphaj}, it follows that    
     \begin{equation}\label{ineq:PV:x0:mpCN:0}
        PV(x_0) \leq V(x_0) \left[ 1 - \frac{p e^{-2\|\Pot\|_\infty}}{p+1}\right]+ \sum_{j = 1}^p \E [ V(\xi_j)\mathbbm{1}_{U \in I_j }].
    \end{equation}
    Using a similar double conditional expectation argument as in \eqref{eq:mpcn:lyap:111} and the upper bound from \eqref{eq:bound_alphaj}, we obtain that 
    \[
    \sum_{j = 1}^p \E [ V(\xi_j)\mathbbm{1}_{U \in I_j }] \leq \frac{p e^{2\|\Pot\|_\infty}}{p+1}\E V(\xi_1),
    \]
    with $\E V(\xi_1)$ well defined as $\xi_1\sim \mu_0$. Then \eqref{eq:mpcn:lyap:p} holds with $l(p) = 1 - \frac{p e^{-2\|\Pot\|_\infty}}{p+1}$ and $K(p) =\frac{p e^{2\|\Pot\|_\infty}}{p+1}\E V(\xi_1)$.
\end{proof}

Next we show that the mpCN kernel \eqref{eq:2:mpcn:kernel} is $d$-contracting with respect to the distance \eqref{eq:d_eps}, $d_\varepsilon(x,y) = 1 \wedge \varepsilon^{-1}\|x - y\|$, for appropriate choices of the parameter $\varepsilon$. 

\begin{Proposition}\label{prop:contr:fin:p:1}
	Fix $p \geq 1$, $\varepsilon>0$, and assume the potential function $\Pot: \qsp \to \R$ is bounded and globally Lipschitz with Lipschitz constant $L_{\Pot}$. Then, for every $x_0, \tx_0 \in \qsp$ with $d_\eps(x_0, \tx_0) < 1$, we have
	\begin{align}\label{ineq:contr:fin:p:a} 
		\Wass_{d_\varepsilon}(\mk_p(x_0, \cdot), \mk_p(\tx_0, \cdot)) 
		\leq  d_\eps(x_0, \tx_0)  \left[  C \frac{1 + \rho^2 p}{(p+1)} \varepsilon + C_p \right],
	\end{align}
	where $C_p = 1 - (1 - \rho^2) \frac{p}{p+1} e^{-2 \| \Pot \|_\infty}$, and for some positive constant $C = C(\|\Pot\|_\infty, L_{\Pot})$. Consequently, for any fixed $\varepsilon$ satisfying $0 < \varepsilon < \frac{(1 - \rho^2) }{C(1 + \rho^2)} e^{-2 \| \Pot \|_\infty}$ and every $x_0, \tx_0 \in \qsp$ with $d_\eps(x_0, \tx_0) < 1$, it holds
	\begin{align}\label{ineq:contr:fin:p:b} 
		\sup_{p \geq 1} \Wass_{d_\varepsilon}(\mk_p(x_0, \cdot), \mk_p(\tx_0, \cdot)) 
		\leq
		\kappa d_\eps(x_0, \tx_0),
	\end{align}
	with $\kappa = \kappa (\varepsilon, \rho) \in (0,1)$.
\end{Proposition}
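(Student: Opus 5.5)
We will bound $\Wass_{d_\varepsilon}$ by the expected $d_\varepsilon$-distance under an explicit coupling, built in two stages as described in the introduction. First, couple the proposal clouds synchronously: use the same $\xi_0,\xi_1,\ldots,\xi_p\sim\mu_0$ for both starting points. By \eqref{eq:2:proposals} this gives $X_j-\tX_j=\rho^2(x_0-\tx_0)$ for every $j=1,\ldots,p$. Second, couple the selection steps through a single shared $U\sim\cU([0,1])$ in \eqref{eq:2:mpcn:chain}, with intervals $I_j$ built from $\alpha_j=\alpha_j(x_0,X_1,\ldots,X_p)$ and $\tilde I_j$ built from $\tilde\alpha_j=\alpha_j(\tx_0,\tX_1,\ldots,\tX_p)$. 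It is convenient to reorder the intervals so that the index $0$ (rejection) comes last. This changes neither marginal law, and it makes the common prefix of the two partitions as long as possible.

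Write $\delta:=d_\varepsilon(x_0,\tx_0)=\|x_0-\tx_0\|/\varepsilon<1$. We split $\E d_\varepsilon(X^{(1)},\tX^{(1)})$ into three events.
\begin{itemize}
\item[(a)] $U\in I_j\cap\tilde I_j$ with $j\ge1$. Then $d_\varepsilon=\rho^2\delta$.
\item[(b)] $U\in I_0\cap\tilde I_0$. Then $d_\varepsilon=\delta$.
\item[(c)] The two chains pick different indices. Then we simply bound $d_\varepsilon\le1$.
\end{itemize}
The probability of (c) is at most $\sum_{j=0}^p|\alpha_j-\tilde\alpha_j|$, possibly up to a factor related to the cumulative-sum discrepancies; see the last paragraph for how we handle this. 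Using the bounds \eqref{eq:bound_alphaj}, we get
\[
\E d_\varepsilon\le \delta\Big[\rho^2\,\E\textstyle\sum_{j\ge1}\alpha_j+\E\alpha_0\Big]+\bbP(\text{(c)})
=\delta\Big[1-(1-\rho^2)\,\E\textstyle\sum_{j\ge1}\alpha_j\Big]+\bbP(\text{(c)}),
\]
and $\sum_{j\ge1}\alpha_j\ge \frac{p}{p+1}e^{-2\|\Pot\|_\infty}$. This produces exactly the term $C_p\,\delta$.

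The main obstacle is to show
\[
\bbP(\text{(c)})\le C\,\frac{1+\rho^2p}{p+1}\,\varepsilon\,\delta=C\,\frac{1+\rho^2p}{p+1}\,\|x_0-\tx_0\|.
\]
The bound must be uniform in $p$, even though there are $p+1$ weights. Write $\alpha_j=w_j/S$ with $w_j=e^{-\Pot(X_j)}$, and similarly $\tilde\alpha_j=\tilde w_j/\tilde S$. By the Lipschitz property, $|w_j-\tilde w_j|\le e^{\|\Pot\|_\infty}L_\Pot\rho^2\|x_0-\tx_0\|$ for $j\ge1$, and $|w_0-\tilde w_0|\le e^{\|\Pot\|_\infty}L_\Pot\|x_0-\tx_0\|$. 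Then
\[
|\alpha_j-\tilde\alpha_j|\le \frac{|w_j-\tilde w_j|}{S}+\tilde w_j\,\frac{|S-\tilde S|}{S\tilde S},
\qquad |S-\tilde S|\le C(1+p\rho^2)\|x_0-\tx_0\|.
\]
Since $S,\tilde S\ge(p+1)e^{-\|\Pot\|_\infty}$, summing over $j$ gives $\sum_j|\alpha_j-\tilde\alpha_j|\le C\frac{1+\rho^2p}{p+1}\|x_0-\tx_0\|$. The factor $1/(p+1)$ coming from the normaliser cancels the $p$ terms.

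The remaining subtlety is that with consecutive intervals, a single shift early in the ordering misaligns all later intervals. This could make the mismatch probability as large as $\sum_j|\sum_{k\le j}(\alpha_k-\tilde\alpha_k)|$. To avoid this, we replace the naive interval coupling by a maximal coupling of the two discrete index distributions: choose the common index with probability $\sum_j\alpha_j\wedge\tilde\alpha_j$, and otherwise choose from the residuals. The mismatch probability is then exactly $\frac12\sum_j|\alpha_j-\tilde\alpha_j|$. This is still realisable through one uniform by laying out the overlap masses $\alpha_j\wedge\tilde\alpha_j$ first.

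Finally, take $\varepsilon<\frac{(1-\rho^2)}{C(1+\rho^2)}e^{-2\|\Pot\|_\infty}$. Using $\frac{1+\rho^2p}{p+1}\le1$ and $\frac{p}{p+1}\ge\frac12$ (after absorbing constants into $C$), the bracket in \eqref{ineq:contr:fin:p:a} is bounded uniformly in $p$ by some $\kappa<1$.
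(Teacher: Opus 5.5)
Your proposal is correct and follows essentially the same route as the paper. The steps match: synchronous coupling of the clouds; a maximal coupling of the selection indices realised on one uniform, with the overlap masses $\alpha_j\wedge\tilde\alpha_j$ laid out first (the paper's partition of $[0,1]$); the mismatch probability bounded by $\sum_j|\alpha_j-\tilde\alpha_j|\le C\frac{1+\rho^2p}{p+1}\|x_0-\tx_0\|$ using the lower bound $(p+1)e^{-\|\Pot\|_\infty}$ on the normaliser; and the matched part bounded by $C_p\,d_\varepsilon(x_0,\tx_0)$. For that last bound you use $\alpha_j\wedge\tilde\alpha_j\le\alpha_j$ together with $\sum_j\alpha_j=1$, while the paper uses the lower bound on $\alpha_j\wedge\tilde\alpha_j$; both give the same $C_p$.

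The only slip is in the final step. The crude bound $\frac{1+\rho^2p}{p+1}\le1$ gives $\kappa<1$ only for $\varepsilon<\frac{1-\rho^2}{2C}e^{-2\|\Pot\|_\infty}$, which is smaller than the stated threshold, and enlarging $C$ does not close this gap. Instead, note that $p\mapsto\frac{1+\rho^2p}{p+1}$ is decreasing, hence at most $\frac{1+\rho^2}{2}$. This gives exactly the stated range $\varepsilon<\frac{1-\rho^2}{C(1+\rho^2)}e^{-2\|\Pot\|_\infty}$ with the same $C$.
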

\begin{proof}
Fix $x_0, \tx_0 \in \qsp$ such that $d_\eps(x_0, \tx_0) < 1$. We start by defining a suitable coupling of $\mk^p(x_0, \cdot)$ and $\mk^p(\tx_0, \cdot)$. Recalling the formulation of the kernel $\mk^p$ given in \eqref{eq:P:mpCN:pfwd:mu0:U}, let $U$ be a uniform variable over $[0,1]$, i.e. $U\sim \cU([0,1])$, and let $\Xi = \left(\xi_0, \xi_1, \ldots, \xi_p\right) \sim \mu_0^{\otimes (p+1)}$. We then consider a synchronous coupling of the proposals, by setting
\begin{align}\label{eq:mpcn:proposal1}
    X_j = F(F(x_0, \xi_0), \xi_j), \quad \tX_j=  F(F(\tx_0, \xi_0), \xi_j), \quad j =1, \ldots, p,
\end{align}
where $F(x,w) =\rho x + \rr w$ as in \eqref{eq:2:defF}.

Consider also
\begin{align}\label{eq:alphahat}
	\hat{\alpha}_j = \hat{\alpha}_j(x_0, \tx_0; \Xi) 
	= \min\{\alpha_j(x_0, X_1, \ldots, X_p), \alpha_j(\tx_0, \tX_1, \ldots, \tX_p)\}, \quad j = 0, \ldots, p,
\end{align}
and
\begin{align}\label{eq:def:sj}
	s_{-1} = 0, \quad s_j = s_j(x_0, \tx_0; \Xi) = \sum_{i=0}^j \hat{\alpha}_i, \quad j = 0, \ldots, p.
\end{align}
Next, we define the non-negative quantities $\beta_j$ and $\tbeta_j$, for $j=0, \ldots, p$, as
\begin{equation}\label{def:betam:coupl}
\begin{split}
	\beta_j&= \max\{\ar_{j}(x_0,  X_1, \ldots, X_p) - \ar_{j}(\tx_0, \tX_1, \ldots, \tX_p), 0\}, \\
	\tbeta_j&=  \max\{\ar_j(\tx_0, \tX_1, \ldots, \tX_p) - \ar_j (x_0,  X_1, \ldots, X_p), 0 \}.
    \end{split}
\end{equation}
Note that $s_p + \sum_{j=0}^p \beta_j = s_p + \sum_{j=0}^{p} \tbeta_j = 1$, and define the real intervals
\begin{equation}\label{eq:Jintervals}
\begin{split}
	&J_0 = [s_p, s_p + \beta_0), \quad\tJ_0 = [s_p, s_p + \tbeta_0),\\
	&J_k = \left[s_p + \sum_{j = 0}^{k-1} \beta_j, s_p + \sum_{j=0}^k \beta_j \right), \; k = 1, \ldots, p-1, \\
    &\tJ_l = \left[s_p + \sum_{j = 0}^{l-1} \tbeta_j, s_p + \sum_{j=0}^l \tbeta_j \right), \; l = 1, \ldots, p-1,\\
    &J_p = \left[s_p + \sum_{i = 0}^{p-1} \beta_i, s_p + \sum_{i=0}^p \beta_i \right], \quad \tJ_{p} = \left[s_p + \sum_{i = 0}^{p-1} \tbeta_i, s_p + \sum_{i=0}^{p} \tbeta_i \right],
\end{split}
\end{equation}
so that
\begin{align}\label{eq:mpcn:[sp,1]}
	[s_p, 1] = \bigcup_{k=0}^p J_k = \bigcup_{l=0}^{p} \tJ_l 
	= \bigcup_{\{(k,l) \,:\, J_k \cap \tJ_l \neq \emptyset \}} ( J_k \cap \tJ_l ) .
\end{align}
We have then constructed the following partition of the unit interval
\begin{equation}\label{eq:partition[0,1]}
    [0,1] = \bigcup_{j = 0}^p [s_{j-1}, s_{j}) \cup  [s_p, 1] = \bigcup_{j = 0}^p [s_j, s_{j+1}) \cup  \bigcup_{\{(k,l) \,:\, J_k \cap \tJ_l \neq \emptyset \}} ( J_k \cap \tJ_l ).
\end{equation}
Then, we define 
\begin{equation}
    (X, \tX) = \begin{cases}
        (x_0, \tx_0) & \quad \mbox{ if }U \in [0, s_0),\\
        (X_j, \tX_j) & \quad \mbox{ if }U \in [s_{j-1}, s_{j}), \quad j = 1, \ldots p\\
        (X_{k}, \tX_{l}) & \quad \mbox{ if } U \in J_k \cap \tJ_l \mbox{ with } k,l= 0,\ldots, p \mbox{ such that }  J_k \cap \tJ_l \neq \emptyset,
    \end{cases}\label{eq:mpcn:coupling}
\end{equation}
where if $k$ or $l$ are zero, we read $X_{k}$ and $ \tX_{l}$ as $x_0$ and $ \tx_0$, respectively. See \Cref{fig:coupling} for a visualization of the construction above for the case of three proposals.

\begin{figure}
\centering
\begin{tikzpicture}[xscale=1.5, scale=0.8]

\draw[thick] (0,0) -- (13,0);

\draw[mgPurple, line width=2pt] (7,0.06) -- (9,0.06);
\draw[mgPurple, line width=2pt] (7,-0.06) -- (9,-0.06);

\draw[red!40, line width=2pt] (11,0.06) -- (13,0.06);
\draw[red!40, line width=2pt] (11,-0.06) -- (13,-0.06);

\foreach \x in {0,...,13} {
    \ifnum\x=0
        \draw[thick] (\x,0.15) -- (\x,-0.15);
    \else\ifnum\x=2
        \draw[thick] (\x,0.15) -- (\x,-0.15);
    \else\ifnum\x=5
        \draw[thick] (\x,0.15) -- (\x,-0.15);
    \else\ifnum\x=9
        \draw[thick] (\x,0.15) -- (\x,-0.15);
    \else\ifnum\x=13
        \draw[thick] (\x,0.15) -- (\x,-0.15);
    \else
        \draw[thin,gray!60] (\x,0.10) -- (\x,-0.10);
    \fi\fi\fi\fi\fi
}

\node at (1,0.5) {$\alpha_0$};
\node at (3.5,0.5) {$\alpha_1$};
\node at (7,0.5) {$\alpha_2$};
\node at (11,0.5) {$\alpha_3$};
\node[text=mgPurple] at (8,0.5) {$\beta_2$};
\node[text=red!70!black] at (12,0.5) {$\beta_3$};

\draw[thick] (0,-1.5) -- (13,-1.5);

\draw[mgGreen, line width=2pt] (2,-1.44) -- (3,-1.44);
\draw[mgGreen, line width=2pt] (2,-1.56) -- (3,-1.56);

\draw[mgGold, line width=2pt] (6,-1.44) -- (9,-1.44);
\draw[mgGold, line width=2pt] (6,-1.56) -- (9,-1.56);

\foreach \x in {0,...,13} {
    \ifnum\x=0
        \draw[thick] (\x,-1.35) -- (\x,-1.65);
    \else\ifnum\x=3
        \draw[thick] (\x,-1.35) -- (\x,-1.65);
    \else\ifnum\x=9
        \draw[thick] (\x,-1.35) -- (\x,-1.65);
    \else\ifnum\x=11
        \draw[thick] (\x,-1.35) -- (\x,-1.65);
    \else\ifnum\x=13
        \draw[thick] (\x,-1.35) -- (\x,-1.65);
    \else
        \draw[thin,gray!60] (\x,-1.40) -- (\x,-1.60);
    \fi\fi\fi\fi\fi
}

\node at (1.5,-1) {$\tilde{\alpha}_0$};
\node[text=darkgreen] at (2.5,-1) {$\tilde{\beta}_0$};
\node at (6,-1) {$\tilde{\alpha}_1$};
\node[text=orange] at (7.5,-1) {$\tilde{\beta}_1$};
\node at (10,-1) {$\tilde{\alpha}_2$};
\node at (12,-1) {$\tilde{\alpha}_3$};

\draw[thick] (0,-3) -- (13,-3);

\draw[mgPurple, line width=2pt] (9,-2.94) -- (11,-2.94);
\draw[mgPurple, line width=2pt] (9,-3.06) -- (11,-3.06);

\draw[red!40, line width=2pt] (11,-2.94) -- (13,-2.94);
\draw[red!40, line width=2pt] (11,-3.06) -- (13,-3.06);

\foreach \x in {0,...,13} {
    \ifnum\x=0
        \draw[thick] (\x,-2.85) -- (\x,-3.15);
    \else\ifnum\x=2
        \draw[thick] (\x,-2.85) -- (\x,-3.15);
    \else\ifnum\x=5
        \draw[thick] (\x,-2.85) -- (\x,-3.15);
    \else\ifnum\x=7
        \draw[thick] (\x,-2.85) -- (\x,-3.15);
    \else\ifnum\x=9
        \draw[thick] (\x,-2.85) -- (\x,-3.15);
    \else\ifnum\x=11
        \draw[thick] (\x,-2.85) -- (\x,-3.15);
    \else\ifnum\x=13
        \draw[thick] (\x,-2.85) -- (\x,-3.15);
    \else
        \draw[thin,gray!60] (\x,-2.90) -- (\x,-3.10);
    \fi\fi\fi\fi\fi\fi\fi
}

\node at (1,-2.5) {$\hat{\alpha}_0$};
\node at (3.5,-2.5) {$\hat{\alpha}_1$};
\node at (6,-2.5) {$\hat{\alpha}_2$};
\node at (8,-2.5) {$\hat{\alpha}_3$};
\node[text=mgPurple] at (10,-2.5) {$J_2$};
\node[text=red!70!black] at (12,-2.5) {$J_3$};

\draw[thick] (0,-4.5) -- (13,-4.5);

\draw[mgGreen, line width=2pt] (9,-4.44) -- (10,-4.44);
\draw[mgGreen, line width=2pt] (9,-4.56) -- (10,-4.56);

\draw[mgGold, line width=2pt] (10,-4.44) -- (13,-4.44);
\draw[mgGold, line width=2pt] (10,-4.56) -- (13,-4.56);

\foreach \x in {0,...,13} {
    \ifnum\x=0
        \draw[thick] (\x,-4.35) -- (\x,-4.65);
    \else\ifnum\x=2
        \draw[thick] (\x,-4.35) -- (\x,-4.65);
    \else\ifnum\x=5
        \draw[thick] (\x,-4.35) -- (\x,-4.65);
    \else\ifnum\x=7
        \draw[thick] (\x,-4.35) -- (\x,-4.65);
    \else\ifnum\x=9
        \draw[thick] (\x,-4.35) -- (\x,-4.65);
    \else\ifnum\x=10
        \draw[thick] (\x,-4.35) -- (\x,-4.65);
    \else\ifnum\x=13
        \draw[thick] (\x,-4.35) -- (\x,-4.65);
    \else
        \draw[thin,gray!60] (\x,-4.40) -- (\x,-4.60);
    \fi\fi\fi\fi\fi\fi\fi
}

\node at (1,-4) {$\hat{\alpha}_0$};
\node at (3.5,-4) {$\hat{\alpha}_1$};
\node at (6,-4) {$\hat{\alpha}_2$};
\node at (8, -4) {$\hat{\alpha}_3$};
\node[text=darkgreen] at (9.5,-4) {$\tilde{J}_0$};
\node[text=orange] at (11.5,-4) {$\tilde{J}_1$};

\draw[thick] (0,-6) -- (13,-6);

\draw[mgPurple, line width=2pt] (9,-5.94) -- (10,-5.94);
\draw[mgGreen, line width=2pt] (9,-6.06) -- (10,-6.06);

\draw[mgPurple, line width=2pt] (10,-5.94) -- (11,-5.94);
\draw[mgGold, line width=2pt] (10,-6.06) -- (11,-6.06);

\draw[red!40, line width=2pt] (11,-5.94) -- (13,-5.94);
\draw[mgGold, line width=2pt] (11,-6.06) -- (13,-6.06);

\foreach \x in {0,...,13} {
    \ifnum\x=0
        \draw[thick] (\x,-5.85) -- (\x,-6.15);
    \else\ifnum\x=2
        \draw[thick] (\x,-5.85) -- (\x,-6.15);
    \else\ifnum\x=5
        \draw[thick] (\x,-5.85) -- (\x,-6.15);
    \else\ifnum\x=7
        \draw[thick] (\x,-5.85) -- (\x,-6.15);
    \else\ifnum\x=9
        \draw[thick] (\x,-5.85) -- (\x,-6.15);
    \else\ifnum\x=10
        \draw[thick] (\x,-5.85) -- (\x,-6.15);
    \else\ifnum\x=11
        \draw[thick] (\x,-5.85) -- (\x,-6.15);
    \else\ifnum\x=13
        \draw[thick] (\x,-5.85) -- (\x,-6.15);
    \else
        \draw[thin,gray!60] (\x,-5.90) -- (\x,-6.10);
    \fi\fi\fi\fi\fi\fi\fi\fi
}

\node[text=blue,font=\small] at (1,-5.5) {$(x_0, \tx_0)$};
\node[text=blue,font=\small] at (3.5,-5.5) {$(X_1, \tX_1)$};
\node[text=blue,font=\small] at (6,-5.5) {$(X_2, \tX_2)$};
\node[text=blue,font=\small] at (8,-5.5) {$(X_3, \tX_3)$};
\node[text=blue,font=\small] at (9.5,-5.5) {$(X_2, \tx_0)$};
\node[text=blue,font=\small] at (10.5,-5.5) {$(X_2, \tX_1)$};
\node[text=blue,font=\small] at (12,-5.5) {$(X_3, \tX_1)$};

\node at (0,-6.5) {0};
\node at (2,-6.5) {$s_0$};
\node at (5,-6.5) {$s_1$};
\node at (7,-6.5) {$s_2$};
\node at (9,-6.5) {$s_3$};
\node at (13,-6.5) {1};

\end{tikzpicture}
\caption{Example of the coupling of the acceptance step for $p =3$ given by the partition of the unit interval \eqref{eq:partition[0,1]}. The first and second lines from the top visualize the acceptance probabilities of the chain starting at $x_0$ and the chain starting at $\tx_0$, respectively. Here we use the shorter notations $\alpha_j$ and $\tilde{\alpha}_j$ for $\alpha_j(x_0, X_1, \ldots, X_p)$ and $\alpha_j(\tx_0,\tX_1, \ldots, \tX_p)$ respectively. We highlight the non-zero differences \eqref{def:betam:coupl} of the acceptance probabilities with different colors. The third and fourth lines show how to partition $[s_3,1]$ in two different ways as in \eqref{eq:mpcn:[sp,1]}. In the last line, we have the whole partition of the unit interval as in \eqref{eq:partition[0,1]}, so the last three intervals are respectively $J_2\cap \tJ_0$, $J_2\cap \tJ_1$ and $J_3\cap \tJ_1$, as the color scheme indicates. Here we also indicate in blue the realizations of the coupling \eqref{eq:mpcn:coupling} over the different intervals where the uniform variable $U$ may fall.}\label{fig:coupling}
\end{figure}

It is not difficult to show from this construction that $(X, \tX)$ is a coupling of $\mk^p(x_0, \cdot)$ and $\mk^p(\tx_0, \cdot)$. Hence, 
\begin{align*}
	\Wass_{d_\varepsilon}(\mk^p(x_0, \cdot), \mk^p(\tx_0, \cdot)) 
	&\leq \E d_\eps(X, \tX)\\
    &= \E d_\eps(x_0, \tx_0)\mathbbm{1}_{U \in [0, s_0)} + \sum_{j = 1}^{p} \E d_\eps(X_j , \tX_j)\mathbbm{1}_{U \in [s_{j-1}, s_{j})} + \E d_\eps(X, \tX) \mathbbm{1}_{U \in [s_p, 1]}.
\end{align*}
From the definition of $X_j$ and $\tX_j$ in \eqref{eq:mpcn:proposal1}, and recalling that $d_\eps(x_0, \tx_0)<1$, it follows that
\begin{equation*}
    d_\eps(X_j, \tX_j) = d_\eps(\rho^2 x_0, \rho^2 \tx_0) = 1 \wedge \frac{\rho^2}{\varepsilon}\|x_0 - \tx_0\| = \rho^2 d_\eps(x_0, \tx_0),
    \quad j =1,\ldots, p.
\end{equation*}
Then, using the fact that $d_\eps \leq 1$ and the definition \eqref{eq:def:sj}, we have 
\begin{align}
   \Wass_{d_\varepsilon}(\mk^p(x_0, \cdot), \mk^p(\tx_0, \cdot)) &\leq  
    d_\eps(x_0, \tx_0) \E s_0 +  \rho^2 d_\eps(x_0, \tx_0) \E  \sum_{j = 1}^{p} \left(s_j - s_{j-1}\right) + \E (1 - s_p) \notag\\
    &= d_\eps(x_0, \tx_0)\E  \left( \har_0 +  \rho^2  \sum_{j = 1}^{p} \har_j \right) + \E (1 - s_p). \label{eq:contr1}
\end{align}
Now we treat the last term in \eqref{eq:contr1}. First observe that for any $(y_0, \ldots, y_p)$ and  $(\ty_0, \ldots, \ty_p)$ in $\qsp^{p+1}$
\begin{align}\label{ineq:sp}
	\sum_{j=0}^p | \ar_j (y_0, \ldots, y_p) - \ar_j (\ty_0, \ldots, \ty_p) |
    &= \sum_{j=0}^p \left| \frac{e^{-\Pot(y_j)}}{\sum_{k=0}^p e^{-\Pot(y_k)}} - \frac{e^{-\Pot(\ty_j)}}{\sum_{k=0}^p e^{-\Pot(\ty_k)}} \right| 
	\notag \\
	&= \sum_{j=0}^p \left|  \frac{e^{-\Pot(y_j)} \sum_{k=0}^p e^{-\Pot(\ty_k)}  -  e^{-\Pot(\ty_j)} \sum_{k=0}^p e^{-\Pot(y_k)} }{\sum_{k,l=0}^p e^{-\Pot(y_k)} e^{- \Pot(\ty_l)}}  \right| 
	\notag\\
	&\leq \frac{\sum_{j=0}^p \left| e^{-\Pot(y_j)} - e^{-\Pot(\ty_j)} \right| }{\sum_{k=0}^p e^{-\Pot(y_k)}}
	+
	\frac{\left| \sum_{k=0}^p (e^{-\Pot(y_k)} - e^{-\Pot(\ty_k)} )  \right| }{\sum_{k,l = 0}^p e^{-\Pot(y_k)} e^{- \Pot(\ty_l)}} \sum_{j=0}^p e^{-\Pot(\ty_j)} 
	\notag \\
	&\leq 2  \frac{\sum_{j=0}^p \left| e^{-\Pot(y_j)} - e^{-\Pot(\ty_j)} \right| }{\sum_{k=0}^p e^{-\Pot(y_k)}}.
\end{align}
By invoking the Mean Value theorem together with the boundedness and Lipschitzianity of $\Pot$, we have
\begin{align}\label{ineq:num}
	\left| e^{-\Pot(y_j)} - e^{-\Pot(\ty_j)} \right|
	\leq e^{\|\Pot\|_\infty} |\Pot(y_j) - \Pot(\ty_j)|
	\leq \,\, e^{\|\Pot\|_\infty} L_{\Pot} \|y_j - \ty_j\| .
\end{align}
Moreover,
\begin{align}\label{ineq:den}
	\sum_{k=0}^p e^{-\Pot(y_k)} \geq \sum_{k=0}^p e^{-\|\Pot \|_\infty} =(p+1) e^{-\|\Pot \|_\infty}.
\end{align}
Then
\begin{equation*}
    \sum_{j=0}^p | \ar_j (y_0, \ldots, y_p) - \ar_j (\ty_0, \ldots, \ty_p) | \leq \frac{2e^{2\|\Pot\|_\infty} L_{\Pot}  }{(p+1) }\sum_{j = 0 }^p \|y_j - \ty_j\|.
\end{equation*}
Recalling the definition of $s_p$ in \eqref{eq:def:sj}, we have 
\begin{align}
   \E \left( 1 - s_p \right) 
   &\leq \sum_{j=0}^p \E | \ar_j (x_0,X_1 \ldots, X_p) - \ar_j (\tx_0, \tX_1, \ldots, \tX_p) | \label{eq:mpcn:1-sp_2}\\
   &\leq  \frac{2e^{2\|\Pot\|_\infty} L_{\Pot}  }{(p+1) }\left( \|x_0 - \tx_0\| + \sum_{k = 1}^p \E \|X_k - \tX_k\|\right), \notag 
\end{align}
and using again \eqref{eq:mpcn:proposal1} yields
\begin{align}\label{eq:mpcn:1-sp}
	 \E \left( 1 - s_p \right) \leq C \frac{1 + \rho^2 p}{p+1} \|x_0 - \tx_0\| = C \frac{1 + \rho^2 p}{p+1} \varepsilon d_\eps(x_0, \tx_0) ,
\end{align}
for some constant $C = C(\| \Pot\|_\infty,L_{\Pot}) > 0$.

From \eqref{eq:contr1}, we then showed that 
\begin{align}\label{ineq:contr:fin:p:0}
    W_{d_\varepsilon} (P_p(x_0, \cdot), P_p(\tx_0, \cdot))  \leq   d_\eps(x_0, \tx_0)  \left[  C \frac{1 + \rho^2 p}{(p+1)} \varepsilon +  \E \left( \har_0 +  \rho^2  \sum_{j = 1}^{p} \har_j  \right) \right].
\end{align}

Note that, for any $(y_0, \ldots, y_p) \in \qsp^{p+1}$ and $j \in \{0,\ldots, p\}$,
\begin{align*}
	\alpha_j (y_0, \ldots, y_p) = \frac{e^{-\Pot(y_j)}}{\sum_{k=0}^p e^{-\Pot(y_k)}} \geq \frac{e^{-\| \Pot\|_\infty}}{(p+1) e^{ \| \Pot \|_\infty }}
	= \frac{e^{- 2 \| \Pot \|_\infty }}{p+1}.
\end{align*}
Thus, for any choice of $\Xi \sim \mu_0^{\otimes (p+1)}$,
\begin{align}\label{eq:bound:har}
	 \hat{\alpha}_j(x_0, \tx_0; \Xi) \geq \frac{e^{- 2 \| \Pot \|_\infty }}{p+1}, \quad j =0, \ldots, p,
\end{align}
and
\begin{align*}
	 \har_0 + \rho^2 \sum_{j=1}^p \har_j 
	&\leq 1 - \sum_{j=1}^p \har_j + \rho^2 \sum_{j=1}^p  \har_j 
	=  1 - (1 - \rho^2) \sum_{j=1}^p  \har_j
	\\
	&\leq 1 - (1 - \rho^2) \frac{p}{p+1} e^{-2 \| \Pot \|_\infty} =: C_p < 1.
\end{align*}
Plugging this estimate into \eqref{ineq:contr:fin:p:0}, we deduce \eqref{ineq:contr:fin:p:a}. The proof of \eqref{ineq:contr:fin:p:b} then follows by estimating
\begin{align}\label{ineq:epsilon:Cp}
	 C \frac{1 + \rho^2 p}{(p+1)} \varepsilon + C_p 
\leq C \frac{1+ \rho^2}{2} \varepsilon + 1 - \frac{(1 - \rho^2)}{2} e^{- 2 \| \Pot\|_\infty}
\end{align}
and selecting $\varepsilon > 0$ so that the right-hand side of \eqref{ineq:epsilon:Cp} is strictly less than $1$, namely
\begin{equation}
    \varepsilon < \frac{(1 - \rho^2) }{C(1 + \rho^2)}e^{-2 \| \Pot \|_\infty}.
\end{equation}

\end{proof}

Last we show that $P_p^n$ satisfies condition 2 in \cref{thm:our_harris} and $d$-smallness for $n$ large enough.
\begin{Proposition}\label{prop:mpcn:small}
	Fix $\varepsilon>0$, $p \geq 1$, and assume the potential function $\Pot: \qsp \to \R$ is bounded and globally Lipschitz with Lipschitz constant $L_\Pot$. Define $S= \lbrace V(x) \leq 4 K_V\rbrace $ with $V$ being any of the functions in \cref{prop:mpcn:lyap}, and let $r_s >0$ be such that $S \subset B(0, r_s)$. Then, for every $x_0, \tx_0\in S$, it holds
    \begin{equation}\label{eq:mpcn:small}
        \Wass_{d_\varepsilon} (P_p^n(x_0, \cdot), P_p^n(\tx_0, \cdot))\leq 1 - \left(\frac{pe^{-2\|\Pot\|_\infty}}{p+1}\right)^n\left( 1 - \frac{2r_s \rho^{2n}}{\varepsilon} \right) \quad n\in \N.
    \end{equation}
    Consequently, for any $n>  \frac{ \log 2r_s - \log \varepsilon}{- 2 \log \rho}$ and every $x_0, \tx_0\in S$, it holds 
    \begin{equation}\label{eq:mpcn:small:s}
     \Wass_{d_\varepsilon} (P_p^n(x_0, \cdot), P_p^n(\tx_0, \cdot))\leq s
    \end{equation}
    with $s = s(\varepsilon, \rho, p, n) \in (0,1)$.
\end{Proposition}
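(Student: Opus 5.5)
We construct an $n$-step coupling of $P_p^n(x_0,\cdot)$ and $P_p^n(\tx_0,\cdot)$ by iterating the one-step coupling from the proof of \cref{prop:contr:fin:p:1}. At each step the proposal clouds are driven by the same Gaussians $\xi_0^{(k)},\ldots,\xi_p^{(k)}$, and a single uniform $U^{(k)}$ drives the selection through the partition \eqref{eq:partition[0,1]}. The event we care about is
\[
A_n:=\bigcap_{k=1}^n\Big\{U^{(k)}\in \textstyle\bigcup_{j=1}^p[s_{j-1}^{(k)},s_j^{(k)})\Big\},
\]
namely that at every step both chains accept a proposal with the \emph{same} index $j\ge1$. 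On $A_n$ the synchronous structure \eqref{eq:mpcn:proposal1} gives $X^{(k)}-\tX^{(k)}=\rho^2(X^{(k-1)}-\tX^{(k-1)})$. Iterating, $\|X^{(n)}-\tX^{(n)}\|=\rho^{2n}\|x_0-\tx_0\|\le 2r_s\rho^{2n}$, since $x_0,\tx_0\in S\subset B(0,r_s)$. Therefore $d_\eps(X^{(n)},\tX^{(n)})\le 2r_s\rho^{2n}/\varepsilon$ on $A_n$. Off $A_n$ we use only the trivial bound $d_\eps\le1$.

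This gives
\[
\Wass_{d_\varepsilon}(P_p^n(x_0,\cdot),P_p^n(\tx_0,\cdot))\le \E d_\eps(X^{(n)},\tX^{(n)}) \le 1-\bbP(A_n)\Big(1-\tfrac{2r_s\rho^{2n}}{\varepsilon}\Big),
\]
so it remains to bound $\bbP(A_n)$ from below. Conditionally on the past and on the current cloud, the probability of the $k$-th event is $\sum_{j=1}^p\har_j^{(k)}$. By \eqref{eq:bound:har} this is at least $\frac{p}{p+1}e^{-2\|\Pot\|_\infty}$, uniformly in the states and in the noise, because the bound \eqref{eq:bound_alphaj} relies only on the boundedness of $\Pot$. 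Applying the tower property step by step then gives $\bbP(A_n)\ge (pe^{-2\|\Pot\|_\infty}/(p+1))^n$. If $1-2r_s\rho^{2n}/\varepsilon<0$, inequality \eqref{eq:mpcn:small} follows from $\Wass_{d_\eps}\le 1$ anyway, so that case is harmless.

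To conclude \eqref{eq:mpcn:small:s}, note that $n>\frac{\log 2r_s-\log\varepsilon}{-2\log\rho}$ is equivalent to $2r_s\rho^{2n}<\varepsilon$. Then the right side of \eqref{eq:mpcn:small} equals $s\in(0,1)$, since the base $pe^{-2\|\Pot\|_\infty}/(p+1)\in(0,1)$. The case $\rho=0$ is immediate: on $A_1$ the two chains coincide. Uniformity in $p$ comes from $p/(p+1)\ge1/2$. Note also that $r_s$ is independent of $p$, because $K_V$ is bounded uniformly in $p$ by \cref{prop:mpcn:lyap}.

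The main point needing care is making the iterated coupling rigorous: we must check that the pair of processes is a genuine Markov coupling, so that the marginals are $P_p^n(x_0,\cdot)$ and $P_p^n(\tx_0,\cdot)$, and that the conditional lower bound on matched acceptance holds at each step regardless of where the chains currently are. The coupling is well defined because \eqref{eq:mpcn:coupling} defines a one-step coupling kernel $\Pi((x,\tx),\cdot)$ for \emph{arbitrary} pairs $(x,\tx)$; the restriction $d_\eps<1$ in \cref{prop:contr:fin:p:1} was needed only for the distance estimate, not for the construction itself. Composing this kernel $n$ times gives the Markov coupling, and the lower bound \eqref{eq:bound:har} is state-independent, which settles the second point.
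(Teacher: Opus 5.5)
Your proof is correct and follows the paper's argument: you iterate the one-step coupling of \cref{prop:contr:fin:p:1}, restrict to the event that both chains select the same index $j\ge 1$ at every step (on which the distance contracts by $\rho^{2}$ per step), and bound the probability of this event from below by $\bigl(pe^{-2\|\Pot\|_\infty}/(p+1)\bigr)^n$ using the state-independent bound \eqref{eq:bound:har}. Your version is slightly more careful than the written proof in three places: you use the tower property with a uniform conditional lower bound, you handle the case $1-2r_s\rho^{2n}/\varepsilon<0$, and you state the threshold correctly as $2r_s\rho^{2n}<\varepsilon$.
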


\begin{proof}
    Let $x_0, \tx_0\in S$ and consider the same coupling \eqref{eq:mpcn:coupling} of $P(x_0, \cdot)$ and $P(\tx_0, \cdot)$ as constructed in the proof of \cref{prop:contr:fin:p:1}. Given $U^{(1)}\sim \mathcal{U}([0,1])$, $\Xi^{(1)} = (\xi_0^{(1)}, \ldots, \xi_p^{(1)})\sim \mu_0^{\otimes (p +1)}$, and the proposals $X^{(1)}_j$, $\tX^{(1)}_j$, $j = 1, \ldots, p$ as in \eqref{eq:mpcn:proposal1}, we set 
    \begin{equation*}
        \left(X^{(1)}, \tX^{(1)}\right) = \begin{cases}
            (x_0, \tx_0) \quad &\text{if } U^{(1)} \in [0, s_0^{(1)})\\
            (X^{(1)}_j, \tX^{(1)}_j) \quad &\text{if } U^{(1)} \in [s_{j-1}^{(1)}, s_j^{(1)})  \quad j =1, \ldots, p,
        \end{cases}
    \end{equation*}
    and, decorating the intervals in \eqref{eq:Jintervals} with the index 1, 
    \begin{equation*}
            \left(X^{(1)}, \tX^{(1)}\right) = \left(X^{(1)}_{k}, \tX^{(1)}_{l}\right) \quad \mbox{ if } U^{(1)} \in J^{(1)}_{k} \cap \tJ^{(1)}_{l}
    \end{equation*}
    for $k,l \in \{0,\ldots, p\} $ such that $J_{k}^{(1)} \cap \tJ^{(1)}_{l} \neq \emptyset$.

    Next, define 
    $$
    A^{(1)}=\left\lbrace U^{(1)}\in \left[s_0^{(1)}, s_p^{(1)}\right)\right\rbrace 
    $$
    the event in which two proposals with same index get accepted, so that there is an index $j= 1,\ldots, p$ such that 
    \begin{align*}
    \Wass_{d_\varepsilon} (\mk_p(x_0, \cdot), \mk_p(\tx_0, \cdot)) &\leq \E d\left( X^{(1)}, \tX^{(1)}\right) \\
    &= \E d\left( X^{(1)}, \tX^{(1)}\right) \mathbbm{1}_{A^{(1)}} + \E d\left( X^{(1)}, \tX^{(1)}\right) \mathbbm{1}_{(A^{(1)})^c} \\
    &= \E d\left(X_j^{(1)}, \tX_j^{(1)} \right) \mathbbm{1}_{A^{(1)}} + \E d\left( X^{(1)}, \tX^{(1)}\right) \mathbbm{1}_{(A^{(1)})^c}.
    \end{align*}
    Using the definition of the proposals \eqref{eq:mpcn:proposal1} and the fact that $d\leq 1$ by definition, we get
    \begin{align*}
          \Wass_{d_\varepsilon} (\mk_p(x_0, \cdot), \mk_p(\tx_0, \cdot)) &\leq d_\eps(\rho^2 x_0, \rho^2 \tx_0)\bbP\left(A^{(1)}\right) + 1 - \bbP\left(A^{(1)}\right).
    \end{align*}
    Now as $x_0, \tx_0\in S\subset B(0, r_s)$
     \begin{align*}
          \Wass_{d_\varepsilon} (\mk^p(x_0, \cdot), \mk^p(\tx_0, \cdot)) &\leq 1 - \bbP\left(A^{(1)}\right)\left( 1 - \frac{2r_s\rho^2}{\varepsilon}\right)
    \end{align*}
    and $\bbP\left(A^{(1)}\right) = \E \left( s_p^{(1)} - s_0^{(1)} \right)$. Observe that we cannot ensure that the bound on the right hand side is strictly smaller than one without imposing conditions, for example on $\rho$ to be very small. However, if we take enough steps of the chains, we expect the parameter to decrease and eventually be smaller than one as desired. We will now show this intuition by iterating the argument and constructing a coupling for $\mk_p^n(x_0, \cdot),\, \mk_p^n(\tx_0, \cdot)$. 
    
    Define $\Xi^{(n)} = (\xi^{(n)}_0, \ldots, \xi^{(n)}_p) \sim \mu_0^{\otimes (p+1)}$ independent of $\Xi^{(1)}, \ldots, \Xi^{(n-1)}$, and set the proposals at the $n$-th step as 
    \begin{align*}
        X_j^{(n)} = F(F(X^{(n-1)}, \xi_0^{(n)}), \xi_j^{(n)}) \quad 
        \text{and} \quad \tX_j^{(n)} = F(F(\tX^{(n-1)}, \xi_0^{(n)}), \xi_j^{(n)}).
    \end{align*}
    Then define the minimum of the acceptance probabilities
    \begin{align*}
	\hat{\alpha}_j^{(n)} &= \hat{\alpha}_j(X^{(n-1)}, \tX^{(n-1)}; \Xi^{(n)}) \\
	&= \min\{\alpha_j(X^{(n-1)}, X_1^{(n)}\ldots, X_p^{(n)}), \alpha_j(\tX^{(n-1)}, \tX_1^{(n)}\ldots, \tX_p^{(n)})\}, \quad j = 0, \ldots, p,
\end{align*}
and set, as in \eqref{eq:def:sj},
\begin{align}\label{eq:def:sj:n}
	s_j^{(n)} = s_j^{(n)}\left(x_0, \tx_0; \Xi^{(n)}\right) = \sum_{i=0}^j \hat{\alpha}^{(n)}_i, \quad j = 0, \ldots, p.
\end{align}
Now, given $U^{(n)}\sim \mathcal{U}([0,1])$, independent of $U^{(1)},\ldots, U^{(n-1)}$ and of $\Xi^{(1)}\ldots \Xi^{(n)}$, we set 
    \begin{equation*}
        \left(X^{(n)}, \tX^{(n)}\right) = 
        \begin{cases}
            (X^{(n-1)}, \tX^{(n-1)}) &\quad \text{if } U^{(n)} \in [0, s_0^{(n)})\\
            (X^{(n)}_j, \tX^{(n)}_j)& \quad \text{if } U^{(n)} \in [s_{j-1}^{(n)}, s_j^{(n)}), \quad j =1, \ldots, p
        \end{cases}
    \end{equation*}
    and 
    \begin{equation*}
        \left(X^{(n)}, \tX^{(n)}\right) =  \left(X^{(n)}_{k}, \tX^{(n)}_{l}\right) \quad \mbox{if } U^{(n)} \in J^{(n)}_{k} \cap \tJ^{(n)}_{l}
    \end{equation*}
    for $k,l \in \{0,\ldots, p\}$ such that $ J^{(n)}_{k} \cap \tJ^{(n)}_{l} \neq \emptyset$. 
    It can be verified that this is indeed a coupling of $P_p^n(x_0, \cdot)$ and $P_p^n(x_0, \cdot)$. 
    
    Next consider the event in which two proposals with same index get accepted at the $n$-th iteration, namely
    \[ A^{(n)}=\left\{ U^{(n)}\in \left[s_0^{(n)}, s_p^{(n)}\right)\right\} \]
     and define 
    \begin{equation*}
        \Lambda^{(n)} = \bigcap_{j = 1}^n A^{(j)}
    \end{equation*}
    the event for which, for $n$ steps in a row, the chains accept two proposal with the same index in $1, \ldots, p$, with this index possibly changing among the $n$ steps. Then we can write 
    \begin{align*}
    \Wass_{d_\varepsilon} (P_p^n(x_0, \cdot), P_p^n(\tx_0, \cdot)) &\leq \E d\left( X^{(n)}, \tX^{(n)}\right) \\
    &= \E d\left( X^{(n)}, \tX^{(n)}\right) \mathbbm{1}_{ \Lambda^{(n)}} + \E d\left( X^{(n)}, \tX^{(n)}\right) \mathbbm{1}_{\left(\Lambda^{(n)}\right)^c} \\
    &= d_\eps(\rho^{2n}x_0, \rho^{2n}\tx_0) \bbP( \Lambda^{(n)}) + 1 - \bbP( \Lambda^{(n)}).
    \end{align*}
    Again, as $x_0, \tx_0\in B(0, r_s)$, being in the small set, 
    \begin{align}\label{eq:mpcn:small1}
        \Wass_{d_\varepsilon} (P_p^n(x_0, \cdot), P_p^n(\tx_0, \cdot)) &\leq 1 - \bbP\left( \Lambda^{(n)}\right) \left( 1 - \frac{2r_s \rho^{2n}}{\varepsilon}\right). 
    \end{align}

    Let us now focus on estimating the probability of the event $\Lambda^{(n)}$. It is easy to see that 
    \begin{align*}
        \bbP\left( \Lambda^{(n)}\right) &= \bbP\left(A^{(1)}\right) \prod_{k = 2}^n\bbP\left( A^{(k)}  \left|\, \bigcap_{j =1}^{k-1}  A^{(j)}\right.\right) \\
        &=\prod_{k=1}^n \E \left( s_p^{(k)} - s_0^{(k)}\right) =    \prod_{k=1}^n \E \sum_{j = 1}^p \hat{\alpha}^{(k)}_j.
    \end{align*}
    Given the boundedness assumption on the potential $\Pot$, it follows, as in \eqref{eq:bound:har}, that
    \begin{equation*}
        \E \hat{\alpha}^{(k)}_j > \frac{e^{-2\|\Pot\|_\infty}}{p+1}
    \end{equation*}
    hence 
    \begin{equation}\label{eq:boundLambda}
        \bbP\left( \Lambda^{(n)}\right) > \left( \frac{pe^{-2\|\Pot\|_\infty}}{p+1}\right)^n.
    \end{equation}
    Plugging \eqref{eq:boundLambda} in \eqref{eq:mpcn:small1} we obtain \eqref{eq:mpcn:small}. The proof of \eqref{eq:mpcn:small:s} follows by choosing $n$ so that $ 2r_s \rho^{2n} > \varepsilon$. 
\end{proof}

    Thanks to \cref{prop:mpcn:lyap}, \cref{prop:contr:fin:p:1} and \cref{prop:mpcn:small} the assumptions of \cref{thm:our_harris} are fulfilled, giving proof of \cref{thm:3:mpcn1} for mpCN.

\subsubsection{Infinite number of proposals}\label{sec:proof_mpc_infinite}
We now want to prove \cref{thm:3:inftypcn}. Given the algorithmic parameters $\rho_1$, $\rho_2$ we define the functions, similarly to \eqref{eq:2:defF},
\begin{equation}\label{def:F:i}
    F_i (x, w) = \rho_i x + \sqrt{1 - \rho_i^2} w ,\quad i = 1,2
\end{equation}
so that $Q_i (x, dy) = F(x, \cdot)^*\mu_0(dy)$. Moreover, we set $\bar{F}_{12}(x, \cdot): \qsp^{2}\to \qsp $
\begin{equation*}
    \bar{F}_{12}(x, w_1, w_2) := F_1(F_2(x, w_2), w_1)
\end{equation*}
so that 
\begin{equation}\label{eq:infty:Qtrans}
    \int_\qsp  Q_1(z, dy) Q_2(x, dz) = \bar{F}_{12}(x, \cdot)^* \mu_0^{\otimes 2}(dy)
\end{equation}

We now follow the same pathway as the previous subsection, starting from the Lyapunov functions. 

\begin{Proposition}\label{prop:inftypcn:lyap}
    Assume the potential function $\Pot: \qsp \to \R$ is Lipschitz with constant $L_\Pot$. Then the functions $V(x) = \|x\|^n$, $n \in \N$, $V(x) = \exp(v\|x\|)$, $v>0$ and $V(x) = \exp(v\|x\|^2)$ with $v$ small enough, are Lyapunov functions as in \cref{def:lyapunov} for the $\infty$-pCN Markov kernel $P_\infty$.
\end{Proposition}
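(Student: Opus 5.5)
The plan is to exploit the explicit two-stage structure $P_\infty(x,dy) = \int \baQ_1(z,dy)\,Q_2(x,dz)$ together with the pushforward representation \eqref{eq:infty:Qtrans}. There is no rejection step, so, unlike \cref{prop:mpcn:lyap}, contraction of $V$ will come entirely from the deterministic factor $\rho_1\rho_2<1$ multiplying $x$. The only new difficulty is the reweighting by $e^{-\Pot}$, which is now unbounded.

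\emph{Step 1: reweighting density bounded uniformly in $z$.} Write $y = F_1(z,\xi_1) = \rho_1 z + \sqrt{1-\rho_1^2}\,\xi_1$ with $\xi_1\sim\mu_0$. Then $\baQ_1(z,\cdot)$ is the pushforward under $F_1(z,\cdot)$ of the measure $w_z(\xi_1)\,\mu_0(d\xi_1)$, where
\begin{equation*}
w_z(\xi_1) = \frac{e^{-\Pot(F_1(z,\xi_1))}}{\int e^{-\Pot(F_1(z,u))}\,\mu_0(du)}
= \frac{e^{-(\Pot(F_1(z,\xi_1)) - \Pot(\rho_1 z))}}{\int e^{-(\Pot(F_1(z,u)) - \Pot(\rho_1 z))}\,\mu_0(du)}.
\end{equation*}
Subtracting $\Pot(\rho_1 z)$ in numerator and denominator and using the Lipschitz bound $|\Pot(F_1(z,u)) - \Pot(\rho_1 z)|\le L_\Pot\sqrt{1-\rho_1^2}\,\|u\|$ gives
\begin{equation*}
w_z(\xi_1) \le c_0\, e^{a\|\xi_1\|}, \qquad a := L_\Pot\sqrt{1-\rho_1^2},\quad c_0 := \Big(\int e^{-a\|u\|}\,\mu_0(du)\Big)^{-1}<\infty .
\end{equation*}
Both constants are independent of $z$. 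This is the step where boundedness of $\Pot$ was previously used, and it is the main point to get right. I expect it to go through exactly as written, because only increments of $\Pot$ appear.

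\emph{Step 2: Young-type splitting as in \cref{prop:mpcn:lyap}.} With $\xi_1,\xi_2\sim\mu_0$ independent,
\begin{equation*}
P_\infty V(x) = \E\big[w_{F_2(x,\xi_2)}(\xi_1)\, V(Y)\big],\qquad Y = \rho_1\rho_2 x + \rho_1\sqrt{1-\rho_2^2}\,\xi_2 + \sqrt{1-\rho_1^2}\,\xi_1 .
\end{equation*}
For each choice of $V$ in \eqref{eq:choiceV}, the same Young/convexity inequalities used in \cref{prop:mpcn:lyap}, with $\rho^2$ replaced by $\rho_1\rho_2$ (and $\rho^4$ by $\rho_1^2\rho_2^2$), give
\begin{equation*}
V(Y)\le l_1 V(x) + G(\xi_1,\xi_2),\qquad l_1<1 .
\end{equation*}
Here $G$ is a polynomial, an $e^{c\|\cdot\|}$, or an $e^{c v\|\cdot\|^2}$ function of $\|\rho_1\sqrt{1-\rho_2^2}\,\xi_2 + \sqrt{1-\rho_1^2}\,\xi_1\|$.

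\emph{Step 3: take expectations.} Conditionally on $\xi_2$, the weight $w_z$ integrates to exactly $1$ against $\mu_0(d\xi_1)$. The first term therefore contributes exactly $l_1 V(x)$, with no loss. For the second term, Step 1 yields
\begin{equation*}
\E[w\,G] \le c_0\,\E\big[e^{a\|\xi_1\|}\,G(\xi_1,\xi_2)\big]\le c_0\,\big(\E e^{2a\|\xi_1\|}\big)^{1/2}\big(\E G^2\big)^{1/2} =: K_V,
\end{equation*}
which is independent of $x$. Each factor is finite by Fernique's theorem. For $V=e^{v\|x\|^2}$ this requires $v$ small enough that $\E G^2<\infty$, which is the only restriction on $v$. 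Hence $P_\infty V\le l_1 V + K_V$, and iterating as after \cref{def:lyapunov} gives the claim for all $n$. The degenerate cases $\rho_1\rho_2=0$ are immediate, since $Y$ is then independent of $x$ and $P_\infty V$ is a finite constant by the same bound.
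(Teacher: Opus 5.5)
Your proposal is correct and follows essentially the same route as the paper: the same pushforward representation with contraction factor $\rho_1\rho_2$, the same Young-type splitting, exact cancellation of the normalized weight on the $l_1 V(x)$ term, and the same $z$-uniform Lipschitz bound $c_0 e^{L_\Pot\sqrt{1-\rho_1^2}\,\|\xi_1\|}$ on the reweighting density (the paper's $M_1^{-1}e^{L_\Pot\sqrt{1-\rho_1^2}\,\|w_1\|}$). The only cosmetic difference is that you close the remaining integral with Cauchy--Schwarz, whereas the paper bounds $\iint G\,e^{L_\Pot\sqrt{1-\rho_1^2}\,\|w_1\|}$ directly, splitting the square to factor the Gaussian integrals in the $\exp(v\|x\|^2)$ case.
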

\begin{proof}
    For simplicity of notation we drop the decoration $\infty$ on the kernel and we denote $\sqrt{1-\rho_i^2}= \trho_i$, $i =1,2$. By definition of the kernel we write the quantity we desire to bound as 
    \begin{align}
        (PV)(x) &= \int V(y) \, P(x, dy) = \iint V(y) \frac{\exp(-\Pot(y))}{\int \exp(- \Pot(u)) Q_1(z, du)} \, Q_1(z, dy) Q_2(x, dz) \notag\\
        &= \iint V\left(\bar{F}_{12}(x, w_1, w_2)\right) \frac{\exp(-\Pot\left(\bar{F}_{12}(x, w_1, w_2)\right))}{\int \exp(- \Pot(\left(\bar{F}_{21}(x, \tu, w_2)\right))) \mu_0(\tu)} \, \mu_0(dw_1)\mu_0(dw_2).\label{eq:infty:lyap0}
    \end{align}
    where in the second line we used the relation \eqref{eq:infty:Qtrans}.
    
    We start by bounding $V\left(\bar{F}_{12}(x, w_1, w_2)\right)$ in \eqref{eq:infty:lyap0} for the three different candidates \eqref{eq:choiceV} for $V$. By Cauchy--Schwartz and Young's inequalities, for each $\delta>0$, there is $C_\delta>0$ such that
    \begin{equation*}
       \|\bar{F}_{12}(x, w_1, w_2)\|^n = \|\rho_1 \rho_2 x + \rho_1 \trho_2 w_2 + \trho_1 w_1\|^n \leq  (1 + \delta)  \|\rho_1 \rho_2 x\|^n + C_\delta\| \rho_1 \trho_2 w_2 + \trho_1 w_1\|^n.
    \end{equation*}
    Next, thanks to the triangular and Young's inequalities, for any $q > 1$
    \begin{align*}
        \exp(v \|\bar{F}_{12}(x, w_1, w_2) \|)
        &\leq \tfrac{1}{q}\exp\left(q v\rho_1\rho_2 \|x\|\right) + \tfrac{q-1}{q} \exp\left(\tfrac{qv}{q-1}\|\rho_1 \trho_2 w_2 + \trho_1 w_1 \|\right)\\
        & = \tfrac{1}{q}\exp\left(v\|x\|\right)\exp\left((v(q\rho_1\rho_2 -1)\|x\|\right) + \tfrac{q-1}{q} \exp\left(\tfrac{qv}{q-1}\|\rho_1 \trho_2 w_2 + \trho_1 w_1 \|\right).
    \end{align*}
    Similarly for an arbitrary $\delta>0$ there is $C_\delta>0$ such that for any $q>1$ 
    \begin{align*}
        \exp(v \|\bar{F}_{12}(x, w_1, w_2) \|^2)
        &\leq \tfrac{1}{q}\exp\left(q v( 1 + \delta) \rho_1\rho_2 \|x\|^2\right) + \tfrac{q-1}{q} \exp\left(\tfrac{qv  C_\delta}{q-1}\|\rho_1 \trho_2 w_2 + \trho_1 w_1 \|^2\right)\\
        & = \tfrac{1}{q}\exp\left(v\|x\|^2\right)\exp\left[v(q\rho_1\rho_2(1 + \delta) -1)\|x\|^2\right] + \tfrac{q-1}{q} \exp\left(\tfrac{qvC_\delta}{q-1}\|\rho_1 \trho_2 w_2 + \trho_1 w_1 \|^2\right).
    \end{align*}
    Then, for $\rho>0$, it follows that
    \begin{equation}\label{eq:infty:lyap1}
        V(\bar{F}_{12}(x, w_1, w_2) ) \leq l_1 V(x) + G(w_1, w_2)
    \end{equation}
 with $l_1<1$ and $G(w_1,w_2)$ defined as
    \begin{align}\label{eq:inftypcn:lyap:l1}
        l_1 &=  (1 + \delta)\rho_1^n\rho_2^n,\quad 0< \delta <  (\rho_1\rho_2)^{-n} -1; &&  G(w_1, w_2) = 
             C_\delta\|\rho_1 \trho_2 w_2 + \trho_1 w_1\|^n
            \\
         l_1 &=  q^{-1},\quad1< q < (\rho_1\rho_2)^{-1}; &&  G(w_1, w_2)=  \tfrac{q-1}{q} \exp\left(\tfrac{qv}{q-1}\|\rho_1 \trho_2 w_2 + \trho_1 w_1 \|\right)  \label{eq:inftypcn:lyap:l2}\\
         l_1 &=  q^{-1},\quad 1< q< (( 1 + \delta)\rho_1\rho_2)^{-1}, \, 0< \delta <  (\rho_1\rho_2)^{-n} -1; &&  G(w_1, w_2) =  \tfrac{q-1}{q} \exp\left(\tfrac{q v C_\delta}{q-1}\|\rho_1 \trho_2 w_2 + \trho_1 w_1 \|^2\right).\label{eq:inftypcn:lyap:l1:3}
    \end{align}
    for the three candidate Lyapunov functions respectively. 
Then, using \eqref{eq:infty:lyap1} in \eqref{eq:infty:lyap0} yields 
\begin{equation*}
    (PV)(x) \leq l_1 V(x) + \iint G(w_1, w_2) \frac{\exp(-\Pot\left(\bar{F}_{12}(x, w_1, w_2)\right))}{\int \exp(- \Pot(\left(\bar{F}_{21}(x, \tu, w_2)\right))) \mu_0(\tu)} \, \mu_0(dw_1)\mu_0(dw_2) = l_1 V(x) + \mathcal{I}_G.
\end{equation*}
We are now left to ensure $\mathcal{I}_G$ is suitably bounded to define $K_V>0$. 
First we note that by the Lipschitzianity of $\Pot$ we have 
    \begin{align}
      &\frac{\exp(-\Pot(\bar{F}_{12}(x, w_1, w_2)))}{\int \exp(-\Pot(\bar{F}_{12}(x, \tu, w_2)))\mu_0(d\tu)} \notag\\
        &\quad = \left( \int \exp(\Pot(\rho_1 \rho_2 x + \rho_1 \trho_2 w_2 + \trho_1 w_1) -\Pot(\rho_1 \rho_2 x + \rho_1 \trho_2 w_2 + \trho_1\tu))\mu_0(d\tu)\right) ^{-1} \notag\\
       &\quad \leq \left( \int \exp(-L_\Pot \trho_1 \|w_1 - \tu \| )\mu_0(d\tu)\right)^{-1} \leq \frac{1}{M_1}\exp(L_\Pot \trho_1 \|w_1\|)
        \end{align}
    where in the last inequality we set $ M_1 = \int \exp(-L_\Pot \trho_1 \|\tu \| )\mu_0(d\tu)$. Therefore 
    \begin{equation*}
        \mathcal{I}_G \leq \frac{1}{M_1}\iint G(w_1, w_2) \exp(L_\Pot \trho_1 \|w_1\|) \, \mu_0(dw_1)\mu_0(dw_2).
    \end{equation*}
For $G$ as in \eqref{eq:inftypcn:lyap:l1}-\eqref{eq:inftypcn:lyap:l2} it is easy to see that this integral is well defined for any choice of the parameters since $\mu_0$ is Gaussian, so algebraic and exponential moments are finite.
For the last definition of $G$ in \eqref{eq:inftypcn:lyap:l1:3} we have to ensure to be able to use Fernique's theorem. Indeed
  \begin{align}
       \mathcal{I}_G &= \frac{q-1}{q M_1} \iint \exp\left(\tfrac{qv C_\delta }{q-1}\|\rho_1 \trho_2 w_2 + \trho_1 w_1 \|^2\right) \exp( L\trho_1\|w_1 \| )\,\mu_0(dw_1) \,  \mu_0 (dw_2)\notag\\
     &\leq 
     \frac{q-1}{q M_1} \int \exp\left(\tfrac{2qC_\delta (\rho_1 \trho_2)^2}{q-1}v \|w_2\|^2 \right)\, \mu_0(dw_2) 
     \int \exp\left( \tfrac{2qC_\delta \trho_1^2}{q-1}v\|w_1\|^2+ L_\Pot\trho_1\| w_1 \|\right) \,\mu_0 (dw_1) , \label{eq:infty:lyap:ex2}
  \end{align}
  so choosing the parameter $v$ small enough to ensure the finiteness of the Gaussian integrals in \eqref{eq:infty:lyap:ex2} we have the desired result for $\rho_1, \rho_2$ positive. With similar and simpler arguments the result holds also for $\rho_1 = 0$ and/or $\rho_2 = 0$. 
\end{proof}

We continue by establishing the $d$-contraction for close enough initial points. To construct a coupling of $P_\infty(x,\cdot)$, $P_\infty(\tx, \cdot)$ for different $x, \tx$ in $\qsp$ we start by using \cref{lemma:AR_coupling}.
\begin{Lemma}\label{lemma:eta_coupling}
   Consider $\baQ_1(z, \cdot)$ and $\baQ_1(\tz, \cdot)$ as in \eqref{eq:2:baQ}, namely 
   \begin{equation}\label{eq:4:baQ}
    \baQ_1(z, dy):= \dfrac{\exp(-\Pot(y)) Q_1(z, d y)}{\int \exp(-\Pot(u)) Q_1(z, d u) }.
    \end{equation}
    Define the function $T_{\bz}: \qsp \to \qsp$, with $\bz = ( z , \tz)$, as 
    \begin{equation}\label{eq:Tz}
        T_{\bz}(y) := y - \rho_1(z - \tz) .
    \end{equation}
   Then the measure $ \pi_\beta(\bz,\, \cdot)$ on $\qsp^2$ defined as 
    \begin{equation}\label{eq:ARcoupling_eta}
     \pi_\beta(\bz; dy, d\ty) = \int_{\qsp^2} \left[\beta(\bz; w, T_{\bz}(y))\,\delta_{T(y)}(d\ty) + (1 - \beta(\bz; w, T_{\bz}(y)))\,\delta_{w}(d\ty)\right]\, \baQ_1(z, dy)\baQ_1(\tz, dw)
\end{equation}
with  
\begin{equation}\label{eq:acceptanceAR}
       \beta(\bz; w, T_{\bz}(y)) = 1 \wedge \frac{\exp(-\Pot(w) + \Pot(T_{\bz}^{-1}(w)))}{\exp(-\Pot(T_{\bz}(y)) + \Pot(y))}.
    \end{equation}
is a coupling of $\baQ_1(z, \cdot)$ and $\baQ_1(\tz, \cdot)$.
\end{Lemma}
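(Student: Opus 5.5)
The plan is to apply \cref{lemma:AR_coupling} with $\mu_1 = \baQ_1(z,\cdot)$, $\mu_2 = \baQ_1(\tz,\cdot)$ and $T = T_{\bz}$. Then \eqref{eq:ARcoupling_eta} is exactly \eqref{eq:general-coupling}, so the only work is to check the hypotheses of the proposition and to identify its acceptance function \eqref{eq:beta} with \eqref{eq:acceptanceAR}.

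\emph{Hypotheses.} The map $T_{\bz}$ is a translation by the vector $h := \rho_1(z-\tz)$, so it is a bijection with inverse $T_{\bz}^{-1}(w) = w + h$. Since $Q_1(z,\cdot) = \cN(\rho_1 z, (1-\rho_1^2)\cC)$, we have $T_{\bz}^* Q_1(z,\cdot) = Q_1(\tz,\cdot)$, because the mean $\rho_1 z$ is shifted to $\rho_1\tz$. Pushing forward the reweighted measure then gives
\[
T_{\bz}^*\baQ_1(z,dw) = \frac{\exp(-\Pot(T_{\bz}^{-1}(w)))}{\int \exp(-\Pot(u))\,Q_1(z,du)}\, Q_1(\tz,dw).
\]
The measure $\baQ_1(\tz,dw)$ has density proportional to $\exp(-\Pot(w))$ with respect to the same reference measure $Q_1(\tz,\cdot)$. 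Both densities are strictly positive and finite, since $\Pot$ is real valued and the normalizations are finite and positive. For the normalizations we use that $\Pot$ is Lipschitz, so $e^{-\Pot}$ grows at most like $e^{L_\Pot\|u\|}$, which is integrable against a Gaussian by Fernique's theorem. Hence $T_{\bz}^*\baQ_1(z,\cdot)$ and $\baQ_1(\tz,\cdot)$ are equivalent.

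\emph{Radon--Nikodym ratio.} Dividing the two densities, and writing $Z(z) = \int e^{-\Pot}\,dQ_1(z,\cdot)$, gives
\[
\frac{dT_{\bz}^*\baQ_1(z,\cdot)}{d\baQ_1(\tz,\cdot)}(w) = \frac{Z(\tz)}{Z(z)}\,\exp\big(-\Pot(T_{\bz}^{-1}(w)) + \Pot(w)\big).
\]
In \eqref{eq:beta} this ratio is evaluated at $u = w$ and its reciprocal at $v = T_{\bz}(y)$. The normalizing constants $Z(\tz)/Z(z)$ cancel in the product, leaving
\[
\frac{\exp\big(-\Pot(T_{\bz}^{-1}(w)) + \Pot(w)\big)}{\exp\big(-\Pot(y) + \Pot(T_{\bz}(y))\big)}.
\]

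\emph{Main obstacle: the orientation of the ratio.} This exponent is the reciprocal of the ratio displayed in \eqref{eq:acceptanceAR}. The cleanest resolution I expect is that the marginal computation in \cref{lemma:AR_coupling} only uses the symmetry identity $\beta(u,w)\,\mu_2(du)\,T^*\mu_1(dw) = \beta(w,u)\,T^*\mu_1(du)\,\mu_2(dw)$. So I will re-derive the second-marginal identity directly with the stated $\beta$, checking which orientation makes $\beta(w,T(u))\,T^*\mu_1(du)\,\mu_2(dw)$ symmetric under swapping the two variables. I will fix the convention so that the acceptance is the minimum of one and the product $\frac{d\mu_2}{dT^*\mu_1}(w)\cdot\frac{dT^*\mu_1}{d\mu_2}(T(y))$. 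If the printed formula instead corresponds to the other convention, I will note that it follows from the proposition applied with the roles of the two measures exchanged, together with the inverse map $T_{\bz}^{-1}$.

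With the orientation settled, the proposition gives the first marginal $\baQ_1(z,\cdot)$ immediately. The second marginal follows from the change-of-variables computation in the proposition's proof, with Gaussian translation invariance supplying $T_{\bz}^*Q_1(z,\cdot) = Q_1(\tz,\cdot)$.
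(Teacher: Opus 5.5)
Your route is the paper's: you apply \cref{lemma:AR_coupling} with $\mu_1=\baQ_1(z,\cdot)$, $\mu_2=\baQ_1(\tz,\cdot)$, $T=T_{\bz}$, and use $T_{\bz}^*Q_1(z,\cdot)=Q_1(\tz,\cdot)$ to get equivalence and the Radon--Nikodym derivative. Your computation is correct. So is your observation that \eqref{eq:beta}, evaluated at $(w,T_{\bz}(y))$, gives the reciprocal of \eqref{eq:acceptanceAR}. The paper's proof has the same inconsistency: it writes the product of derivatives in the order opposite to \eqref{eq:beta}, namely $\frac{d\baQ_1(\tz,\cdot)}{dT_{\bz}^*\baQ_1(z,\cdot)}(y)\,\frac{dT_{\bz}^*\baQ_1(z,\cdot)}{d\baQ_1(\tz,\cdot)}(\ty)$, and that is how it reaches the printed formula.

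The gap in your proposal is the resolution, which goes the wrong way. The convention you say you will fix, $1\wedge\frac{d\mu_2}{dT_{\bz}^*\mu_1}(w)\,\frac{dT_{\bz}^*\mu_1}{d\mu_2}(T_{\bz}(y))$, is exactly the printed ratio. It does \emph{not} satisfy the symmetry identity you quote, so the check you propose would refute it. The fallback does not help either. Applying \cref{lemma:AR_coupling} to $(\baQ_1(\tz,\cdot),\baQ_1(z,\cdot),T_{\bz}^{-1})$ produces a different measure, in which the accept/reject step acts on the $\baQ_1(z,\cdot)$-coordinate. Its acceptance, written in the same variables, is again the reciprocal of the printed one.

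In fact the lemma as printed is false in general, so no choice of convention can prove it. Put $r=dT_{\bz}^*\mu_1/d\mu_2$, so $r(v)\propto\exp\bigl(\Pot(v)-\Pot(T_{\bz}^{-1}(v))\bigr)$; this is bounded above and below because $\Pot$ is Lipschitz. Take any rule $\beta(w,v)$, with the independent draw $w\sim\mu_2$ in the first slot and the proposal $v=T_{\bz}(y)$ in the second. The second marginal of \eqref{eq:general-coupling} equals $\mu_2$ plus the signed measure with $\mu_2$-density
\[
F(v)=\int[\beta(w,v)r(v)-\beta(v,w)r(w)]\,\mu_2(dw).
\]
For $\beta(w,v)=1\wedge r(w)/r(v)$, which is \eqref{eq:beta}, the integrand vanishes. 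For the printed rule $\beta(w,v)=1\wedge r(v)/r(w)$ the integrand equals $(r(v)^2-r(w)^2)/(r(v)\vee r(w))$, and symmetrizing gives
\[
\int F\,r\,d\mu_2=\frac12\iint\frac{(r(v)-r(w))^2\,(r(v)+r(w))}{r(v)\vee r(w)}\,\mu_2(dv)\,\mu_2(dw)>0 .
\]
This is strictly positive unless $r$ is $\mu_2$-a.s.\ constant, i.e.\ unless $v\mapsto\Pot(v)-\Pot(v+\rho_1(z-\tz))$ is a.s.\ constant (e.g.\ $z=\tz$, $\rho_1=0$, or $\Pot$ affine).

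What your computation does prove is the corrected lemma, with
\[
\beta(\bz;w,T_{\bz}(y))=1\wedge\frac{\exp\bigl(-\Pot(T_{\bz}(y))+\Pot(y)\bigr)}{\exp\bigl(-\Pot(w)+\Pot(T_{\bz}^{-1}(w))\bigr)}.
\]
This is the independence-Metropolis rule with current state $w\sim\baQ_1(\tz,\cdot)$ and proposal $T_{\bz}(y)$, and with this $\beta$ your argument is complete. The correction is harmless downstream. The bounds $|\beta-1|\le 2L_{\Pot}\rho_1\|z-\tz\|$ and $\beta\ge e^{-2L_{\Pot}\rho_1\|z-\tz\|}$ used in \cref{prop:infty:contr,prop:inftypcn:small}, and in their reuse for $\infty$-MTpCN, depend only on the absolute value of the log-ratio, so they hold for either orientation.
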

\begin{proof}
We want to show that $T_{\bz}^*\baQ_1(z, \cdot)$ is equivalent to $\baQ_1(\tz, \cdot)$.  Given $\psi:\qsp\to \R$, using $F_1$ defined in \eqref{def:F:i} and, setting $\cI_{\tz} = \int \exp(-\Pot(u))Q_1(\tz, du)$, we have
    \begin{align*}
        \int \psi(y)\, \baQ_1(\tz, dy) &= \int \psi (y) \frac{\exp(-\Pot(y))}{\cI_{\tz}} \, Q_1(\tz, dy)= \int \psi(F_1(\tz, w)) \frac{\exp(-\Pot(F_1(\tz, w))}{\cI_{\tz}} \, \mu_0(dw).
    \end{align*} 
Note that by definition \eqref{eq:Tz} of $T_{\bz}$
\begin{equation*}
    F_1(\tz, w) = \rho_1 \tz + \trho_1 w = \rho_1 z + \trho_1 w - \rho_1(z - \tz) = T_{\bz} ( F_1(z , w))
\end{equation*}
so that 
\begin{align}
        \int \psi(y)\, \baQ_1(\tz, dy) &= \int \psi (T_{\bz} (y)) \frac{\exp(-\Pot(T_\bz (y)))}{\cI_{\tz}} \, Q_1(z, dy)     \label{eq:ar:coup:1}
\end{align}
and, recalling the definition of the kernel $\baQ_1(z, \cdot)$,
\begin{align*}
     \eqref{eq:ar:coup:1} &= \int \psi (y) \frac{\exp(-\Pot(T_{\bz}(y)))}{\cI_{\tz}} \frac{\cI_{z}}{\exp(-\Pot(y))}\, \baQ_1 (z, dy)\\
     &= \int \psi (y) \frac{\cI_{z}}{\cI_{\tz}} \exp(-\Pot(y) + \Pot(T_{\bz}^{-1}(y)))\, (T_{\bz}^*\baQ_1) (z, dy)
\end{align*}
where $T_{\bz}^{-1}(y) = y+ \rho_1(z - \tz)$. We then showed that $\baQ_1 (\tz, \cdot)$ is absolutely continuous with respect to $T_{\bz}^*\baQ_1(z, \cdot)$ and 
    \begin{equation*}
        \frac{d\baQ_1 (\tz, \cdot)}{d T_{\bz}^*\baQ_1(z, \cdot)}(y) =  \frac{\cI_{z}}{\cI_{\tz}} \exp(-\Pot(y) + \Pot(T_{\bz}^{-1}(y))).
    \end{equation*}
    A symmetric proof gives the opposite direction and corresponding Radon--Nikodym derivative
    \begin{equation*}
       \frac{d T_{\bz}^*\baQ_1(z, \cdot)}{d \baQ_1 (\tz, \cdot)}  (\ty) = \left( \frac{d\baQ_1 (\tz, \cdot)}{d T_{\bz}^*\baQ_1(z, \cdot)}\right)^{-1}(\ty) = \frac{\cI_{\tz}}{\cI_{z}} \exp(\Pot(\ty) - \Pot(T_{\bz}^{-1}(\ty))).
    \end{equation*}
    Then, thanks to \cref{lemma:AR_coupling}, the measure $\pi_\beta$ in \eqref{eq:ARcoupling_eta} with 
    \begin{equation*}
    \begin{split}
        \beta(\bz; y, \ty) &= 1 \wedge \frac{d\baQ_1 (\tz, \cdot)}{d T_{\bz}^*\baQ_1(z, \cdot)}(y)  \frac{d T_{\bz}^*\baQ_1(z, \cdot)}{d \baQ_1 (\tz, \cdot)}(\ty) =  1 \wedge \frac{\exp(-\Pot(y) + \Pot(T_{\bz}^{-1}(y)))}{\exp(-\Pot(\ty) + \Pot(T_{\bz}^{-1}(\ty)))}
        \end{split}
    \end{equation*}
    is a coupling of $\baQ_1(z, \cdot)$, $\baQ(\tz, \cdot)$. 
\end{proof}

\begin{Proposition}\label{prop:infty:contr}
	Fix $\varepsilon>0$ and assume the potential function $\Pot: \qsp \to \R$ is globally Lipschitz with Lipschitz constant $L_{\Pot}$. Then, for every $x, \tx \in \qsp$ with $d_\eps(x, \tx) < 1$, we have
	\begin{align}\label{ineq:contr:infty:a} 
		\Wass_{d_\varepsilon}(\mk_\infty(x, \cdot), \mk_\infty(\tx, \cdot)) 
		\leq  d_\eps(x, \tx) \rho_1\rho_2(1 + 2L_\Pot\varepsilon).
	\end{align}
    Consequently, for any fixed $\varepsilon$ satisfying $0 < \varepsilon < \frac{1}{2L_\Pot}\left(\frac{1}{\rho_1\rho_2}-1\right)$ and every $x, \tx \in \qsp$ with $d_\eps(x, \tx) < 1$, it holds
	\begin{align}\label{ineq:contr:infty:b} 
		\Wass_{d_\varepsilon}(\mk_\infty(x, \cdot), \mk_\infty(\tx, \cdot)) 
		\leq
		\kappa d_\eps(x, \tx),
	\end{align}
	with $\kappa = \kappa (\varepsilon, \rho_1, \rho_2) \in (0,1)$.
\end{Proposition}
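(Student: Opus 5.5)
We build a coupling of $P_\infty(x,\cdot)$ and $P_\infty(\tx,\cdot)$ in two layers, following the structure $P_\infty(x,dy)=\int \baQ_1(z,dy)Q_2(x,dz)$. First, we couple the outer step synchronously. We draw $\xi_2\sim\mu_0$ and set $Z=F_2(x,\xi_2)$ and $\tZ=F_2(\tx,\xi_2)$, so that $Z-\tZ=\rho_2(x-\tx)$ deterministically. Conditionally on $(Z,\tZ)$, we then couple $\baQ_1(Z,\cdot)$ and $\baQ_1(\tZ,\cdot)$ through the accept--reject coupling $\pi_\beta(\bz;\cdot,\cdot)$ of \cref{lemma:eta_coupling}, with $\bz=(Z,\tZ)$. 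Concretely, by \cref{rem:couplingRV} we draw $Y\sim\baQ_1(Z,\cdot)$, an independent $W\sim\baQ_1(\tZ,\cdot)$ and $U\sim\cU(0,1)$. We set $\tY=T_{\bz}(Y)=Y-\rho_1(Z-\tZ)$ when $U\le\beta(\bz;W,T_{\bz}(Y))$, and $\tY=W$ otherwise. Integrating over $\xi_2$ produces a coupling of $P_\infty(x,\cdot)$ and $P_\infty(\tx,\cdot)$.

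Next we estimate $\E d_\eps(Y,\tY)$ by splitting into the accept and reject events. On acceptance, $\|Y-\tY\|=\rho_1\rho_2\|x-\tx\|$. Since $d_\eps(x,\tx)<1$, this gives $d_\eps(Y,\tY)=\rho_1\rho_2 d_\eps(x,\tx)$. On rejection we only use $d_\eps\le1$. Hence
\[
\Wass_{d_\eps}\le \rho_1\rho_2\, d_\eps(x,\tx)+\E\big[1-\beta(\bz;W,T_{\bz}(Y))\big].
\]

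The main task, and the expected obstacle, is to bound the rejection probability by $2L_\Pot\rho_1\rho_2\|x-\tx\|$. By \eqref{eq:acceptanceAR}, the ratio inside $\beta$ is
\[
\exp\big(-\Pot(W)+\Pot(T_{\bz}^{-1}W)+\Pot(T_{\bz}Y)-\Pot(Y)\big).
\]
Both $T_{\bz}^{-1}W-W$ and $T_{\bz}Y-Y$ have norm $\rho_1\|Z-\tZ\|=\rho_1\rho_2\|x-\tx\|$. Lipschitz continuity of $\Pot$ therefore bounds the exponent from below by $-2L_\Pot\rho_1\rho_2\|x-\tx\|$. Using $1-(1\wedge e^{-a})\le a$, we get
\[
1-\beta\le 2L_\Pot\rho_1\rho_2\|x-\tx\|=2L_\Pot\rho_1\rho_2\eps\, d_\eps(x,\tx)
\]
pointwise, with no need for boundedness of $\Pot$. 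The delicate point is to check that the normalizing constants $\cI_z,\cI_{\tz}$ really cancel in $\beta$, so that only potential differences at shifted points remain. They do, since \cref{lemma:eta_coupling} already gives $\beta$ in that form. Adding the two contributions yields \eqref{ineq:contr:infty:a}.

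Finally, \eqref{ineq:contr:infty:b} follows by choosing $\eps<\frac{1}{2L_\Pot}\big(\frac{1}{\rho_1\rho_2}-1\big)$, which makes $\kappa=\rho_1\rho_2(1+2L_\Pot\eps)<1$. In the degenerate case $\rho_1\rho_2=0$, the shift vanishes, so $\beta=1$ and $\kappa=0$ trivially.
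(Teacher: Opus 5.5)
Your proposal is correct and follows the paper's proof essentially step for step: a synchronous coupling of the outer $Q_2$ step, the accept--reject coupling of \cref{lemma:eta_coupling} for the inner $\baQ_1$ step, exact contraction by $\rho_1\rho_2$ on acceptance, and the pointwise Lipschitz bound $1-\beta\le 2L_\Pot\rho_1\rho_2\|x-\tx\|$ on rejection. One remark: checked against \cref{lemma:AR_coupling} (where $\beta(u,v)=1\wedge r(u)/r(v)$ with $r=dT_{\bz}^*\baQ_1(z,\cdot)/d\baQ_1(\tz,\cdot)$), the ratio in \eqref{eq:acceptanceAR} that you copy is inverted---the acceptance probability that actually yields a coupling is $1\wedge \exp\bigl(-\Pot(T_{\bz}(y))+\Pot(y)+\Pot(w)-\Pot(T_{\bz}^{-1}(w))\bigr)$---but your bound on $1-\beta$ uses only the absolute value of the exponent, so the argument and the constant are unaffected.
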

\begin{proof}
 Set the notations $ \bx = (x, \tx)$, for the initial points, and  $\by = (y, \ty)$,  $\bz = (z, \tz)$ for some auxiliary variables. Let $\pi_s(\bx;\, d\bz)$ be the synchronous coupling of $Q_2(x, dz)$ and $Q_2(\tx, d\tz)$ and $\pi_\beta(\bz;\, d\by)$ be as in \eqref{eq:ARcoupling_eta}. Then the measure
 $\pi_\infty(\bx, \cdot)$ on $\qsp^2$ defined as
    \begin{equation}
    \label{eq:ARcontrol_P}
        \pi_\infty(\bx;\, d\by) := \iint \pi_\beta(\bz;\, d\by)\,  \pi_s(\bx;\, d\bz) 
    \end{equation}
is a coupling of $P_\infty(x, \cdot)$ and $P_\infty(\tx, \cdot)$. As highlighted in \cref{rem:couplingRV}, we can also express the coupling $\pi_\beta(\bz;\, d\by)$ as the couple of random variables $(Y, \tY)$ where $Y\sim \baQ_1(z, \cdot)$ and $\tY\sim \baQ_1(\tz, \cdot)$ is as in \eqref{eq:couplingRV}, namely 
    \begin{align*}
        \tY  = \mathbbm{1}_{U\leq \beta(\bz;\hat{Y}, T_{\bz}(Y))} T_{\bz}(Y) + \mathbbm{1}_{U > \beta(\bz;\hat{Y}, T_{\bz}(Y))} \hat{Y},
    \end{align*}
    with $\hat{Y}\sim \baQ_1(\tz, \cdot)$, drawn independently of~$Y$, and $U\sim \cU(0,1)$.
    Then 
    \begin{equation*}
          W_{d_\varepsilon}(P_\infty(x, \cdot ), P_\infty(\tx, \cdot)) \leq \int d_\eps(y, \ty) \, \pi_\infty(\bz; d\by) = \int \E\, d_\eps(Y, \tY) \, \pi_s(\bx;\, d\bz). 
    \end{equation*}
    By definitions of $\tY$ and $T_{\bz}$, \eqref{eq:Tz}, and using the fact that $d\leq 1$
    \begin{align}
      \E\, d_\eps(Y, \tY) &= \E d_\eps(Y, T_{\bz}(Y))\mathbbm{1}_{U\leq \beta(\bz;\hat{Y}, T_{\bz}(Y))} +  
        \E d_\eps(Y, \hat{Y}) \mathbbm{1}_{U > \beta(\bz;\hat{Y}, T_{\bz}(Y))} \notag \\
        &\leq  d_\eps(\rho_1 z, \rho_1\tz)\bbP(U\leq \beta(\bz;\hat{Y}, T_{\bz}(Y)))  +  \bbP(U > \beta(\bz;\hat{Y}, T_{\bz}(Y)))\notag \\
       &= d_\eps(\rho_1 z, \rho_1\tz) \E \beta(\bz;\hat{Y}, T_{\bz}(Y))) + 1 - \E \beta(\bz;\hat{Y}, T_{\bz}(Y))).\label{eq:infty:contr0}
    \end{align}
     As the function $\Pot$ is Lipschitz with constant $L_\Pot$, then the following upper bound holds for any choice of $\bz, \by$
    \begin{align}
         |\beta(\bz; y, \ty) - 1 | &= \left|  1 \wedge \frac{\exp(-\Pot(y) + \Pot(y + \rho_1(z - \tz)))}{\exp(-\Pot(\ty) + \Pot(\ty+ \rho_1(z - \tz)))} -1\right| \nonumber\\
         &\leq |-\Pot(y) + \Pot(y + \rho_1(z - \tz)| +| \Pot(\ty) - \Pot(\ty+ \rho_1(z - \tz))|
         \leq 2L_\Pot\rho_1 \|z - \tz\|.\label{eq:infty:beta-1}
    \end{align}
    Therefore, as $\beta\leq 1$
    \begin{align*}
       \E d_\eps(Y, \tY) \leq d_\eps(\rho_1 z, \rho_1\tz)  + 2L_\Pot\varepsilon \rho_1 \frac{\|z - \tz\|}{\varepsilon}
    \end{align*}
    and recalling that $\pi_s(\bx;\, d\bz)$ is the synchronous coupling of $Q(x, dz)$ and $Q(\tx, d\tz)$
    \begin{align*}
         W_{d_\varepsilon}(P_\infty(x, \cdot ), P_\infty(\tx, \cdot)) &\leq \int d_\eps(\rho_1 z, \rho_1\tz)  + 2L_\Pot\varepsilon \rho_1 \frac{\|z - \tz\|}{\varepsilon} \, \pi_s(\bx;\, d\bz)  \\
          &= d_\eps(\rho_1\rho_2 x , \rho_1\rho_2 \tx) + 2L_\Pot\varepsilon \rho_1\rho_2 \frac{\|x - \tx\|}{\varepsilon}\\
         &= \rho_1\rho_2(1 + 2L_\Pot\varepsilon) d_\eps(x, \tx).
    \end{align*}
    where in the last equality we used that $d_\eps(x, \tx) <1$. We then pick $\varepsilon< \frac{1}{2L_\Pot}\left(\frac{1}{\rho_1\rho_2}-1\right)$ to derive \eqref{ineq:contr:infty:b}. 
\end{proof}

Last we will use the coupling we just constructed to show condition 2 in \cref{thm:our_harris} and obtain $d$-smallness. 
\begin{Proposition}\label{prop:inftypcn:small}
	Fix $\eps>0$ and assume the potential function $\Pot: \qsp \to \R$ is globally Lipschitz with constant $L_\Pot$. Define $S= \lbrace V(x) \leq 4 K_V\rbrace $ with $V$ any of the functions in \eqref{eq:choiceV}, and let $r_s >0$ be such that $S \subset B(0, r_s)$. Then, for every $x, \tx\in S$, it holds
    \begin{equation}\label{eq:inftypcn:small}
        \Wass_{d_\varepsilon} (P_\infty^n(x, \cdot), P_\infty^n(\tx, \cdot))\leq  1 - \exp\left(-2L_\Pot r_s \rho_1\rho_2(1 - \rho_1^n\rho_2^n)\right)\left(1 - \frac{r_s\rho_1^n\rho_2^n }{\varepsilon} \right),  \quad n\in \N.
    \end{equation}
    Consequently, for any $n>  \frac{\log\varepsilon - \log r_s}{ \log \rho_1\rho_2}$ and every $x, \tx\in S$, it holds 
    \begin{equation}\label{eq:inftypcn:small:s}
     \Wass_{d_\varepsilon} (P_\infty^n(x, \cdot), P_\infty^n(\tx, \cdot))\leq s
    \end{equation}
    with $s = s(\varepsilon, \rho, n) \in (0,1)$.
\end{Proposition}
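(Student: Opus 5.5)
I will mimic the proof of \cref{prop:mpcn:small}, iterating the one-step coupling $\pi_\infty$ from \cref{prop:infty:contr} $n$ times. Start from $x,\tx\in S\subset B(0,r_s)$. At step $k$ take independent $\xi^{(k)}_2\sim\mu_0$ for the synchronous outer move $Z^{(k)}=F_2(X^{(k-1)},\xi_2^{(k)})$, $\tZ^{(k)}=F_2(\tX^{(k-1)},\xi_2^{(k)})$. Then draw $Y^{(k)}\sim\baQ_1(Z^{(k)},\cdot)$, an independent $\hat Y^{(k)}\sim\baQ_1(\tZ^{(k)},\cdot)$ and a uniform $U^{(k)}$. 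Set $X^{(k)}=Y^{(k)}$, and let $\tX^{(k)}=T_{\bz^{(k)}}(Y^{(k)})$ if $U^{(k)}\le\beta^{(k)}$ and $\tX^{(k)}=\hat Y^{(k)}$ otherwise, as in \cref{lemma:eta_coupling} and \cref{rem:couplingRV}. By construction this Markovian coupling has marginals $P_\infty^n(x,\cdot)$ and $P_\infty^n(\tx,\cdot)$.

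Let $A^{(k)}=\{U^{(k)}\le\beta^{(k)}\}$ and $\Lambda^{(n)}=\bigcap_{k\le n}A^{(k)}$. On $\Lambda^{(n)}$ the differences evolve deterministically. From $\tX^{(k)}=X^{(k)}-\rho_1(Z^{(k)}-\tZ^{(k)})$ and $Z^{(k)}-\tZ^{(k)}=\rho_2(X^{(k-1)}-\tX^{(k-1)})$ we get
\[
X^{(k)}-\tX^{(k)}=\rho_1\rho_2(X^{(k-1)}-\tX^{(k-1)}),
\]
hence $\|X^{(n)}-\tX^{(n)}\|=(\rho_1\rho_2)^n\|x-\tx\|$. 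Since $d_\eps\le1$, this gives
\[
\Wass_{d_\eps}\le \bbP(\Lambda^{(n)})\tfrac{(\rho_1\rho_2)^n\|x-\tx\|}{\eps}+1-\bbP(\Lambda^{(n)}) .
\]
Writing $\|x-\tx\|\le r_s$, this matches the form of \eqref{eq:inftypcn:small}. I would interpret $B(0,r_s)$ so that $\|x-\tx\|\le r_s$, which amounts to adjusting $r_s$ by a factor of $2$ if necessary.

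The key step is a deterministic lower bound on $\bbP(\Lambda^{(n)})$. Instead of the linearized bound \eqref{eq:infty:beta-1}, use the exact form
\[
\beta=1\wedge\exp\big(-\Pot(w)+\Pot(T^{-1}w)+\Pot(T y)-\Pot(y)\big),
\]
and apply Lipschitzianity to each difference. This gives $\beta^{(k)}\ge\exp(-2L_\Pot\rho_1\|Z^{(k)}-\tZ^{(k)}\|)$. On $\Lambda^{(k-1)}$ we have $\|Z^{(k)}-\tZ^{(k)}\|=\rho_2(\rho_1\rho_2)^{k-1}\|x-\tx\|$, which is deterministic. Conditioning successively, exactly as in the product formula in the proof of \cref{prop:mpcn:small}, yields
\[
\bbP(\Lambda^{(n)})\ge\prod_{k=1}^n\exp\big(-2L_\Pot r_s\rho_1\rho_2(\rho_1\rho_2)^{k-1}\big)=\exp\Big(-2L_\Pot r_s\rho_1\rho_2\,\tfrac{1-(\rho_1\rho_2)^n}{1-\rho_1\rho_2}\Big).
\]
This is uniformly positive in $n$. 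Since the stated exponent lacks the factor $(1-\rho_1\rho_2)^{-1}$, I expect to have to either track the constant more carefully or accept an equivalent constant; this is the main point to check. The point of the argument is that Lipschitz bounds alone, with no boundedness of $\Pot$, keep every acceptance probability bounded away from zero.

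Finally, combining the two displays gives \eqref{eq:inftypcn:small}. For $n>\frac{\log\eps-\log r_s}{\log\rho_1\rho_2}$ we have $r_s(\rho_1\rho_2)^n<\eps$, so the factor $1-r_s(\rho_1\rho_2)^n/\eps$ is positive. The exponential factor is bounded below by $e^{-2L_\Pot r_s\rho_1\rho_2/(1-\rho_1\rho_2)}>0$, so the right-hand side is a constant $s<1$, which proves \eqref{eq:inftypcn:small:s}. The cases $\rho_1\rho_2=0$ are trivial, since then the coupling is exact after one step.
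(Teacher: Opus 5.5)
Your proposal is correct and follows the paper's proof essentially step by step. It uses the same iterated coupling (synchronous $Q_2$-step followed by the accept--reject coupling of \cref{lemma:eta_coupling}), the same event $\Lambda^{(n)}$ on which $X^{(k)}-\tX^{(k)}=\rho_1\rho_2(X^{(k-1)}-\tX^{(k-1)})$, and the same successive-conditioning lower bound $\bbP(\Lambda^{(n)})\ge\prod_{k=1}^n\exp(-2L_\Pot(\rho_1\rho_2)^k\|x-\tx\|)$.

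The constant discrepancy you flag is real, and it lies in the paper's own last step. That step asserts $\exp\bigl(-2L_\Pot r_s\sum_{k=1}^n(\rho_1\rho_2)^k\bigr)\ge\exp\bigl(-2L_\Pot r_s\rho_1\rho_2(1-\rho_1^n\rho_2^n)\bigr)$. Since $\sum_{k=1}^n(\rho_1\rho_2)^k=\rho_1\rho_2(1-\rho_1^n\rho_2^n)/(1-\rho_1\rho_2)$, this inequality actually goes the other way. The paper also silently replaces $\|x-\tx\|\le 2r_s$ by $r_s$. So your constant, with the factor $(1-\rho_1\rho_2)^{-1}$ and $r_s$ read as a bound on $\|x-\tx\|$, is the one the argument actually yields, and it is all that \eqref{eq:inftypcn:small:s} requires.
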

\begin{proof}
   We denote $\trho_i = \sqrt{1 - \rho_i^2}$, $i = 1,2$ and construct an iterative argument. We start by noticing that the coupling  $\pi_\infty(\bx; \cdot)$, defined in \eqref{eq:ARcontrol_P}, can also be written as follows: given two independent random variables $\xi_0\sim \mu_0$ and $U^{(1)}\sim \cU([0,1])$, set 
\begin{equation*}
    Z^{(1)} = F_2(x, \xi_0) = \rho_2 x + \trho_2 \xi_0, \quad \tZ^{(1)} = F_2(\tx, \xi_0) = \rho_2 \tx + \trho_2 \xi_0, \label{def:Z_1}
\end{equation*}
so that $\bZ^{(1)} = (Z^{(1)}, \tZ^{(1)}) \sim \pi_s(\bx; \cdot)$. Then, draw independently $Y^{(1)}\sim \baQ_1(Z^{(1)}, \cdot)$ and $\hat{Y}^{(1)} \sim \baQ_1(\tZ^{(1)}, \cdot)$ and, given the event 
\begin{equation*}
   A^{(1)}= \left\lbrace U^{(1)}\leq \beta\left(\hat{Y}^{(1)}, T_{\bZ^{(1)}}\left(Y^{(1)}\right)\right)\, \right\rbrace,
\end{equation*}
define
\begin{equation*}
    \tY^{(1)} =   T_{\bZ^{(1)}}(Y^{(1)}) \mathbbm{1}_{A^{(1)}}+ \hat{Y}^{(1)} \mathbbm{1}_{ (A^{(1)})^c}.
\end{equation*}
Consequently $\left(Y^{(1)}, \tY^{(1)}\right)\sim \pi_\infty(\bx, \cdot)$. Then, similarly to the argument carried out to obtain \eqref{eq:infty:contr0}, it follows that
    \begin{align*}
         W_{d_\varepsilon}(P_\infty(x, \cdot ), P_\infty(\tx, \cdot)) & \leq \E d_\eps(Y^{(1)}, T_{\bZ^{(1)}}(Y^{(1)}))\mathbbm{1}_{A^{(1)}} + \E d_\eps(Y^{(1)}, \hat{Y}^{(1)})\mathbbm{1}_{A^{(1)}} \\
        & \leq  \E d_\eps(\rho_1 Z^{(1)}, \rho_1 \tZ^{(1)}) \mathbbm{1}_{A^{(1)}} +  1 - \bbP(A^{(1)})\\
         &=  1 - \bbP(A^{(1)}) (1 - d_\eps(\rho_1 \rho_2 x, \rho_1 \rho_2 \tx)).
    \end{align*}
    By the definition \eqref{eq:acceptanceAR} of the acceptance probability $\beta$, and the the Lipschitzianity of $\Pot$,
    \begin{align*}
        \E \beta(\bZ; \hat{Y}, T_{\bZ}(Y)))
        &= \E 1 \wedge \frac{\exp(-\Pot(\hat{Y}) + \Pot(T_{\bZ}^{-1}(\hat{Y})))}{\exp(-\Pot(T_{\bZ}(Y)) + \Pot(Y))} )\\
        &\geq \E  \exp(-2L\rho_1\|Z- \tZ\|) = \exp(-2L\rho_1\rho_2\|x- \tx\|) .
    \end{align*}
   Last, as $x, \tx\in S\subset B(0, r_s)$ then $\|x - \tx\|\leq 2r_s$ and we have 
    \begin{equation*}
         W_{d_\varepsilon}(P_\infty(x, \cdot ), P_\infty(\tx, \cdot)) \leq  1 -  \exp\left(- 2L\rho_1\rho_2 r_s\right)\left( 1 - \frac{\rho_1\rho_2 r_s}{\varepsilon}\right).
    \end{equation*}
    This is not sufficient to ensure $d_\eps$-smallness unless we impose conditions on the algorithmic parameters $\rho_1$, $\rho_2$. To avoid this requirements we can iterate the argument, constructing a coupling for $P_\infty^n(x, \cdot)$, $P_\infty^n(\tx, \cdot)$ for~$n>1$ in a similar fashion. 
    
    Draw $\xi^{(n-1)} \sim \mu_0$ independently of $\xi^{(0)}, \ldots, \xi^{(n-2)}$ and define $\bZ^{(n)} =(Z_n, \tZ_n)$ to be the synchronous coupling of $Q_2(Y_{n-1},\cdot )$ and $Q_2(\tY_{n-1}, \cdot)$, namely 
\begin{align*}
    & Z^{(n)}= F_2(Y^{(n-1)},\xi^{(n-1)}), \quad \tZ^{(n)} =F_2(\tY^{(n-1)},\xi^{(n-1)}). 
\end{align*}
Drawn independently $Y_n\sim \baQ_1(Z_n, \cdot)$ and $\hat{Y}_n\sim \baQ_1(\tZ^{(n)}, \cdot)$ and, for $U^{(n)} \sim \cU([0,1])$ independent of $U^{(1)}, \ldots, U^{(n-1)}$, define the event 
\begin{equation*}
   A^{(n)} = \lbrace U^{(n)}\leq \beta(\hat{Y}^{(n)},T_{\bZ^{(n)}}(Y^{(n)}))\rbrace.
\end{equation*}
Then it is easy to see that $(Y^{(n)}, \tY^{(n)})$ with 
\begin{align}
    &\tY_n = T_{\bZ_n}(Y^{(n)})  \mathbbm{1}_{A^{(n)}}  +  \hat{Y}^{(n)}  \mathbbm{1}_{(A^{(n)})^c} \notag
\end{align}
is a coupling of $P_\infty^n(x, \cdot)$ and $P_\infty^n(\tx, \cdot)$. 

Next, define the event for which at at each iteration $j =1, \ldots, n$, the variable $\tY^{(j)}$ is exactly $T_{\bZ^{(j)}}(Y^{(j)})$, the shift of $Y^{(j)}$, namely
     \begin{equation*}
         \Lambda^{(n)}  := \bigcap_{j=1}^n A^{(j)}.
     \end{equation*}
    Thanks to these definitions, it follows
    \begin{align*}
        W_{d_\varepsilon}( P_\infty^n(x, \cdot ), P_\infty^n(\tx, \cdot)) &\leq 
        \E d_\eps(Y^{(n)}, \tY^{(n)}) \\
         &\leq \E d_\eps(Y^{(n)},T_{\bZ^{(n)}}(Y^{(n)}))\mathbbm{1}_{\Lambda^{(n)} } + 1 - \bbP(\Lambda^{(n)}) \notag\\
        & \leq  1 - \bbP(\Lambda^{(n)} )\left( 1 - d_\eps(\rho_1^n\rho_2^n x, \rho_1^n\rho_2^n \tx)\right)\notag
    \end{align*}
    and, as $x$ and $\tx$ are assumed to be in the small set $S\subset B(0, r_s)$, 
     \begin{align}
          W_{d_\varepsilon}( P_\infty^n(x, \cdot ), P_\infty^n(\tx, \cdot)) \leq  1 - \bbP(\Lambda^{(n)} )\left( 1 - \frac{\rho_1^n\rho_2^n r_s}{\varepsilon}\right) \label{eq:W_small_infty}. 
     \end{align}
    As last step we have to ensure that $\Lambda^{(n)}$ has positive probability:
\begin{align*}
    \bbP(\Lambda^{(n)} )& = \bbP\left(A^{(1)}\right) \prod_{k=2}^n \bbP\left( A^{(k)}\, \left|\, \bigcap_{j=1}^{k-1} A^{(j)} \right.\right) \\
    &=\exp(-2L\rho_1 \rho_2 r_s) \prod_{k=2}^n \exp(-2L\rho_1^{n}\rho_2^n r_s) = \exp\left(-2Lr_s \sum_{k=1}^n\rho_1^{k}\rho_2^k\right)  \\
    &= \exp\left(-2Lr_s \left( \frac{1 - (\rho_1\rho_2)^{n+1}}{1 - \rho_1\rho_2}-1\right)\right)\geq \exp\left(-2Lr_s\rho_1\rho_2(1 - \rho_1^n\rho_2^n)\right).
\end{align*}
Therefore 
\begin{align*}
        W_{d_\varepsilon}( P_\infty^n(x, \cdot ), P_\infty^n(\tx, \cdot)) &\leq  1 - \exp\left(-2Lr_s\rho_1\rho_2(1 - \rho_1^n\rho_2^n)\right) \left( 1 - \frac{\rho_1^n\rho_2^n r_s}{\varepsilon}\right) =:s
    \end{align*}
   and, choosing $n > \frac{\log\varepsilon - \log r_s}{ \log \rho_1\rho_2}=:n^*$, then $s<1$.
\end{proof}

\subsection{Proofs for Multiple-Try pCN}\label{sec:proof_mtm}
In this section we will prove the results \cref{thm:3:mpcn1} for \cref{alg:mtpcn} and \cref{thm:3:inftyMTpcn} for the limit kernel $P_\infty$ \eqref{eq:2:mtminfty:kernel} of the multiple-try pCN. Again we do so ensuring all the conditions of the weak Harris theorem are fulfilled in a sequence of propositions. Given the structure of the kernels, the proofs will take elements from both the results for mpCN and $\infty$-pCN.

\subsubsection{Finite number of proposals}\label{sec:proof_mtm_finite}
\begin{Proposition}\label{prop:mtm:lyap}
     Given \cref{alg:mtpcn} with Markov kernel $P_p$ as in \eqref{eq:2:mtpcn:kernel} for fixed $p\geq 1$, assume the potential $\Pot:\qsp \to \R$ is globally bounded. Then the functions $V(x) = \|x\|^n$, $n \in \N$, $V(x) = \exp(v\|x\|)$, $v>0$ and $V(x) = \exp(v\|x\|^2)$ for $v$ small enough, are Lyapunov functions, as in \cref{def:lyapunov}, for the MTpCN Markov kernel $P_p$ \eqref{eq:2:mtpcn:kernel} with constants $l_V$ and $K_V$ independent of the number of proposals $p$.
\end{Proposition}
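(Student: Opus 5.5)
We follow the template of \cref{prop:mpcn:lyap}, writing the first MTpCN step through \eqref{eq:mtpcn:X1}. With $\cF$ the $\sigma$-algebra generated by $\xi_1,\ldots,\xi_p,\txi_1,\ldots,\txi_{p-1}$, we have
\begin{equation*}
PV(x_0) = \E\Big[\sum_{j=1}^p V(X_j)\,\beta_j(X_1,\ldots,X_p)\,\balpha\big(x_0,X_1,\ldots,X_p,Z^{(j)}_1,\ldots,Z^{(j)}_{p-1}\big)\Big] + V(x_0)\,\E\big[1-\balpha\big],
\end{equation*}
where $Z^{(j)}_k = \rho X_j + \rr\txi_k$. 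Here we condition on $\cF$ and on the selected index, exactly as in \eqref{eq:mpcn:lyap:111}. Since $X_j = \rho x_0 + \rr\xi_j$, the same Young/Cauchy--Schwarz bounds as in \cref{prop:mpcn:lyap}, with $\rho$ in place of $\rho^2$ and $\xi_j$ in place of $\rho\xi_0+\xi_j$, give
\begin{equation*}
V(X_j)\le l_1 V(x_0) + G(\xi_j),
\end{equation*}
with $l_1<1$ depending only on $\rho$ (and on $\delta,q$ as before). For instance, for $V=\|x\|^n$ one gets $l_1=(1+\delta)\rho^n$. Substituting this yields
\begin{equation*}
PV(x_0)\le V(x_0)\big[1-(1-l_1)\E\balpha\big] + \sum_{j=1}^p \E\big[G(\xi_j)\,\beta_j\,\balpha\big].
\end{equation*}

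The key uniform bounds come from the boundedness of $\Pot$. Every summand $e^{-\Pot(\cdot)}$ lies in $[e^{-\|\Pot\|_\infty},e^{\|\Pot\|_\infty}]$. The numerator of \eqref{eq:2:mtpcn:balpha} has $p$ terms and the denominator also has $p$ terms (one from $x_0$ and $p-1$ from the $Z_l$). Hence
\begin{equation*}
\balpha\ge 1\wedge e^{-2\|\Pot\|_\infty} = e^{-2\|\Pot\|_\infty},
\end{equation*}
with no $p$-dependence. This is the feature that makes MTpCN easier than one might fear: the numbers of terms match exactly. Likewise $\beta_j\le e^{2\|\Pot\|_\infty}/p$. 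Therefore the contraction factor is
\begin{equation*}
l_V = 1-(1-l_1)e^{-2\|\Pot\|_\infty} < 1,
\end{equation*}
and using $\balpha\le 1$,
\begin{equation*}
\sum_{j=1}^p\E\big[G(\xi_j)\beta_j\balpha\big] \le \frac{e^{2\|\Pot\|_\infty}}{p}\sum_{j=1}^p \E\, G(\xi_j) = e^{2\|\Pot\|_\infty}\int G\,d\mu_0 =: K_V .
\end{equation*}
Both constants are independent of $p$. The Gaussian moments $\int G\,d\mu_0$ are finite for the polynomial and $e^{v\|x\|}$ cases, and by Fernique's theorem for $e^{v\|x\|^2}$ once $v$ is small, exactly as in \cref{prop:mpcn:lyap}.

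The case $\rho=0$ is handled as in the end of \cref{prop:mpcn:lyap}: there $X_j=\xi_j$, so $l_1=0$ and $G=V$. The case $p=1$ is included automatically, since $\beta_1=1$ and the denominator of $\balpha$ reduces to $e^{-\Pot(x_0)}$.

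The only step requiring care is the conditioning. The weights $\beta_j$ and $\balpha$ depend on $\xi_j$ jointly with $G(\xi_j)$, so one cannot factor expectations. The plan avoids this by bounding $\beta_j\balpha$ pointwise by the deterministic constant $e^{2\|\Pot\|_\infty}/p$ before integrating, which removes the dependence entirely.
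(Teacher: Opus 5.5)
Your proposal is correct and follows essentially the same route as the paper. The shared steps are the pointwise bound $V(X_j)\le l_1V(x_0)+G(\xi_j)$, the $p$-uniform lower bound $\balpha\ge e^{-2\|\Pot\|_\infty}$ (from the matching count of $p$ terms in the numerator and denominator) for the contraction factor, and the bound $\beta_j\balpha\le e^{2\|\Pot\|_\infty}/p$ giving $K_V=e^{2\|\Pot\|_\infty}\int G\,d\mu_0$. Your write-up of the last step is in fact slightly cleaner than the paper's: the paper states it as an equality involving $\beta_j(\xi_1,\ldots,\xi_p)$, whereas what is meant is an inequality with $\beta_j(X_1,\ldots,X_p)$, obtained after discarding $\balpha\le 1$.
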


\begin{proof}
    Using the formulation \eqref{eq:mtpcn:X1} of the first step of the chain $X^{(1)}$ generated by $P_p$, with $\bX(p) = (X_1, \ldots, X_p)$ and $\bZ(p-1) = (Z_1, \ldots, Z_{p-1})$, we have
    \begin{align*}
        PV(x_0) &= \E V(X^{(1)}) = \E V(Y) \mathbbm{1}_{U \leq \balpha(x_0,\bX(p),\bZ(p-1))} +   V(x_0)\bbP(U > \balpha(x_0,\bX(p),\bZ(p-1)))\\
        &= \sum_{j = 1}^p \E V(X_j)  \mathbbm{1}_{\tU \in I_j}  \mathbbm{1}_{U\leq \balpha(x_0, \bX(p),\bZ(p-1))} + V(x_0)\left( 1 - \E \balpha(x_0, \bX(p),\bZ(p-1))\right),
    \end{align*}
    where $X_j = \rho x_0 + \sqrt{1 - \rho^2} \xi_j$, $j =1, \ldots, p$.
    If $V(x) = \|x\|^n$, $n\in \N$, we use as usual the bound $V(X_j) \leq (1 + \delta) \rho^n V(x_0) + C_\delta \|\xi_j\|^n$ for an arbitrary $\delta>0$, to get 
    \begin{align}
        PV(x_0) &\leq (1 + \delta) \rho^n V(x_0) \sum_{j= 1}^p \E  \mathbbm{1}_{U \in I_j}  \mathbbm{1}_{\tU\leq \balpha} +  C_\delta \sum_{j= 1}^p \E \|\xi_j\|^n \mathbbm{1}_{U \in I_j}  \mathbbm{1}_{\tU\leq \balpha} + V(x_0)\left( 1 - \E \balpha \right) \notag\\
        &= V(x_0) \left[ 1 - \left(1 - \rho^n(1 + \delta)\right) \E \balpha \right] + C_\delta \sum_{j= 1}^p  \E \|\xi_j\|^n \mathbbm{1}_{U \in I_j}  \mathbbm{1}_{\tU\leq \balpha}.\label{eq:mtm:lyap1}
    \end{align}
    As $\Pot$ is assumed bounded then, the acceptance probabilities $\balpha$ and $\beta_j$, $j = 1, \ldots, p$, defined in \eqref{eq:2:mtpcn:balpha} and \eqref{eq:2:mtpcn:beta}, are bounded as 
    \begin{equation*}
        \balpha \geq e^{-2\|\Pot\|_\infty} \quad \text {and}\quad \beta_j\leq p^{-1}e^{2\|\Pot\|_\infty}.
    \end{equation*}
    Then, for $\rho>0$, we set the arbitrary parameter $\delta$ to be such that $(1 + \delta) \rho^n<1$, namely $\delta < \rho^{-n}-1$, so that 
    \begin{equation*}
        1 - \left(1 - \rho^n(1 + \delta)\right) \E \balpha \leq 1 - \left(1 - \rho^n(1 + \delta)\right) e^{-2\|\Pot\|_\infty} =:l_V. 
    \end{equation*}
    Last we treat the second term in \eqref{eq:mtm:lyap1}
    \begin{align*}
        C_\delta \sum_{j =1}^p \E \|\xi_j\|^n \mathbbm{1}_{U \in I_j}  \mathbbm{1}_{\tU\leq \balpha} = \sum_{j =1}^p \E \E\left[\|\xi_j\|^n \mathbbm{1}_{U \in I_j}  \mathbbm{1}_{\tU\leq \balpha} \right|\left. \xi_j \right]\\
         = C_\delta \sum_{j = 1}^p \E \|\xi_j\|^n \beta_j(\xi_1, \ldots, \xi_p)
         \leq C_\delta e^{2 \|\Pot\|_\infty} \E \|\xi_1\|^n =: K_V,
    \end{align*}
    where we used that $\xi_j$ are i.i.d. Gaussians. Therefore
    \begin{equation*}
        PV(x_0)\leq l_V V(x_0) + K_V. 
    \end{equation*}

    With a similar argument we can show that $V(x) = \exp(v\|x\|)$ for any $v>0$, and $V(x) = \exp(v\|x\|^2)$, for $v$ small enough, are Lyapunov functions using bounds similar to those used in \cref{prop:mpcn:lyap}. 
    In these cases, then, setting $q\in (1, \rho^{-1})$ and $q \in (1, (1 + \delta)\rho^{-2})$, with $\delta< \rho^{-2} - 1$, we have 
    \begin{equation*}
        l_V = 1 - \left(1 - \frac{1}{q}\right) e^{-2\|\Pot\|_\infty} 
    \end{equation*}
    and 
    \begin{equation*}
        K_V = \frac{q-1}{q} e^{2 \|\Pot\|_\infty} \E \exp\left( \frac{q}{q-1}v\sqrt{1 - \rho^2}\|\xi_1\|\right) 
    \end{equation*}
    or, for $v$ appropriately small to use Fernique's theorem,
     \begin{equation*}
        K_V = \frac{q-1}{q} e^{2 \|\Pot\|_\infty} \E \exp\left( C_\delta \frac{q}{q-1}v\sqrt{1 - \rho^2}\|\xi_1\|^2\right), \quad C_\delta = 1 + \frac{1}{4\delta}, 
    \end{equation*}
    for the two functions respectively. For $\rho= 0$ the argument follows in a similar and even simpler way as the proposal are independent draws from the reference gaussian $X_j = \xi_j\sim \mu_0$.
\end{proof}

In the next proposition we establish contraction with respect to the distance-like function $d_\varepsilon$, defined in \eqref{eq:d_eps}, making use of the coupling constructed for mpCN in \cref{prop:contr:fin:p:1}.
\begin{Proposition}\label{prop:mtm:contr:fin:p:1}
	Given \cref{alg:mtpcn} with Markov kernel $P_p$ as in \eqref{eq:2:mtpcn:kernel} for fixed $p\geq 1$, assume the potential function $\Pot: \qsp \to \R$ is bounded and globally Lipschitz with Lipschitz constant $L_{\Pot}$. Fix $\varepsilon>0$, then, for every $x_0, \tx_0 \in \qsp$ with $d_\eps(x_0, \tx_0) < 1$, we have
	\begin{align}\label{ineq:mtm:contr:fin:p:a} 
		\Wass_{d_\varepsilon}(\mk_p(x_0, \cdot), \mk_p(\tx_0, \cdot)) 
		\leq  \kappa d_\eps(x_0, \tx_0),
	\end{align}
	with $\kappa=\kappa (\varepsilon, \rho , p, \|\Pot\|_\infty, L_\Pot) $ as in \eqref{eq:mtm:kappa}. Moreover, $\kappa \in (0,1)$ whenever $0 < \varepsilon < \frac{(1 - \rho)c_1}{\rho c_2 + C_p}$, with $C_p = c_3 + \frac{c_4}{p} + \frac{c_5(p-1)}{p}$ for some positive constants $c_i = c_i(\|\Pot\|_\infty, L_\Pot)$.
\end{Proposition}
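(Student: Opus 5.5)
I will build a coupling of $\mk_p(x_0,\cdot)$ and $\mk_p(\tx_0,\cdot)$ in three layers, following the representation \eqref{eq:mtpcn:X1}.

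\emph{Proposal layer.} I couple the first clouds synchronously, taking $X_j = F(x_0,\xi_j)$ and $\tX_j = F(\tx_0,\xi_j)$ with the same $\xi_j$. Then $\|X_j-\tX_j\| = \rho\|x_0-\tx_0\|$ for every $j$.

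\emph{Selection layer.} I couple the selection probabilities $\beta_j$ and $\tbeta_j$ (computed from $\tX_1,\dots,\tX_p$) with one shared uniform $U$, exactly as in \eqref{eq:mpcn:coupling} but without the index $0$. Set $\hat\beta_j=\min\{\beta_j,\tbeta_j\}$; on $\{U\in[\hat s_{j-1},\hat s_j)\}$ both chains select index $j$, so $\tY = Y + \rho(\tx_0-x_0)$. The argument of \eqref{ineq:sp}–\eqref{ineq:den}, using boundedness and Lipschitz continuity of $\Pot$, gives
\[
\E(1-\hat s_p) \le \frac{2e^{2\|\Pot\|_\infty}L_\Pot}{p}\sum_{j}\|X_j-\tX_j\| = c\,\rho\|x_0-\tx_0\|,
\]
and this bound is uniform in $p$.

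\emph{Auxiliary cloud and accept/reject layer.} Here $\bar\alpha$ denotes the MTpCN acceptance probability \eqref{eq:2:mtpcn:balpha} for the chain started at $x_0$, and $\tilde{\bar\alpha}$ the corresponding quantity for the chain started at $\tx_0$. I draw $Z_k=F(Y,\txi_k)$ and $\tZ_k=F(\tY,\txi_k)$ with shared $\txi_k$, and I use a shared uniform $\tU$ for the final acceptance. The one-step distance then splits into three pieces:
\begin{itemize}
\item both accept with the same index: the distance is $\rho\, d_\eps(x_0,\tx_0)$, with probability at least $\E[\hat s_p\,(\bar\alpha\wedge\tilde{\bar\alpha})]$;
\item both reject: the distance is $d_\eps(x_0,\tx_0)$;
\item mismatched acceptance or mismatched index: I bound $d_\eps$ by $1$, with probability at most $\E(1-\hat s_p)+\E|\bar\alpha-\tilde{\bar\alpha}|$.
\end{itemize}

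The key step, and the main obstacle, is a bound on $\E|\bar\alpha-\tilde{\bar\alpha}|$ on the event of a common index that is linear in $\|x_0-\tx_0\|$ and uniform in $p$. Since $a\mapsto 1\wedge a$ is $1$-Lipschitz, it suffices to control the difference of the ratios
\[
\frac{\sum_l e^{-\Pot(X_l)}}{e^{-\Pot(x_0)}+\sum_{l\le p-1} e^{-\Pot(Z_l)}}.
\]
Both numerator and denominator are of order $p$ and bounded below by $p\,e^{-\|\Pot\|_\infty}$. The numerator differences are each at most $e^{\|\Pot\|_\infty}L_\Pot\rho\|x_0-\tx_0\|$. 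In the denominator, the single term $x_0$ contributes a difference of order $\|x_0-\tx_0\|/p$, which produces the $c_4/p$ term. The $p-1$ terms $Z_l-\tZ_l = \rho(Y-\tY) = \rho^2(x_0-\tx_0)$ contribute differences of order $\frac{p-1}{p}\rho^2\|x_0-\tx_0\|$, which produces the $c_5(p-1)/p$ term. Collecting these yields $\E|\bar\alpha-\tilde{\bar\alpha}| \le C_p\|x_0-\tx_0\|$ with $C_p=c_3+c_4/p+c_5(p-1)/p$, where the constants depend only on $\|\Pot\|_\infty$ and $L_\Pot$.

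Finally I assemble the pieces. Using $\hat s_p\ge 1-c\rho\|x_0-\tx_0\|$ and $\bar\alpha\ge e^{-2\|\Pot\|_\infty}=:c_1$ from the bounds in \cref{prop:mtm:lyap}, and writing $\|x_0-\tx_0\|=\eps\, d_\eps(x_0,\tx_0)$, I expect
\[
\Wass_{d_\eps}\le d_\eps(x_0,\tx_0)\bigl[1-(1-\rho)c_1+\eps(\rho c_2+C_p)\bigr] =: \kappa\, d_\eps(x_0,\tx_0), \label{eq:mtm:kappa}
\]
and $\kappa<1$ exactly under the stated condition on $\eps$.

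The delicate bookkeeping is that "both accept" must be handled through the minimum $\bar\alpha\wedge\tilde{\bar\alpha}$: its complement splits into "both reject", where the cost is $d_\eps(x_0,\tx_0)$ rather than $1$, and a mismatch, where the cost is $1$. This split is what gives the contraction factor $(1-\rho)c_1$ rather than a trivial bound.
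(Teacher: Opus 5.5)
Your proposal is correct and follows essentially the same route as the paper. Both arguments use synchronous coupling of the two Gaussian clouds, the index-matching shared-uniform coupling of the selection weights taken from mpCN, a shared uniform for the final accept/reject step, and a Lipschitz bound on the acceptance ratio that yields $C_p=c_3+c_4/p+c_5(p-1)/p$. The only difference is bookkeeping: you bound $|\bar\alpha-\tilde{\bar\alpha}|$ only on the common-index event and charge the mismatched-index event to $\E(1-\hat s_p)$, whereas the paper bounds $\E|\bar\alpha-\tilde{\bar\alpha}|$ over all outcomes, splitting the $Z$-terms and controlling the mismatched-index part by $\|\Pot\|_\infty\,\E(1-s_p)$.
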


\begin{proof}
   Fix $x_0, \tx_0 \in \qsp$ with $d_\eps(x_0, \tx_0) < 1$. We want to find a coupling of $P(x_0, \cdot)$ and $P(\tx_0, \cdot)$. We will couple the four stages of the algorithm either synchronously or taking advantage of the couplings introduced for the multiproposal algorithm in \cref{prop:contr:fin:p:1}. 

    For the first $p$ proposals we consider a synchronous coupling, namely, given $\Xi = (\xi_1, \ldots, \xi_p)\sim\mu_0^{\otimes p}$, the proposals are $X_j = F(x_0, \xi_j)$ and $\tX_j = F(\tx_0, \xi_j)$ with $F$ as in \eqref{eq:2:defF}. We denote the minimum acceptance probability for the $j$-th proposals
    \begin{equation*}
        \hat{\beta}_j(x_0, \tx_0, \Xi) =  \beta_j(X_1, \ldots, X_p) \wedge \beta_j(\tX_1, \ldots, \tX_p)
    \end{equation*}
    and 
    \begin{equation*}
       s_0 = 0, \quad  s_j= s_j (x_0, \tx_0, \Xi) = \sum_{i=1}^j \hat{\beta}_i(x_0, \tx_0, \Xi), \quad j = 1, \ldots, p.
    \end{equation*}
    Over the interval $[s_{j-1}, s_j]$ both chains accept the $j$-th element of the cloud as preliminary steps $Y$, $\tY$. In the interval $[s_p, 1]$ we consider the same construction as in \eqref{eq:mpcn:[sp,1]}. 
    Then, given $U\sim \cU([0,1])$ we define
\begin{align}\label{eq:mtm:(Y,tY)}
	(Y, \tY)  = \begin{cases}
        (X_j, \tX_j) \quad &\mbox{ if } U \in [s_{j-1}, s_j],\; j = 1, \ldots, p\\
	    (X_{k}, \tX_{l})
        ; &\mbox{ if } U \in J_k \cap \tJ_l \mbox{ with } k,l = 1, \ldots,p \mbox{ s.t. }  J_k \cap \tJ_l \neq \emptyset.
	\end{cases} 
\end{align}
Next, to couple the auxiliary variables $Z_j$, $\tZ_j$, $j = 1, \ldots, p-1$ we use a synchronous coupling $Z_j = F(Y, \txi_j)$ and $\tZ_j = F(\tY, \txi_j)$,
where $\tXi = (\txi_1, \ldots, \txi_{p-1})~\mu_0^{\otimes (p-1)}$ is independent of $\Xi$ and $U$.
To ease the notation we write $ \balpha(x_0,X_1, \ldots, X_p, Z_1, \ldots, Z_{p-1}) = \balpha(x_0, Y, \Xi, \tXi)$ and set
 \begin{align*}
     \har(Y, \tY, \Xi,  \tXi) := \balpha(x_0, Y, \Xi, \tXi) \wedge  \balpha(\tx_0, \tY, \Xi, \tXi).
 \end{align*}
Finally, given $\tU \sim \cU([0,1])$ independent of $U$, $\Xi$ and $\tXi$, the desired coupling of the $P_p$ kernels is 
 \begin{equation}\label{eq:mtm:coupling}
        (X, \tX) = \begin{cases}
            (Y, \tY) \quad &\text{if } \tU \leq \har(Y, \tY, \Xi,  \tXi) \\
            (x, \tx)  \quad &\text{if } \tU \geq \balpha(x_0, Y, \Xi, \tXi) \vee \balpha(\tx_0, \tY, \Xi, \tXi)\\
            (x, \tY)   \quad &\text{if } \balpha(x_0, Y, \Xi, \tXi)  \leq \tU \leq  \balpha(\tx_0, \tY, \Xi, \tXi) \\
            (Y, \tx)  \quad &\text{if } \balpha(\tx_0, \tY, \Xi, \tXi) \leq \tU \leq \balpha(x_0, Y, \Xi, \tXi).
        \end{cases}
    \end{equation}

 Recalling that $d_\eps\leq 1$ we can then write 
 \begin{equation}\label{eq:mtm:contr1}
     \E d_\eps(X, \tX) \leq \E d_\eps(Y, \tY) \mathbbm{1}_{\tU \leq \har(Y, \tY, \Xi,  \tXi) }+ d_\eps(x, \tx) \left( 1 - \bbP(\tU \leq \har(Y, \tY, \Xi,  \tXi) ) \right) + \E |\balpha(x_0, Y, \Xi, \tXi) - \balpha(\tx_0, \tY, \Xi, \tXi)| 
 \end{equation}
 We start analyzing the first term on the right hand side. By definition \eqref{eq:mtm:(Y,tY)} we have
 \begin{align*}
     \E d_\eps(Y, \tY) \mathbbm{1}_{\tU \leq \har(Y, \tY, \Xi,  \tXi) }& \leq \E \sum_{j =1}^p d_\eps(X_j, \tX_j) \mathbbm{1}_{\tU \in [s_{j-1}, s_j)}\mathbbm{1}_{\tU \leq \har(Y, \tY, \Xi,  \tXi)} + \E \mathbbm{1}_{U \in [s_p,1]}\mathbbm{1}_{\tU \leq \har(Y, \tY, \Xi,  \tXi)} \\
      &=  d_\eps(\rho x_0, \rho \tx_0)\E \mathbbm{1}_{U \in [0, s_p)}\mathbbm{1}_{\tU \leq \har(Y, \tY, \Xi,  \tXi)} + \E \mathbbm{1}_{U \in [s_p,1]}\mathbbm{1}_{\tU \leq \har(Y, \tY, \Xi,  \tXi)} .
 \end{align*}
 Then if $\mathcal{F}_0$ is the $\sigma$-algebra generated by $(Y, \tY)$
 \begin{align}
     \E \mathbbm{1}_{U \in [s_p,1]}\mathbbm{1}_{\tU \leq \har(Y, \tY, \Xi,  \tXi)} &= \E \E \left[\mathbbm{1}_{U \in [s_p,1]}\mathbbm{1}_{\tU \leq \har(Y, \tY, \Xi,  \tXi)}\left| \mathcal{F}_0\right.\right] \notag\\
     &= \E \left[ \mathbbm{1}_{\tU \leq \har(Y, \tY, \Xi,  \tXi)} \E \left[ \mathbbm{1}_{U \in [s_p,1]} \left| \mathcal{F}_0\right.\right]\right] \notag\\
      &= \E (1 - s_p) \har(Y, \tY, \Xi,  \tXi).\label{eq:mtm:EE}
 \end{align}
 Similarly to \eqref{ineq:sp}
 \begin{equation*}
     1 - s_p = 2\frac{\sum_{k = 1}^p|e^{-\Pot(X_k)} - e^{- \Pot(\tX_k)}| }{\sum_{j= 1}^p e^{-\Pot(X_k)}}
 \end{equation*}
 and for bounded and globally Lipschitz $\Pot$, thanks to \eqref{ineq:num}-\eqref{ineq:den}, it follows 
 \begin{equation}\label{eq:mtm:sp}
     1 - s_p \leq  C_1\rho \|x_0 - \tx_0\| = \varepsilon C_1 \rho d_\eps(x_0,\tx_0)
 \end{equation}
 for some constant $C_1 = C_1(\|\Pot\|_\infty, L_\Pot)>0$. Therefore we obtained 
 \begin{align}
      \E d_\eps(Y, \tY) \mathbbm{1}_{\tU \leq \har(Y, \tY, \Xi,  \tXi) } &\leq d_\eps(\rho x_0, \rho \tx_0)\E \mathbbm{1}_{U \in [0, s_p)}\mathbbm{1}_{\tU \leq \har(Y, \tY, \Xi,  \tXi)} + \varepsilon C_1 \rho d_\eps(x_0,\tx_0)\E \har(Y, \tY, \Xi,  \tXi)  \notag\\
      &\leq \rho d_\eps( x_0, \tx_0)\E \har(Y, \tY, \Xi,  \tXi) \left( 1 + \varepsilon C_1 \right) \label{eq:mtm:contr2}
 \end{align}
 where we have also used that $d_\eps(x_0, \tx_0)<1$. 

 We are left to show that $ \balpha$ is Lipschitz in $x_0$ to bound the last term in  \eqref{eq:mtm:contr1}:
 \begin{align*}
     \E |\balpha(x_0, Y, \Xi, \tXi)& - \balpha(\tx_0, \tY, \Xi, \tXi)| \\
     &= \E \left|  1\wedge \frac{\sum_{l=1}^p e^{-\Pot(X_l)}}{e^{-\Pot(x_0)} + \sum_{l = 1}^{p-1}e^{-\Pot(Z_l)}} - 1\wedge \frac{\sum_{l=1}^pe^{-\Pot(\tX_l)}}{e^{-\Pot(\tx_0)} + \sum_{l = 1}^{p-1}e^{-\Pot(\tZ_l)}}\right|\\
     &\leq   \E \left| \frac{\sum_{l=1}^pe^{-\Pot(X_l)}}{e^{-\Pot(x_0)} + \sum_{l = 1}^{p-1}e^{-\Pot(Z_l)}} -  \frac{\sum_{l=1}^p e^{-\Pot(\tX_l)}}{-e^{-\Pot(\tx_0)} + \sum_{l = 1}^{p-1}e^{-\Pot(\tZ_l)}}\right|\\
      &= \E \left| \frac{\left(e^{-\Pot(\tx_0)} + \sum_{l = 1}^{p-1}e^{-\Pot(\tZ_l)}\right)\sum_{l=1}^pe^{-\Pot(X_l)} - \left( e^{-\Pot(x_0)} + \sum_{l = 1}^{p-1}e^{-\Pot(Z_l)}\right) \sum_{l=1}^pe^{-\Pot(\tX_l)}}{\left( e^{-\Pot(x_0)}+ \sum_{l = 1}^{p-1}e^{-\Pot(Z_l)} \right)\left( e^{-\Pot(\tx_0)} + \sum_{l = 1}^{p-1}e^{-\Pot(\tZ_l)}\right) }\right|
 \end{align*}
 and with some simple manipulations
 \begin{equation*}
     \leq \E \sum_{l=1}^p\frac{|e^{-\Pot(X_l)} - e^{-\Pot(\tX_l)}|}{e^{-\Pot(\tx_0)}
     + \sum_{l = 1}^{p-1}e^{-\Pot(\tZ_l)}} 
     + \E \frac{|e^{-\Pot(x_0)} - e^{-\Pot(\tx_0)}| + \sum_{k =1}^{p-1} |e^{-\Pot(Z_k)} -e^{-\Pot(\tZ_k)}|}{\left( e^{-\Pot(x_0)} + \sum_{l = 1}^{p-1}e^{-\Pot(Z_l)} \right)\left( e^{-\Pot(\tx_0)}+ \sum_{l = 1}^{p-1}e^{-\Pot(\tZ_l)}\right) } \sum_{l=1}^p e^{-\Pot(X_l)}.
 \end{equation*}
 Again, by \eqref{ineq:num}-\eqref{ineq:den}, it follows 
 \begin{multline*}
       \E |\balpha(x_0, Y, \Xi, \tXi) - \balpha(\tx_0, \tY, \Xi, \tXi)| 
       \leq 
       e^{2\|\Pot\|_\infty} L_\Pot \rho\|x_0 - \tx_0\| 
       + \frac{1}{p}e^{4\|\Pot\|_\infty} L_\Pot \|x_0 - \tx_0\| 
       \\
       + \frac{e^{2\|\Pot\|_\infty}}{p} \sum_{l=1}^{p-1} \E |\Pot(Z_l) - \Pot(\tZ_l)|.
 \end{multline*}
Next as the $Z_l, \, \tZ_l$ are synchronous and $\Pot$ is bounded
 \begin{align*}
      \E |\Pot(Z_l) - \Pot(\tZ_l)| &=  \E |\Pot(Z_l) - \Pot(\tZ_l)|\sum_{j=1}^p \mathbbm{1}_{U \in [s_{j-1}, s_j)} + \E |\Pot(Z_l) - \Pot(\tZ_l)|\mathbbm{1}_{U \in [s_p, 1]}\\
      &\leq L_\Pot\E \|Z_l - \tZ_l \|\sum_{j=1}^p \mathbbm{1}_{U \in [s_{j-1}, s_j)} + \|\Pot\|_\infty\E \mathbbm{1}_{U \in [s_p, 1]}\\
       &= L_\Pot\rho\E \|Y - \tY \|\sum_{j=1}^p \mathbbm{1}_{U \in [s_{j-1}, s_j)} + \|\Pot\|_\infty\E [1 - s_p]\\
        &\leq  L_\Pot\rho^2\|x_0 - \tx_0\|\E s_p + \|\Pot\|_\infty C_1\rho \|x_0 - \tx_0\|,
 \end{align*}
 where we have used \eqref{eq:mtm:sp} again. By construction we know that $\E s_p<1$, hence
 \begin{equation}\label{eq:mtm:diffalpha}
     \E |\balpha(x_0, Y, \Xi, \tXi) - \balpha(\tx_0, \tY, \Xi, \tXi)| 
       \leq C_p \varepsilon d_\eps(x_0, \tx_0)
 \end{equation}
 with 
 \begin{equation*}
     C_p = e^{2\|\Pot\|_\infty} L_\Pot\rho
       + \frac{1}{p}e^{4\|\Pot\|_\infty} L_\Pot 
       + \frac{e^{2\|\Pot\|_\infty}}{p} (p-1) L_\Pot\rho^2 + \|\Pot\|_\infty C_1\rho .
 \end{equation*}

 Putting together \eqref{eq:mtm:contr1},\eqref{eq:mtm:contr2} and \eqref{eq:mtm:diffalpha} we showed 
\begin{equation*}
     \E d_\eps(X, \tX) \leq  d_\eps(x_0, \tx_0) \left[ 1 - \left( 1 - \rho \left( 1 + \varepsilon C_1 \right)\right) \E \har(Y, \tY, \Xi,  \tXi)\right] + C_p \varepsilon d_\eps(x_0, \tx_0)
\end{equation*}
so that
 \begin{equation}
     W_{d_\varepsilon}(P(x_0, \cdot), P(\tx_0, \cdot))\leq \kappa d_\eps(x_0, \tx_0)
 \end{equation}
 with 
 \begin{equation}\label{eq:mtm:kappa}
     \kappa = 1 - \left( 1 - \rho \left( 1 + \varepsilon C_1 \right)\right) \E \har(Y, \tY, \Xi,  \tXi) + C_p\varepsilon
 \end{equation}
 which stays smaller than one for appropriate choice of $\varepsilon$, namely 
 \begin{equation*}
     \varepsilon < \frac{(1 - \rho)\E \har(Y, \tY, \Xi,  \tXi)}{\rho C_1 \E \har(Y, \tY, \Xi,  \tXi) + C_p} 
 \end{equation*}
 Note that this choice is well defined as by definition \eqref{eq:2:mtpcn:balpha}, $e^{-2\|\Pot\|_\infty}\leq \E \har(Y, \tY, \Xi,  \tXi)\leq e^{2\|\Pot\|_\infty}$
 so it is enough to consider 
 \begin{equation*}
     \varepsilon\leq \frac{(1 - \rho)e^{-2\|\Pot\|_\infty}}{\rho C_1 e^{2\|\Pot\|_\infty} + C_p}.
 \end{equation*}

In summary, the constants $c_i$, $i =1, \ldots, 5$ in the statement of the theorem are defined as $
    c_1 = e^{-2\|\Pot\|_\infty}$, $c_2 = C_1 e^{2\|\Pot\|_\infty}$,
    $c_3 = e^{2\|\Pot\|_\infty} L_\Pot\rho + \|\Pot\|_\infty C_1\rho$, $c_4 =  e^{4\|\Pot\|_\infty} L_\Pot$, $c_5 = e^{2\|\Pot\|_\infty} L_\Pot\rho^2$.
\end{proof}

Last we show that $P_p$ satisfies condition 2 in \cref{thm:our_harris} and it is $d_\varepsilon$-small after a large enough number of iterations.

\begin{Proposition}\label{prop:mtm:small}
    Given \cref{alg:mtpcn} with Markov kernel $P_p$ as in \eqref{eq:2:mtpcn:kernel} for fixed $p\geq 1$, assume the potential function $\Pot: \qsp \to \R$ is bounded and globally Lipschitz. 
    Define $S= \lbrace V(x) \leq 4 K_V\rbrace $ with $V$ Lyapunov function as in \cref{prop:mtm:lyap}, and let $r_s>0$ be such that $S \subset B(0, r_s)$. Then, for every $x_0, \tx_0\in S$, it holds
    \begin{equation*}
        \Wass_{d_\varepsilon} (P_p^n(x_0, \cdot), P_p^n(\tx_0, \cdot))\leq  1 - e^{-4n\|\Pot\|_\infty}\left(1 - \frac{2 r_s \rho^n}{\varepsilon}\right)\quad n \in \N.
    \end{equation*}
    Consequently, for any $n >\frac{ \log 2r_s - \log \varepsilon}{- \log \rho}$ and every $x_0, \tx_0\in S$, it holds
    \begin{equation*}
        W_{d_\varepsilon}(P_p^{n}(x_0, \cdot), P_p^{n}(\tx_0, \cdot)) \leq s
    \end{equation*}
    with $s = s(\varepsilon, \rho, n)\in (0,1)$.
\end{Proposition}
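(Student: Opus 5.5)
We follow the iterative scheme of \cref{prop:mpcn:small}, now using the one-step coupling \eqref{eq:mtm:coupling} built in the proof of \cref{prop:mtm:contr:fin:p:1}.

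\textbf{Construction.} Fix $x_0,\tx_0\in S$ and set $(X^{(0)},\tX^{(0)})=(x_0,\tx_0)$. At step $k$, draw fresh independent $\Xi^{(k)}\sim\mu_0^{\otimes p}$, $\tXi^{(k)}\sim\mu_0^{\otimes(p-1)}$ and $U^{(k)},\tU^{(k)}\sim\cU([0,1])$, and produce $(X^{(k)},\tX^{(k)})$ from $(X^{(k-1)},\tX^{(k-1)})$ by \eqref{eq:mtm:coupling}. Because each step is a coupling of $P_p(X^{(k-1)},\cdot)$ and $P_p(\tX^{(k-1)},\cdot)$ given the past, the pair $(X^{(n)},\tX^{(n)})$ is a coupling of $P_p^n(x_0,\cdot)$ and $P_p^n(\tx_0,\cdot)$.

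\textbf{Good event.} Let
\[
A^{(k)}=\{U^{(k)}\in[0,s_p^{(k)})\}\cap\{\tU^{(k)}\le\har^{(k)}\}.
\]
On $A^{(k)}$, both chains select the same index $j$ and both accept. The proposals are synchronous, $X_j^{(k)}-\tX_j^{(k)}=\rho(X^{(k-1)}-\tX^{(k-1)})$, so the difference contracts exactly by the factor $\rho$. Set $\Lambda^{(n)}=\bigcap_{k\le n}A^{(k)}$. On $\Lambda^{(n)}$ we get $\|X^{(n)}-\tX^{(n)}\|=\rho^n\|x_0-\tx_0\|\le 2r_s\rho^n$. Since $d_\eps\le1$, this gives
\[
\Wass_{d_\varepsilon}\big(P_p^n(x_0,\cdot),P_p^n(\tx_0,\cdot)\big)\le 1-\bbP(\Lambda^{(n)})\Big(1-\frac{2r_s\rho^n}{\varepsilon}\Big).
\]

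\textbf{Lower bound on $\bbP(\Lambda^{(n)})$.} This is the crux. We would show that, conditionally on $\cF_{k-1}$ (the past up to step $k-1$), $\bbP(A^{(k)}\mid\cF_{k-1})\ge e^{-4\|\Pot\|_\infty}$ uniformly in the current pair of states and in $p$. Conditioning on $(\Xi^{(k)},U^{(k)},\tXi^{(k)})$ first gives
\[
\bbP(A^{(k)}\mid\cF_{k-1})=\E\big[\mathbbm{1}_{U^{(k)}<s_p^{(k)}}\,\har^{(k)}\big].
\]
The bound $\har^{(k)}\ge e^{-2\|\Pot\|_\infty}$ is deterministic: the ratio in \eqref{eq:2:mtpcn:balpha} has $p$ terms above and $p$ terms below, each in $[e^{-\|\Pot\|_\infty},e^{\|\Pot\|_\infty}]$. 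It remains to check $s_p^{(k)}\ge e^{-2\|\Pot\|_\infty}$. Each $\beta_j$ lies in $[p^{-1}e^{-2\|\Pot\|_\infty},p^{-1}e^{2\|\Pot\|_\infty}]$, so $\hat\beta_j\ge p^{-1}e^{-2\|\Pot\|_\infty}$, and summing over $j$ gives $s_p^{(k)}\ge e^{-2\|\Pot\|_\infty}$. Combining, $\bbP(A^{(k)}\mid\cF_{k-1})\ge e^{-4\|\Pot\|_\infty}$, and chaining the conditional probabilities yields
\[
\bbP(\Lambda^{(n)})\ge e^{-4n\|\Pot\|_\infty}.
\]

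\textbf{Conclusion and main obstacle.} Substituting this bound gives the stated estimate. For $n>\frac{\log 2r_s-\log\varepsilon}{-\log\rho}$ we have $2r_s\rho^n<\varepsilon$, hence $s<1$. The case $\rho=0$ is immediate, since then $\rho^n=0$. The step needing the most care is the conditioning: one must verify that the events $A^{(k)}$ depend on the past only through $(X^{(k-1)},\tX^{(k-1)})$. The lower bounds above are uniform in these states, so the product bound holds regardless of the path. One must also check that on $A^{(k)}$ the pair is $(X_j^{(k)},\tX_j^{(k)})$ for a common index $j$, rather than a mixed pair produced by the $J_k\cap\tJ_l$ pieces of the partition.
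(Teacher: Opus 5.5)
Your proof is correct and takes essentially the same route as the paper. You iterate the one-step coupling \eqref{eq:mtm:coupling}, restrict to the event that at every step both chains select the same index and both accept, and lower-bound each step's conditional probability by $e^{-4\|\Pot\|_\infty}$ via $s_p\ge e^{-2\|\Pot\|_\infty}$ and $\har\ge e^{-2\|\Pot\|_\infty}$. Your conditioning, which first conditions on $(\Xi^{(k)},U^{(k)},\tXi^{(k)})$ and then uses only the deterministic lower bound on $\har^{(k)}$, is in fact slightly cleaner than the paper's: the paper writes $\E[\mathbbm{1}_{U\le s_p}\mathbbm{1}_{\tU\le\har}]=\E[s_p\har]$, even though $\har$ depends on $U$ through $(Y,\tY)$.
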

\begin{proof}
    Fix $x_0$, $\tx_0\in S$. As usual, we construct a coupling for the iterated kernels $P_p^n(x_0, \cdot)$, $P_p^n(\tx_0, \cdot)$, $n \in \N$. For $n = 1$ we consider the same coupling construction used in the $d_\eps$-contraction proof, namely \eqref{eq:mtm:coupling}, so that
    \begin{equation*}
        W_{d_\varepsilon}(P_p(x, \cdot), P_p(\tx_0,\cdot)) \leq \E d (X^{(1)}, \tX^{(1)})  \\
        \leq \E d_\eps(Y^{(1)}, \tY^{(1)}) \mathbbm{1}_{\tU^{(1)}\leq \har^{(1)}} + 1 - \bbP(\tU^{(1)}\leq \har^{(1)})
    \end{equation*}
    where $\har^{(1)} = \har(Y^{(1)}, \tY^{(1)}, \xi^{(1)}, \txi^{(1)})$ and we used that $d_\eps\leq 1$. Next, we recall the definition \eqref{eq:mtm:(Y,tY)} of $(Y^{(1)},\tY^{(1)})$ to get 
    \begin{align*}
        W_{d_\varepsilon}(P_p(x, \cdot), P_p(\tx_0,\cdot))  &\leq \E d_\eps(Y^{(1)},\tY^{(1)}) \mathbbm{1}_{U^{(1)}\leq s_p^{(1)}}\mathbbm{1}_{U\leq \har^{(1)}}  +  \E \mathbbm{1}_{U^{(1)}\geq s_p^{(1)}}\mathbbm{1}_{U\leq \har^{(1)}}  + 1 - \bbP(U\leq \har^{(1)})\\
        &\leq d_\eps(\rho x_0, \rho \tx_0) \E  \mathbbm{1}_{U^{(1)}\leq s_p^{(1)}}\mathbbm{1}_{U\leq \har^{(1)}} + 1 -  \E \mathbbm{1}_{U^{(1)}\geq s_p^{(1)}}\mathbbm{1}_{\tU^{(1)}\leq \har^{(1)}}
    \end{align*}
    where $s_p^{(1)} = \sum_{k = 1}^p\hat{\beta}_k^{(1)}$, with $\hat{\beta}_k^{(1)} = \hat{\beta}_k(x_0, \tx_0, \xi^{(1)})$. Since $x_0$, $\tx_0\in S\subset B(0, r_S)$, then $d_\eps(\rho x_0, \rho \tx_0)\leq \frac{2r_s \rho}{\varepsilon}$ and we obtained 
    \begin{equation*}
         W_{d_\varepsilon}(P_p(x, \cdot), P_p(\tx_0,\cdot)) \leq 1 - \left( 1 - \frac{2r_s \rho}{\varepsilon} \right) \E  \mathbbm{1}_{U^{(1)}\leq s_p^{(1)}}\mathbbm{1}_{U\leq \har^{(1)}}.
    \end{equation*}
    Last, with the double condition argument (see e.g. \eqref{eq:mtm:EE}) to get
    \begin{equation*}
        \E  \mathbbm{1}_{U^{(1)}\leq s_p^{(1)}}\mathbbm{1}_{U\leq \har^{(1)}} = \E s_p^{(1)}\har^{(1)} >  e^{-4\|\Pot\|_\infty},
    \end{equation*}
    since $\Pot$ is assumed bounded.

    Now we iterate the argument just carried on setting 
    \begin{align*}
    \begin{array}{ll}
          X_j^{(n)} = F(X^{(n-1)}, \xi_j^{(n)}) \quad j =1, \ldots, n, &  \;Y^{(n)} = \sum_{j = 1}^p X_j^{(n)} \mathbbm{1}_{U^{(n)}\leq \beta_j^{(n)}}  \\
         Z_k^{(n)} = F(Y^{(n)},\tXi_k^{(n)}) \quad k = 1, \ldots, n-1, & \;X^{(n)} = Y^{(n)} \mathbbm{1}_{\tU^{(n)}\leq \har^{(n)}} + X^{(n-1)}\mathbbm{1}_{\tU^{(n)}>\har^{(n)}},
    \end{array}
    \end{align*}
    with $X^{(0)} = x_0$, and similarly for the second chain, with same uniform variables $\tU^{(n)}$, $U^{(n)}$ and Gaussian variables $\Xi^{(n)}$, $\tXi^{(n)}$. Moreover, we define 
    \begin{align*}
        A^{(n)} = \bigcap_{j =1}^n \left\lbrace \tU^{(j)}\leq \har^{(j)}\right\rbrace, \quad B^{(n)} = \bigcap_{j =1}^n \left\lbrace U^{(j)}\leq s_p^{(j)}\right\rbrace 
    \end{align*}
    respectively the event when the $Y$ and $\tY$ are accepted $n$ times in a row in the last step of the algorithm and that when $Y$ and $\tY$ are made of the proposals with the same index $n$ times in a row (not necessarily the same index at each iteration). Then, when we are in $ A^{(n)}\cap B^{(n)}$ we have
    \begin{equation*}
        d_\eps(X^{(n)}, \tX^{(n)}) = d_\eps(Y^{(n)}, \tY^{(n)}) = d_\eps(\rho X^{(n-1)}, \rho X^{(n-1)}) = \ldots = d_\eps(\rho^n x_0, \rho^n x_0) \leq \frac{2 r_s \rho^n}{\varepsilon}.
    \end{equation*}
    It follows 
    \begin{align*}
          W_{d_\varepsilon}(P^n_p(x, \cdot), P_p^n(\tx_0,\cdot)) &\leq \E   d_\eps(X^{(n)}, \tX^{(n)}) \leq  \E  d_\eps(X^{(n)}, \tX^{(n)}) \mathbbm{1}_{ A^{(n)}\cap B^{(n)}} + 1 - \bbP( A^{(n)}\cap B^{(n)})\\
          &\leq 1 - \left(1 - \frac{2 r_s \rho^n}{\varepsilon}\right) \bbP( A^{(n)}\cap B^{(n)}).
    \end{align*}
    Taking $n$ so that $2 r_s \rho^n< \varepsilon$, we have the desired result, as long as $\bbP\left( A^{(n)}\cap B^{(n)}\right)$ is positive. This follows again the boundedness of the potential $\Pot$ as 
    \begin{equation*}
        \bbP( A^{(n)}\cap B^{(n)}) = \bbP( A^{(1)}\cap B^{(1)})\prod_{k =2}^n \bbP\left( A^{(k)}\cap B^{(k)} \left| \bigcap_{j=1}^{k-1}  A^{(j)}\cap B^{(j)}\right.\right) > e^{-4n\|\Pot\|_\infty}.
    \end{equation*}
    so $ W_{d_\varepsilon}(P(x, \cdot)^n, P(\tx_0,\cdot)^n) \leq s$ with $s = s(n) = 1 - e^{-4n\|\Pot\|_\infty}\left(1 - \frac{2 r_s \rho^n}{\varepsilon}\right)$ for any $x, \tx_0\in S$.
\end{proof}

\subsubsection{Infinite number of proposals}\label{sec:proof_mtm_infinite}

In this section we provide the proof of \cref{thm:3:inftyMTpcn} by showing that the limiting kernel $P_\infty$ of the Multiple Try pCN algorithm, defined as
\begin{equation}\label{eq:mtminfty:kernel}
    P_\infty(x, dy) = \alpha(x, y)\baQ(x, dy) + \delta_{x}(dy)\int \left(1 - \alpha(x, u)\right) \baQ(x, du)
\end{equation}
with 
\begin{equation}\label{eq:mtminfty:alpha}
    \alpha(x, y) = 1 \wedge \frac{\int e^{-\Pot(z)}Q(x, dz)}{\int e^{-\Pot(u)}Q(y, du)}, \quad \baQ(x, dy) = \frac{e^{-\Pot(y)}}{\int e^{- \Pot(u)}Q(x, du)} Q(x, dy)
\end{equation}
fulfills all the conditions of the weak Harris theorem \cref{thm:our_harris}.
\begin{Proposition}
      Assume $\Pot:\qsp\to \R$ is globally bounded. Then the functions $V(x) = \|x\|^n$, $n \in \N$, $V(x) = \exp(v\|x\|)$, $v>0$ and $V(x) = \exp(v\|x\|^2)$ for $v$ small enough, are Lyapunov functions, as in \cref{def:lyapunov}, for the Markov kernel $P_\infty$ \eqref{eq:mtminfty:kernel}.
\end{Proposition}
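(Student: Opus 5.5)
We prove the bound $P_\infty V(x)\le l_V V(x)+K_V$ for the kernel \eqref{eq:mtminfty:kernel} by combining the proof of \cref{prop:mtm:lyap} with the change of variables used in \cref{prop:inftypcn:lyap}. Write
\begin{equation*}
 P_\infty V(x) = \int V(y)\,\alpha(x,y)\,\baQ(x,dy) + V(x)\Big(1-\int \alpha(x,u)\,\baQ(x,du)\Big).
\end{equation*}
Boundedness of $\Pot$ gives two uniform controls. First, $\int e^{-\Pot}\,dQ(x,\cdot)\in[e^{-\|\Pot\|_\infty},e^{\|\Pot\|_\infty}]$ for every $x$. Hence $\alpha(x,y)\ge e^{-2\|\Pot\|_\infty}$ for all $x,y$, and therefore $\int\alpha(x,u)\,\baQ(x,du)\ge e^{-2\|\Pot\|_\infty}$. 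Second, the density of $\baQ(x,\cdot)$ with respect to $Q(x,\cdot)$ is bounded by $e^{2\|\Pot\|_\infty}$.

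Next, we represent $y=F(x,w)=\rho x+\sqrt{1-\rho^2}\,w$ with $w\sim\mu_0$. Exactly as in \cref{prop:mpcn:lyap} and \cref{prop:mtm:lyap}, we get $V(F(x,w))\le l_1 V(x)+G(w)$ for each of the three choices in \eqref{eq:choiceV}. Here $l_1<1$ is chosen through the parameters $\delta$ and $q$, and $G$ is the corresponding polynomial, exponential, or Gaussian-exponential function of $\|w\|$. Substituting this bound and using $\alpha\le 1$ on the $G$-term gives
\begin{equation*}
 P_\infty V(x)\le V(x)\Big[1-(1-l_1)\int\alpha(x,u)\,\baQ(x,du)\Big] + \int G(w)\,\frac{e^{-\Pot(F(x,w))}}{\int e^{-\Pot}\,dQ(x,\cdot)}\,\mu_0(dw).
\end{equation*}
The first bracket is at most $l_V:=1-(1-l_1)e^{-2\|\Pot\|_\infty}<1$. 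The second term is at most $K_V:=e^{2\|\Pot\|_\infty}\int G\,d\mu_0$. This is finite for the first two choices of $V$ because Gaussian measures have all polynomial and exponential moments. For $V=\exp(v\|x\|^2)$ it is finite by Fernique's theorem once $v$ is small enough.

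The case $\rho=0$ is simpler. The proposals are then independent of $x$, so we take $l_1=0$ and $G=V$. The bound $P_\infty V(x)\le(1-e^{-2\|\Pot\|_\infty})V(x)+e^{2\|\Pot\|_\infty}\int V\,d\mu_0$ follows directly.

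The only delicate point is the first step, namely making sure the rejection mass does not destroy contraction. This is handled by the uniform lower bound on $\alpha$. Unlike the $\infty$-pCN case, here we do not need Lipschitz continuity of $\Pot$, only its boundedness. Since the claim is just the one-step bound, iterating it yields \cref{def:lyapunov}.
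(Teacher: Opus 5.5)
Your proposal is correct and follows essentially the same route as the paper: the change of variables $y=F(x,w)$, the splitting $V(F(x,w))\le l_1V(x)+G(w)$, the uniform lower bound $\alpha\ge e^{-2\|\Pot\|_\infty}$ to keep the rejection mass from destroying contraction, and Gaussian moments (Fernique for the quadratic exponential) to make $K_V$ finite. The only cosmetic difference is in the $G$-term. The paper bounds the product $\alpha(x,y)\,e^{-\Pot(y)}/\int e^{-\Pot}\,dQ(x,\cdot)$ by $e^{-\Pot(y)}/\int e^{-\Pot}\,dQ(y,\cdot)\le e^{2\|\Pot\|_\infty}$. You instead drop $\alpha\le 1$ and bound the density of $\baQ(x,\cdot)$ directly, which gives the same constant.
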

\begin{proof} We want to establish the existence of $l_V<1$ and $K_V>0$ so that $PV(x)\leq l_V V(x) + K_V$ for all $x\in \qsp$ where 
    \begin{align}\label{mtm:lyap1}
        PV(x) = \int V(y)\alpha(x,y)\baQ(x, dy) + V(x) \int 1 - \alpha(x,z)\, \baQ(x, dz) .
    \end{align}
    Recall that $F(x, w)= \rho x + \sqrt{1 - \rho^2} w$ and $Q(x, \cdot ) = F(x, \cdot)^* \mu_0$, so for the first term we have 
  \begin{align*}
      \int V(y)\alpha(x,y)\baQ(x, dy) &= \int V(y)\alpha(x,y)\frac{e^{-\Pot(y)}}{\int e^{-\Pot(z)}Q(x, dz)}Q(x, dy)\\
      &= \int V(F(x, w))\alpha(x,F(x, w))\frac{e^{-\Pot(F(x, w))}}{\int e^{-\Pot(z)}Q(x, dz)}\mu_0(dw).
  \end{align*}
  If $V(x) = \|x\|^n$, then $V(F(x, w)) = \|\rho x + \sqrt{1 - \rho^2} w\|^n$ and, by Young's inequality, $V(F(x, w)) \leq  (1+\delta)\rho^n \|x\|^n + C(\delta)(1 - \rho)^{n/2} \|w\|^n$, for an arbitrary $\delta>0$. We can then write:
  \begin{multline}\label{mtm:lyap2}
      \int V(y)\alpha(x,y)\baQ(x, dy) \leq (1+\delta)\rho^n V(x) \int  \alpha(x,y)\baQ(x, dy) \\+ C(\delta, \rho)\int \|w\|^n \alpha(x,F(x, w))\frac{e^{-\Pot(F(x, w))}}{\int e^{-\Pot(z)}Q(x, dz)}\mu_0(dw).
  \end{multline}
  As $\Pot$ is bounded, then for any $x, y\in \qsp$ we have the bounds $\alpha(x,y)\geq e^{-2\|\Pot\|_\infty}$ and 
  \begin{align}\label{eq:mtminfty:alphabound}
     \alpha(x,y)\frac{e^{-\Pot(y)}}{\int e^{-\Pot(z)}Q(x, dz)}
     &= \left[1 \wedge \frac{\int e^{-\Pot(z)} Q(x, dz)}{\int e^{-\Pot(u)} Q(y, du)} \right] \frac{e^{-\Pot(y)}}{\int e^{-\Pot(z)}Q(x, dz)} \leq  \frac{e^{-\Pot(y)}}{\int e^{-\Pot(u)} Q(y, du)} \leq e^{2\|\Pot\|_\infty}.
  \end{align}
  As a consequence, from \eqref{mtm:lyap2} 
    \begin{equation*}
         \int V(y)\alpha(x,y)\baQ(x, dy) \leq (1+\delta)\rho^n V(x) \int  \alpha(x,y)\baQ(x, dy) + C(\delta, \rho, \|\Pot\|_\infty)\int \|w\|^n \mu_0(dw)
    \end{equation*}
and looking back at \eqref{mtm:lyap1} it follows
    \begin{equation}\label{eq:mtminf:lyap}
        PV(x) \leq V(x)\left \{ 1 - \left[1 -(1+\delta)\rho^n\right]\int  \alpha(x,y)\baQ(x, dy) \right\} + C(\delta, \rho, \|\Pot\|_\infty)\int \|w\|^n \mu_0(dw).
    \end{equation}
    For $\rho>0$ we than choose $\delta$ so that $1 -(1+\delta)\rho^n >0$ and use the fact that $\alpha(x,y)\geq e^{-2\|\Pot\|_\infty}$ to close the argument with constants
    \begin{align*}
        l_V = 1 - e^{-2\|\Pot\|_\infty}\left[1 -(1+\delta)\rho^n\right], \quad K_V = C(\delta, \rho, \|\Pot\|_\infty)\int \|w\|^n \mu_0(dw).
    \end{align*}

      With a similar argument we can show that $V(x) = \exp(v\|x\|)$ for any $v>0$, and $V(x) = \exp(v\|x\|^2)$ for $v$ small enough, are Lyapunov functions. We use the bounds 
    \begin{align*}
       \exp(v\|F(x,w)\|) &\leq \frac{1}{q}e^{v\|x\|}e^{v(q\rho - 1)\|x\|} + \frac{q-1}{q} e^{\frac{q}{q-1}v\sqrt{1 - \rho^2}\|w\|}\\
       \exp(v\|F(x,w)\|^2) &\leq \frac{1}{q}e^{v\|x\|^2}e^{v((1 + \delta)q\rho^2 - 1)\|x\|^2} + \frac{q-1}{q} e^{C_\delta\frac{q}{q-1}v\sqrt{1 - \rho^2}\|w\|^2}, 
    \end{align*}
    for any $q>1$ and $0<\delta<\rho^2 -1$ with $C_\delta =1 + (4\delta)^{-1}$. Then choosing $1<q<\rho^{-1}$ and $1 < q< ((1 + \delta)\rho)^{-1}$ respectively, the analogous of \eqref{eq:mtminf:lyap} are
    \begin{align*}
          PV(x) \leq V(x)\left \{ 1 - \left[1 - \frac{1}{q}\right]\int  \alpha(x,y)\baQ(x, dy) \right\} + C(q, \rho, \|\Pot\|_\infty)\int \exp\left(\frac{q}{q-1}v\sqrt{1 - \rho^2}\|w\|\right) \mu_0(dw)
        \end{align*}
    and for $v$ small enough to apply Fernique's theorem 
        \begin{align*}
        PV(x) \leq V(x)\left \{ 1 - \left[1 - \frac{1}{q}\right]\int  \alpha(x,y)\baQ(x, dy) \right\} + C(q, \rho, \|\Pot\|_\infty)\int \exp\left(C_\delta\frac{q}{q-1}v\sqrt{1 - \rho^2}\|w\|^2\right) \mu_0(dw).
    \end{align*}
    The desired result then follows again from the boundedness of $\Phi$, which provides the bound \eqref{eq:mtminfty:alphabound} for the acceptance probability, and from the fact that the reference measure $\mu_0$ is Gaussian. Note that for $\rho = 0$ the argument is similar and even simpler leading to 
    \begin{equation*}
        l_V = 1 - e^{-2\|\Phi\|_\infty}, \quad K_V = C(\|\Phi\|_\infty) \int V(w) \, \mu_0(dw).
    \end{equation*}
\end{proof}

\begin{Proposition}\label{prop:mtminfty:contr}
	Assume the potential function $\Pot: \qsp \to \R$ is globally Lipschitz with Lipschitz constant $L_{\Pot}$. Fix $\varepsilon>0$, then, for every $x, \tx \in \qsp$ with $d_\eps(x, \tx) < 1$, we have
	\begin{align}\label{ineq:contr:mtminfty:a} 
		\Wass_{d_\varepsilon}(\mk_\infty(x, \cdot), \mk_\infty(\tx_0, \cdot)) 
		\leq  d_\eps(x, \tx)\kappa(\varepsilon, \rho, \|\Pot\|_\infty, L_\Pot)
	\end{align}
    with $\kappa$ as in \eqref{eq:mtminfty:kappa}. Moreover, there is $\varepsilon^* = \varepsilon^*( \rho, \|\Pot\|_\infty, L_\Pot)$ such that $\kappa(\varepsilon, \rho, \|\Pot\|_\infty, L_\Pot)\in (0,1)$ for any $\varepsilon< \varepsilon^*$.
\end{Proposition}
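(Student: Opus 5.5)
We build a coupling of $\mk_\infty(x,\cdot)$ and $\mk_\infty(\tx,\cdot)$ in two layers, mirroring the structure of \eqref{eq:mtminfty:kernel}. For the inner reweighted proposal we use \cref{lemma:eta_coupling} with $\rho_1=\rho$ and the pair $\bz=(x,\tx)$ directly: there is no outer Tjelmeland step here. Concretely, draw $Y\sim\baQ(x,\cdot)$ and independently $\hat Y\sim\baQ(\tx,\cdot)$. With $T(y)=y-\rho(x-\tx)$ and $V\sim\cU(0,1)$, set $\tY=T(Y)$ on $A=\{V\le\beta(\hat Y,T(Y))\}$ and $\tY=\hat Y$ otherwise. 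For the outer accept--reject step we mimic \eqref{eq:mtm:coupling}. Take a common $\tU\sim\cU(0,1)$ and set $X=Y$ if $\tU\le\alpha(x,Y)$, else $X=x$. Likewise set $\tX=\tY$ if $\tU\le\alpha(\tx,\tY)$, else $\tX=\tx$. Marginals are correct by \cref{lemma:AR_coupling} and since the outer step only uses the marginal of each chain.

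We then bound $\E d_\eps(X,\tX)$ by splitting into four regions. When both accept on $A$, we have $d_\eps(Y,\tY)=d_\eps(\rho x,\rho\tx)=\rho\,d_\eps(x,\tx)$. When both accept off $A$, the contribution is at most $\bbP(A^c)\le 1-\E\beta$. By \eqref{eq:infty:beta-1}, $1-\E\beta\le 2L_\Pot\rho\|x-\tx\|=2L_\Pot\rho\varepsilon\,d_\eps(x,\tx)$. When both reject, the contribution is $d_\eps(x,\tx)\,\bbP(\tU>\alpha(x,Y)\vee\alpha(\tx,\tY))$. When exactly one accepts, the contribution is at most $\E|\alpha(x,Y)-\alpha(\tx,\tY)|$. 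Collecting terms, with $\hat a=\alpha(x,Y)\wedge\alpha(\tx,\tY)$, we get
\[
\E d_\eps(X,\tX)\le d_\eps(x,\tx)\bigl[1-(1-\rho)\E\hat a\bigr]+2L_\Pot\rho\varepsilon\,d_\eps(x,\tx)+\E|\alpha(x,Y)-\alpha(\tx,\tY)|.
\]
Boundedness of $\Pot$ gives $\alpha\ge e^{-2\|\Pot\|_\infty}$, hence $\E\hat a\ge e^{-2\|\Pot\|_\infty}$.

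The main obstacle is bounding $\E|\alpha(x,Y)-\alpha(\tx,\tY)|$ by $C\varepsilon\,d_\eps(x,\tx)$. Write $g(y)=\int e^{-\Pot(u)}Q(y,du)=\int e^{-\Pot(\rho y+\rr w)}\mu_0(dw)$. A synchronous-noise argument shows that $g$ is Lipschitz with constant $e^{\|\Pot\|_\infty}L_\Pot\rho$, and $g\ge e^{-\|\Pot\|_\infty}$. The map $(a,b)\mapsto 1\wedge a/b$ is Lipschitz on $[e^{-\|\Pot\|_\infty},\infty)^2$ with constant of order $e^{2\|\Pot\|_\infty}$. We therefore split on $A$ and $A^c$. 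On $A$, $\|Y-\tY\|=\rho\|x-\tx\|$, so the difference is at most $C(\|x-\tx\|+\|Y-\tY\|)\le C'\|x-\tx\|$. On $A^c$, we bound the difference by $1$ and use $\bbP(A^c)\le 2L_\Pot\rho\|x-\tx\|$ again. Together these give $\E|\alpha(x,Y)-\alpha(\tx,\tY)|\le C_2\varepsilon\,d_\eps(x,\tx)$ with $C_2=C_2(\rho,\|\Pot\|_\infty,L_\Pot)$. Note that boundedness of $\Pot$ is genuinely used here, as in \cref{thm:3:inftyMTpcn}.

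Combining the above, we obtain
\[
\kappa=1-(1-\rho)e^{-2\|\Pot\|_\infty}+(2L_\Pot\rho+C_2)\varepsilon .
\]
This $\kappa$ is less than $1$ whenever
\[
\varepsilon<\varepsilon^*:=\frac{(1-\rho)e^{-2\|\Pot\|_\infty}}{2L_\Pot\rho+C_2}.
\]
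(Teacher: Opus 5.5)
Your proposal is correct and follows essentially the paper's proof. You use the same coupling: \cref{lemma:eta_coupling} applied directly to $(x,\tx)$ with the shift $T(y)=y-\rho(x-\tx)$, then a common uniform $\tU$ for the outer accept--reject step. You also use the same three-term bound $\E d_\eps(X,\tX)\le d_\eps(x,\tx)[1-(1-\rho)\E\hat a]+O(\varepsilon)\,d_\eps(x,\tx)+\E|\alpha(x,Y)-\alpha(\tx,\tY)|$, and the same split on the shift event to control the last term. Boundedness of $\Pot$ gives $\hat a\ge e^{-2\|\Pot\|_\infty}$; you are right that this is needed, and the paper's proof uses it too even though the statement assumes only Lipschitz continuity.

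The differences are cosmetic. You keep $\bbP(A^c)$ as a separate additive error, and you bound $|\alpha(x,Y)-\alpha(\tx,\tY)|$ through the Lipschitz continuity of $g$ and of $(a,b)\mapsto 1\wedge a/b$. As a result your explicit $\kappa$ and $\varepsilon^*$ have different constants from \eqref{eq:mtminfty:kappa}, which does not affect the conclusion.
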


\begin{proof}
   Let $\bx=(x, \tx)$ be such that $d_\eps(x, \tx)< 1$. 
    We start by constructing the coupling $(X,\tX)$ of $P(x, \cdot), P(\tx, \cdot)$. First we want to construct a coupling of the proposal kernels  $\baQ(x, \cdot)$ and $\baQ(\tx, \cdot)$. We use the coupling \eqref{eq:ARcoupling_eta} $\pi_\beta(\bx; \cdot)\in \mathfrak{C}(\baQ(x, \cdot), \baQ(\tx, \cdot))$ as developed in \cref{lemma:eta_coupling}, namely $(Y, \tY)\sim \pi_\beta(\bx; \cdot)$ with $Y\sim \baQ(x, \cdot)$ and
    \begin{equation}\label{eq:mtm:tY}
        \tY = T_{\bx}(Y) \mathbbm{1}_{U\leq \beta(\hat{Y}, T(Y))} + \hat{Y} \mathbbm{1}_{U\geq \beta(\hat{Y}, T(Y))}
    \end{equation}
    where $\hat{Y}\sim \baQ(\tx, \cdot)$ is independent of~$Y$, $U\sim\cU([0,1])$ and $ T_{\bx}(Y)= Y - \rho(x - \tx)$.
    Next, define $\tU\sim \cU([0,1])$, independent of $U$, and set 
    \begin{equation}\label{eq:mtm:coupling1}
        (X, \tX) = \begin{cases}
            (Y, \tY) \quad &\text{if } \tU \leq \alpha(x, Y)\wedge\alpha(\tx, \tY)=: \har(Y, \tY)\\
            (x, \tx)  \quad &\text{if } \tU \geq \alpha(x, Y)\vee\alpha(\tx, \tY)\\
            (x, \tY)   \quad &\text{if }\alpha(x, Y) \leq \alpha(\tx, \tY) \text{ and } \alpha(x, Y) \leq \tU \leq \alpha(\tx, \tY)\\
            (Y, \tx)  \quad &\text{if } \alpha(\tx, \tY) \leq \alpha(x, Y) \text{ and } \alpha(\tx, \tY) \leq \tU \leq \alpha(x, Y),
        \end{cases}
    \end{equation}
    It is easy to see \eqref{eq:mtm:coupling1} is a coupling of $P(x, \cdot), P(\tx, \cdot)$.
    It follows, using that $d_\eps \leq 1$, 
    \begin{align}
        W_{d_\varepsilon}&(P_\infty(x, \cdot), P_\infty(\tx, \cdot))\leq \E d (X, \tX) \notag\\
        &\leq \E d_\eps(Y, \tY)\mathbbm{1}_{\tU \leq \har(Y, \tY)} + d_\eps(x, \tx) \left[ 1 - \bbP\left( \tU \leq \har(Y, \tY)\right) \right] + \E |\alpha(x, Y)- \alpha(\tx, \tY)|. \label{eq:mtminfty:contr:1}
    \end{align}
    We start analyzing the first term on the rhs using the explicit representation \eqref{eq:mtm:tY} of $\tY$, so that 
    \begin{align*}
        \E d_\eps(Y, \tY)\mathbbm{1}_{\tU \leq \har(Y, \tY)} =  \E d_\eps\left( Y, T_{\bx}(Y)\right) \mathbbm{1}_{U\leq \beta(\hat{Y}, T_{\bx}(Y))}\mathbbm{1}_{\tU \leq \har(Y, \tY)} + \E d_\eps(Y, \hat{Y}) \mathbbm{1}_{U\geq \beta(\hat{Y}, T_{\bx}(Y))}\mathbbm{1}_{\tU \leq \har(Y, \tY)}.
    \end{align*}
    On the first term we use the definition of $T_{\bx}$ and on the second we drop the distance as $d_\eps\leq 1$: 
\begin{align*}
        \E d_\eps(Y, \tY)\mathbbm{1}_{\tU \leq \har(Y, \tY)} &\leq  d\left( \rho x, \rho\tx\right) \E \mathbbm{1}_{U\leq \beta(\hat{Y}, T_{\bx}(Y))}\mathbbm{1}_{\tU \leq \har(Y, \tY)} + \E \mathbbm{1}_{U\geq \beta(\hat{Y}, T_{\bx}(Y))}\mathbbm{1}_{\tU \leq \har(Y, \tY)}\\
       & =   d\left( \rho x, \rho\tx\right) \E \beta(\hat{Y}, T_{\bx}(Y))\har(Y, \tY) + \E \left( 1 - \beta(\hat{Y}, T_{\bx}(Y))\right) \har(Y, \tY),
    \end{align*}
   where we used the double conditional expectation argument namely, if $\cF_0$ is the $\sigma$-algebra generated by $Y, \tY$, then by the independence of $\tU$ and $U$ 
   \begin{align*}
       \E \mathbbm{1}_{U\leq \beta(\hat{Y}, T_{\bx}(Y))}\mathbbm{1}_{\tU \leq \har(Y, \tY)} = \E \E \left[\mathbbm{1}_{U\leq \beta(\hat{Y}, T_{\bx}(Y))}\mathbbm{1}_{\tU \leq \har(Y, \tY)}| \cF_0\right] =  \E \beta(\hat{Y}, T_{\bx}(Y))\har(Y, \tY).
   \end{align*}
   As seen in \eqref{eq:infty:beta-1}, by the Lipschitzianity of $\Pot$ 
   \begin{equation}\label{eq:infty:beta-12}
       |\beta(\hat{Y}, T_{\bx}(Y)) - 1 | \leq 2L_\Pot \rho \|x - \tx\| 
   \end{equation}
    then, using the fact that $\beta\leq 1$ and $d_\eps(x, \tx) = \varepsilon^{-1}\|x - \tx\|<1$, it follows
    \begin{align}
         \E d_\eps(Y, \tY)\mathbbm{1}_{\tU \leq \har(Y, \tY)} &
          \leq \rho d_\varepsilon\left(  x, \tx\right) \E \har(Y, \tY) + 2L_\Pot\rho \varepsilon d_\eps(x, \tx) \E \har(Y, \tY) .\label{eq:mtminfty:contr:2}
    \end{align}
    Therefore from \eqref{eq:mtminfty:contr:1}
    \begin{equation*}
         W_{d_\varepsilon}(P_\infty(x, \cdot), P_\infty(\tx, \cdot))\leq d_\eps\left(  x, \tx\right) \left[1 -  \E \har(Y, \tY) \left( 1- \rho(1 + 2L_\Pot \varepsilon) \right) \right] + \E |\alpha(x, Y)- \alpha(\tx, \tY)|. 
    \end{equation*}
    If $\varepsilon< \frac{1}{2L_\Phi}(\frac{1}{\rho}-1)$, since for any $x,y\in \qsp$ the following lower bound holds
    \[
     \har(x, y) = 1 \wedge \frac{\int e^{-\Pot(z)}Q(x, dz)}{\int e^{-\Pot(u)}Q(y, du)} \wedge \frac{\int e^{-\Pot(z)}Q(y, dz)}{\int e^{-\Pot(u)}Q(x, du)} \geq e^{-2\|\Pot\|_\infty},
    \]
    it then follows that
    \begin{equation}\label{eq:mtminfty:contr:21}
         W_{d_\varepsilon}(P_\infty(x, \cdot), P_\infty(\tx, \cdot))\leq d_\eps\left(  x, \tx\right) \left[1 -  e^{-2\|\Pot\|_\infty}\left( 1- \rho(1 + 2L_\Pot \varepsilon) \right) \right] + \E |\alpha(x, Y)- \alpha(\tx, \tY)|. 
    \end{equation}
    Next we analyze the last term in \eqref{eq:mtminfty:contr:21}: we want to find a constant $k$ so that 
    \begin{equation*}
        \E |\alpha(x, Y)- \alpha(\tx, \tY)| \leq k \varepsilon d_\eps(x, \tx).
    \end{equation*}
    As the function $1 \wedge x$ is Lipschitz with constant 1, then 
    \begin{align*}
        \E |\alpha(x, Y)- \alpha(\tx, \tY)| \leq \E \left|\frac{\int e^{-\Pot(z)}\, Q(x, dz)}{\int e^{-\Pot(w)}\, Q(Y, dw)} - \frac{\int e^{-\Pot(z)}\, Q(\tx, dz)}{\int e^{-\Pot(w)}\, Q(\tY, dw)} \right|\\
        \leq \E \left|\frac{\int e^{-\Pot(z)}\, Q(x, dz)\int e^{-\Pot(w)}\, Q(\tY, dw) - \int e^{-\Pot(z)}\, Q(\tx, dz)\int e^{-\Pot(w)}\, Q(Y, dw)}{\int e^{-\Pot(w)}\, Q(Y, dw)\int e^{-\Pot(w)}\, Q(\tY, dw)}  \right|.
    \end{align*}
    Using the definition of the kernels $Q$ and the boundedness of $\Pot$ 
    \begin{align*}
      &\leq e^{2\|\Pot\|_\infty} \E \iint \left| e^{-\Pot(F(x, z))} e^{-\Pot(F(\tY, w))} - e^{-\Pot(F(\tx, z))} e^{-\Pot(F(Y, w))} \right| \mu_0(dz)\mu_0(dw) \\
       &\leq e^{3\|\Pot\|_\infty} \rho L_\Pot\|x - \tx\| + \E \left|\Pot(F(\tY, w)) -\Pot(F(Y, w))\right|
    \end{align*}
    Next, looking at the the second term:
    \begin{align*}
        \E &\left|\Pot(F(\tY, w)) -\Pot(F(Y, w))\right| \\
        &= \E \left|\Pot(F(\tY, w)) -\Pot(F(Y, w))\right|\left( \mathbbm{1}_{U\leq \beta(\hat{Y}, T_{\bx}(Y))} + \mathbbm{1}_{U\geq \beta(\hat{Y}, T_{\bx}(Y))}\right)\\
       & \leq L\rho \E \left\|\tY - Y \right\| \mathbbm{1}_{U\leq \beta(\hat{Y}, T_{\bx}(Y))} + 2\|\Pot\|_\infty \E\mathbbm{1}_{U\geq \beta(\hat{Y}, T_{\bx}(Y))}\\
       & \leq L_\Pot\rho^2 \|x - \tx\|  + 2\|\Pot\|_\infty \left|1 - \E \beta(\hat{Y}, T_{\bx}(Y))\right| \leq L_\Pot\rho^2 \|x - \tx\|  + 4\|\Pot\|_\infty L_\Pot\rho\|x - \tx\|, 
    \end{align*}
    where we have used the Lipschitzianity of $\Pot$, the definition of $\tY$, the fact that $\beta \leq 1$ and \eqref{eq:infty:beta-12}.
    
    Therefore we showed that 
    \begin{equation*}
         \E |\alpha(x, Y)- \alpha(\tx, \tY)| \leq \varepsilon \rho L_\Pot \left( e^{3\|\Pot\|_\infty} + \rho + 4\|\Pot\|_\infty \right)  d_\eps(x, \tx)
    \end{equation*}
    and, with \eqref{eq:mtminfty:contr:21}, it follows that
    \begin{align}
         W_{d_\varepsilon}(P(x, \cdot), P(\tx, \cdot))
         &\leq d_\eps(x, \tx) \left\lbrace 1 - e^{-2\|\Pot\|_\infty} \left[ 1 - \rho\left(1 + 2L_\Pot\varepsilon\right) \right] + \varepsilon \rho L_\Pot \left( e^{3\|\Pot\|_\infty} + \rho + 4\|\Pot\|_\infty \right) \right\rbrace 
         \notag\\
         &= \kappa d_\eps(x, \tx).\label{eq:mtminfty:kappa}
    \end{align}
    Then, choosing 
    \begin{equation*}
        \varepsilon < \frac{e^{-2\|\Pot\|_\infty}(1 - \rho)}{\rho L_\Pot (e^{3\|\Pot\|_\infty} + \rho + 4\|\Pot\|_\infty +2 e^{-2\|\Pot\|_\infty})} = \varepsilon^*,
    \end{equation*}
    the parameter $\kappa$ is strictly smaller than one as desired.
\end{proof}

\begin{Proposition}\label{prop:mtminftypcn:small}
	Assume the potential function $\Pot: \qsp \to \R$ is globally Lipschitz with constant $L_\Pot$. Define $S= \lbrace V(x) \leq 4 K_V\rbrace $ with $V$ any of the functions in \eqref{eq:choiceV}, and let $r_s >0$ be such that $S \subset B(0, r_s)$. Then, for every $x, \tx\in S$, there exists $s(n)>0$ such that 
    \begin{equation}\label{eq:inftyMTpcn:small}
        \Wass_{d_\varepsilon} (P_\infty^n(x, \cdot), P_\infty^n(\tx, \cdot))\leq s(n) \quad n\in \N
    \end{equation}
    and there is $n^* = n^*(\varepsilon, \rho, \|\Pot\|_\infty, L_{\Pot})$ such that $s = s(\varepsilon, \rho, n) \in (0,1)$, for all $n >n^*$.
\end{Proposition}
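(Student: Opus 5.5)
We mirror the arguments of \cref{prop:inftypcn:small} and \cref{prop:mtm:small}, iterating the one-step coupling \eqref{eq:mtm:coupling1} built in the proof of \cref{prop:mtminfty:contr}. Although the statement only lists Lipschitz continuity, the constants in the construction require $\Pot$ to be bounded as well, as in \cref{thm:3:inftyMTpcn}; we use boundedness throughout.

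\textbf{Construction.} Starting from $(X^{(0)},\tX^{(0)})=(x,\tx)$ with $x,\tx\in S\subset B(0,r_s)$, at step $k$ we proceed in two stages.
\begin{enumerate}
\item Draw $(Y^{(k)},\tY^{(k)})\sim\pi_\beta(\bX^{(k-1)};\cdot)$ as in \eqref{eq:mtm:tY}, with $\bX^{(k-1)}=(X^{(k-1)},\tX^{(k-1)})$. This uses fresh independent $Y^{(k)}\sim\baQ(X^{(k-1)},\cdot)$, $\hat Y^{(k)}\sim\baQ(\tX^{(k-1)},\cdot)$ and $U^{(k)}\sim\cU([0,1])$.
\item Apply the accept--reject coupling \eqref{eq:mtm:coupling1} with a fresh $\tU^{(k)}$.
\end{enumerate}
Composing these Markovian couplings gives a coupling of $P_\infty^n(x,\cdot)$ and $P_\infty^n(\tx,\cdot)$.

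\textbf{Good events.} Define
\[
A^{(k)}=\{U^{(k)}\le\beta(\hat Y^{(k)},T_{\bX^{(k-1)}}(Y^{(k)}))\},\qquad B^{(k)}=\{\tU^{(k)}\le\har(Y^{(k)},\tY^{(k)})\},
\]
and $\Lambda^{(n)}=\bigcap_{k\le n}(A^{(k)}\cap B^{(k)})$. On $A^{(k)}\cap B^{(k)}$ we have $\tY^{(k)}=Y^{(k)}-\rho(X^{(k-1)}-\tX^{(k-1)})$, and both chains move. Hence
\[
X^{(k)}-\tX^{(k)}=\rho\,(X^{(k-1)}-\tX^{(k-1)}),
\]
and by induction, on $\Lambda^{(n)}$, $\|X^{(n)}-\tX^{(n)}\|=\rho^n\|x-\tx\|\le 2r_s\rho^n$. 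Since $d_\eps\le1$, this gives
\[
W_{d_\eps}(P^n_\infty(x,\cdot),P^n_\infty(\tx,\cdot))\le 1-\bbP(\Lambda^{(n)})\Bigl(1-\frac{2r_s\rho^n}{\varepsilon}\Bigr).
\]

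\textbf{Lower bound on $\bbP(\Lambda^{(n)})$.} This is the main step. We factor the probability through the chain of conditional probabilities, conditioning on the past $\sigma$-algebra and the current pair. Two estimates control each factor:
\begin{itemize}
\item By the Lipschitz estimate \eqref{eq:infty:beta-12}, $\beta\ge\exp(-2L_\Pot\rho\|X^{(k-1)}-\tX^{(k-1)}\|)$, which on $\Lambda^{(k-1)}$ is at least $\exp(-2L_\Pot\rho^{k}\, 2r_s)$. Alternatively, boundedness gives $\beta\ge e^{-4\|\Pot\|_\infty}$ directly.
\item Conditionally on $U^{(k)}$ and the proposals, $\bbP(B^{(k)}\mid\cdot)=\har\ge e^{-2\|\Pot\|_\infty}$, as shown in the proof of \cref{prop:mtminfty:contr}.
\end{itemize}
Using the double conditional expectation argument of \eqref{eq:mtm:EE} (the variables $\tU^{(k)}$ and $U^{(k)}$ are independent), each factor is at least $e^{-2\|\Pot\|_\infty}\exp(-4L_\Pot r_s\rho^{k})$. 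Multiplying over $k$,
\[
\bbP(\Lambda^{(n)})\ge e^{-2n\|\Pot\|_\infty}\exp\Bigl(-\tfrac{4L_\Pot r_s\rho}{1-\rho}\Bigr)>0.
\]
The only delicate point is handling the conditioning correctly, since $\har$ depends on $\tY^{(k)}$, which itself depends on $U^{(k)}$.

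\textbf{Conclusion.} Set
\[
s(n)=1-e^{-2n\|\Pot\|_\infty}\exp\Bigl(-\tfrac{4L_\Pot r_s\rho}{1-\rho}\Bigr)\Bigl(1-\tfrac{2r_s\rho^n}{\varepsilon}\Bigr).
\]
Then $s(n)<1$ as soon as $n>n^*:=\frac{\log 2r_s-\log\varepsilon}{-\log\rho}$, and $s(n)>0$ trivially. For $\rho=0$ one step already gives $X^{(1)}=\tX^{(1)}$ on $\Lambda^{(1)}$, so the statement holds with $n^*=0$.
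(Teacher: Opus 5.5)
Your proposal is correct and is essentially the paper's proof: you use the same iterated coupling \eqref{eq:mtm:coupling1}, the same good event $\Lambda^{(n)}$ (the paper merges your $A^{(k)}\cap B^{(k)}$ into a single $A^{(k)}$), the same bound $1-\bbP(\Lambda^{(n)})(1-2r_s\rho^n/\varepsilon)$, and, as you point out, the same reliance on boundedness of $\Pot$ to get $\har\ge e^{-2\|\Pot\|_\infty}$. The only difference is that the paper bounds $\beta>e^{-4\|\Pot\|_\infty}$ and gets $\bbP(\Lambda^{(n)})>e^{-6n\|\Pot\|_\infty}$, whereas you use $\beta\ge\exp(-2L_\Pot\rho\|X^{(k-1)}-\tX^{(k-1)}\|)$ to get the slightly sharper $e^{-2n\|\Pot\|_\infty}\exp(-4L_\Pot r_s\rho/(1-\rho))$; that exponential bound follows directly from the definition \eqref{eq:acceptanceAR}, as in the proof of \cref{prop:inftypcn:small}, and not from \eqref{eq:infty:beta-12}, which only gives the linear bound $\beta\ge 1-2L_\Pot\rho\|X^{(k-1)}-\tX^{(k-1)}\|$.
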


\begin{proof}
    Let $x, \tx\in S\subset B(0, r_s)$ and consider the same coupling of $P(x, \cdot)$, $P(\tx, \cdot)$ as in \eqref{eq:mtm:coupling1} so that 
    \begin{align}
        W_{d_\varepsilon}(P(x, \cdot), P(\tx, \cdot))
        &\leq \E d_\eps(Y, \tY) \mathbbm{1}_{\tU\leq \har(Y, \tY)} + 1 - \E\har(Y, \tY).\label{eq:mtm:small1}
    \end{align}
   We start by focusing on the first term: using the definition of the coupling $(Y, \tY)$ it follows
    \begin{align*}
        \E d_\eps(Y, \tY) \mathbbm{1}_{\tU\leq \har(Y, \tY)} &= \E d_\eps(Y, T_{\bx}(Y)) \mathbbm{1}_{U\leq \beta(\hat{Y}, T_{\bx}(Y))} \mathbbm{1}_{\tU\leq \har(Y, \tY)}
       + \E d_\eps(Y, \hat{Y}) \mathbbm{1}_{U\geq \beta(\hat{Y}, T(Y))}\mathbbm{1}_{\tU\leq \har(Y, \tY)}\\
        &\leq d_\eps(\rho x, \rho \tx) \E \mathbbm{1}_{U\leq \beta(\hat{Y}, T_{\bx}(Y))} \mathbbm{1}_{\tU\leq \har(Y, \tY)}+ \E \mathbbm{1}_{U\geq \beta(\hat{Y}, T_{\bx}(Y))}\mathbbm{1}_{\tU\leq \har(Y, \tY)}\\
        &\leq  \frac{2r_s \rho }{\varepsilon}\E \beta(\hat{Y}, T_{\bx}(Y))\har(Y, \tY)+ 
        \E \left(1- \beta(\hat{Y}, T_{\bx}(Y))\right) \har(Y, \tY),
    \end{align*}
    where in the last inequality we used that $x, \tx$ are in $B(0, r_s)$. Looking back at \eqref{eq:mtm:small1}, we then showed that 
    \begin{align*}
          W_{d_\varepsilon}(P(x, \cdot), P(\tx, \cdot))
        &\leq  \frac{2r_s \rho }{\varepsilon}\E \beta(\hat{Y}, T_{\bx}(Y))\har(Y, \tY)+ 
        \E \left(1- \beta(\hat{Y}, T_{\bx}(Y))\right) \har(Y, \tY)+  1 - \E\har(Y, \tY)\\
        &=  1 - \E \beta(\hat{Y}, T_{\bx}(Y))\har(Y, \tY) \left( 1 - \frac{2r_s \rho }{\varepsilon}\right),
    \end{align*}
    and, as $\Pot$ is assumed bounded, from the definitions of the acceptance probabilities $\alpha$ \eqref{eq:mtminfty:alpha} and $\beta$ \eqref{eq:acceptanceAR}, the quantity $\E \beta(\hat{Y}, T_{\bx}(Y))\har(Y, \tY) > e^{-6\|\Pot\|_\infty}$ stays greater than zero. 
    
    Next, we iterate the argument to get a similar bound for $P^n(x, \cdot), P^n(\tx, \cdot)$ so that, for $n$ large enough, the factor $1 - \frac{2r_s \rho^n }{\varepsilon}$ is small enough to ensure the desired result.
    Let $n\geq 1$ and $(X^{(0)}, \tX^{(0)}) = (x,\tx)$. Consider the coupling $(Y^{(n)}, \tY^{(n)}) \sim \pi_\beta(X^{(n-1)}, \tX^{(n-1)})$ of the proposals $\baQ^n(x,\cdot), \baQ^n(\tx,\cdot)$, namely $ \tY^{(n)}\sim \baQ(X^{(n-1)}, \cdot)$ and, given the independently drawn $\hat{Y}^{(n)}\sim \baQ(\tX^{(n-1)}, \cdot)$, 
    \begin{equation*}
         \tY^{(n)} = T_{n-1}(Y^{(n)}) \mathbbm{1}_{U^{(n)} \leq \beta(\hat{Y}^{(n)}, T_{n-1}(Y^{(n)}))} + \hat{Y}^{(n)} \mathbbm{1}_{U^{(n)} > \beta(\hat{Y}^{(n)}, T_{n-1}(Y^{(n)}))} ,
    \end{equation*}
    where $U^{(n)}\sim \cU([0,1])$ and $T_{n-1}(y) := T_{\bX^{(n-1)}}(y) = y - \rho(X^{(n-1)} - \tX^{(n-1)})$. 
    Let $\tU^{(n)}\sim \cU([0,1])$ be independent of $U^{(k)}$, $k=1, \ldots, n$, and $\tU^{(k)}$, $k=1, \ldots, n-1$, and define
    \begin{equation}\label{eq:mtm:coupling:n}
        (X^{(n)}, \tX^{(n)}) = 
        \begin{cases}
            (Y^{(n)}, \tY^{(n)}) \qquad &\text{if } \tU^{(n)} \leq \alpha(X^{(n-1)}, Y^{(n)})\wedge\alpha(\tX^{(n-1)}, \tY^{(n)})=: \har_{n-1}(Y^{(n)}, \tY^{(n)})\\
            (X^{(n-1)}, \tX^{(n-1)})  \qquad &\text{if } \tU^{(n)} \geq \alpha(X^{(n-1)}, Y^{(n)})\vee\alpha(\tX^{(n)}, \tY^{(n)})\\
            (X^{(n-1)}, \tY^{(n)})   \qquad &\text{if } \alpha(X^{(n-1)}, Y^{(n)}) \leq \tU^{(n)} \leq \alpha(\tX^{(n-1)}, \tY^{(n)})\\
            (Y^{(n)}, \tX^{(n-1)})  \qquad &\text{if } \alpha(\tX^{(n-1)}, \tY^{(n)}) \leq \tU^{(n)} \leq \alpha(X^{(n-1)}, Y^{(n)}).
        \end{cases}
    \end{equation}
    Define the events
    \begin{equation*}
    \begin{split}
            A^{(k)} &= \lbrace U^{(k)} \leq \beta(\hat{Y}^{(k)}, T_{k-1}(Y^{(k)})) \, \text{ and }\,\tU^{(k)} \leq \har_{k-1}(Y^{(k)}, \tY^{(k)})\rbrace \\
        \Lambda^{(n)}& = \bigcap_{k=1}^n A^{(k)}
    \end{split}
    \end{equation*}
    so that $\Lambda^{(n)}$ is the event over which $(X^{(k)}, \tX^{(k)}) =  (Y^{(k)}, T_{k-1}(Y^{(k)}))$ for any $k= 1,\ldots, n$. Then, since $d_\varepsilon\leq 1$ and from the definition of the shift function $T_{n-1}$, it follows 
    \begin{align*}
        W_{d_\varepsilon}(P^n(x, \cdot), P^n(\tx, \cdot))&\leq \E d_\eps(X^{(n)}, \tX^{(n)}) \left[\mathbbm{1}_{\Lambda^{(n)}} + \mathbbm{1}_{(\Lambda^{(n)})^c}\right]\\
        &\leq \E d_\eps(Y^{(n)}, \tY^{(n)}) \mathbbm{1}_{\Lambda^{(n)}} + 1 - \bbP(\Lambda^{(n)})\\
        &=  \E d_\eps(Y^{(n)}, T_{n-1}(Y^{(n)})) \mathbbm{1}_{\Lambda^{(n)}} + 1 - \bbP(\Lambda^{(n)})\\
        &=  \E d_\eps(\rho X^{(n-1)}, \rho \tX^{(n-1)}) \mathbbm{1}_{\Lambda^{(n)}} + 1 - \bbP(\Lambda^{(n)})\\
        & \; \vdots\\
        & = \E d_\eps(\rho^n x, \rho^n \tx ) \mathbbm{1}_{\Lambda^{(n)}} + 1 - \bbP(\Lambda^{(n)}).
    \end{align*}
    Finally, as $x, \tx\in S \subset B(0, r_s)$
    \begin{align}
         W_{d_\varepsilon}(P^n(x, \cdot), P^n(\tx, \cdot))&\leq 1 - \bbP(\Lambda^{(n)}) \left( 1 - \frac{2 r_s \rho^n}{\varepsilon}\right). 
    \end{align}
    If $\Lambda^{(n)}$ has positive probability, and $n$ is large enough to get $2 r_s \rho^n< \varepsilon$, we then have the desired result. 
    Again by the boundedness of the function $\Pot$ for any $k =1, \ldots, n$
    \begin{equation*}
        \har_{k-1} (Y^{(k)}, \tY^{(k)}) \geq e^{-2\|\Pot\|_\infty}, \quad \beta(\hat{Y}^{(k)}, T_{k-1}(Y^{(k)})> e^{-4 \|\Pot\|_\infty}
    \end{equation*}
    so that 
    \begin{align*}
         \bbP(A^{(1)}) &= \bbP\left\lbrace U^{(1)} \leq \beta(\hat{Y}^{(1)}, T_{k-1}(Y^{(1)}))\, \text{ and }\,  \tU^{(1)} \leq \har_0(Y^{(1)}, \tY^{(1)})\right\rbrace\\
         &= \E  \beta(\hat{Y}^{(1)}, T_{k-1}(Y^{(1)})) \har_0(Y^{(1)}, \tY^{(1)}) > e^{-6\|\Pot\|_\infty}
    \end{align*}
    and 
    \begin{align*}
        \bbP(\Lambda^{(n)}) = \bbP(A^{(1)}) \prod_{k=2}^{n}\bbP \left( A^{(k)} \left| \bigcap_{j = 1}^{k-1} A^{(j)}\right.\right) 
        > e^{-6n\|\Pot\|_\infty}. 
    \end{align*}
    This concludes the proof.
\end{proof}

\section{Numerical Experiments} \label{sec:numerics}
In this section, we benchmark mpCN and MTpCN against pCN on three toy inverse problems with complicated posterior geometries.
All three problems can be mathematically phrased as Bayesian inverse problems \cite{kaipio2006statistical,stuart2010inverse}. We can phrase the problem as follows:  estimating the unknown $x\in \mathcal{X}$ from observations $y\in \R^n$ connected to the unknown through a forward map $f:\mathcal{X}\to\mathbb{R}^n$, 
\begin{equation} \label{eq:inverse_problem}
    y = f(x) + \eta,
\end{equation}
where $\eta \in \mathbb{R}^n$ is the additive observational noise. 

We assume throughout Gaussian noise  with observational noise level $\sigma^2$, defining in turn the Gaussian likelihood 
\begin{equation}\label{eq:lik}
    \mathcal{L}(y; x) \propto \exp \left ( -\frac{1}{2\sigma^2}\left\|f(x)-y\right\|^2\right ) := \exp \left (-\Phi(y;x) \right ), 
\end{equation}
for the data $y$ in~\eqref{eq:inverse_problem}, with $\Phi$ the quadratic potential.

Next, we assume a zero-mean Gaussian prior measure $\mu_0=\mathcal{N}(x; 0, \mathcal{C})$ with covariance operator $\mathcal{C}$. Together with the likelihood, this prior measure defines the  intractable posterior measure 
\begin{equation*}
    \mu(dx) \propto \exp \left ( -{\Phi(y; x)}\right ) \mu_0(dx).
\end{equation*}
In practice, we will represent the functional $x$ with some finite discretization in $\mathbb{R}^d$, in which case the posterior measure can be written as
\begin{equation*}
    \mu(dx) \propto
\exp\left(-\frac{1}{2\sigma^2}\left\|f(x)-y\right\|^2
-\frac{1}{2}x^T \bm{C}^{-1} x\right)\, dx,
\end{equation*}
where we have assumed a $d \times d$ covariance matrix $\bm{C} = \tau^2 \bm{R}$ for the Gaussian prior, with $\tau^2$ the scalar prior marginal variance and $\bm{R}$ a correlation matrix. 

In the first two problems we consider a two-dimensional parameter $x =(x_1, x_2) \in \mathbb{R}^2$, i.e. $d=2$, and corresponding forward maps $f_1, f_2 :\mathbb{R}^2 \to \mathbb{R}^2$ that generate multiwelled and swirl surfaces in $\mathbb{R}^{3}$, respectively.  

The third problem consists of a toy formulation of a functional solute transport inverse problem and is motivated by sparse indirect measurements of a fluid flow under damping and external forcing. In its discretized version, the forward map is given by a matrix inversion. 

To demonstrate the algorithms, we use the so-called \textit{inverse crime} approach \cite{kaipio2006statistical}, and simulate data from the models as
\begin{equation}\label{eq:data_simulation}
x_{\text{true}} \sim \mathcal{N}(0, \bm{C}), \quad y_{\text{obs}} = f(x_{\text{true}}) + \eta, \quad \eta \sim \mathcal{N}(0, \sigma^2 \bm{I}_n).
\end{equation}

We then sample from the model's posterior measure using mpCN, MTpCN, and pCN, for different values for the proposal count $p$ and the proposal aggressiveness $\rho$. We use the toy examples to demonstrate and compare various aspects of the performance of the multiproposal mpCN and MTpCN among themselves and against pCN, considering both single pCN chains and the embarrassingly parallel approach of running $p$ pCN chains simultaneously. 

\subsection{A multiwelled posterior}\label{sec:multiwell} 
The goal with this example is to analyze the performance of the algorithms on a multimodal posterior on $\mathbb{R}^2$, obtained by defining the multiwell forward map
\begin{equation}
    f(x):= f_1(x) = \begin{bmatrix}
(x_1^2 - 1)(x_1^2 - 4) \\
(x_2^2 - 1)(x_2^2 - 4)
\end{bmatrix}
\end{equation}
and $\tau^2=\sigma^2=1$ (so $\bm{C}=\bm{R}=\bm{I}$) in the Gaussian prior and likelihood in~\eqref{eq:lik}.
\cref{fig:lik_contours} (a) shows the likelihood contours obtained with this setup.

\begin{figure}
\begin{subfigure}{0.5\textwidth}
    \centering
    \includegraphics[width=0.7\linewidth]{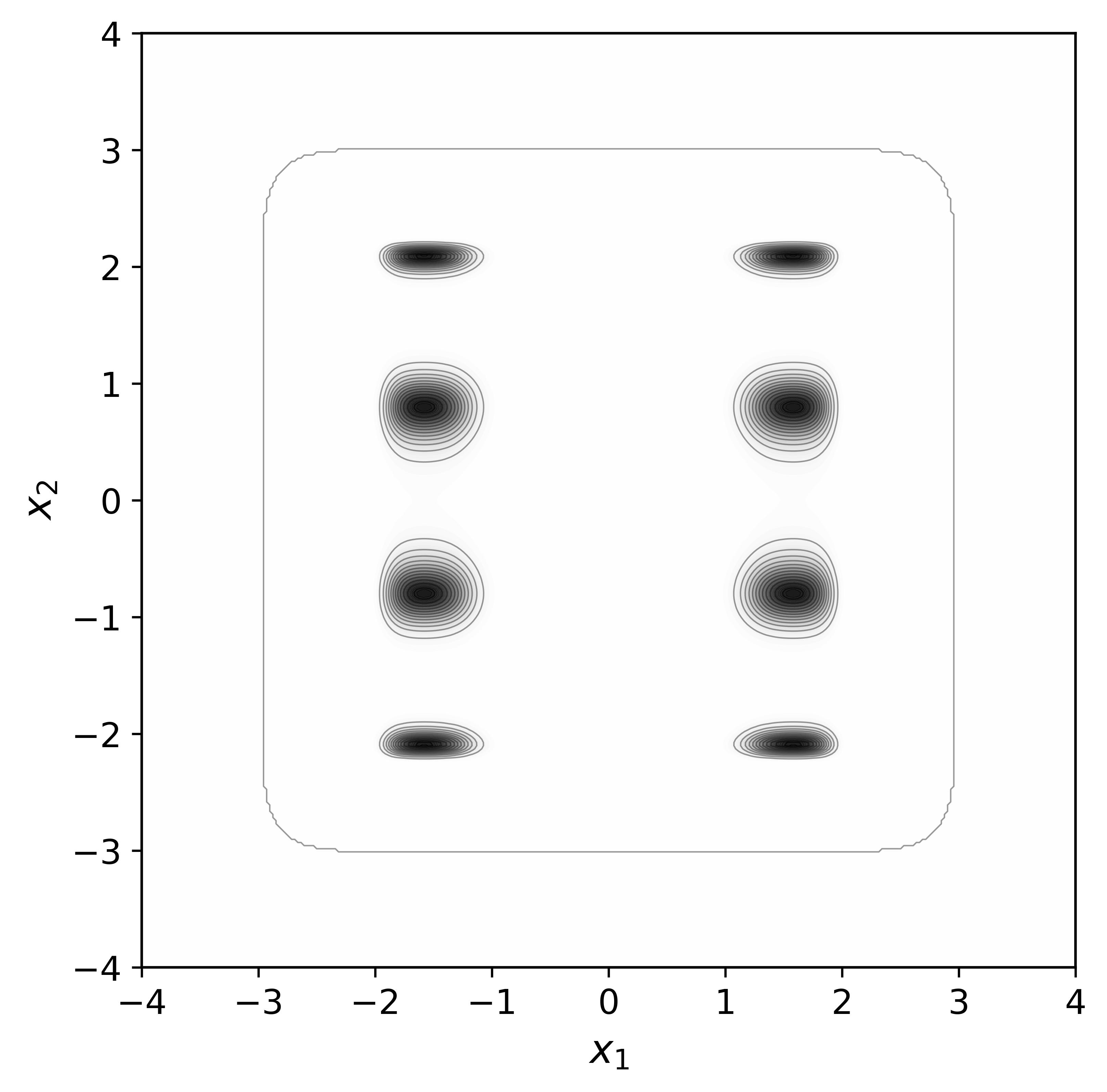}
    \caption{Multiwell.}
    \label{fig:lik_contour_multiwell}
\end{subfigure}
\hfill
\begin{subfigure}{0.5\textwidth}
    \centering
    \includegraphics[width=0.7\linewidth]{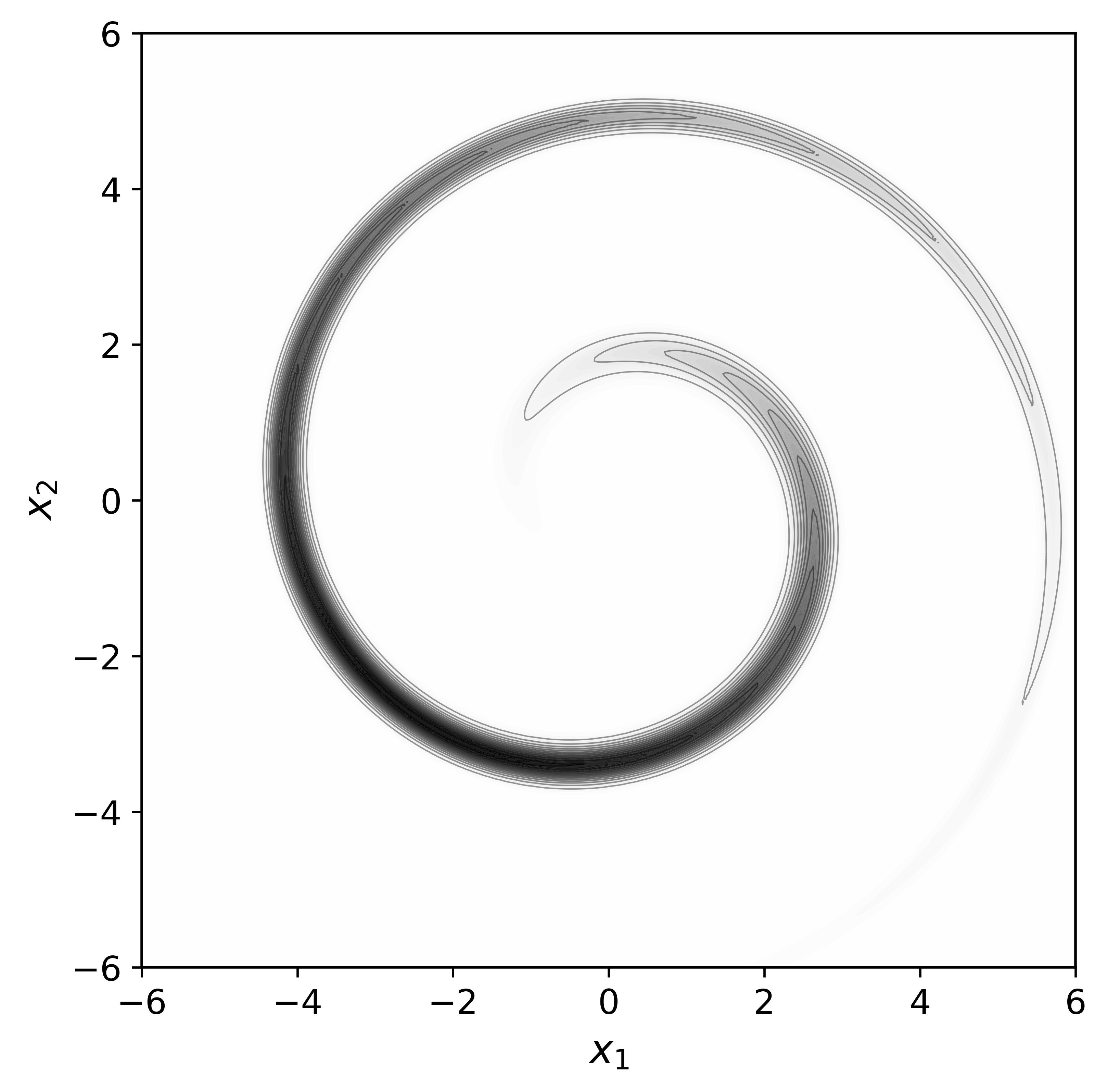}
    \caption{Polar twist.}
    \label{fig:lik_contour_polar}
\end{subfigure}
\caption{Likelihood contours.}
\label{fig:lik_contours}
\end{figure}

In \cref{fig:sweep_multiwell_per_param} we study mixing as a function of $p$ and $\rho$ for $x_1$ and $x_2$. We observe a transition in the mpCN algorithm from globally informed to locally informed exploration as $\rho$ increases. For small $\rho$, the proposal combined with Barker acceptance introduces enough randomness to explore multiple directions and move between modes. As $\rho$ grows, proposed states become increasingly similar to the current state, and the chain explores only local directions.

\begin{figure}
    \centering
    \includegraphics[width=1\linewidth, trim={0 0 0 1cm}, clip]{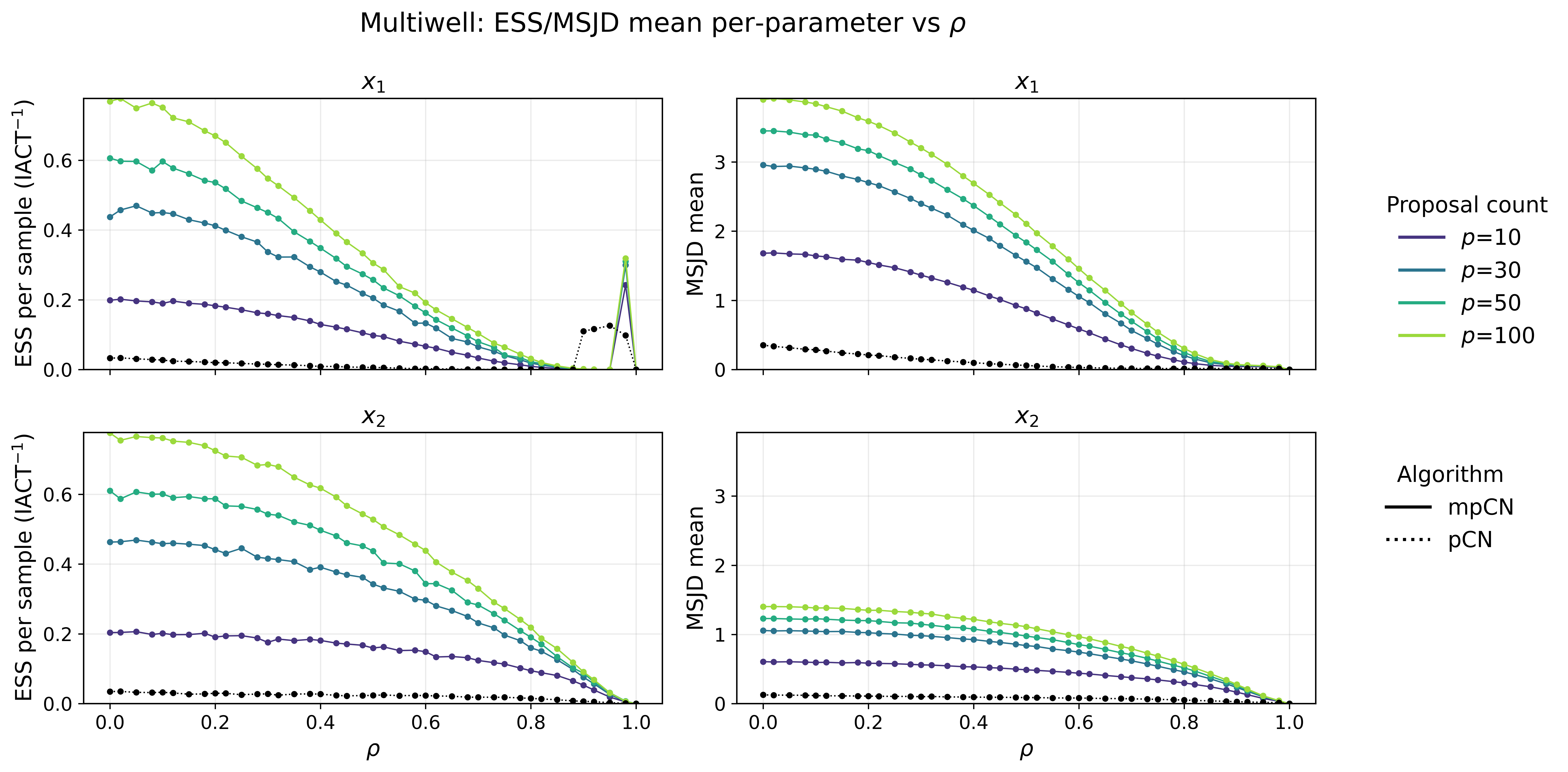}
    \caption{$p$-$\rho$ sweep for ESS and MSJD for the multiwell example, split by parameter. The mixing decays with $\rho$ in general. Since ESS measures local mixing, it can capture the moment when the algorithm transitions from a poor global exploration to exclusively local exploration. This is evidenced in the top-left subplot, where low ESS values are followed by a spike in the ESS.}
    \label{fig:sweep_multiwell_per_param}
\end{figure}

This transition is reflected in spikes in the ESS for $x_1$ (\cref{fig:sweep_multiwell_per_param}, top left). Since ESS is a local diagnostic, it can be high even when the chain is confined to a single mode but mixes well within it. The traceplots in \cref{fig:multiwell_traceplots} support this: as $\rho$ increases, the $x_1$ component exhibits fewer transitions between modes and becomes trapped in one mode. The corresponding $\rho$ values align with the ESS spikes.

\begin{figure}
    \centering
    \includegraphics[width=1\linewidth, trim={0 0 0 1cm}, clip]{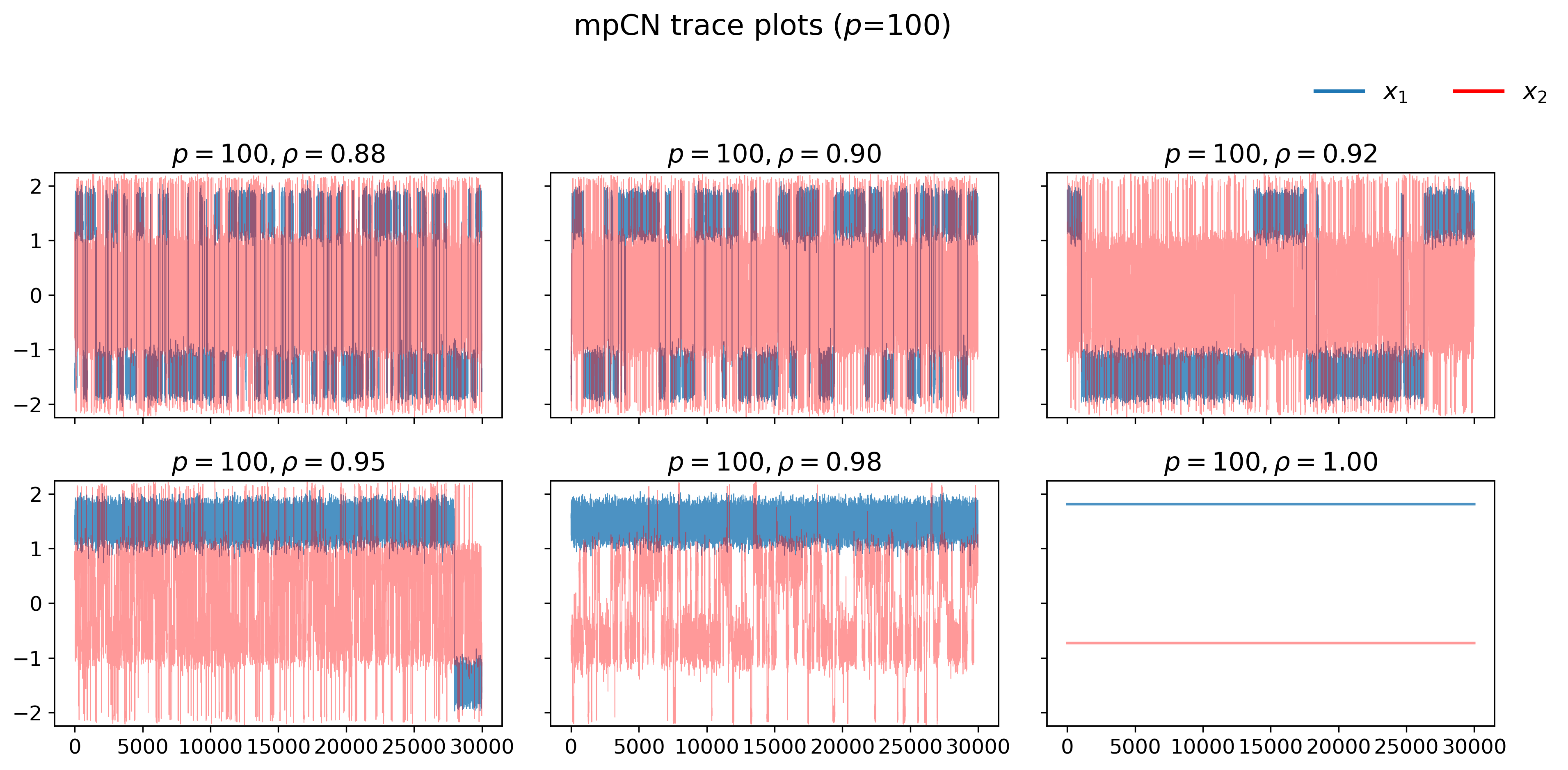}
    \caption{mpCN ($p=100$) traceplots for the multiwell example. The ability to jump between modes decreases with $\rho$; for $\rho$ large enough, the component plotted in blue becomes confined to a single mode.}
    \label{fig:multiwell_traceplots}
\end{figure} 

\subsection{Polar twist posterior}\label{sec:polar}
The goal with this example is to evaluate and compare the performance of mpCN, MTpCN and single-chain pCN on a posterior with complicated correlation structures. To generate these complicated structures, we assume the "polar-twist`` forward map 
$$
f(x):= f_2(x;\alpha) = \begin{bmatrix}
x_1 \cos(\alpha r) - x_2 \sin(\alpha r) \\
x_1 \sin(\alpha r) + x_2 \cos(\alpha r)
\end{bmatrix}
$$
where $\alpha$ is a hyperparameter that determines the strength of the twist and $r = \sqrt{x_1^2 + x_2^2}$. 
We then simulate data from the model as described at the beginning of Section~\ref{sec:numerics}, 
with $\alpha=2$, $\sigma^2=1$, $\tau^2=4$, and
$$
\bm{R} = \begin{bmatrix}1 & 0.3 \\ 0.3 & 0.5\end{bmatrix}.
$$
\cref{fig:lik_contours} (b) shows the likelihood contours obtained with this setup.
\begin{figure}[h!]
    \centering
    \includegraphics[width=0.9\linewidth, trim={0 0 0 0cm}, clip]{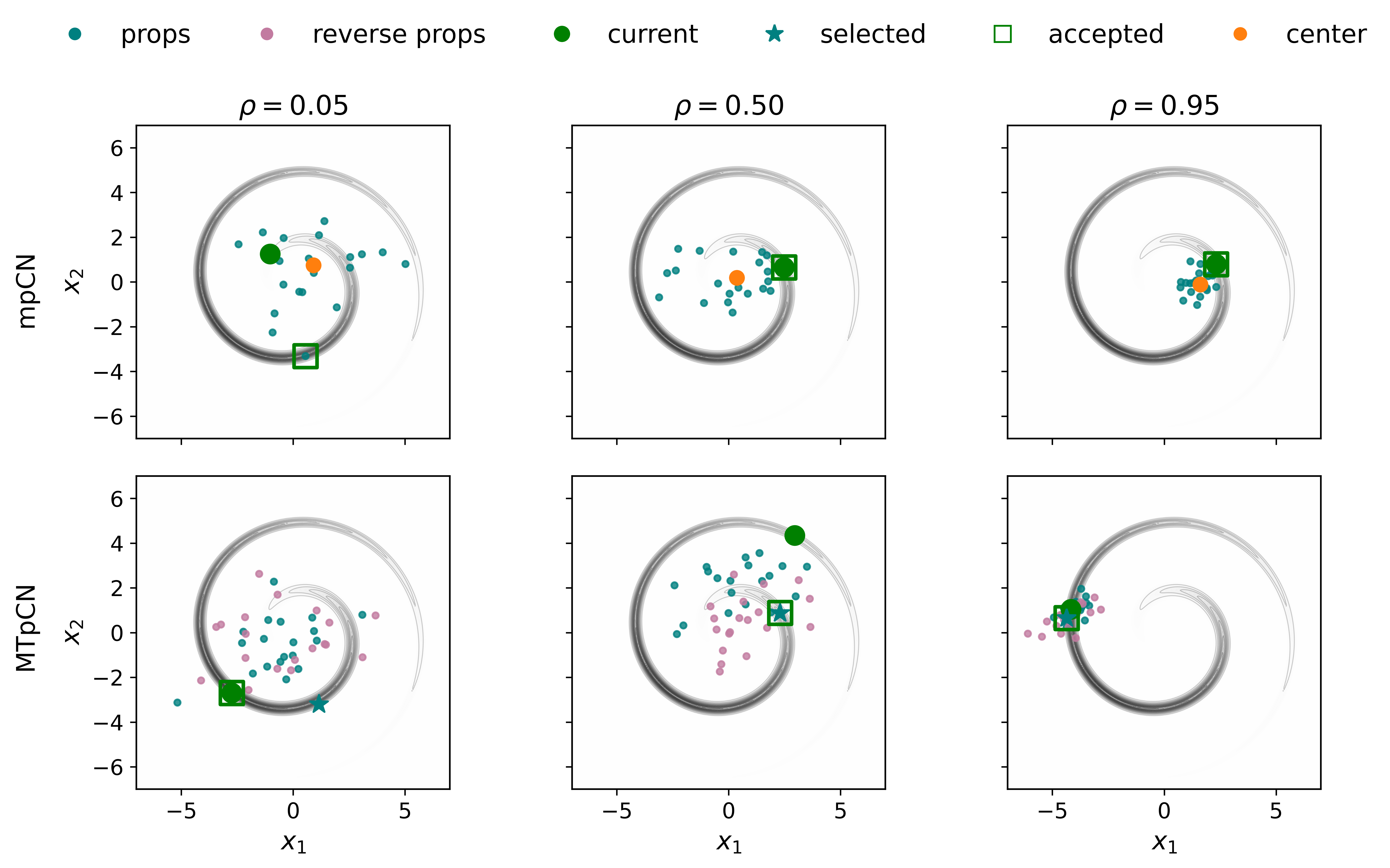}
    \caption{Example proposal clouds for mpCN and MTpCN, for the polar-twist example. Each subplot corresponds to iteration 10000 in the chains defined by $p=10$ and the $\rho$ indicated for each column, considering the same random seed for initializing the six resulting chains.}
    \label{fig:clouds}
\end{figure}

In \cref{fig:sweep_polar} we compare the mixing of the algorithms as a function of the proposal count $p$ and the hyperparameter $\rho$. In the figure, the mixing is expressed in raw ESS and MSJD. 
The mixing decreases with $\rho$ for all algorithms and all values of $p$. The mpCN algorithm has better mixing than MPpCN and pCN in general, and we note that MTpCN has approximately double computational cost as mpCN since it requires twice as many likelihood computations. 
\begin{figure}[h!]
    \centering
    \includegraphics[width=1\linewidth, trim={0 0 0 0cm}, clip]{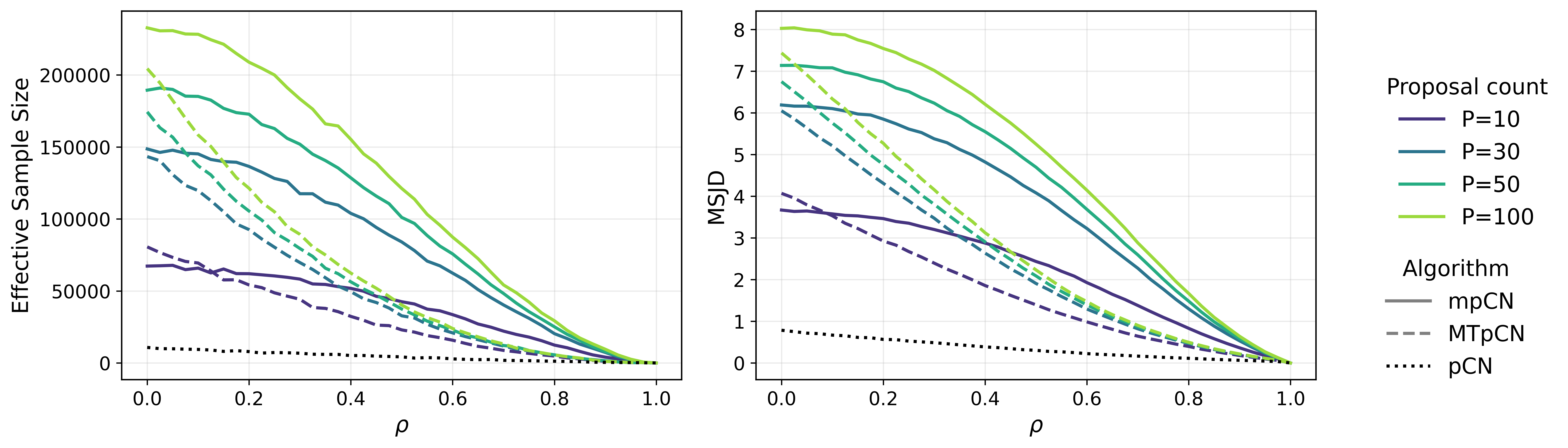}
    \caption{Mixing of mpCN, MTpCN, and pCN for the polar twist forward map. Both plots consider the ESS and MSJD averaged over the $x_1$ and $x_2$ coordinates.}
    \label{fig:sweep_polar}
\end{figure}

\subsection{Toy solute transport problem}\label{sec:matrix_inversion}
We consider the following matrix-based approximation of a PDE for damped transport in fluids:
\begin{equation} \label{eq:ad_eq}
    \frac{d}{dt} \bm{\theta} + (\bm{A} + \kappa \bm{I}) \bm{\theta} = \bm{g}.
\end{equation}
Here, the solution vector $\bm{\theta} \in \mathbb{R}^d$ is the state variable (e.g. temperature, solute concentration), $\bm{A} \in \mathbb{M}^{[d\times d]}$ is a discretized version of the advection (transport) operator, $\kappa>0$ is a damping parameter, and $\bm{g} \in \mathbb{R}^d$ is a time-independent forcing term (e.g. heat source, dye injection or stirring). 
This toy model, first introduced in \cite{senn2026mess} and with preliminary versions introduced in \cite{glatt2024parallel},
mimics some of the features of the transport of a solute by a fluid under damping and external forcing. 

The model is already written in a basis that behaves like Fourier modes, and then the elements $a_{ij}$ of $\bm{A}$ represent the energy transfer from mode $i$ to $j$. To model a natural physical symmetry, $\bm{A}$ is defined to be zero-diagonal and antisymmetric, and is thus specified by the the $m = d(d-1)/2$ non-zero elements in its upper triangle. 
We choose $\bm{g} = \hat{\bm{e}}_1 = (1, 0, \ldots, 0)$ so that the energy injection is done only at mode 1 (the largest scale) and is cascaded to the other modes (smaller scales) by $\bm{A}$.  

We focus on the steady state of equation~\eqref{eq:ad_eq}, which satisfies 
\begin{equation} \label{eq:ad_toy}
    (\bm{A} + \kappa \bm{I}) \bm{\theta} = \bm{g},
\end{equation}
and extends to infinite dimensions assuming appropriate square-summability of the coefficients of $\bm{A}$ and $\bm{g}$ and hence $\bm{\theta}$ (see \cref{ap:solute_like}).
Our goal is to use Bayesian inversion on the toy model in~\eqref{eq:ad_toy} to estimate $\bm{A}$ assuming the partial observational model
\begin{equation}\label{eq:ad_obs_model}
    \bm{y} = \mathcal{P}(\bm{\theta}(\bm{A})) + \bm{\eta}, \quad \bm{\eta} \sim \mathcal{N}_k(0, \sigma^2 \bm{I}_k),
\end{equation}
where $\mathcal{P}:\mathbb{R}^d \to \mathbb{R}^k$ is a projection operation so that if $\bm{\theta} = (\theta_1, \ldots, \theta_{d})$ and $1 \leq d_0 \leq d$, then $\mathcal{P}(\bm{\theta})=(\theta_{d_{0}-k}, \ldots, \theta_{d_{0}})$ and $\sigma^2$ is the observational noise scale. 

We phrase the estimation problem as sampling from the posterior with measure
\begin{equation}\label{eq:ex3:target}
    \mu(d\bm{A}) \propto \exp(-\Phi(\bm{A}))\mu_0(d\bm{A}),
\end{equation}
with potential 
\begin{equation}\label{eq:pot}
    \Phi(\bm{A}) =  \frac{1}{2\sigma^2} \|\cP\left((\bA + \kappa \bI)^{-1}\bg)\right)- \by \|^2,
\end{equation}
that involves computing the solution vector $ \bm{\theta}(\bm{A})=(\bm{A}+\kappa \bm{I})^{-1} \bm{g}$ at each iteration, implying an expensive likelihood evaluation for large $d$. 
We show in \cref{rem:ex3} in Appendix~\ref{ap:solute_like} that the log-likelihood function \eqref{eq:pot} falls under the category analyzed in \cref{sec:results}, as it can be proved to be globally bounded and Lipschitz, ensuring theoretical benchmarks for robustness of mixing with increasing dimension. 

Regarding the prior modeling of the $m$ unknown coefficients elements $a_{ij}, \; 0 \leq i < j \leq d-1$ in the non-zero upper triangle of $\bm{A}$,  we choose prior measure  $\mu_0 := \mathcal{N}(\bm{0}, \bm{C})$ with diagonal covariance matrix $\bm{C} := \tau^2 \mbox{diag}(\bm{q})$, where $\bm{q} = (q_{ij})$ is a vector containing the variances the elements of $A$. More precisely, we define the elements of $\bm{q}$ as 
\begin{equation} \label{eq:ad_covariance}
    q_{ij} = \mbox{Var}(a_{ij}) := (ij)^{-\alpha}|i-j|^{-\gamma}, \quad 0 \leq i < j \leq d-1, \quad \gamma, \alpha > 0.
\end{equation}
With the modeling choice in~\eqref{eq:ad_covariance}, the magnitude of each element $a_{ij}$ will decay both with the difference between $i$ and $j$, discouraging long-range energy interactions, and with increasing indices $i$ and $j$, thereby penalizing energy transfer at higher frequencies. 

In the following two sections we analyze first the warm-up phase and then the stationary phase using datasets generated from the model, as described at the beginning of this section.

\subsubsection{Warm-up phase analysis}\label{subsec:warm-up}
For the warm-up phase, we consider $d=40$, corresponding to $m=780$ parameters. We choose the hyperparameter values 
\begin{equation} \label{eq:hyperpars_st}
    \kappa=0.02, \alpha=3, \gamma=2, \sigma^2=0.25, \tau^2=2,
\end{equation}
and set the observational scale with $d_0=12$ and $k=7$, corresponding to observing modes $6, 7, \ldots, 12$ (the 6th, 7th, ... 12th elements of the observational vector $\bm{y}_{40}$). 
This model configuration ensures that the problem is complex enough so that the warm-up phase can be observed with the naked eye. 
\cref{fig:ad_simulated_data_stat} (right) in Appendix~\ref{ap:numerics} illustrates the generated $\bm{A}_{40}$, $\bm{\theta}(\bm{A}_{40})$ and $\bm{y}_{40}$. 

We next demonstrate how mpCN with $p$ proposals has a shorter warm-up phase than a single pCN chain and also than $p$ embarrassingly parallel pCN chains starting at $p$ different initial states. 
To this aim, we will consider the following four observables of the advection matrix $\bm{A}$: 
\begin{equation}\label{eq:observables}
    \varphi_1(\bm{A}) := a_{01},
    \quad
    \varphi_2(\bm{A}) := \mbox{max}_{i<j}|a_{ij}|,
    \quad
    \varphi_3(\bm{A}) := E_2(\bm{A})= \sum_{i<j:|i-j|\leq 2}a_{ij}^2,
    \quad
    \varphi_4(\bm{A}) := \Phi(\bm{A}),
\end{equation}
corresponding respectively to the first element of the unknown vector, the maximum absolute transfer coefficient, the energy in a 2-band, and the potential.

To compare the error evolution between algorithms, we compute running mean estimators for each of the observables in~\eqref{eq:observables}. Namely, and letting $Y = (Y_1, Y_2, \ldots)$ denote an mpCN chain and $X^j = (X_1^j, X_2^j, \ldots)$ denote the $j$-th pCN chain out of a group of $p$ independent pCN chains, we define the running mean estimators
\begin{equation}\label{eq:estimators}
    \hat{\varphi}_{t, mpCN} := \frac{1}{t}\sum_{s=1}^t \varphi(X_s), \quad 
    \hat{\varphi}_{t, pCN} := \frac{1}{t}\sum_{s=1}^t \varphi(X_s^1), \quad
    \hat{\varphi}_{t, EP} := \frac{1}{t}\sum_{s=1}^t \frac{1}{p}\sum_{j=1}^p\varphi(X_s^j),
\end{equation}
for an arbitrary observable $\varphi$.
Next, we define the running mean squared error (MSE) for an arbitrary estimator $\hat{\varphi}$, such as those in~\eqref{eq:estimators}, as
\begin{equation}\label{eq:MSE}
     \mbox{MSE}(t, \hat{\varphi}) := \frac{1}{M}\sum_{m=1}^M \bigl(\hat{\varphi}_t^m - \hat{\mu}(\varphi) \bigr)^2 \approx \int_{\mathcal{X}} \bigl(\hat{\varphi}_t - \mu(\varphi) \bigr)^2 \mu_0(dx),
\end{equation}
where $\hat{\mu}(\varphi)$ is an estimation of the true value $\mu(\varphi)$ of the estimator, computed with a reliable method such as a benchmark MCMC algorithm.

In \cref{fig:mpcn_pcn_traceplots} we compare the traceplots for the four observables in~\eqref{eq:observables} obtained with mpCN ($p=100$) and the embarrassingly parallel (EP) approach of simultaneously running $p=100$ independent pCN chains. The traceplots suggest mpCN has a shorter warm-up phase than embarassingly parallel pCN and therefore single-chain pCN. 
\begin{figure}
    \centering
    \includegraphics[width=1\linewidth]{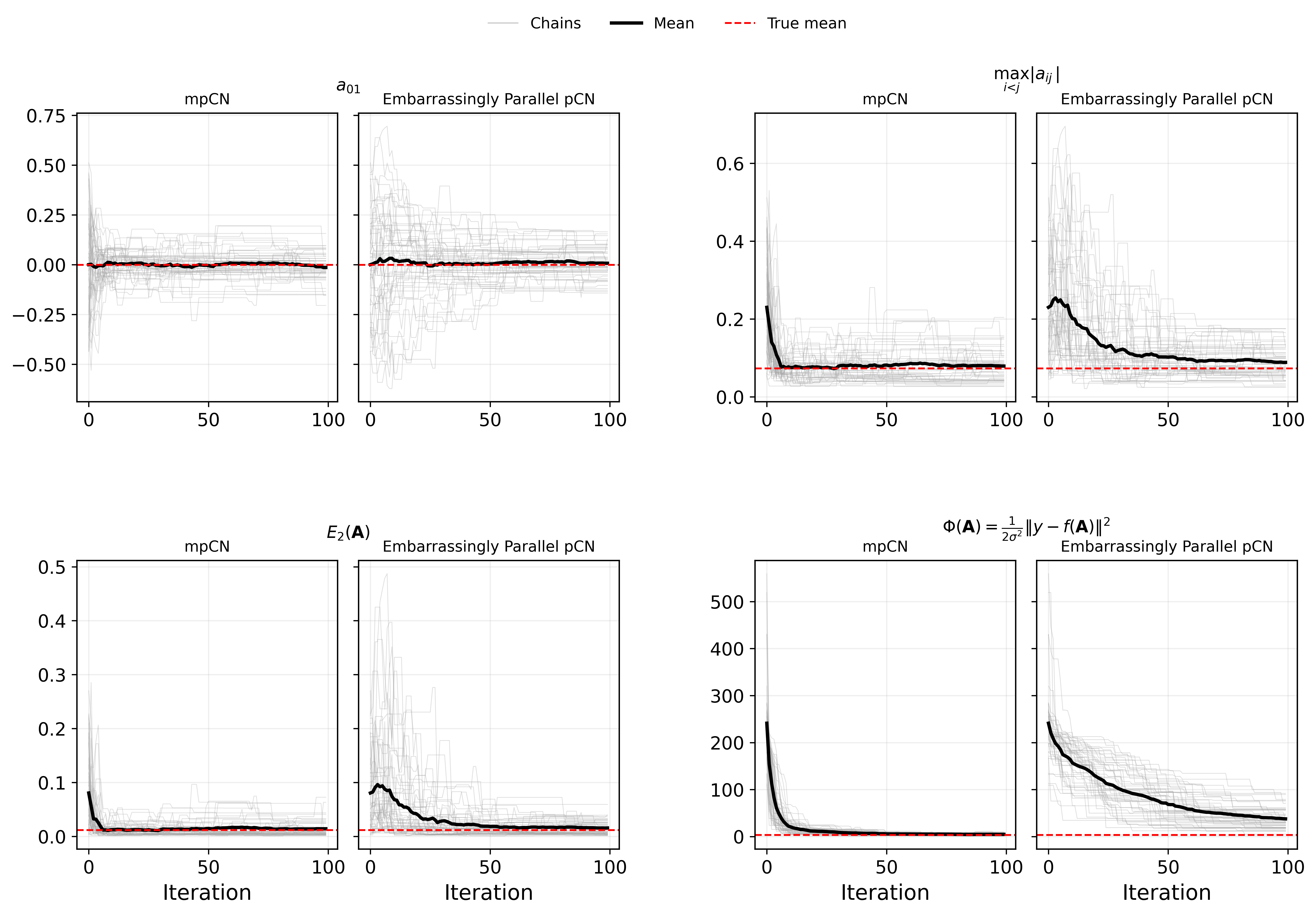}
    \caption{Traceplots for the first 100 iterations for the observables in~\eqref{eq:observables}, for mpCN with $p=100$ (left column of eaach subplot) and EP-pCN with $p=100$ chains (right column of each subplot). Each gray lines is a realization of a chain. We consider $M=50$, meaning that there are 50 mpCN independent chains, and 50 replicates of the EP-pCN experiment with $p=100$ chains each, making a total of 5000 pCN chains. Each chain is initialized at a random value from the prior. The solid black line represents the mean of all gray lines.}
    \label{fig:mpcn_pcn_traceplots}
\end{figure}
\cref{fig:running_mse} compares the running MSE with $M=50$ replicates for the same four observables using the three different estimators in~\eqref{eq:estimators}. 
The error decreases the fastest with mpCN for all observables except for $\varphi_1$, which corresponds to the first element of the unknown elements in the estimated matrix $\bm{A}$. 
Here the effect of using $100\times$ as many starting points, as compared to the mpCN and single-pCN, provides an advantage, because the starting points are sampled from the prior centered around zero and the true value for $a_{01}$ is very close to zero. 

\begin{figure}
    \centering
    \includegraphics[width=1\linewidth]{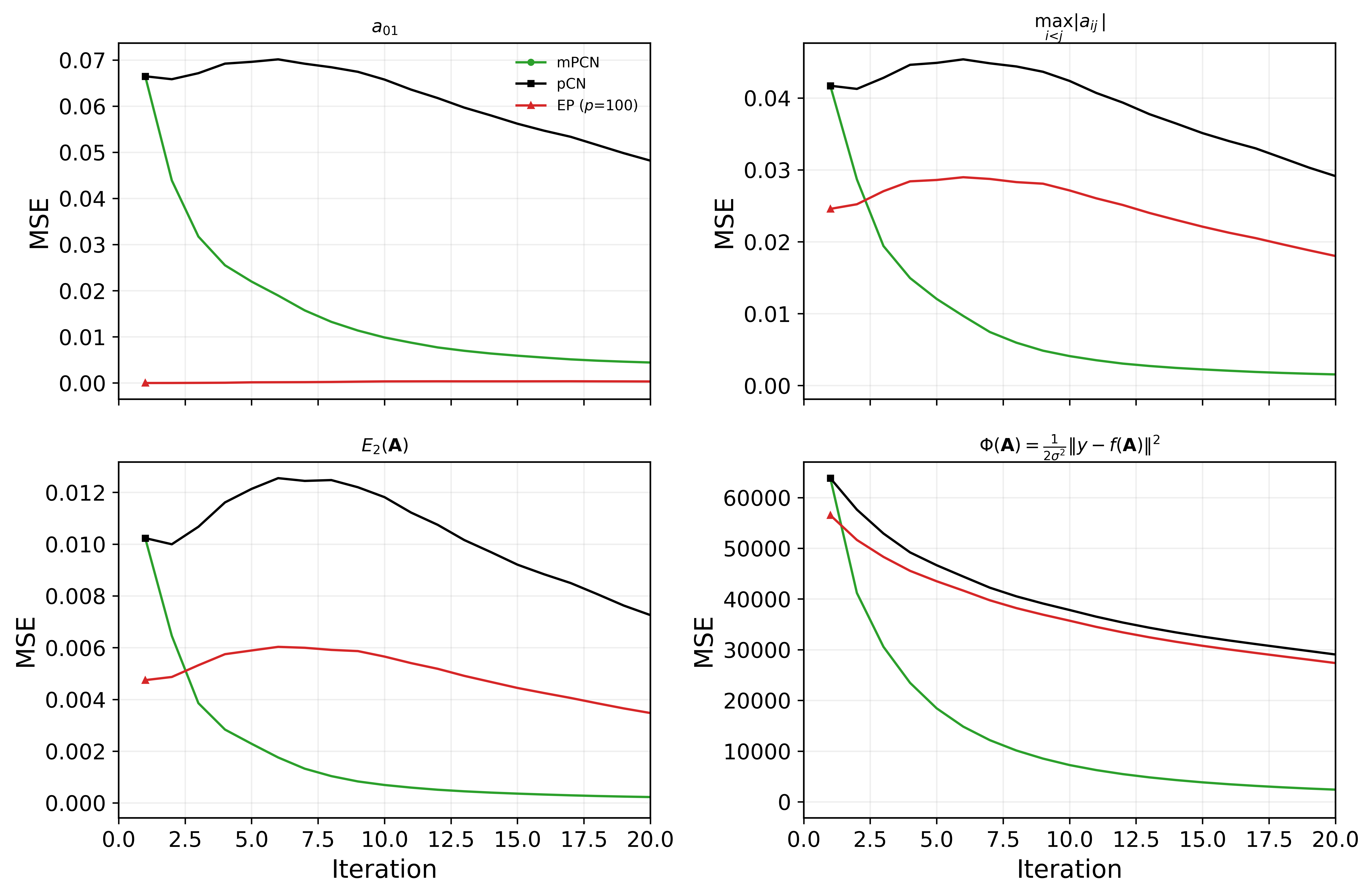}
    \caption{Running MSE for each of the estimators in~\eqref{eq:estimators}, for the four observables in~\eqref{eq:observables}, for $M=50$ replicates and $T=20$ iterations. The 50 mpCN and pCN chains are initialized at the same random 50 samples from the prior and therefore the error at the first iteration is the same. For all observables considered, the error decreases the fastest with mpCN. For the observable $a_{01}$, EP-pCN has an advantage over single chain pCN due to the increased number of prior-sampled starting points, which match the true value of $a_{01}$ well.}
    \label{fig:running_mse}
\end{figure}

\subsubsection{Stationary phase analysis}\label{subsec:solute:statio}
For the stationary phase, we set $d=10$ and generate datasets from the model using hyperparameter values as in~\eqref{eq:hyperpars_st}. Then, we configure the observational scale with $d_0=6$ and $k=2$, corresponding to observing modes 4, 5, and 6 (the 4th, 5th and 6th elements of the observational vector $\bm{y}_{10}$). A similar setup was used in the numerical experiments in \cite{senn2026mess}. Since in this example we do not require visual observation of the warm-up phase, the dimensions of the problem can be chosen smaller compared to those in Section~\ref{subsec:warm-up}, with the corresponding increased computational speed.

\cref{fig:ad_simulated_data_stat} (left) in Appendix~\ref{ap:numerics} illustrates the generated $\bm{A}_{10}$, $\bm{\theta}(\bm{A}_{10})$ and $\bm{y}_{10}$ and \cref{fig:pairplots_stat_mpcn} illustrates the resulting complex posterior geometry.
This last visualization includes marginal histograms for $a_{01}, a_{02}, a_{03}, a_{12}$, and $a_{13}$ on the diagonal, and corresponding pairwise density plots off-diagonal. The highly non-Gaussian marginal posteriors and complex correlation structures are a result of the non-linear map in~\eqref{eq:ad_toy}. 
\begin{figure}
    \includegraphics[width=0.9\linewidth, trim={0 0 0 2cm}, clip]{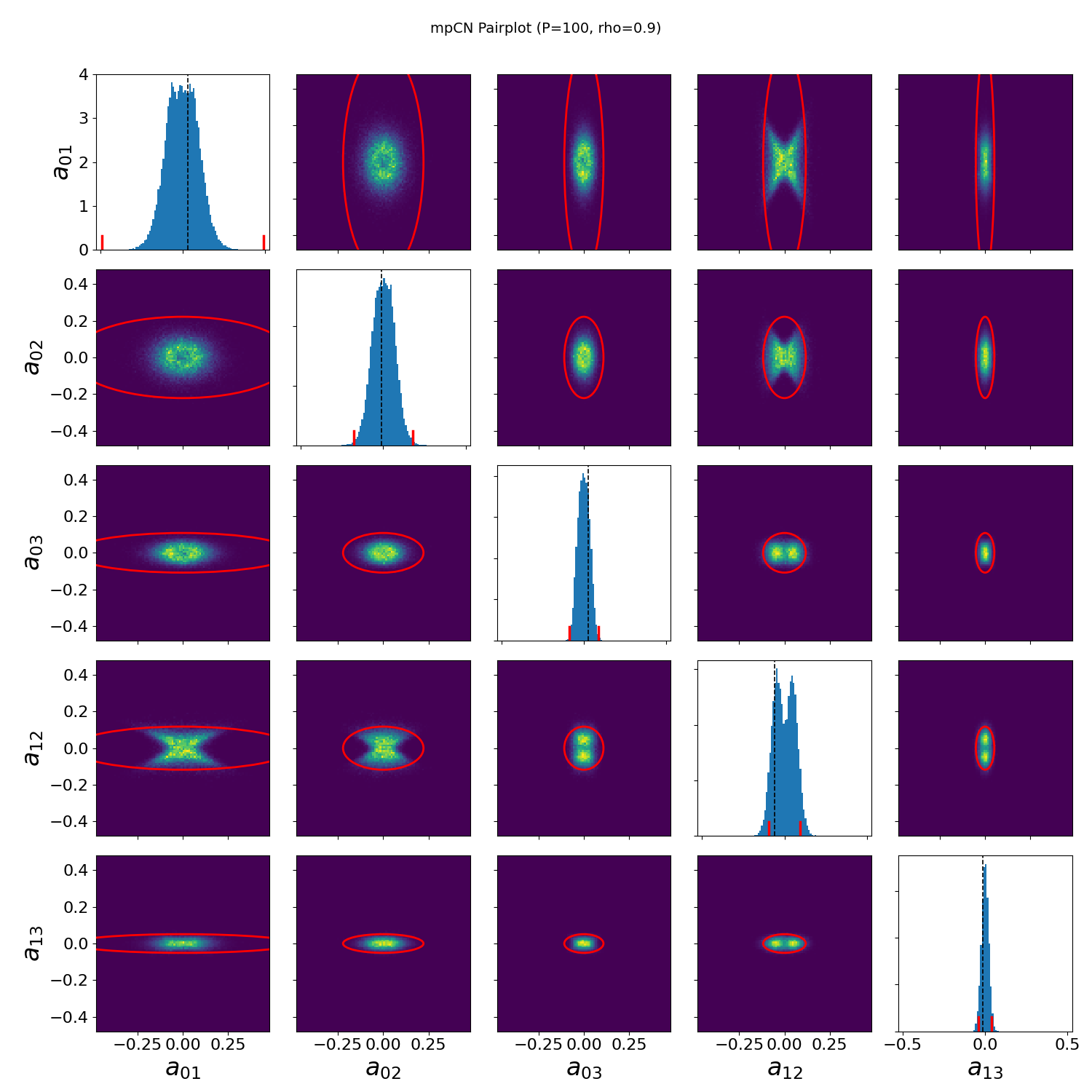}
\caption{Posterior marginal histograms and two-dimensional density plots for the components $a_{01}, a_{02}, a_{03}, a_{12}, a_{13}$, computed on a single mpCN chain with $p=100$ proposals and 300k iterations.}
\label{fig:pairplots_stat_mpcn}
\end{figure}

In \cref{fig:sweep_solute} we analyze the mixing in the stationary regime as a function of $p$ and $\rho$ simultaneously. To quantify mixing, we compute the ESS and MSJD using samples from each chain, after burn-in. We consider a $p-\rho$ configuration grid with $9 \times 40$ combinations, i.e. for $p \in \{10, 20, 30, 40, 100, 300, 500, 1000, 2000\}$ and $\rho \in \{0, 0.025, \ldots, 0.975, 1\}$. We then run one mpCN chain for each of the resulting 360 configurations. Then, we compute the ESS and MSJD for 1) the raw coordinates $a_{01}, a_{02}, a_{03}, a_{04}, a_{12}, a_{13}, a_{14}$, 2) the potential, and 3) the 2-band energy. 
For the raw coordinates, we report the computed ESS and MSJD averaged over the seven coordinates.
The ESS and MSJD curves are shown in the first and second rows, respectively, and each column is one of the three observables just mentioned.

In general, the mixing improves with $\rho$, until the proposal becomes so conservative at very large $\rho$ that the chain eventually stops moving. This behaviour could suggest a mismatch between the prior and the posterior. Increasing $p$ partially mitigates this, as we observe that we achieve maximum mixing at smaller $\rho$. In fact, we observe that the mixing curves become flatter with increased $p$, that is, that increasing $p$ also increases the range of values of $\rho$ for which the observed mixing remains close to optimal. This is because large $p$ increases the possibility of hitting a high probability region, thus allowing for less conservative $\rho$s. 
The saturation point is not clear, and for this example higher $p$ would probably lead to better mixing. 

\begin{figure}
    \centering
    \includegraphics[width=1\linewidth, trim={0 0 0 0cm}, clip]{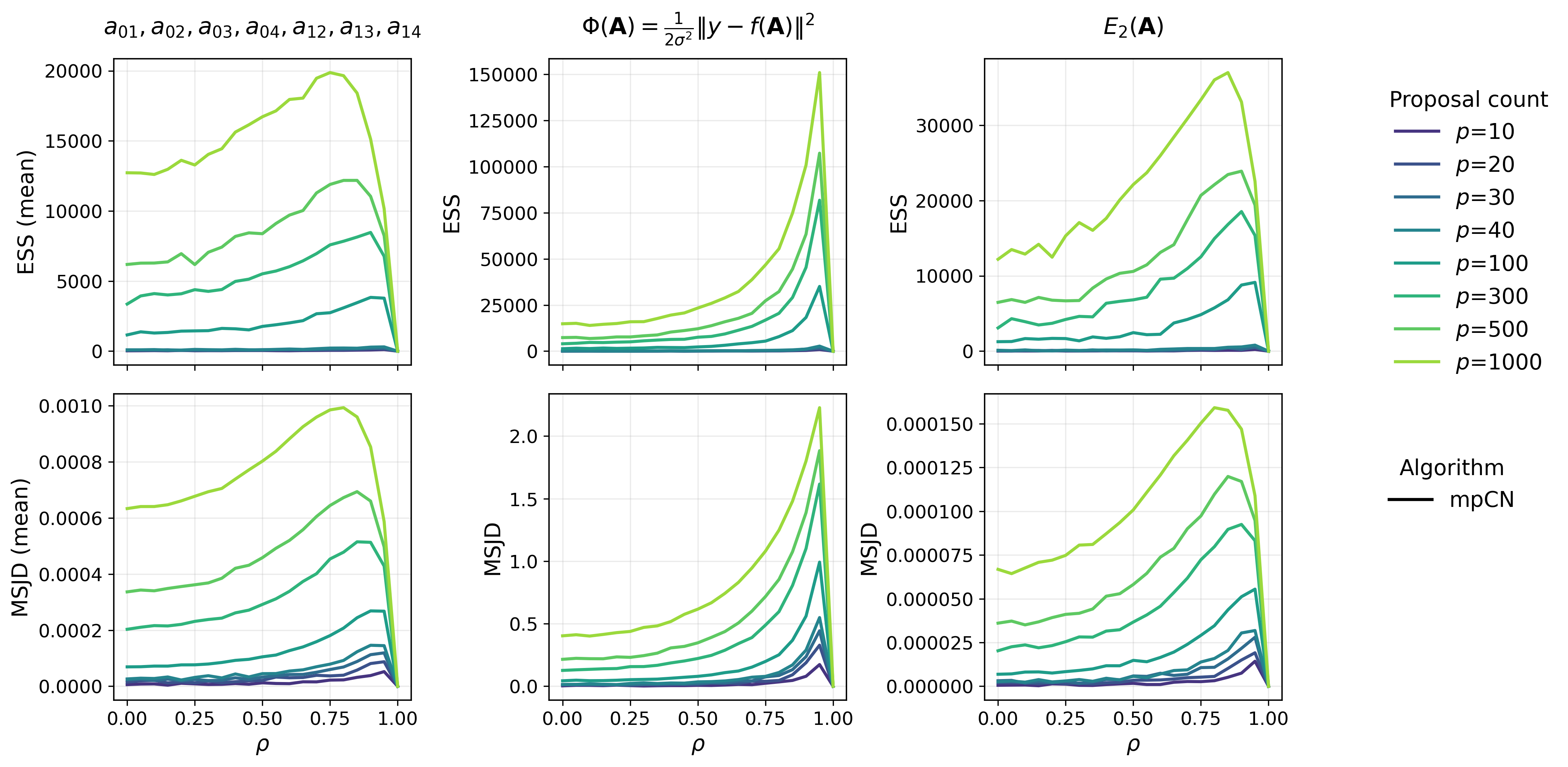}
    \caption{$p$-$\rho$ sweep for ESS and MSJD for the solute transport model. The ESS and MSJD in the first column has been averaged over the seven coordinates in the column label. The mixing increases with $\rho$ and then decreases again for very large $\rho$, independently of the proposal count.}
    \label{fig:sweep_solute}
\end{figure}

\cref{fig:fraction} shows more clearly how increasing $p$ reduces the sensitivity of mpCN to tuning ot $\rho$. In this figure, the y-axis represents the fraction of values of $\rho$ for which the obtained ESS (MSJD) lies within 25\% of the maximum ESS (MSJD) obtained, and this fraction is plotted as a function of the proposal count $p$ in the x-axis. 
\begin{figure}
    \centering
    \includegraphics[width=0.5\linewidth, trim={0 0 0 0cm}, clip]{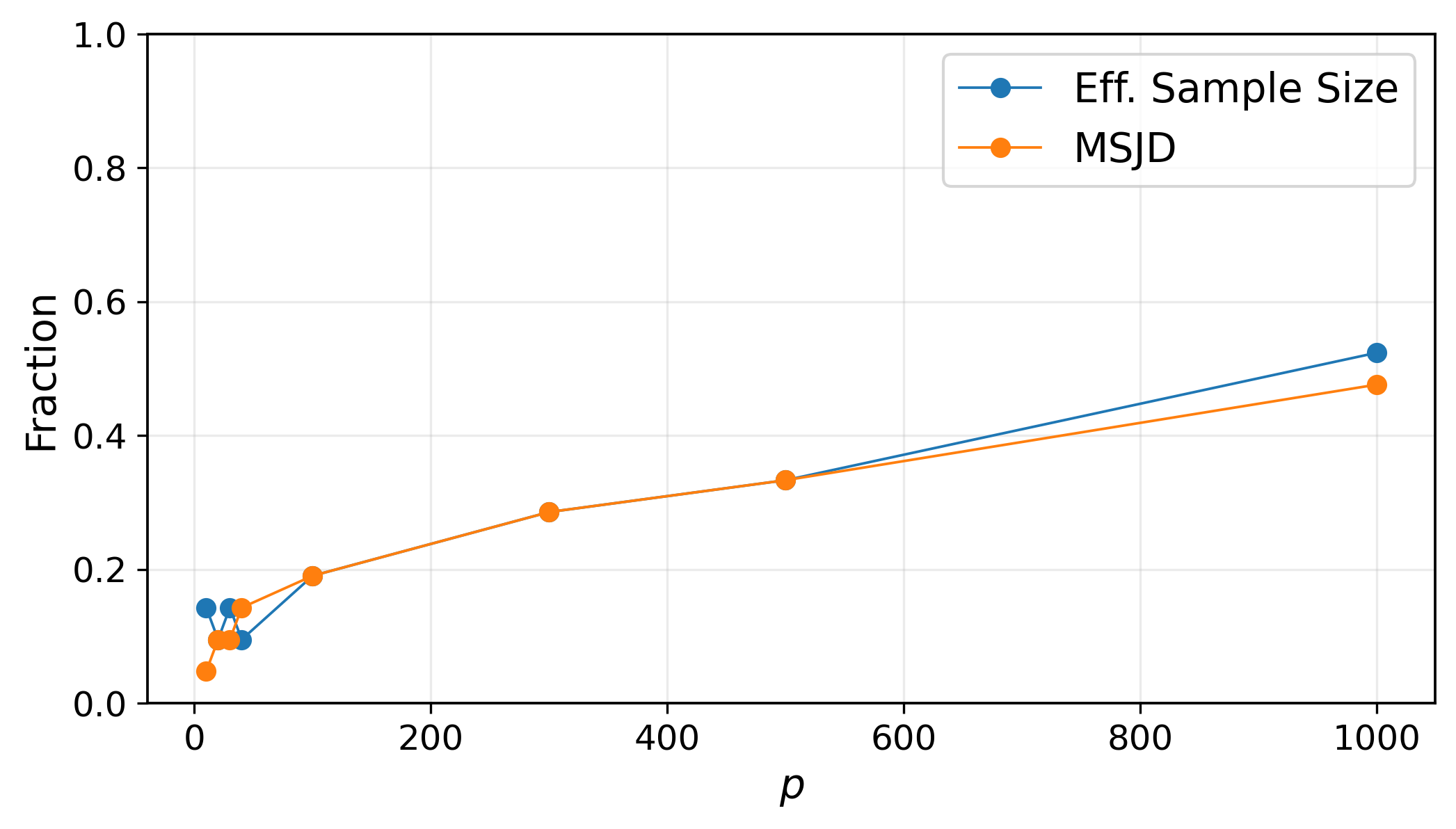}
    \caption{Fraction of $\rho$ within 25\% of max. Increasing the number of proposals also increases robustness to the choice of values of $\rho$.}
    \label{fig:fraction}
\end{figure}

In \cref{fig:sweep_solute_ep} we compare the mixing of mpCN with $p$ proposals to the mixing obtained with $p$ pCN chains, ran independently in parallel, and thinned every $p$ samples. The comparison is thus fair in terms of computational time and memory storage. Thinning the pCN chains every $p$ samples returns in turn a new Markov chain where the transition operator is equivalent to the 1-step transition operator of the pCN chain, applied $p$ steps. 
We do this comparison for $p \in \{10, 40, 100\}$ and $\rho \in \{0, 0.025, \ldots, 0.975, 1\}$. 
The results show that the mixing at stationarity of mpCN is bounded above by the mixing obtained with the $p$ thinned chains. 
\cref{fig:pairplots_stat_ep} in Appendix~\ref{ap:numerics} includes a visualization of the posterior analogous to that in \cref{fig:pairplots_stat_mpcn}. The figure is constructed from samples obtained from 100 pCN chains thinned every 100 samples, providing thus a graphical comparison between the two approaches that is fair in terms of computational budget and wall-clock time. The results enforce the interpretation arising from \cref{fig:sweep_solute_ep}. 

\begin{figure}
    \centering
    \includegraphics[width=1\linewidth, trim={0 0 0 0cm}, clip]{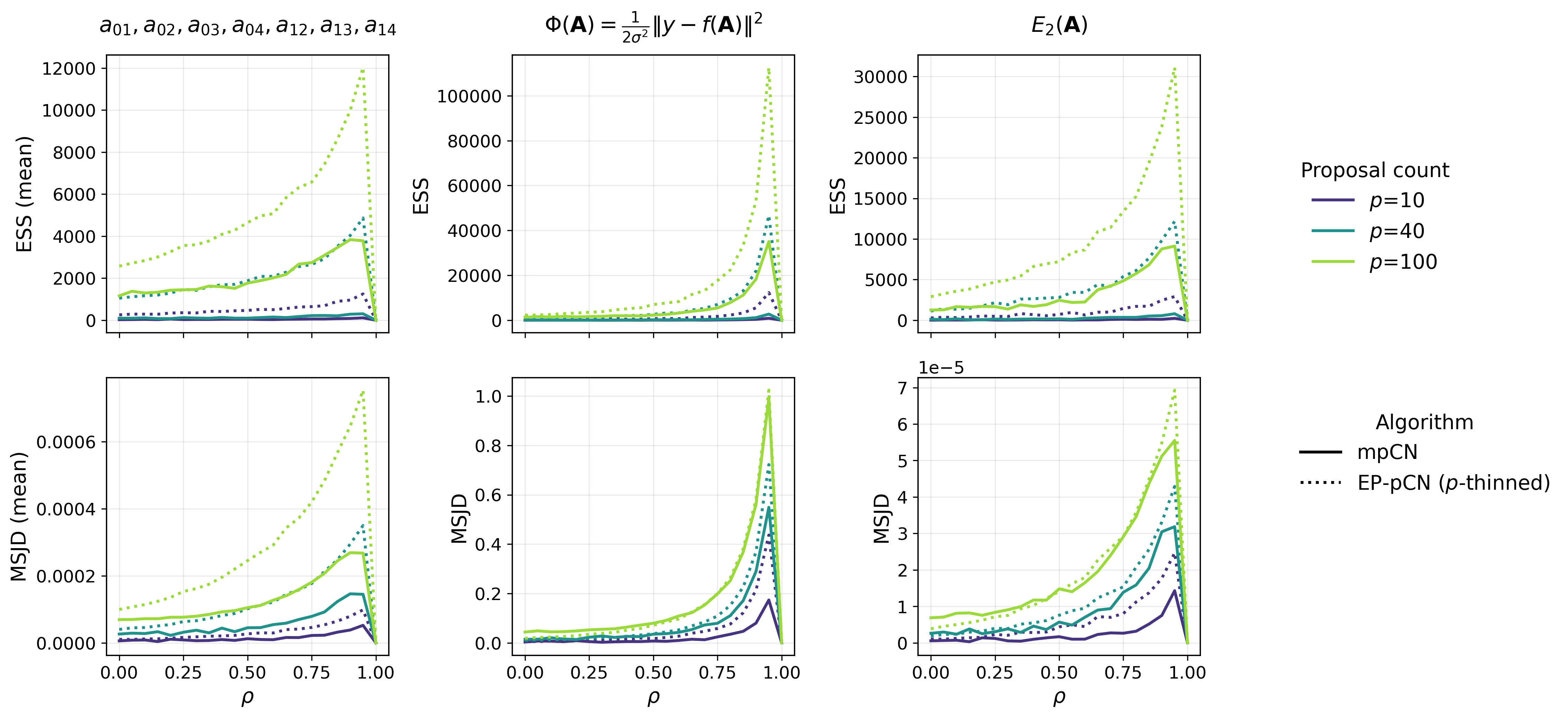}
    \caption{$p$-$\rho$ sweep for ESS and MSJD for the solute transport model, comparing the performance at stationarity of mpCN with $p$ independent pCN chains, thinned every $p$ samples. The mixing of the latter is superior to that of mpCN.}
    \label{fig:sweep_solute_ep}
\end{figure}

\section{Summary and Outlook}\label{sec:outlook}

This work is a systematic analysis of two recently discovered multiproposal, preconditioned, gradient-free methods, mpCN and MTpCN, which are applicable to high dimensional problems defined around a Gaussian reference measure.  We provide a rigorous proof of dimension (and proposal size) free independent mixing rates for these methods.  Moreover, we have begun to accrue some very interesting preliminary numerical evidence for two precise advantages of mpCN and MTpCN: in overcoming difficulties in the burn-in phase and in the robustness of mixing as a function of the specification of algorithmic parameters. These two properties of these methods are potentially decisive advantages over other status quo anti methods in the same Hilbert space family in important situations of practical interest. 

Notwithstanding these contributions we should emphasize that our progress herein represent some initial steps in a wider program rather than a set of definitive conclusions for this research area; many fascinating issues remain wide open.  Specifically, one immediate open question is to determine the precise way in which mpCN and MTpCN should be tuned as a function of the algorithmic parameters $p$ and $\rho$ and of the structure of the target $\mu$.  More broadly it remains to develop a more complete accounting of the relative advantages and disadvantages of various approaches now available within the wider family of Hilbert space methods and multiproposal methods.  This wider question of scope comes into sharp focus in view of other recently derived methods at the Multiproposal Hilbert space intersection (\cite{glatt2024sacred, glatt2025+,senn2026mess}) not covered by our discussions herein.

Regarding the first direction:  determining algorithmic parameter tuning for mpCN and MTpCN, our numerical experiments  only go so far as to suggest that increasing the number of proposals $p$ makes these algorithms more robust with respect to the choice of the parameter $\rho$. This raises a natural question: how should one optimally scale $\rho$ as $p$ increases? Here of course, $p$, the number of proposals is fundamentally constrained by the available computational resources, while $\rho$ is the  parameter to tune for the given $p$ accordingly.
While we intend to carry out a more systematic set of numerical studies 
for general guidance and to formulate clear conjectures, it would of course be
desirable to analyze this $\rho(p)$ optimality relationship on a rigorous basis.

One possible rigorous approach to address this $\rho-p$ scaling question is to draw on the strategy developed in \cite{andrieu2024explicit}, where upper as well as lower bounds on the spectral gap are obtained for the Random Walk Metropolis (RWM) and pCN methods with precise algorithmic parameter dependence through Cheeger's inequality. Lower bounds in our mpCN context here would be particularly valuable in order to obtain a more precise understanding of the dependence of the $L^2_{\mu}$ spectral gap on $p$ and $\rho$. Preliminary work in progress reveal a rich and technically delicate structure in the multiproposal setting. Further work is needed to understand whether these estimates can be made tractable enough to produce useful tuning principles. 

We note however that, without diminishing the significance of the innovative new approaches developed in \cite{andrieu2024explicit}, this `Functional-inequalities' direction comes with some important limitations.  In particular it requires strong uni-modal assumptions on the target distribution.   The methods of \cite{andrieu2024explicit} therefore applies to a much narrower class of examples than we are able to address within the weak Harris framework adopted herein.  The tension is that our Harris approach, does not seem to yield any meaningful dependence in the upper bound mixing constants on the important problem parameters; not least the dependence on $\rho-p$.   Notably however our proofs in this current work do suggest explicit coupling strategies of mpCN chains.  These coupling suggest possible avenues for developing the ideas in e.g. \cite{biswas2019estimating} toward a novel, semi-analytical approach to estimating the $p-\rho$ relationship.  Significantly this semi-analytical route would allow us to estimate this $p-\rho$ relationship in a number of non-equivalent Wasserstein metrics.  Note that the potential to address different metric within a single general framework underlines the subtile point that $\rho(p)$ optimality may depend
on the class of observables under consideration.

Zooming out, the second wider direction is to clarify how mpCN and MTpCN fit amongst other established and newly emerging methods designed for high- and infinite-dimensional sampling problems. In particular the algorithms studied herein represent two ways of extending the pCN method to a cloud of proposals considered at once, but they are not the only possible way to exploit parallelism or nonlocal proposal mechanisms to address high dimensional problems, even within the specific pCN paradigm. It would therefore be useful to compare mpCN more directly with other approaches built around Gaussian reference measures, including a local variant of mpCN, as introduced in \cite{glatt2025+}, auto-tuning methods such as the Multiproposal Elliptic Slice Samplier (MESS) \cite{senn2026mess}, as well as approaches based on approximate or surrogate trajectories \cite{glatt2020accept,  glatt2024sacred}. As far as we can tell the rigorous analysis of mixing for each of these methods, via either the weak Harris or Functional-Inequalites routes remains essential wide open.  Of course various benchmark problems including those consider for mpCN and MTpCN here in \cref{sec:numerics}
(as well as other related PDE informed Bayesian models used previously in e.g. \cite{stuart2010inverse, glatt2024parallel, glatt2024sacred}) provide a starting point for systematic comparative numerical case studies which could be of great value.

\section*{Acknowledgments}

Our efforts are supported under the grants NSF-DMS-2108790,
NSF-DMS-2510856 (NEGH), DMS-2239325 (CFM). GC and MS gratefully acknowledge the Department of Mathematics at Drexel University for hosting them on separate occasions, which provided valuable opportunities to collaborate on this paper. We
would like to thank Andrew Holbrook, Justin Krometis and Andrew Warren for inspiring discussions and helpful feedback on this work.

\begin{footnotesize}
	\addcontentsline{toc}{section}{References}
	\bibliographystyle{alpha}
	\bibliography{refs}
\end{footnotesize}

\appendix

\section{Wasserstein contraction for unbounded potentials}\label{app:unbound}
In this section we verify conditions 1 and 2 of the weak Harris theorem, \cref{thm:our_harris}, for the multiproposal pCN algorithm, \cref{alg:mpcn}, under the assumption that the potential $\Pot$ is Lipschitz, without requiring boundedness.

At first sight, by analogy with the spectral gap results for the single-proposal pCN algorithm established in \cite{hairer2014spectral}, one might expect Lipschitz continuity alone, either global or local, to suffice in order to prove the desired Wasserstein contraction properties also in the multiproposal setting. In the classical pCN algorithm, the acceptance mechanism is given by the Metropolis--Hastings probability
\[
    \alpha(x,y) = 1 \wedge \exp(-\Pot(y) + \Pot(x)),
\]
which is known to be Peskun optimal and therefore minimizes the asymptotic variance among a broad class of reversible acceptance rules. By contrast, the multiproposal setting behaves differently. As discussed in \cite{glatt2025+}, the natural and effective choice of acceptance probabilities is instead of Barker type, namely
\[
    \alpha(x,y) = \frac{\exp(-\Pot(y))}{\exp(-\Pot(x)) + \exp(-\Pot(y))},
\]
and more generally as in \eqref{eq:intro:alpha}. This structural difference introduces additional analytical difficulties in the coupling arguments required for the weak Harris framework, particularly in the absence of boundedness assumptions on $\Pot$.

In the proof of $d_\eps$-contraction and $d_\eps$-smallness for mpCN we made essential use of the boundedness of the potential function $\Pot$ in estimating the acceptance probabilities. Indeed, under the assumption that $\Pot$ is bounded, we have
\[
    \alpha_j(x_0, x_1, \ldots, x_p)
    =
    \frac{\exp(-\Pot(x_j))}{\sum_{k = 0}^p \exp(- \Pot(x_k))}
    \in
    \left[
    \frac{\exp(- 2\|\Pot\|_\infty)}{p+1},
    \frac{\exp(2\|\Pot\|_\infty)}{p+1}
    \right],
    \qquad j =0, \ldots, p.
\]
These bounds are particularly useful when estimating differences of acceptance probabilities in the proof of $d_\eps$-contraction, for example in \eqref{ineq:sp}. In that setting, they allow us to establish Lipschitz continuity of the acceptance probabilities with respect to the starting point, with a Lipschitz constant that remains uniformly bounded in the number of proposals $p$. 
This mechanism appears to break down if one assumes only that $\Pot$ is Lipschitz continuous. The difficulty stems from the Barker-type structure \eqref{eq:intro:alpha} of the acceptance probabilities, which introduces denominators involving sums involving $\Pot$. 

To overcome this issue, we impose an alternative condition, analogous in spirit to Assumption~2.10 in \cite{hairer2014spectral}. Heuristically, this assumption ensures that, even when the chain starts far from the origin, there remains a sufficiently large probability of accepting at least one proposal, thereby preventing the dynamics from becoming effectively frozen in regions where the potential is large. More precisely we impose:
 
 \begin{Assumption}\label{assumptionalpha}
    There is $R>0$, $\alpha^*>0$ and a function $r:\R^+\to \R^+$ with the property $r(t) \leq (1 - \rho^2)t/2$ for all $|t|\geq R$ such that for at least one $j =1 \ldots, p$ the following is true: for all $x\in B(0,R)^c$ and all $z_1, \ldots, z_{j-1}, z_{j+1}, \ldots, z_n\in \qsp$ 
    \begin{equation}
        \inf_{z_j \in B(\rho^2 x, r(\|x\|)) }\alpha_j(x, z_1, \ldots,z_j, \ldots z_p) > \alpha^* .
    \end{equation}
\end{Assumption}

Then we have the following result for the mpCN kernel, alternative to \cref{thm:3:mpcn1}.

\begin{Theorem}\label{thm:3:mpcn2}
    Let $P_p$ be as in \eqref{eq:2:mpcn:kernel} for a fixed  $\rho\in [0,1)$ and $p>1$ with function $\Pot$ globally Lipschitz with respect to the norm $\|\cdot \|$ with constant $L_{\Pot}$. If \cref{assumptionalpha} is satisfied, then the result of \cref{thm:3:mpcn1} holds with $\varepsilon^*= \varepsilon^*(p)\to 0$, $n_1(p)\to \infty$ and $\lambda = \lambda(p)\to 1$ when $p\to \infty$.
\end{Theorem}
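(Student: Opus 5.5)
The plan is to apply \cref{thm:our_harris} again, verifying its three hypotheses for $P_p$ with $\Pot$ globally Lipschitz but possibly unbounded. The constants will now depend on $p$, and the degeneration claimed in the statement should become visible in the estimates.

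\emph{Lyapunov step.} We start from \eqref{eq:PVx0:mpCN} and the bound $V(X_j)\le l_1V(x_0)+G(\xi_0,\xi_j)$. The lower bound \eqref{eq:bound_alphaj} on $\alpha_j$ is no longer available, so we replace it by \cref{assumptionalpha}. For $\|x_0\|\ge R$ and the distinguished index $j$, on the event $\{\|X_j-\rho^2x_0\|<r(\|x_0\|)\}$ we have $\alpha_j\ge\alpha^*$. Since $X_j-\rho^2x_0=\sqrt{1-\rho^2}(\rho\xi_0+\xi_j)$ is a centred Gaussian independent of $x_0$, and $r$ is allowed to grow with $\|x_0\|$, this event has probability at least some $c_R>0$ (after enlarging $R$ if needed). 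On this event we also have $\|X_j\|\le\rho^2\|x_0\|+r(\|x_0\|)\le\frac{1+\rho^2}{2}\|x_0\|$, so $V(X_j)\le l'V(x_0)$ with $l'<1$. The remaining terms are handled as before: the upper bound $\alpha_k\le1$ is harmless, and the $G$-contributions are summed over at most $p$ indices. This gives $PV\le(1-\alpha^*c_R(1-l'))V+K_V(p)$ outside $B(0,R)$, while inside $B(0,R)$ the bound is trivial. Here $K_V(p)$ grows like $p$, so the sublevel set $S$ and the radius $r_s$ grow with $p$.

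\emph{Contraction step.} We reuse the coupling \eqref{eq:mpcn:coupling}, and \eqref{eq:contr1} remains valid verbatim. Two quantities must be re-estimated.

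\begin{enumerate}[label=(\roman*)]
\item For $\E(1-s_p)$ we bound $\sum_j|\alpha_j-\tilde\alpha_j|$ in a way that avoids $\|\Pot\|_\infty$. Writing $\alpha_j$ as a softmax of $-\Pot(y_k)$, the map $(\Pot(y_k))_k\mapsto(\alpha_k)_k$ is $2$-Lipschitz from $\ell^\infty$ to $\ell^1$. Hence
\[
\sum_j|\alpha_j-\tilde\alpha_j|\le2\max_k|\Pot(y_k)-\Pot(\tilde y_k)|\le2L_\Pot\|x_0-\tx_0\|,
\]
using the synchronous coupling. This gives $\E(1-s_p)\le 2L_\Pot\varepsilon\, d_\varepsilon(x_0,\tx_0)$, a bound that is actually uniform in $p$.
\item The harder quantity is $\E\sum_{j\ge1}\har_j$, which must be bounded below in order to make $C_p<1$. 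Outside $B(0,R)$ we again use \cref{assumptionalpha} together with Lipschitz perturbation: $\har_j\ge e^{-2L_\Pot\|x_0-\tx_0\|}\alpha_j\ge e^{-2L_\Pot\varepsilon}\alpha^*$ on the good event. Inside $B(0,R)$ there is no assumption to fall back on, so we use Gaussian concentration. Since $\rho^2\|x_0\|\le R$ and $\Pot$ is Lipschitz, with probability bounded below all $p+1$ points $x_0,X_1,\dots,X_p$ lie in a ball of radius of order $R+M$. On that ball the oscillation of $\Pot$ is at most $2L_\Pot(R+M)$, and this gives $\alpha_j\ge e^{-2L_\Pot(R+M)}/(p+1)$. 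The probability that all $p$ proposals land in the ball decays in $p$, however, so we must pick $M=M(p)$ growing, for instance like $\sqrt{\log p}$. This is the source of $\lambda(p)\to1$ and $\varepsilon^*(p)\to0$.
\end{enumerate}

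\emph{Smallness step.} We iterate the coupling exactly as in \cref{prop:mpcn:small}, replacing the bound \eqref{eq:boundLambda} by the $p$-dependent lower bound on $\E\sum_{j\ge1}\har_j$ from step (ii), now valid on $B(0,r_s(p))$. We then need $n$ large enough that $2r_s(p)\rho^{2n}<\varepsilon$. This forces $n_1(p)\to\infty$, because $r_s$ grows with $p$ and $\varepsilon^*(p)\to0$.

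\emph{Main obstacle.} The hardest part is the uniform lower bound on the total same-index acceptance mass $\sum_{j\ge1}\har_j$ in the contraction step when $x_0$ lies in the bounded region. Without boundedness of $\Pot$, the Barker denominators can be dominated by a single proposal, and a clean estimate there is exactly what will cost the $p$-dependence. I would first try conditioning on the event that all proposals lie in a ball and tracking the explicit dependence on $p$. Only if that fails would I fall back on \cref{assumptionalpha} for both regimes, enlarging $R$ as needed.
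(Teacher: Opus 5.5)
Your route is the paper's. You use the same synchronous-plus-interval coupling, split the contraction estimate into the region where \cref{assumptionalpha} applies and a bounded region where the cloud is confined, and iterate the coupling for smallness. Your Lyapunov argument also supplies what the paper only asserts.

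Several of your estimates are sharper than the paper's. Since $1-s_p=\tfrac12\sum_j|\alpha_j-\tilde\alpha_j|$ and the softmax is $2$-Lipschitz from $\ell^\infty$ to $\ell^1$, your bound on $\E(1-s_p)$ is uniform in $p$; the paper's Euclidean Jacobian estimate grows with $p$. Outside the ball you need only the distinguished index to land in $B(\rho^2x_0,r(\|x_0\|))$, and you transfer to $\tx_0$ via $\har_j\ge e^{-2L_\Pot\|x_0-\tx_0\|}\alpha_j$. The paper's set $B_p$ asks this of all $p$ indices, an event of exponentially small probability in $p$.

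Your account of where the $p$-dependence comes from is misplaced, though. The bounded region needs no growing $M(p)$. Since $t\mapsto t/(a+t)$ is increasing, $\sum_{j\ge1}\alpha_j\ge\bigl(1+e^{\Pot(X_1)-\Pot(x_0)}\bigr)^{-1}$, and its expectation is bounded below on $\{\|x_0\|\le R+1\}$ independently of $p$. The degeneration sits in $\alpha^*$. Evaluating \cref{assumptionalpha} at $z_1=\dots=z_p=w$ with $w\in B(\rho^2x,r(\|x\|))$ gives $\alpha^*<e^{-\Pot(w)}/(e^{-\Pot(x)}+p\,e^{-\Pot(w)})<1/p$. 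So your Lyapunov rate $1-\alpha^*c_R(1-l')$ and your outside contraction bound degenerate as well, not only $K_V(p)$.

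The step that does not close as written is smallness. In \cref{prop:mpcn:small} the lower bound on $\har_j$ holds at every state, so $\bbP(A^{(k)}\mid\Lambda^{(k-1)})$ is controlled wherever the coupled chains are. Your bound holds only on a ball. On $\Lambda^{(k-1)}$, however, the chains sit at proposals $X^{(k-1)}_j=\rho^2X^{(k-2)}+\sqrt{1-\rho^2}\,(\rho\xi^{(k-1)}_0+\xi^{(k-1)}_j)$, which can leave $B(0,r_s)$. Confinement must therefore be built into the event. Intersect each $A^{(k)}$ with $N^{(k)}=\{\|\rho\xi^{(k)}_0+\xi^{(k)}_l\|\le M,\ l=1,\dots,p\}$. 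On $\bigcap_{m<k}(A^{(m)}\cap N^{(m)})$ both chains stay in $B(0,r_s+M/\sqrt{1-\rho^2})$. There your all-points-in-a-ball estimate, which does not need $x_0,\tx_0$ to be close, gives a positive conditional lower bound. The paper does the same with noise radius $r_s/n$.

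Finally, \cref{assumptionalpha} only bounds $r$ from above, so your $c_R>0$ requires $\inf_{t\ge R}r(t)>0$, and enlarging $R$ does not provide this. The paper relies on the same fact implicitly, since its $K_2(p)$ depends on $x_0,\tx_0$ through $B_p$. It should be added as a hypothesis.
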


To use the weak Harris theorem \cref{thm:our_harris}, we will use the functions found in \cref{prop:mpcn:lyap}, which can be showed still to be Lyapunov under some modifications of the proof, and we show alternatives of the $d_\eps$-contraction result \cref{prop:contr:fin:p:1} and the $d_\eps$-smallness result \cref{prop:mpcn:small}.

\begin{Proposition}
    Fix $1\leq p <\infty$, assume the potential function $\Pot: \qsp \to \R$ is globally Lipschitz with constant $L_\Pot$, and that \cref{assumptionalpha} holds. Then there exists $\kappa = \kappa(\rho, p, \eps, L_\Pot)>0$ such that 
    \begin{align*}
    	\Wass_{d_\eps}(\mk_p(x_0, \cdot), \mk_p(\tx_0, \cdot)) 
    	\leq \kappa d_\eps(x_0, \tx_0)
    \end{align*}
    for all $x_0, \tx_0 \in \qsp$ satisfying $d(x_0, \tx_0) < 1$.
    Moreover, there exists $\varepsilon^*= \eps^*(\rho, p, L_\Phi)>0$ such that $\kappa<1$ for all $\eps< \eps^*$. Finally, as $p \to \infty$, then the parameters saturate $\eps^*\to 0$ and $\kappa\to 1$.
\end{Proposition}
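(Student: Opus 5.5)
We reuse verbatim the coupling constructed in the proof of \cref{prop:contr:fin:p:1}: synchronous proposals \eqref{eq:mpcn:proposal1} together with the maximal coupling of the selection step through the partition \eqref{eq:partition[0,1]}. Exactly as there, this gives \eqref{eq:contr1}, namely
\[
\Wass_{d_\eps}(\mk_p(x_0,\cdot),\mk_p(\tx_0,\cdot)) \leq d_\eps(x_0,\tx_0)\,\E\Big(\har_0+\rho^2\sum_{j=1}^p\har_j\Big) + \E(1-s_p),
\]
and this step uses no boundedness of $\Pot$. Two ingredients then have to be re-proved with boundedness replaced by the Lipschitz property and \cref{assumptionalpha}. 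The first is a Lipschitz bound $\E(1-s_p)\le C(p,L_\Pot)\|x_0-\tx_0\|$. The second is a uniform lower bound $\E\sum_{j\ge1}\har_j\ge a(p)>0$.

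For the first ingredient we avoid \eqref{ineq:den} altogether. We write $\alpha_j = e^{-\Pot(y_j)}/\sum_k e^{-\Pot(y_k)}$ as a softmax of the vector $(-\Pot(y_0),\dots,-\Pot(y_p))$. The softmax map is $1$-Lipschitz from $\ell^\infty$ to $\ell^1$ (indeed its $\ell^1$ derivative bound is $2$). Hence
\[
\sum_{j=0}^p|\alpha_j(y)-\alpha_j(\ty)|\le 2\max_j|\Pot(y_j)-\Pot(\ty_j)|\le 2L_\Pot\max_j\|y_j-\ty_j\|\le 2L_\Pot\|x_0-\tx_0\|,
\]
because $\|X_j-\tX_j\|=\rho^2\|x_0-\tx_0\|$. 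This is independent of $p$, so \eqref{eq:mpcn:1-sp} holds with $C=2L_\Pot$. The degeneration in $p$ must therefore come from the second ingredient. If this softmax estimate turns out to be too optimistic, we instead use the cruder bound $\sum_j|\Pot(y_j)-\Pot(\ty_j)|$, which gives $C\sim(1+p\rho^2)L_\Pot$ and still suffices for a $p$-dependent statement.

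The main obstacle is the lower bound on $\E\sum_{j\ge1}\har_j$, since the acceptance probabilities are no longer bounded below pointwise. We split into cases according to $\|x_0\|$.

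\emph{Case $x_0\in B(0,R)^c$.} We take $\tx_0$ with $\|x_0-\tx_0\|<\eps$ small. By \cref{assumptionalpha}, on the event $E=\{\|X_j-\rho^2x_0\|<r(\|x_0\|)\}$, where $j$ is the distinguished index, we have $\alpha_j(x_0,X)>\alpha^*$. Since $\tX_j-\rho^2\tx_0=X_j-\rho^2x_0$, the same holds for $\tx_0$ once $\|\tx_0\|\ge R$. Thus $\har_j\ge\alpha^*$ on $E$. We then need $\bbP(E)=\mu_0$-measure of a ball of radius $r(\|x_0\|)/\sqrt{1-\rho^2}$-ish (for $\rho\xi_0+\xi_j$) bounded below. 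We plan to achieve this by using $r(t)\geq$ some fixed $r_0$ when $t\ge R$ (we would add this to the reading of the assumption if needed) together with the fact that Gaussian measures charge every ball.

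\emph{Case $x_0\in B(0,R)$ (slightly enlarged to $B(0,R+1)$).} Here we use the Lipschitz property to bound $\Pot(X_k)-\Pot(x_0)\le L_\Pot\|X_k-x_0\|$. On an event where all $\|\rho\xi_0+\xi_k\|\le M$, which has probability $q_M^p>0$, every $\alpha_k$ with $k\geq0$ is at least $e^{-2L_\Pot(2R+M)}/(p+1)$. This gives $\E\sum_{j\ge1}\har_j\ge q_M^p\,\tfrac{p}{p+1}e^{-c}$.

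Both lower bounds $a(p)$ deteriorate as $p\to\infty$; this is the source of $\kappa\to1$ and $\eps^*\to0$. Setting
\[
\kappa=1-(1-\rho^2)a(p)+C\eps,
\]
we choose $\eps^*=(1-\rho^2)a(p)/C$, and conclude that $\kappa<1$ for every $\eps<\eps^*$.
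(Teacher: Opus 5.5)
Your argument is correct, given the lower bound on $r$ that you flag. It has the same skeleton as the paper's proof: the coupling of \cref{prop:contr:fin:p:1}, the reduction \eqref{eq:contr1}, the split into $x_0,\tx_0\in B(0,R+1)$ versus $x_0,\tx_0\in B(0,R)^c$, and in each case a lower bound on $\E\sum_{j\ge1}\har_j$ over a suitable event.

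Where you differ is in the estimates, and yours are sharper.

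For $\E(1-s_p)$, the paper applies the mean value theorem to the softmax in the Euclidean norm and then Cauchy--Schwarz. This gives $C\sqrt{p(1+\rho^4p)}\,\eps\, d_\eps(x_0,\tx_0)$, which grows linearly in $p$. Your $\ell^\infty\to\ell^1$ bound works because the derivative maps $h$ to $(\alpha_j(h_j-\sum_k\alpha_kh_k))_j$, whose $\ell^1$-norm is at most $2\|h\|_\infty$. It gives $\E(1-s_p)\le 2L_\Pot\|x_0-\tx_0\|$ with no dependence on $p$. Note that this is what you actually obtain; it is not \eqref{eq:mpcn:1-sp} with $C=2L_\Pot$, which carries the extra factor $\frac{1+\rho^2p}{p+1}$.

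In the far case, the paper's set $B_p$ forces all $p$ proposals into the $r$-ball, so its probability decays in $p$. \cref{assumptionalpha} only involves the distinguished index, exactly as in your event $E$, so your far-case bound does not decay in $p$.

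Hence in your version the only degeneration in $p$ is the factor $q_M^p$ in the near case, and even this can be removed. First, $\sum_{j\ge1}\har_j\ge 1-\alpha_0(x_0,X_1,\dots,X_p)-\sum_{j=0}^p|\alpha_j(x_0,X_1,\dots,X_p)-\alpha_j(\tx_0,\tX_1,\dots,\tX_p)|$. Second, by monotonicity in the cloud, $1-\alpha_0\ge e^{-\Pot(X_1)}/(e^{-\Pot(x_0)}+e^{-\Pot(X_1)})$. For $x_0\in B(0,R+1)$ the expectation of this last quantity is bounded below independently of $p$. So the saturation $\eps^*\to0$, $\kappa\to1$ asserted in the statement comes from the estimates used, not from the coupling. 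Your route reproduces it only through the near-case event.

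Three technical points.

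First, the condition $\inf_{t\ge R}r(t)>0$ is genuinely needed, and the paper's proof uses it implicitly too. Its $K_2(p)=1-(1-\rho^2)\alpha^*\mu_0^{\otimes(p+1)}(B_p)$ depends on $(x_0,\tx_0)$ through $r(\|x_0\|)\wedge r(\|\tx_0\|)$, and it is not bounded away from $1$ if $r(t)\to0$.

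Second, define $E$ with $r_0$, i.e.\ as $\{\sqrt{1-\rho^2}\,\|\rho\xi_0+\xi_j\|<r_0\}$. Then the same event serves both chains, rather than only the one started at $x_0$.

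Third, the near-case events $\{\|\rho\xi_0+\xi_k\|\le M\}$, $k=1,\dots,p$, share $\xi_0$ and are not independent, so their joint probability is not $q_M^p$. It is at least $q_M^p$, by Jensen's inequality after conditioning on $\xi_0$.
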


\begin{proof}
Let $R > 0$ be as in \cref{assumptionalpha}. Fix $x_0, \tx_0 \in \qsp$ with $d_\eps(x_0, \tx_0) < 1$. Then, for $\varepsilon < 1$, it follows that either $x_0, \tx_0 \in B(0, R+1)$, or $x_0, \tx_0 \in B(0,R)^c$.

First, suppose that $x_0, \tx_0 \in B(0, R+1)$. Define the set
\begin{align}\label{eq:ap:setA}
	A = \{ (z_0, \ldots, z_p) \in \qsp^{p+1} \,:\, \sqrt{1 - \rho^2} \| \rho z_0 + z_j \| \leq R + 1 \mbox{ for all } j=1, \ldots, p\}.
\end{align}
We proceed with the same steps as in the beginning of the proof of \cref{prop:contr:fin:p:1} leading to \eqref{eq:contr1}, namely 
\begin{equation}\label{eq:contr1:ap}
   \Wass_{d_\varepsilon}(\mk^p(x_0, \cdot), \mk^p(\tx_0, \cdot)) \leq  d_\eps(x_0, \tx_0)\E  \left( \har_0 +  \rho^2  \sum_{j = 1}^{p} \har_j \right) + \E (1 - s_p). 
\end{equation}
Recall, using the synchronous coupling for the proposals, we set 
\[
    \har_j = \har_j(x_0, \tx_0;\Xi) = \min\{\alpha_j(x_0, X_1, \ldots, X_p), \alpha_j(\tx_0,\tX_1, \ldots, \tX_p)\}, \, \quad j=0, \ldots, p
\]
where $\Xi = (\xi_0, \ldots, \xi_p)\sim \mu_0^{\otimes (p+1)}$ and $s_p = \sum_{j = 0}^p \har_j\leq 1$.
Focusing on the first term on the right hand side of \eqref{eq:contr1:ap}, we write
\begin{align}
	\E \left( \har_0 + \rho^2 \sum_{j =1}^p \har_j\right) &= \E \left( \har_0 + \rho^2 \sum_{j =1}^p \har_j\right)\mathbbm{1}_{\Xi\in A} + \E \left( \har_0 + \rho^2 \sum_{j =1}^p \har_j\right)\mathbbm{1}_{\Xi\in A^c} \notag\\
    &\leq   \E \left( \har_0 + \rho^2 \sum_{j =1}^p \har_j\right)\mathbbm{1}_{\Xi\in A} +\mu_0^{\otimes (p+1)} (A^c).\label{eq:I:II}
\end{align}

Recalling the notation \eqref{eq:mpcn:proposal1} for the synchronously coupled proposals, it follows that if $\Xi\in A$, then 
\begin{align*}
	\| X_j \| \leq \rho^2 \| x_0 \| + \sqrt{1 - \rho^2} \| \rho \xi_0 + \xi_j \| 
	\leq \rho^2 (R+ 1) + R+1 < 2(R + 1), \quad j = 1, \ldots, p,
\end{align*}
and, similarly, $\| \tX_j \| \leq 2 (R + 1)$, $j = 1, \ldots, p$. Therefore, for all $\Xi\in A$, $j = 1, \ldots, p$
\begin{align*}
	\alpha_j(x_0, X_1, \ldots, X_p) 
	= \frac{e^{-\Pot(X_j)}}{e^{-\Pot(x_0)}+\sum_{k=1}^p e^{-\Pot(X_k)}} 
	\geq \frac{\exp (- \sup \{ \Pot(z) \,:\, \| z \| \leq 2 (R+1) \})}{ (p+1) \exp ( - \inf \{ \Pot(z) \,:\, \|z \| \leq 2 (R + 1)\})}
	=: \frac{C_R}{p+1},
\end{align*}
and similarly for $\alpha_j(\tx_0,\tX_1, \ldots, \tX_p)$, so that
\begin{align*}
	\har_j =\har_j(x_0, \tx_0;\Xi) \geq \frac{C_R}{p+1} \quad \mbox{for all } \,\, \Xi \in A, \,\, j = 1, \ldots, p.
\end{align*}

In \eqref{eq:I:II} we can then estimate
\begin{align*}
	 \har_0  + \rho^2 \sum_{j=1}^p \har_j 
	&\leq 1 - \sum_{j=1}^p \har_j + \rho^2 \sum_{j=1}^p \har_j
	= 1 - (1 - \rho^2) \sum_{j=1}^p \har_j  \notag \\
	&\leq 1 - ( 1 - \rho^2 ) \frac{p}{p+1} C_R,
\end{align*}
hence 
\begin{equation}\label{ineq:int:I}
    \E \left( \har_0 + \rho^2\sum_{j =1}^p \har_j\right)\mathbbm{1}_{\Xi\in A} \leq \left( 1 - ( 1 - \rho^2 ) \frac{p}{p+1} C_R\right) \mu_0^{\otimes(p+1)} (A).
\end{equation}
It follows from \eqref{eq:contr1:ap}, \eqref{eq:I:II}, and \eqref{ineq:int:I} that
\begin{align}\label{Wass:contr:nb:0}
	\Wass_d (\mk_p(x_0, \cdot), \mk_p(\tx_0, \cdot))
	&\leq \left[ \left(1 - ( 1 - \rho^2 ) \frac{p}{p+1} C_R \right) \mu_0^{\otimes (p+1)} (A) + \mu_0^{\otimes (p+1)} (A^c)\right]  d(x_0, \tx_0) 
	\notag \\
	&\qquad\qquad\qquad\qquad \qquad\qquad \qquad\qquad\qquad + \E (1 - s_p)
	\notag \\
	&=: K_1(p) d(x_0, \tx_0) + \E (1 - s_p)
\end{align}
with $K_1(p) = 1 - ( 1 - \rho^2 ) \frac{p}{p+1} C_R \mu_0^{\otimes (p+1)} (A) < 1$ for all $p\geq 1$.

We now proceed to estimate the last term in \eqref{Wass:contr:nb:0}. Since here we do not assume $\Pot$ to be bounded, we must obtain a different estimate than \eqref{eq:mpcn:1-sp}. First, recall from the construction in the proof of \cref{prop:contr:fin:p:1} that \eqref{eq:mpcn:1-sp_2} holds, namely 
\begin{align}\label{ineq:sp:b}
	\E \left(1 - s_p \right)\leq \sum_{j=0}^p\E | \ar_j (x_0, X_1, \ldots, X_p) - \ar_j (\tx_0, \tX_1, \ldots, \tX_p) |.
\end{align}

For each $j \in \{0,\ldots, p\}$, denote
\begin{align*}
	G_j(u_0,\ldots, u_p) = \frac{e^{-u_j}}{\sum_{k=0}^p e^{-u_k}}, \quad (u_0, \ldots, u_p) \in \R^{p+1},
\end{align*}
and let $G = (G_0, \ldots, G_p): \R^{p+1} \to \R^{p}$. Moreover, denote $y_j = \Pot(x_j)$ and $\ty_j = \Pot(\tx_j)$ for $j=0, \ldots, p$. With this notation, we have
\begin{align*}
	&\sum_{j=0}^p | \ar_j (x_0, \ldots, x_p) - \ar_j (\tx_0, \ldots, \tx_p) |
	=
	\sum_{j=0}^p |G_j(y_0, \ldots, y_p) - G_j(\ty_0, \ldots, \ty_p) | 
	\\
	&\qquad \qquad \leq 
	\sqrt{p} \left( \sum_{j=0}^p |G_j(y_0, \ldots, y_p) - G_j(\ty_0, \ldots, \ty_p) |^2 \right)^{1/2}
	= \sqrt{p} \| G(y_0, \ldots, y_p) - G(\ty_0, \ldots, \ty_p) \|_{\R^p}, 
\end{align*}
where $\| \cdot \|_{\R^p}$ denotes the Euclidean norm in $\R^p$. By a direct calculation, we have
\begin{align*}
	\partial_k G_j (u_0, \ldots, u_p) = G_j(u_0, \ldots, u_p) [ G_k(u_0, \ldots, u_p) - \delta_{kj}], \quad k = 0,\ldots, p, \,\, j = 0, \ldots, p,
\end{align*}
and then it is not difficult to show that $\| DG(u_0, \ldots, u_p)\| \leq \sqrt{2}$ for all $(u_0, \ldots, u_p) \in \R^{p+1}$.

By the mean value theorem and the Lipschitzianity of $\Phi$, it follows that 
\begin{align*}
	1 - s_p(Z)
	&\leq 
	\sqrt{p} \| DG \|_\infty \| (y_0, \ldots, y_p)  - (\ty_0, \ldots, \ty_p) \|
	= \sqrt{p} \| DG \|_\infty \left( \sum_{k=0}^p ( \Pot(x_k) - \Pot(\tx_k))^2 \right)^{1/2} 
	\\
	&\leq \sqrt{p} \| DG \|_\infty L_{\Pot} \left( \sum_{k=0}^p \|x_k - \tx_k\|^2 \right)^{1/2}.
\end{align*}

Therefore, using the definition of the proposals \eqref{eq:mpcn:proposal1}, we deduce that for any $\Xi\sim \mu_0^{\otimes(p+1)}$
\begin{align}
	\E\left(  1 -s_p(\Xi) \right)  
    &\leq \sqrt{2p} L_{\Pot} \sqrt{1 + \rho^2 p} \|x_0 - \tx_0\| =  C \sqrt{p (1 +\rho^2 p)} \varepsilon d(x_0, \tx_0),\label{ineq:sp:c}
\end{align}
with $C = C(L_{\Pot}) > 0$. Plugging into \eqref{Wass:contr:nb:0}, we have
\begin{align}\label{eq:app:K1}
	\Wass_d (\mk^p(x_0, \cdot), \mk^p(\tx_0, \cdot))
	\leq 
	[K_1(p) + C \sqrt{p (1 +\rho^4 p)} \varepsilon ]d(x_0, \tx_0).
\end{align} 

Next, we analyze the case when $x_0, \tx_0 \in B(0,R)^c$. Here, we define
\begin{align*}
	B_p = \left\{ Z = (z_0, \ldots, z_p) \in \qsp^{p+1} \,:\, \sqrt{1 - \rho^2} \| \rho z_0 + z_j \| \leq r(\| x_0 \|) \wedge r(\| \tx_0 \| ) \mbox{ for all } j =1, \ldots,p \right\}.
\end{align*}
Thus, for all $\Xi \in B_p$, the corresponding $X_j, \tX_j$ satisfy $X_j \in B(\rho^2 x_0, r(\| x_0\|))$ and $\tX_j \in B(\rho^2 \tx_0, r(\| \tx_0\| ))$, $j = 1, \ldots, p$. By \cref{assumptionalpha}, it follows that there exists $l \in \{1,\ldots, p\}$ such that 
\begin{align}\label{lbound:har}
	\har_l (\Xi) > \alpha^* > 0 \quad \mbox{for all } \Xi\in B_p.
\end{align}

Similarly as in the previous case, we have
\begin{align*}
	&\Wass_d (\mk^p(x_0, \cdot), \mk^p(\tx_0, \cdot)) \leq d_\eps(x_0, \tx_0)\E \left( \har_0 + \rho^2\sum_{j =1}^p \har_j\right) + \E (1 - s_p) \\
    &\leq  d_\eps(x_0, \tx_0) \E \left( \har_0 + \rho^2\sum_{j =1}^p \har_j\right)\mathbbm{1}_{\Xi\in B_p} + d_\eps(x_0, \tx_0) \mu_0^{\otimes (p+1)}(B_p^c)  + \E (1 - s_p) .
\end{align*}
From \eqref{lbound:har}, it follows similarly as in \eqref{ineq:int:I} that for all $\Xi \in B_p$
\begin{align*}
	 \har_0(\Xi) + \rho^2 \sum_{j=1}^p \har_j(\Xi)
	 \leq 1 - (1 - \rho^2) \sum_{j=1}^p \har_j(\Xi)
	 < 1 - (1 - \rho^2) \alpha^*.
\end{align*}
Thus, together with \eqref{ineq:sp:c}, we obtain that
\begin{align}
	\Wass_d (\mk_p(x_0, \cdot), \mk_p(\tx_0, \cdot))
	&\leq \left[ (1 - (1 - \rho^2) \alpha^*) \mu_0^{\otimes (p+1)} (B_p) + \mu_0^{\otimes (p+1)} (B_p^c) \right] d(x_0, \tx_0) + C \sqrt{p (1 +\rho^4 p)} \varepsilon d(x_0, \tx_0)
	\notag\\
	&\quad =: K_2(p) d(x_0, \tx_0) + C \sqrt{p (1 +\rho^4 p)} \varepsilon d(x_0, \tx_0), \label{eq:app:K2}
\end{align}
where note that $K_2(p) < 1$ for each fixed $p<\infty$. 

In summary we showed the estimates \eqref{eq:app:K1} for $x_0, \tx_0\in B(0,R)$ and \eqref{eq:app:K2} for $x_0, \tx_0\in B(0, R)^c$, then in general for all $x_0, \tx_0$ such that $d(x_0, \tx_0)<1$ it follows 
\begin{equation*}
    W_{d_\eps} (P_p(x_0, \cdot) , P_p(\tx_0, \cdot)) \leq \kappa d_\eps(x_0,\tx_0)
\end{equation*}
with 
\begin{equation*}
    \kappa = \kappa(\varepsilon, p) = \max\left(K_1(p) , K_2(p)\right)  + C \sqrt{p (1 +\rho^4 p)} \varepsilon 
\end{equation*}
so that $\kappa(\eps, p)<1$ for any $\eps$ smaller than
\begin{equation}\label{eq:app:eps*}
     \eps_* = \frac{1 - \max\left(K_1(p), K_2(p)\right)}{ C \sqrt{p (1 +\rho^4 p)}}
\end{equation}
which is well defined as $K_1, K_2$ are smaller than one. 
\end{proof}

Last we show how to adapt the $d$-smallness proof for unbounded $\Pot$.
\begin{Proposition}
    Set the number of proposals $p< \infty$ and consider $P_p$ with $\Pot$ at least continuous. Let $S= \lbrace V(x) \leq 4 K_V\rbrace $ with $V$ any of the Lyapunov functions in \cref{prop:mpcn:lyap}, and let $r_s>0$ be such that $S \subset B(0, r_s)$. Then there exists $s = s(n, p, \rho, \varepsilon, r_s)$ such that 
    \begin{equation}
        \Wass_{d_\eps} (P_p^n(x_0, \cdot), P_p^n(\tx_0, \cdot))\leq s,
    \end{equation}
    for all $x_0, \tx_0\in S$. Moreover, there exists $n^* = n^*(\rho, \varepsilon, r_s)$ such that $s<1$ for all $n > n^*$. Finally, as $p \to \infty$ the parameters saturate $s\to 1$ and $n^*\to \infty$. 
\end{Proposition}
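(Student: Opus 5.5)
I will reuse the iterated coupling from the proof of \cref{prop:mpcn:small}: synchronous Gaussian noise $\Xi^{(k)}$ for the proposals and the shared-uniform selection coupling \eqref{eq:mpcn:coupling} at every step. Set $A^{(k)}=\{U^{(k)}\in[s_0^{(k)},s_p^{(k)})\}$ and $\Lambda^{(n)}=\bigcap_{k\le n}A^{(k)}$. As in \eqref{eq:mpcn:small1}, on $\Lambda^{(n)}$ we have $X^{(n)}-\tX^{(n)}=\rho^{2n}(x_0-\tx_0)$. Hence
\begin{equation*}
\Wass_{d_\eps}(P_p^n(x_0,\cdot),P_p^n(\tx_0,\cdot))\le 1-\bbP(\Lambda^{(n)})\Bigl(1-\tfrac{2r_s\rho^{2n}}{\eps}\Bigr).
\end{equation*}
Boundedness of $\Pot$ entered only through the lower bound \eqref{eq:boundLambda} on $\bbP(\Lambda^{(n)})$. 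The whole task is therefore to replace that bound with one that uses only continuity of $\Pot$.

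To do this I will localize the noise, in the spirit of the set $A$ in \eqref{eq:ap:setA}. Pick a radius $R_1\ge r_s$ and define the event
\begin{equation*}
\Gamma^{(k)}=\Bigl\{\sqrt{1-\rho^2}\,\|\rho\xi_0^{(k)}+\xi_j^{(k)}\|\le (1-\rho^2)R_1 \text{ for all } j=1,\ldots,p\Bigr\}.
\end{equation*}
Suppose both chains are in $B(0,R_1)$ at step $k-1$ and $\Gamma^{(k)}$ occurs. Then every proposal satisfies $\|X_j^{(k)}\|\le\rho^2R_1+(1-\rho^2)R_1=R_1$, and likewise for $\tX_j^{(k)}$. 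So on $\bigcap_{k\le n}(\Gamma^{(k)}\cap A^{(k)})$ both chains never leave $B(0,R_1)$, by induction starting from $x_0,\tx_0\in S\subset B(0,r_s)$.

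Inside this ball I replace $\|\Pot\|_\infty$ by the oscillation
\begin{equation*}
C_{R_1}=\sup_{B(0,R_1)}\Pot-\inf_{B(0,R_1)}\Pot.
\end{equation*}
This is the step that needs care. Continuity of $\Pot$ alone does not bound $\Pot$ on closed balls in infinite dimensions, so I will assume $\Pot$ is locally bounded; Lipschitz continuity, as in the preceding proposition, gives this. With all arguments in the ball, the computation behind \eqref{eq:bound_alphaj} gives $\har_j^{(k)}\ge e^{-C_{R_1}}/(p+1)$ on $\Gamma^{(k)}$, and hence
\begin{equation*}
\bbP\bigl(A^{(k)}\cap\Gamma^{(k)}\mid\text{past}\bigr)\ge \frac{p\,e^{-C_{R_1}}}{p+1}\,\mu_0^{\otimes(p+1)}(\Gamma).
\end{equation*}
Chaining conditional probabilities as in the proof of \cref{prop:mpcn:small} then yields
\begin{equation*}
\bbP(\Lambda^{(n)})\ge \Bigl(\tfrac{p}{p+1}e^{-C_{R_1}}\mu_0^{\otimes(p+1)}(\Gamma)\Bigr)^n.
\end{equation*}

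This gives the bound with
\begin{equation*}
s=1-\Bigl(\tfrac{p}{p+1}e^{-C_{R_1}}\mu_0^{\otimes(p+1)}(\Gamma)\Bigr)^n\Bigl(1-\tfrac{2r_s\rho^{2n}}{\eps}\Bigr).
\end{equation*}
We get $s<1$ exactly when $n>n^*=\frac{\log 2r_s-\log\eps}{-2\log\rho}$, which does not depend on $p$; the case $\rho=0$ is trivial. The statement's claim that $n^*\to\infty$ as $p\to\infty$ should be read as the deterioration of the effective contraction time, since we do not need any $p$-dependence of $n^*$ here.

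The main obstacle is the degeneration in $p$. Conditionally on $\xi_0$, the $p$ events in $\Gamma$ are independent. By Fernique, for fixed $R_1$ each has probability $q<1$, so $\mu_0^{\otimes(p+1)}(\Gamma)\approx q^p\to0$. This makes $s\to1$, which is consistent with the statement and explains the saturation. If one wanted a milder decay, one could first try enlarging $R_1=R_1(p)$ so that $q^p$ stays bounded away from zero, at the cost of a larger $C_{R_1}$. I will not pursue this, since the proposition only asserts $s\to1$.
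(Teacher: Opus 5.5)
Your proposal is correct and follows essentially the paper's route: the iterated same-index coupling from \cref{prop:mpcn:small}, a localization of the Gaussian noise that keeps every proposal inside a ball containing $S$ (the paper takes all $\xi_j^{(k)}\in B(0,r_s/n)$), and the oscillation of $\Pot$ on that ball in place of $\|\Pot\|_\infty$, which produces the factor $\mu_0^{\otimes(p+1)}(\cdot)^n\to 0$ behind $s\to1$. Your version is if anything cleaner. It keeps the original unshifted selection coupling and decouples the localization radius from the iteration count. It also correctly flags that local boundedness of $\Pot$, which the paper also uses via $\sup_{B(0,r_s)}\Pot$, does not follow from continuity alone. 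As you noticed, the paper's own threshold $n^*$ is likewise independent of $p$.
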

\begin{proof}
    We will use the same coupling as in the proof of the $d$-contraction but with a shifted uniform distribution. More precisely, fix $x_0, \tx_0 \in S$. Recalling the definition of the proposals \eqref{eq:mpcn:proposal1}, and $\hat{\alpha}_k$, the minima between the acceptance probabilities, \eqref{eq:alphahat} we now define 
\begin{align*}
     s_j := s_j(x_0, \tx_0; \Xi) = \sum_{k=0}^j \hat{\alpha}_k, \quad j = 0, \ldots, p.
\end{align*}

Next, denote, for $j = 0, \ldots, p$,
\begin{gather*}
	\beta_j= \max\{\ar_{j} (x_0, X_1, \ldots, X_p) - \ar_{j_n}(\tx_0, \tX_1, \ldots, \tX_p), 0\}, \\
	\tbeta_j=  \max \{ \ar_{j}(\tx_0, \tX_1, \ldots, \tX_p) - \ar_{j} (x_0, X_1,\ldots, X_p),0\},
\end{gather*}
and define the real intervals
\begin{gather*}
	J_0 = [s_p, s_p + \beta_0), \quad
	J_k = \left[s_p + \sum_{i = 0}^{k-1} \beta_i, s_p + \sum_{i=0}^k \beta_i \right), \,\, k = 1, \ldots, p-1, \quad J_p = \left[s_p + \sum_{i = 0}^{p-1} \beta_i, s_p + \sum_{i=0}^p \beta_i \right], \\
	\tJ_0 = [s_p, s_p + \tbeta_0), \quad
	\tJ_l = \left[s_p + \sum_{i = 0}^{l-1} \tbeta_i, s_p + \sum_{i=0}^l \tbeta_i \right), \,\,l= 1, \ldots, p-1, \quad \tJ_{p} = \left[s_p + \sum_{i = 0}^{p-1} \tbeta_i, s_p + \sum_{i=1}^{p} \tbeta_i \right].
\end{gather*}
Note that $s_0 + s_p + \sum_{i=0}^p \beta_i = s_0+ s_p + \sum_{i=0}^{p} \tbeta_i = 1$, so that
\begin{align*}
	[s_p, 1- s_0] = \bigcup_{k=0}^p J_k = \bigcup_{l=0}^{p} \tJ_l 
	= \bigcup_{\{(k,l) \,:\, J_k \cap \tJ_l \neq \emptyset \}} ( J_k \cap \tJ_l ) .
\end{align*}
We have then constructed the following partition of the interval $[-s_0, 1 - s_0]$
\begin{equation}\label{eq:partition[-s_0,1-s_0]}
    [-s_0, 1 - s_0] = [-s_0, 0) \cup [0,s_1) \cup \bigcup_{j = 2}^p[s_{j-1}, s_j)\cup  \bigcup_{\{(k,l) \,:\, J_k \cap \tJ_l \neq \emptyset \}} ( J_k \cap \tJ_l ). 
\end{equation}
Then, given $U\sim \cU([-s_0, 1 - s_0]) $, we define the variables
\begin{align*}
(X, \tX) = 
	\begin{cases}
		(x_0, \tx_0) \quad &\mbox{ if } \,\, U \in [-s_0, 0) \\
        (X_1, \tX_1) \quad &\mbox{ if } \,\, U \in [0, s_1) \\
		(X_j, \tX_j) \quad &\mbox{ if } \,\, U \in \left[ s_{j-1},  s_j\right), \,\, j  =2, \ldots, p,\\
        (X_k, \tX_l) \quad &\mbox{ if } \,\, U \in J_k\cap J_l,\mbox{ with } k,l=0, \ldots, p \mbox{ such that }  J_k \cap \tJ_l \neq \emptyset.
	\end{cases}
\end{align*}
It is not difficult to show from this construction that $(X,\tX)$ is still a coupling of $\mk_p(x_0, \cdot)$ and $\mk_p(\tx_0, \cdot)$. Hence, 
\begin{align*}
\Wass_{d_\eps}(\mk_p(x_0, \cdot), \mk_p(\tx_0, \cdot)) 
	&\leq \E d_\eps (X, \tX)
    \\
    \leq  d_\eps(x_0, \tx_0)& \E s_0 + \E d_\eps(X_1, \tX_1)\mathbbm{1}_{U\in [0, s_1]} + \sum_{j = 2}^p \E d_\eps(X_j, \tX_j)\mathbbm{1}_{U \in [s_{j-1}, s_j]} + \E (1 - s_0-s_p)
\end{align*}
and, using the definition of the proposals which are synchronously coupled, and the fact that $d_\eps\leq 1$
\begin{align*}
     \Wass_{d_\eps}(\mk_p(x_0, \cdot), \mk_p(\tx_0, \cdot)) 
     &\leq  d_\eps(\rho^2 x_0, \rho^2 \tx_0)\E s_1 + d_\eps(\rho^2 x_0, \rho^2 \tx_0) + \sum_{j =2}^p \E (s_j - s_{j-1}) + \E (1 - s_p)\\
     & = d_\eps(\rho^2 x_0, \rho^2 \tx_0) \E s_p  + \E (1 - s_p) .
\end{align*}
As $x_0, \tx_0\in S \subset B(0,r_s)$ then we showed that 
\begin{equation*}
      \Wass_{d_\eps} (\mk_p(x_0, \cdot), \mk_p(\tx_0, \cdot))\leq  1 - \left( 1 - \frac{2r_s\rho^2}{\varepsilon} \right) \E s_p
\end{equation*}
To ensure that $ \E s_p>0$ we notice that, given $\Xi = (\xi_0, \ldots, \xi_p)\sim \mu_0^{\otimes (p+1)}$, 
\begin{equation*}
    \E s_p = \E s_p(\Xi) > \E s_p(\Xi)\mathbbm{1}_{\Xi \in B_p}
\end{equation*}
where we define
\begin{equation}\label{eq:B_p}
    B_p := \left\lbrace (w_0, \ldots, w_p) \, : \, w_j \in B\left(0, \frac{r_s}{n}\right) \text{ for all } j = 0, \ldots, p\right\rbrace.
\end{equation}  
Given this constraint on $\Xi$ it follows 
\begin{equation}\label{eq:mpCN:small1}
    \|X_j\| = \| \rho^2 x_0 + \rho \sqrt{1 - \rho^2}\xi_0 + \sqrt{1 - \rho^2} \xi_j \| \leq \rho^2 r_s+ \rho \sqrt{1 - \rho^2}\frac{r_s}{n} + \sqrt{1 - \rho^2} \frac{r_s}{n}   
\end{equation}
and for any $n \geq \sqrt{\frac{1 + \rho^2}{1 - \rho^2}}$, it follows $\| X_j\|\leq r_s$, and similarly for $\tX_j$, $j = 1, \ldots, p$. Namely with the constraint on the size of the stochasticity, the proposals will all land in the ball containing the small set. It follows then 
\begin{equation*}
    \hat{\alpha}_j(x_0, \tx_0, \Xi) = \frac{e^{-\Pot(X_j)}}{e^{-\Pot(x_0)} + \sum_{k=1}^p e^{-\Pot(X_k)}} \wedge \frac{e^{-\Pot(\tX_j)}}{e^{-\Pot(\tx_0)} + \sum_{k=1}^p e^{-\Pot(\tX_k)}} > \frac{\exp\left(-\sup_{z \in B(0, r_s)} \Pot(z)\right)}{(p+1)\exp\left(-\inf_{z \in B(0, r_s)} \Pot(z)\right)}
\end{equation*}
and 
\begin{equation*}
   \E s_p(\Xi)\mathbbm{1}_{\Xi \in B_p} > \frac{  \exp\left(-\sup_{z \in B(0, r_s)} \Pot(z)\right)}{\exp\left(-\inf_{z \in B(0, r_s)} \Pot(z)\right)}\left[\mu_0\left(B\left(0, \frac{r_s}{n}\right)\right)\right]^{p+1}.
\end{equation*}
This estimate also shows that this argument would not ``pass to the limit" for $p\to \infty$ as the event $\Xi \in B_p$ would be empty. 

We now iterate the argument constructing a coupling of $P_p^n(x, \cdot)$ and $P_p^n(\tx, \cdot)$. Define $\Xi^{(n)} = (\xi^{(n)}_0, \ldots, \xi^{(n)}_p) \sim \mu_0^{\otimes (p+1)}$ independent of $\Xi^{(1)}, \ldots, \Xi^{(n-1)}$, and set the proposals at the $n$-th step as 
    \begin{align}
        X_j^{(n)} = \rho^2 X^{(n-1)} + \rho^2\sqrt{1 - \rho^2}\xi_0^{(n)} +  \sqrt{1 - \rho^2}\xi_j^{(n)} \\
        \tX_j^{(n)} = \rho^2 \tX^{(n-1)} + \rho^2\sqrt{1 - \rho^2}\xi_0^{(n)} +  \sqrt{1 - \rho^2}\xi_j^{(n)}.
    \end{align}
    Then consider also
    \begin{align*}
	\hat{\alpha}_j^{(n)} = \hat{\alpha}_j(X^{(n-1)}, \tX^{(n-1)}; \Xi^{(n)}) 
	= \min\{\alpha_j(X^{(n-1)}, X_1^{(n)}\ldots, X_p^{(n)}), \alpha_j(\tX^{(n-1)}, \tX_1^{(n)}\ldots, \tX_p^{(n)})\}, \quad j = 0, \ldots, p,
\end{align*}
and 
\begin{align}\label{eq:app:def:sj}
	s_j^{(n)} = s_j^{(n)}(x_0, \tx_0; \Xi^{(n)}) = \sum_{k=0}^j \hat{\alpha}^{(n)}_k, \quad j = 0, \ldots, p.
\end{align}
and define the associated families of intervals $J_0^{(n)}, \ldots, J_p^{(n)}$ and $\tJ_0^{(n)}, \ldots, \tJ_p^{(n)}$ as done for the first step. 
Now, given $U^{(n)}\sim \mathcal{U}([-s_0^{(n)}, 1 - s_0^{(n)}])$, independent of $U^{(1)},\ldots, U^{(n-1)}$ and of $\Xi^{(1)}\ldots \Xi^{(n)}$, we set 
    \begin{equation}
        \left(X^{(n)}, \tX^{(n)}\right) = \begin{cases}
            (X^{(n-1)}, \tX^{(n-1)}) \quad &\text{if } U^{(n)} \in [-s_0^{(n)}, 0)\\
            (X^{(n)}_1, \tX^{(n)}_1) \quad &\text{if } U^{(n)} \in [0, s_1^{(n)}), \\
            (X^{(n)}_j, \tX^{(n)}_j) \quad &\text{if } U^{(n)} \in [s_{j-1}^{(n)}, s_j^{(n)}), \quad j =2, \ldots, p\\
             (X^{(n)}_k, \tX^{(n)}_l) \quad &\mbox{ if } \,\, U \in J^{(n)}_k\cap J^{(n)}_l,\mbox{ with } k,l=0, \ldots, p \mbox{ such that }  J^{(n)}_k \cap \tJ^{(n)}_l \neq \emptyset.
        \end{cases}
    \end{equation}
    with $X^{(n)}_0 = X^{(n-1)}$.
    It can be verified that this is a coupling of $P_p^n(x_0, \cdot)$ and $P_p^n(x_0, \cdot)$. 
    
    Next define the event in which two proposals with same index are accepted at step $n$, namley $A^{(n)} = \lbrace U^{(n)}\in [0, s_p^{(n)}]\rbrace $, and 
    \begin{equation*}
        \Lambda^{(n)} = \bigcap_{j =1}^n A^{(j)},
    \end{equation*}
    namely the event for which for $n$ steps in a row the chains accept two proposal with the same index in $1, \ldots, p$, with this index possibly changing among the n steps. Then we can write 
    \begin{align*}
         \Wass_{d_\eps}(P_p^n(x_0, \cdot), P_p^n(\tx_0, \cdot)) &\leq \E d_\eps\left( X^{(n)}, \tX^{(n)}\right) \\
            &= \E d_\eps\left( X^{(n)}, \tX^{(n)}\right) \mathbbm{1}_{ \Lambda^{(n)}} + \E d_\eps \left( X^{(n)}, \tX^{(n)}\right) \mathbbm{1}_{\left(\Lambda^{(n)}\right)^c} \\
    &= d_\eps(\rho^{2n}x_0, \rho^{2n}\tx_0) \bbP( \Lambda^{(n)}) + 1 - \bbP( \Lambda^{(n)}).
    \end{align*}
    Again, as $x_0, \tx_0\in B(0, r_s)$
    \begin{align*}
       \Wass_{d_\eps} (P_p^n(x_0, \cdot), P_p^n(\tx_0, \cdot))  &\leq 1 - \bbP( \Lambda^{(n)}) \left( 1 - \frac{2r_s \rho^{2n}}{\varepsilon}\right). 
    \end{align*}

    Let us now focus on $\Lambda^{(n)} = \bigcap_{j =1}^n A^{(j)}$. As for the step $n=1$, consider $B_p$ defined in \eqref{eq:B_p}, and if all $\Xi^{(k)}$ for $k = 1, \ldots, n$ are in $B_p$ then, starting from \eqref{eq:mpCN:small1} all the proposals are in the ball $B\left( 0, \frac{r_s}{n}\right)$
    \begin{equation*}\begin{split}
        \| X_j^{(k)} \| &\leq  \rho^2\| X^{(n-1)}\| + \rho^2\sqrt{1 - \rho^2}\|\xi_0^{(n)}\| +  \sqrt{1 - \rho^2}\|\xi_j^{(n)}\| \\
        &\leq  \left( \rho^2  + \rho^2 \frac{\sqrt{1 - \rho^2}}{n} +  \frac{\sqrt{1 - \rho^2}}{n}\right) r_s,
    \end{split}
    \end{equation*}
    as long as $n > \sqrt{\frac{1 + \rho^2}{1 - \rho^2}}$. 
    In this case it follows, for all $k =1, \ldots, n$,
    \begin{align}
        s_p^{(k)} = \sum_{j = 0}^p \har_j^{(k)}  >  \frac{ \exp\left(-\sup_{z \in B(0, r_s)} \Pot(z)\right)}{\exp\left(-\inf_{z \in B(0, r_s)} \Pot(z)\right)}=: \sigma.
    \end{align}
  Therefore we have 
\begin{align*}
    \bbP\left( \Lambda^{(n)}\right) &>  \bbP\left( \Lambda^{(n)}\; \left| \; \bigcap_{k=1}^n \lbrace \Xi^{(k)}\in B_p\rbrace \right.\right)\bbP \left(\bigcap_{k=1}^n \lbrace \Xi^{(k)}\in B_p\rbrace \right)\\
    &>  \bbP\left( \bigcap_{j =1}^n \{ U^{(j)} \leq \sigma  \}\; \left| \; \bigcap_{k=1}^n \lbrace \Xi^{(k)}\in B_p\rbrace\right.\right)\bbP \left(\bigcap_{k=1}^n \lbrace \Xi^{(k)}\in B_p\rbrace\right)\\
    &>  \sigma ^n \mu_0\left( B\left(0, \frac{r_s}{n}\right)\right)^{n(p+1)},
\end{align*}
where in the last inequality we used the mutual independence of the $U^{(1)}, \ldots, U^{(n)}$ and of the $\Xi^{(1)},\ldots, \Xi^{(n)}$.

    Finally, we have showed that 
    \begin{equation}
          \Wass_{d_\eps} (P_p^n(x_0, \cdot), P_p(\tx_0, \cdot)) \leq 1 - \left\lbrace \sigma \mu_0\left( B\left(0, \frac{r_s}{n}\right)\right)^{p+1}\right\rbrace^n \left( 1 - \frac{2r_s \rho^{2n}}{\varepsilon}\right)=:s(n,p) 
    \end{equation}
    so that $s(n,p)<1$ taking
    \begin{equation}\label{eq:app:n*}
        n > \max\left( \sqrt{\frac{1 + \rho^2}{1 - \rho^2}},\frac{ \log 2r_s - \log \varepsilon}{- 2 \log \rho} \right) =: n^*
    \end{equation}
    
    Finally, note that the presented argument is not valid for $ p= \infty$ as the events $\{\Xi^{(k)}\in B_p\}$ would be empty for any $k$.
\end{proof}

\section{Additional proofs}\label{app:proofs}
\begin{proof}[Proof of \cref{lemma:Pp1}]
\textbf{Formulation 1.}
    From the original formulation \eqref{eq:2:mpcn:kernel} of the mpCN Markov kernel, namely
    \begin{equation}\label{eq:app:mpcn:kernel}
      P_p(x_0, dy) = \sum_{j=0}^p \int_{\qsp^{p+1}}\frac{e^{- \Pot(x_j)}}{\sum_{l=0}^p e^{- \Pot(x_l)}} \delta_{x_j}(dy)\prod_{k= 1}^p Q(z, dx_k) Q(x_0, dz),
\end{equation}
    isolating and rearranging the $j =0$ term
    \begin{multline*}
        \int_{\qsp^{p+1}}\frac{e^{- \Pot(x_0)}}{\sum_{l=0}^p e^{- \Pot(x_l)}} \delta_{x_0}(dy)\prod_{k= 1}^p Q(z, dx_k) Q(x_0, dz)\\
        =\int_{\qsp^{2}} e^{- \Pot(x_0)}\delta_{x_0}(dy) \int_{\qsp^{p-1}}\frac{\prod_{k= 2}^p Q(z, dx_k)}{e^{- \Pot(x_0)} + e^{-\Pot(x_1)}+ \sum_{l=2}^p e^{- \Pot(x_l)}} Q(z, dx_1) Q(x_0, dz).
    \end{multline*}
    For the terms $j =1, \ldots, p$
    \begin{multline*}
        \sum_{j=1}^p \int_{\qsp^{p+1}}\frac{e^{- \Pot(x_j)}}{\sum_{l=0}^p e^{- \Pot(x_l)}} \delta_{x_j}(dy)\prod_{k= 1}^p Q(z, dx_k) Q(x_0, dz) \\
        = \sum_{j=1}^p  \int_{\qsp^{2}} e^{- \Pot(x_j)}\delta_{x_j}(dy) \int_{\qsp^{p-1}}\frac{\prod_{k\neq j} Q(z, dx_k)}{e^{- \Pot(x_0)} + e^{-\Pot(x_j)}+ \sum_{l\neq j} e^{- \Pot(x_l)}} Q(z, dx_j) Q(x_0, dz).
    \end{multline*}
    We can now relabel the variables without loss of generality so that 
    \begin{equation*}
         = p \int_{\qsp^{2}} e^{- \Pot(x_1)}\delta_{x_1}(dy) \int_{\qsp^{p-1}}\frac{\prod_{k=2}^p Q(z, dx_k)}{e^{- \Pot(x_0)} + e^{-\Pot(x_1)}+ \sum_{l=2}^p e^{- \Pot(x_l)}} Q(z, dx_1) Q(x_0, dz).
    \end{equation*}
    Therefore setting 
    \begin{equation*}
        \gamma_p(x_0, x_1, z):=\int_{\qsp^{p-1}}\frac{\prod_{k= 2}^p Q(z, dx_k)}{e^{- \Pot(x_0)} + e^{-\Pot(x_1)}+ \sum_{l=2}^p e^{- \Pot(x_l)}}
    \end{equation*}
    we have the desired results.

\textbf{Formulation 2.}
Looking at the acceptance probability (integrand) in \eqref{eq:app:mpcn:kernel} we have 
\begin{align*}
   \frac{e^{- \Pot(x_j)}}{\sum_{l=0}^p e^{- \Pot(x_l)}} 
   &= \frac{e^{- \Pot(x_j)}}{\sum_{l=0}^p e^{- \Pot(x_l)}}\frac{e^{-\Pot(x_0)  } + p \int e^{-\Pot(u)}  Q(z, d u)}{e^{-\Pot(x_0)  } + p \int e^{-\Pot(u)}  Q(z, d u)} \\
   &= \frac{e^{- \Pot(x_j)}}{\sum_{l=0}^p e^{- \Pot(x_l)}}\frac{e^{-\Pot(x_0) } + p \int e^{-\Pot(u)}  Q(z, d u)+ \sum_{k=1}^p e^{- \Pot(x_k)} - \sum_{k=1}^p e^{- \Pot(x_k)}}{e^{-\Pot(x_0)} + p \int e^{-\Pot(u)}  Q(z, d u)} \\
   &=  \frac{e^{- \Pot(x_j)}}{\sum_{l=0}^p e^{- \Pot(x_l)}}\frac{ p \int e^{-\Pot(u)}  Q(z, d u) - \sum_{k=1}^p e^{- \Pot(x_k)}}{e^{-\Pot(x_0)} + p \int e^{-\Pot(u)}  Q(z, d u)}
   + \frac{e^{- \Pot(x_j)}}{e^{-\Pot(x_0)} + p \int e^{-\Pot(u)}  Q(z, d u)}.
\end{align*}
Therefore 
\begin{align}
     P_p(x_0, &dy) = \sum_{j=0}^p \int_{\qsp^{p+1}}\frac{e^{- \Pot(x_j)}}{\sum_{l=0}^p e^{- \Pot(x_l)}} \delta_{x_j}(dy)\prod_{k= 1}^p Q(z, dx_k) Q(x_0, dz) \notag\\
     &= \sum_{j=0}^p \int_{\qsp^{p+1}}\frac{e^{- \Pot(x_j)}}{\sum_{l=0}^p e^{- \Pot(x_l)}}\frac{ p \int e^{-\Pot(u)}  Q(z, d u) - \sum_{k=1}^p e^{- \Pot(x_k)}}{e^{-\Pot(x_0)} + p \int e^{-\Pot(u)}  Q(z, d u)}\delta_{x_j}(dy)\prod_{k= 1}^p Q(z, dx_k) Q(x_0, dz) \label{eq:mpcnT3}\\
   &\;+ \sum_{j=0}^p \int_{\qsp^{p+1}}\frac{e^{- \Pot(x_j)}}{e^{-\Pot(x_0)} + p \int e^{-\Pot(u)}  Q(z, d u)}\delta_{x_j}(dy)\prod_{k= 1}^p Q(z, dx_k) Q(x_0, dz),\label{eq:mpcnT12}
   \end{align}
   and we derived the term $- T_3$ in \eqref{eq:mpcnT3}. Then simple manipulations of \eqref{eq:mpcnT12} give 
   \begin{align*}
   \sum_{j=0}^p \int_{\qsp^{p+1}}&\frac{e^{- \Pot(x_j)}}{e^{-\Pot(x_0)} + p \int e^{-\Pot(u)}  Q(z, d u)} \delta_{x_j}(dy)\prod_{k= 1}^p Q(z, dx_k) Q(x_0, dz)\\
   &= \int_{\qsp}\frac{e^{- \Pot(x_0)}}{e^{-\Pot(x_0)} + p \int e^{-\Pot(u)}  Q(z, d u)}\delta_{x_0}(dy) Q(x_0, dz)\\
    &\quad + \sum_{j=1}^p \int \frac{1}{e^{-\Pot(x_0)} + p \int e^{-\Pot(u)}  Q(z, d u)} \int_{\qsp} e^{- \Pot(x_j)} \delta_{x_j}(dy) Q(z, dx_j) Q(x_0, dz)\\
    &=   \int_{\qsp} \frac{e^{-\Pot(y)} }
      { e^{-\Pot(x_0)  } + p \int e^{-\Pot(u)}  Q(z, d u)} \delta_{x_0}(dy) Q(x_0, dz)\\
    & \quad+ p \int_{\qsp}  \frac{ e^{-\Pot(y)}}
      { e^{-\Pot(x_0)  } + p \int e^{-\Pot(u)}  Q(z, d u)}
     Q(z, d y) Q(x_0, dz)  = T_2 + T_1
\end{align*}
as desired.
\end{proof}

\begin{Proposition}\label{app:langevin:lyap}
    Let $\mu_0$ be the reference measure with trace class covariance operator and $P_\infty$ be the kernel \eqref{eq:2:inftypCN} for $\infty$-pCN. Assume the potential function $\Pot: \qsp \to \R$ is $C^1$ with $\|\nabla \Phi\|_\infty< \infty$. Then the functions $V(x) = \|x\|^n$, $n \in \N$, are Lyapunov functions as in \cref{def:lyapunov} for the $\infty$-pCN Markov kernel $P_\infty$.
\end{Proposition}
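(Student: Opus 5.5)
We plan to exploit the remark after \eqref{eq:Langevin1}: for each $z\in\qsp$, the measure $\baQ_1(z,\cdot)$ is invariant for the preconditioned Langevin dynamics \eqref{eq:Langevin1}. Write $\trho_1=\sqrt{1-\rho_1^2}$ and $Y_t$ for the solution started from $Y_0 = F_1(z,\xi)\sim Q_1(z,\cdot)$. Our goal is a moment bound of the form
\begin{equation*}
\int \|y\|^n\,\baQ_1(z,dy)\le (1+\delta)\rho_1^n\|z\|^n + C_\delta,
\end{equation*}
with $C_\delta$ independent of $z$. Once this holds, integrating against $Q_2(x,dz)$ and using $\|F_2(x,w)\|^n\le(1+\delta)\rho_2^n\|x\|^n+C_\delta\|w\|^n$ (as in \cref{prop:inftypcn:lyap}) gives
\begin{equation*}
P_\infty V(x)\le (1+\delta)^2(\rho_1\rho_2)^n V(x)+K_V,
\end{equation*}
and choosing $\delta$ small makes $l_V<1$. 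The case $\rho_1\rho_2=0$ is simpler.

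To obtain the moment bound, we set $\eta_t := Y_t-\rho_1 z$, which solves
\begin{equation*}
d\eta_t = -\eta_t\,dt - \tfrac{\trho_1^2}{2}\,\cC\nabla\Pot(Y_t)\,dt + \trho_1\sqrt{\cC}\,dW_t .
\end{equation*}
By variation of constants,
\begin{equation*}
\eta_t = e^{-t}\eta_0 - \tfrac{\trho_1^2}{2}\int_0^t e^{-(t-s)}\,\cC\nabla\Pot(Y_s)\,ds + \trho_1\int_0^t e^{-(t-s)}\sqrt{\cC}\,dW_s .
\end{equation*}
The drift integral is bounded deterministically by $\tfrac{\trho_1^2}{2}\|\cC\|\,\|\nabla\Pot\|_\infty$. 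The stochastic convolution is Gaussian with covariance dominated by $\tfrac{\trho_1^2}{2}\cC$, which is trace class, so all its moments are bounded uniformly in $t$ and $z$. The initial term $e^{-t}\eta_0=e^{-t}\trho_1\xi$ has $z$-independent moments and decays. Hence $\sup_{t}\E\|\eta_t\|^n\le M_n$, with $M_n$ depending only on $n,\cC,\|\nabla\Pot\|_\infty$.

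The crucial step is passing from this bound to the invariant measure, i.e. showing $\int\|y-\rho_1 z\|^n\baQ_1(z,dy)\le M_n$. We first note that $\baQ_1(z,\cdot)$ has all moments, since it has density bounded by $e^{\mathrm{osc}}$ relative to the Gaussian $Q_1(z,\cdot)$ locally. This alone does not yield a uniform bound, however, because $\Pot$ is unbounded. To get uniformity we start the dynamics from stationarity, $Y_0\sim\baQ_1(z,\cdot)$. By invariance, $\E\|\eta_t\|^n$ then equals the desired quantity $m$ for every $t$, while the variation-of-constants bound gives
\begin{equation*}
m^{1/n}\le e^{-t}m^{1/n}+ M_n^{1/n}.
\end{equation*}
Letting $t\to\infty$, or just fixing $t=1$ and rearranging, yields $m\le C M_n$.

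We expect the main obstacle to be this stationarity step together with the rigorous justification of the SPDE setup: well-posedness of \eqref{eq:Langevin1} in $\qsp$ with merely bounded $C^1$ gradient, and the claim that $\baQ_1(z,\cdot)$ is invariant. We would take these from standard results for preconditioned Langevin dynamics, since $\cC\nabla\Pot$ is bounded and continuous, rather than reprove them. Finally, Young's inequality gives $\|y\|^n\le(1+\delta)\rho_1^n\|z\|^n+C_\delta\|y-\rho_1 z\|^n$, which closes the argument.
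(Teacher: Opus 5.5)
Your argument is sound (modulo the normalization point at the end). It follows the paper's overall plan: a $z$-uniform bound on $\int\|y-\rho_1 z\|^n\,\baQ_1(z,dy)$ extracted from the Langevin dynamics, Young's inequality, then composition with the Gaussian kernel $Q_2$. The key transfer step, however, is done differently.

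The paper applies It\^o's formula to $\|Y_t-\rho_1 z\|^2$, closes with Gronwall, and then lets $t\to\infty$ using the mixing statement $\E g(Y_t)\to\int g(y)\,\baQ_1(z,dy)$. It treats only the quadratic case and leaves general powers conditional on ``$L^q$ energy bounds''. Your mild formulation with a stationary start buys three things:
\begin{itemize}
\item it gives every $n$ at once;
\item it needs only invariance of $\baQ_1(z,\cdot)$, not convergence of $Y_t$ to it, which is an additional input;
\item it bounds the nonlinear drift deterministically. In the It\^o route the cross term $\langle \cC\nabla\Pot(Y_s),Y_s-\rho_1 z\rangle$ grows linearly in $\|Y_s-\rho_1 z\|$ and must be absorbed into the dissipation via Young's inequality.
\end{itemize}
The price is that you need $m<\infty$ a priori in order to rearrange.

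Two further remarks.

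First, your stated reason for the Langevin detour is mistaken, and justifying $m<\infty$ properly shows why. Since $\|\nabla\Pot\|_\infty<\infty$, $\Pot$ is globally Lipschitz with $L_\Pot=\|\nabla\Pot\|_\infty$. Exactly as in the proof of \cref{prop:inftypcn:lyap}, the density of $\baQ_1(z,\cdot)$ with respect to $Q_1(z,\cdot)$ is then at most $M_1^{-1}e^{L_\Pot\|y-\rho_1 z\|}$, where $M_1=\int e^{-L_\Pot\trho_1\|u\|}\mu_0(du)$ does not depend on $z$. Hence $\int\|y-\rho_1 z\|^n\,\baQ_1(z,dy)\le M_1^{-1}\int\trho_1^n\|w\|^n e^{L_\Pot\trho_1\|w\|}\,\mu_0(dw)$ uniformly in $z$. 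Only differences of $\Pot$ enter, so its unboundedness is irrelevant. By contrast, the merely local bound on the density that you invoke would not control the tails. The proposition is in fact a direct corollary of \cref{prop:inftypcn:lyap}.

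Second, if you keep the Langevin route, do not use \eqref{eq:Langevin1} literally. For $\Pot\equiv0$ it is an Ornstein--Uhlenbeck process with invariant law $\cN(\rho_1 z,\tfrac{1-\rho_1^2}{2}\cC)$, not $Q_1(z,\cdot)$, so $\baQ_1(z,\cdot)$ is not invariant for it. The dynamics that does preserve $\baQ_1(z,\cdot)$ has drift $-\tfrac12\bigl[Y_t-\rho_1 z+(1-\rho_1^2)\cC\nabla\Pot(Y_t)\bigr]$ with the same noise. Your argument goes through verbatim for it, with $e^{-t/2}$ in place of $e^{-t}$ and harmless changes of constants.
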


\begin{proof}
We are looking for a measurable function $V: \qsp\to [0,\infty)$ for which there exist $K_V>0$ and $0\leq l_V<1$ such that 
\begin{equation}\label{eq:LF_P}
    (PV)(x) = \int V(y)\, P(x, dy) \leq l_V V(x) + K_V \quad \text{for all }x\in \qsp.
\end{equation}
Note that if $V$ is a Lyapunov function for the kernel $\baQ_1$ defined in \eqref{eq:2:baQ}, for some constant $K_1>0$ and $0\leq l_1<1$, and for the the kernel $Q_2$ with constants $K_2>0$ and $0\leq l_2<1$ then \eqref{eq:LF_P} follows. Indeed
\begin{align*}
    (PV)(x) = \int V(y)\, P(x, dy) &= \iint V(y) \baQ_1(z, dy) Q_2(x, dz)\\
    &\leq \int l_1 V(z) Q_2(x, dz) + K_1 \\
    &\leq  l_1l_2V(x) + K_2 + K_1.
\end{align*}
We only need $l_1l_2<1$, so in theory one of the two parameters can be greater or equal to one.

Consider the candidate Lyapunov function $V(x)= \| x \|^2$. Let us start with showing that it is Lyapunov for $\baQ_1(z, \cdot)$. We add and subtract $\rho_1 z$ and use Young's inequality to get 
\begin{align*}
    \int V(y) \, \baQ_1 (z, dy) = \int \|y\|^2 \, \baQ_1 (z, dy) 
    \leq \int k_\delta\|y - \rho_1 z\|^2 \baQ_1 (z, dy) + (1 + \delta)\rho_1^2\| z\|^2 
\end{align*}
where $\delta>0$ is an arbitrarily small parameter, and $k_\delta>0$. To ensure that 
\begin{equation*}
    \int \|y - \rho_1 z\|^2 \baQ_1 (z, dy)
\end{equation*}
stays finite, we study the associated Langevin dynamics 
\begin{equation}\label{eq:Langevin2}
    dY_t = -\left[ Y_t - \rho_1 z+ \frac{(1 - \rho_1^2)}{2} \cC \nabla \Pot(Y_t)\right]\, dt + \sqrt{(1 - \rho_1^2) \cC} dW_t.
\end{equation}
and use its mixing properties (see e.g. \cite{DPZ_green}). 
The equation \eqref{eq:Langevin2} for any initial condition $Y_0$ holds the measure $\baQ_1(z, \cdot)$ invariant and is mixing in the sense that 
\begin{equation*}
    \lim_{t \to \infty} \E g(Y_t) =  \int g(y) \baQ_1(z, dy).
\end{equation*}

From \eqref{eq:Langevin2} it also holds
\begin{equation}\label{eq:Langevin}
    d(Y_t - \rho_1 z)= -\left[ Y_t - \rho_1 z+ (1 - \rho_1^2)\cC \nabla \Phi(Y_t)\right]\, dt + \sqrt{1 - \rho_1^2} \cC dW_t
\end{equation}
which also holds $\baQ_1$ invariant and 
\begin{equation}\label{eq:app:mix:lan}
    \lim_{t \to \infty} \E g(Y_t - \rho_1 z) =  \int g(y) \baQ_1(z, dy).
\end{equation}

Taking the $\qsp$ scalar product with $Y_t - \rho_1 z$ and by It\^{o}'s lemma 
\begin{multline*}
    \|Y_t - \rho_1 z\|^2 = \|Y_0 - \rho_1 z\|^2  -2\int_0^t \|Y_t - \rho_1 z\|^2 \, ds - 2(1 - \rho_1^2)\int_0^t ( \cC \nabla \Pot(Y_s), Y_s - \rho_1 z)\, ds \\ + t(1 - \rho_1^2)\tr \cC + 2\sqrt{1 - \rho_1^2} \int_0^t (Y_s - \rho_1 z, \cC dW_s).
\end{multline*}
Using the fact that $\nabla \Pot$ is assumed globally bounded, it follows 
\begin{equation*}
     \|Y_t - \rho_1 z\|^2 = \|Y_0 - \rho_1 z\|^2  -(1+ \rho_1^2)\int_0^t \|Y_s - \rho_1 z\|^2 \, ds  + \kappa t+ 2\sqrt{1 - \rho_1^2} \int_0^t (Y_s - \rho_1 z, \cC dW_s)
\end{equation*}
with $\kappa = (1 - \rho_1^2)\tr \cC + (1-\rho_1^2)\|\cC\nabla \Phi\|_\infty$.
Then, taking expectations, 
\begin{equation*}
    \E  \|Y_t - \rho_1 z\|^2 \leq \E \|Y_0 - \rho_1 z\|^2  -(1+ \rho^2)\int_0^t \E \|Y_s - \rho_1 z\|^2 \, ds  + \kappa t,
\end{equation*}
and, by the integral version of Gronwall's inequality,
\begin{equation*}
    \E  \|Y_t - \rho_1 z\|^2 \leq e^{-(1 + \rho_1^2)t}\E \|Y_0 - \rho_1 z\|^2 + \frac{\kappa}{(1 + \rho_1^2)} \left(1 - e^{-(1 + \rho_1^2)t}\right).
\end{equation*}
Then we derived a bound that we can use in \eqref{eq:app:mix:lan} with $g(y) = \|y\|^2$, namely 
\begin{equation*}
    \int \|y - \rho_1 z\|^2\baQ (z, dy) = \lim_{t\to \infty} \E  \|Y_t - \rho_1 z\|^2 \leq \frac{\kappa}{(1 + \rho_1^2)}.
\end{equation*}
and so $V$ is a Lyapunov function for $\baQ_1$ with parameters $K_1= k_\delta\frac{\kappa}{(1 + \rho_1^2)}$ and $l_1 = (1 + \delta)\rho_1^2$. 

Now we turn to the proposal kernel $Q_2$ that is simply a Gaussian. It is immediate to derive 
\begin{align*}
    \int \|y \|^2 Q_2(x, dy) &= \int \|\rho_2 x + \sqrt{1- \rho_2^2} y)\|^2 \mu_0(dy)\\
    &\leq (1 + \delta_1) \rho_2^2 \|x \|^2 + k_{\delta_1}(1 - \rho_2^2)\int \| y \|^2 \mu_0(dy).
\end{align*}
Finally, it is enough to pick the auxiliary parameters $\delta$ and $\delta_1$ so that 
\begin{equation*}
    (1 + \delta)(1 + \delta_1) \rho_1^2\rho_2^2 <1,
\end{equation*}
and we have \eqref{eq:LF_P} with $l_V = (1 + \delta)(1 + \delta_1) \rho_1^2\rho_2^2$ and 
\begin{equation*}
\begin{split}
    K_V = K_1+ K_2 &= \frac{k_\delta \kappa}{1 + \rho_1^2} + k_{\delta_1}(1 - \rho_2^2)\tr \cC\\
    &= \left(1 + \frac{1}{\delta}\right)\frac{1 - \rho_1^2}{1 + \rho_1^2}\left(\tr \cC + \|\cC\nabla \Phi\|_\infty\right) +\left(1 + \frac{1}{\delta_1}\right)(1 - \rho_2^2)\tr \cC 
\end{split}
\end{equation*}  
These results are extendable to $V(x)= \|x\|^q$ for any $q>1$, as long as $L^q$ ``energy" bounds for the Langevin dynamics above are attainable.
\end{proof}

\section{Additional numerical results} \label{ap:numerics}

Here we include additional information that can be useful for a more detailed comprehension of the algorithms and the  interpretation of the results in Section~\ref{sec:numerics}. 

\subsection{Polar twist example}
\cref{fig:polar_clouds_burst} illustrates the behaviour of one mpCN and one MTpCN chain with $p=20$ proposals each and $\rho=0.5$, for 5 contiguous iterations of each chain, on the polar twist example in Section~\ref{sec:polar}. The chains are started from the same initial value. The teal clouds ("props``) represent the location of the proposed values in the forward cloud in MTpCN and the only cloud of mpCN. The pink dots ("reverse props``) shows the location of the proposed points in the reference cloud in MTpCN. 
\begin{figure}
    \centering
    \includegraphics[width=1\linewidth]{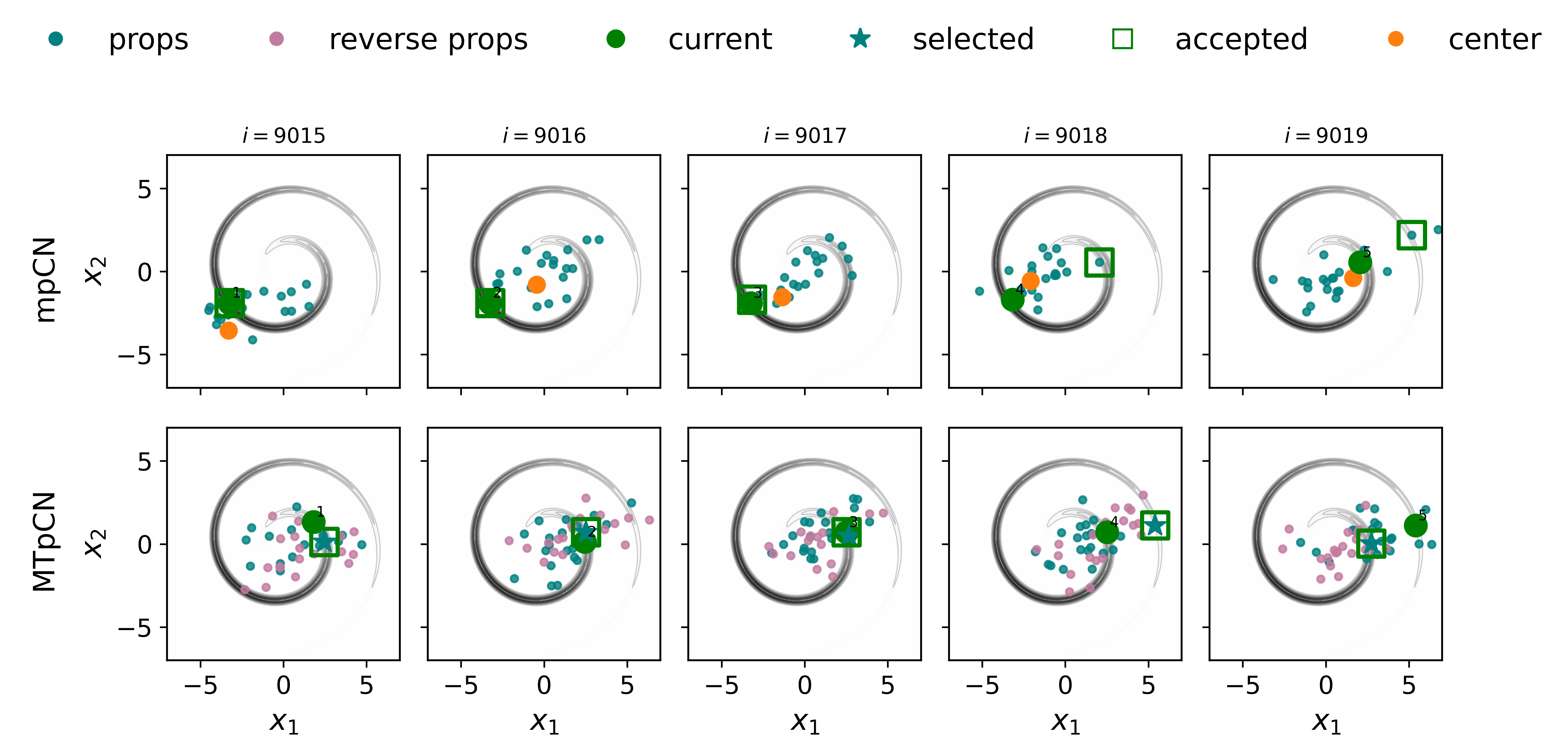}
    \caption{Point clouds illustration for the polar twist, for $\rho=0.5$, $p=20$, and 5 iterations starting from iteration number 9015, for mpCN and MTpCN.}
    \label{fig:polar_clouds_burst}
\end{figure}

\subsection{Solute transport example}

In the first part of this section, we revisit the inverse problem introduced in \cref{sec:matrix_inversion}, but formulate it on $\ell^2(\R)$. This viewpoint makes explicit its infinite-dimensional nature and allows finite-dimensional matrix models to be interpreted as truncations of the underlying problem. In particular, increasing the matrix size in \eqref{eq:ad_toy} corresponds to increasing the dimension of the approximation space.

We conclude by including plots that illustrate the data generation from the model, and the posterior geometry obtained with this data at stationarity. 

\subsubsection{Infinite dimensional formulation}\label{ap:solute_like}
We consider the following problem set in the space of sequences on $\R$, $\ell^2(\R)$, which we simply denote $\ell^2$:
\begin{equation}
     (\bm{A} + \kappa \bm{I}) \bm{\theta} = \bm{g},
\end{equation}
where $\bm{\theta}, \bm{g} \in \ell^2$, $\kappa>0$ and $\bm{A}$ is an Hilbert-Schmidt antisymmetric operator on $\ell^2$, namely \[
    \sum_{j = 1}^\infty \| \bm{A}e_k\|_{\ell^2}^2 <\infty
\]
where \(\lbrace \bm{e}_k, \; k \in \N \rbrace \) is an orthonormal basis of $\ell^2$, and $\langle \bm{A}x, y\rangle_{\ell^2}= - \langle x, \bA^* y\rangle_{\ell^2}$, for any $x,y\in \ell^2$.
We want to estimate $\bm{A}$ assuming we have finite-dimensional observations of $\bm{\theta}$:
\begin{equation*}
    y = \cP(\bm{\theta}(\bm{A})) + \bm{\eta},\; \bm{\eta}\sim \cN_k(0, \sigma^2 \bm{I}_k) 
\end{equation*}
where $\cP: \ell^2 \to \R^k$ is a projection operator. 
The parameter space $\qsp$ is then the collection of all Hilbert-Schmidt antisymmetric operators on $\ell^2$. We consider a centered Gaussian prior on $\qsp$, $\mu_0 = \cN(0, \bm{C})$, with covariance $\bm{C}$ to be specified below, and the log-likelihood defined by the observation model
\begin{equation}\label{eq:app:pot}
    \Phi(\bm{A}) =  \frac{1}{2\sigma^2} \|\cP\left((\bA + \kappa \bI)^{-1}\bg)\right)- \by \|_{\R^k}^2,
\end{equation}
so the Bayesian posterior on $\qsp$ takes form 
\begin{equation}\label{eq:ex1:target}
    \mu(d\bA) \propto \exp\left( - \frac{1}{2\sigma^2}  \|\cP\left((\bA + \kappa \bI)^{-1}\bg)\right)- \by \|_{\R^k}^2 \right) \, \mu_0(\bA).
\end{equation}
Note that the likelihood is well defined as $\bm A+\kappa\bm I$ is invertible, thanks to $\bm A$ being antisymmetric as we show in the following remark.

\begin{Remark}[Invertibility]
Let $\|\cdot \|$ without decorations denote the operator norm. Since $\bm A$ is assumed Hilbert-Schmidt, then $\|\bm A\|\leq \|\bm A\|_{HS}$ and
\begin{align*}
    \|(\bm A + \kappa \bm I ) \btheta\|_{\ell^2} \leq \| \bm A\|\| \btheta \|_{\ell^2} + \kappa \|\btheta\|_{\ell^2}\leq \|\bm \left(\|\bm A\|_{HS} + \kappa \right) \|\btheta\|_{\ell^2}.
\end{align*}
This implies that $\bm A + \kappa \bm I$ is a bounded operator with operator norm satisfying 
\(\|\bm A + \kappa \bm I \|\leq \|\bm A\|_{HS} + \kappa\).
We can also derive a lower bound thanks to the fact that $\bm A$ is antisymmetric. In fact, as $\langle \bm A \btheta , \btheta \rangle_{\ell^2} =0$, it follows
\[\langle (\bm A+\kappa\bm I)\btheta,\btheta\rangle_{\ell^2} = \kappa \|\btheta\|_{\ell^2}^2, \]
and as a consequence, thanks to Cauchy--Schwartz inequality,
\[
    \kappa\|\btheta\|_{\ell^2} \leq \|(\bm A+\kappa\bm I)\btheta\|_{\ell^2}, \qquad \btheta\in\ell^2.
\]
We conclude that
\begin{equation}\label{app:A+kI:bounds}
    \kappa \leq \|\bm A + \kappa \bm I \|\leq \|\bm A\|_{HS} + \kappa.
\end{equation}
This implies that $\bm A+\kappa\bm I$ is injective and that its inverse on
$\operatorname{Rg}(\bm A+\kappa\bm I)$ satisfies
\begin{equation}\label{eq:bound_loglik}
\|(\bm A+\kappa\bm I)^{-1}\|
\le \kappa^{-1}
\end{equation}
Moreover, \eqref{eq:bound_loglik} implies that
$\operatorname{Rg}(\bm A+\kappa\bm I)$ is closed. Indeed, if
$(\tilde{\btheta}_n)_n \subset \operatorname{Rg}(\bm A+\kappa\bm I)$ is
Cauchy and $\tilde{\btheta}_n=(\bm A+\kappa\bm I)\btheta_n$, then
$(\btheta_n)_n$ is Cauchy in $\ell^2$, hence converges to some
$\btheta\in\ell^2$. As $\bm A+\kappa\bm I$ is bounded, it is continuous and 
\[
\tilde{\btheta}_n \to (\bm A+\kappa\bm I)\btheta
\in \operatorname{Rg}(\bm A+\kappa\bm I).
\]
\end{Remark}

Further in the next remark we show that the log-likelihood \eqref{eq:app:pot} satisfies the assumptions of the theoretical results established in \cref{sec:results}, namely it is globally bounded and Lipschitz continuous. This ensures both mpCN and MTpCN algorithms are mixing to the target measure \eqref{eq:ex1:target} uniformly in the number of proposals.
\begin{Remark}\label{rem:ex3}
Thanks to the estimate \eqref{eq:bound_loglik}, it follows that the log-likelihood \eqref{eq:app:pot} is bounded as
\begin{equation*}
    \Phi(\bA) \leq \frac{1}{2\sigma^2} \left(\frac{\|\bg\|_{\ell^2}}{\kappa} +\|\by\|_{\R^k}\right) ^2 \quad \text{for all }\bA\in \qsp.
\end{equation*}
To prove global Lipschitz continuity, observe that
\begin{align}
    &|\Phi(\bA) - \Phi(\tbA)| = \frac{1}{2\sigma^2} \left| \|\cP\left((\bA + \kappa \bI)^{-1}\bg\right)- \by \|_{\R^k}^2 - \|\cP\left((\tbA + \kappa \bI)^{-1}\bg\right)- \by \|_{\R^k}^2 \right| \notag \\
    &\leq \frac{1}{2\sigma^2} \|\cP\left((\bA + \kappa \bI)^{-1}\bg - (\tbA + \kappa \bI)^{-1}\bg\right)\|_{\R^k}\left[ \|\cP\left((\bA + \kappa \bI)^{-1}\bg\right)- \by \|_{\R^k} + \|\cP\left((\tbA + \kappa \bI)^{-1}\bg\right)- \by \|_{\R^k} \right]. \label{eq:ex1:Phi1}
\end{align}
Thanks to the following classic identity for resolvents
\begin{equation*}
    (\bA + \kappa \bI )^{-1} -  (\tbA + \kappa \bI )^{-1} =  (\bA + \kappa \bI )^{-1}(\bA - \tbA)(\tbA + \kappa \bI )^{-1} 
\end{equation*}
and \eqref{eq:bound_loglik}, it follows that 
\begin{equation*}
     \|(\bA + \kappa \bI )^{-1}\bg -  (\tbA + \kappa \bI )^{-1}\bg\|_{\ell^2} \leq  \|(\bA + \kappa \bI )^{-1}\| \|\bA - \tbA\| \|(\tbA + \kappa \bI )^{-1}\|\|\bg\|_{\ell^2} 
     \leq \frac{\|\bg\|_{\ell^2}}{\kappa^2} \|\bA - \tbA\|.
\end{equation*}
Then, from \eqref{eq:ex1:Phi1}, using the triangular inequality, \eqref{eq:bound_loglik} and the fact that the projection operator has norm one, it follows 
\begin{align*}  
   |\Phi(\bA) - \Phi(\tbA)| &\leq \frac{\|\bg\|_{\ell^2}}{\kappa^2} \|\bA - \tbA\| \left[ \|\cP\left((\bA + \kappa \bI)^{-1}\bg\right)\|_{\R^k}+ 2\|\by\|_{\R^k}+\|\cP\left((\tbA + \kappa \bI)^{-1}\bg\right)\|_{\R^k} \right] \\
    &\leq \frac{\|\bg\|_{\ell^2}}{\kappa^2} \|\bA - \tbA\| \left( \frac{2\|\bg\|_{\ell^2}}{\kappa}+ 2\|\by\|_{\R^k} \right),
\end{align*}
namely $\Phi$ is globally Lipschitz with constant $2\kappa^{-2}\|\bg\|_{\ell^2} \left( \kappa^{-1}\|\bg\|_{\ell^2}+ \|\by\|_{\R^k} \right)$.
\end{Remark}

Last, we describe the structure of the prior covariance operator $\bm{C} = \E \left[ (\bm{A} - \E \bm{A})\otimes (\bm{A} - \E\bm{A})\right]$. Let $\lbrace E_{i,j},\, i,j \in \N\rbrace$ be an orthonormal basis of the space of Hilbert-Schmidt operators on $\ell^2$, where 
\begin{equation*}
    \langle E_{i,j}, E_{k,l}\rangle_{HS} = \tr\left(E_{k,l}^*E_{i,j}\right) = \sum_{m\in \N} \langle E_{i,j}e_m, E_{k,l}e_m\rangle_{\ell^2} = \delta_{i,k}\delta_{j,l}, \quad i,j,k,l\in \N
\end{equation*}
for $\lbrace e_m, \, m\in \N\rbrace$ basis of $\ell^2$. The parameter space $\qsp$ is the subset of antisymmetric Hilbert-Schmidt operators with orthonormal basis $\lbrace F_{i,j},\, i,j \in \N, i<j\rbrace$ where
\begin{equation*}
    F_{i,j} := \frac{1}{\sqrt{2}} \left( E_{i,j} - E_{j,i}\right).
\end{equation*}
Then, any $\bm A \in \qsp$ can be written as 
\begin{equation*}
    \bm A = \sum_{i<j} a_{i,j} F_{i,j}
\end{equation*}
with $\|\bm A\|_{HS} = \sum_{i<j} |a_{i,j}|^2 <\infty$.
We can describe the covariance $\bm{C}$ by its action on the basis
\begin{align*}
    \bm{C}F_{i,j} &= \E \left[ (\bm{A} - \E \bm{A}) \langle \bm{A} - \E \bm{A}, F_{i,j} \rangle_{HS} \right] \\
    &= \E \left[ (\bm{A} - \E \bm{A}) (a_{i,j} - \E a_{i,j}) \right]\\
    &= \sum_{k<l} \E \left[ (a_{i,j} - \E a_{i,j})(a_{k,l} - \E a_{k,l})\right] F_{k,l}.
\end{align*}
We assume the following correlation structure
\begin{align}
     q_{(i,j)(k,l)} &:= \operatorname{Corr}(a_{i,j}, a_{k,l}) = \E \left[ (a_{i,j} - \E a_{i,j})(a_{k,l} - \E a_{k,l})\right] = 0 \quad \text{for } (i,j) \neq (k,l)\\
    q_{i,j} = q_{(i,j)(i,j)} &:= \Var (a_{i,j}) = \tau (ij)^{-\alpha}|i-j|^{-\gamma}. 
\end{align}
with $\tau$, $\alpha$ and $\gamma$ positive parameters. Tuning the parameters $\alpha$ and $\gamma$ we can ensure ensure the covariance is trace class, namely
\begin{equation*}
    \tr \bm{C} = \sum_{i<j} q_{i,j} = \tau \sum_{i<j} (ij)^{-\alpha}|i-j|^{-\gamma}< \infty.
\end{equation*}
This series converges when $\alpha$ and $\gamma$ are chosen to that $2\alpha + \gamma>2$.

\subsubsection{Numerics}

\cref{fig:ad_simulated_data_stat} shows the datasets generated from the model for the warm-up phase in Section~\ref{subsec:warm-up} (right column) and for the stationary phase in Section~\ref{subsec:solute:statio} (left column). The same random seed is used for both datasets. Since we use the inverse crime approach described at the beginning of Section~\ref{sec:numerics}, we first generate the advection matrices $\bm{A}_{10}$ and $\bm{A}_{40}$ (reported in the first row of the figure) from the prior distribution, using the hyperparameter values in~\eqref{eq:hyperpars_st} for both datasets. Both matrices illustrate how the  magnitude of their coefficients goes to zero when the distance to the diagonal or between elements increases, corresponding to the penalization behaviour modeled in the prior covariance matrix. Then, we generate $\bm{\theta}(\bm{A}_{10})$ and $\bm{\theta}(\bm{A}_{40})$, represented by the blue lines, and finally, add Gaussian noise to obtain the data vectors $\bm{y}_{10}$, plotted with orange dots. The decay in the data amplitude, expected from the modeling choices, is especially visible for the $d=40$ example. 

\begin{figure}
    \centering
    \includegraphics[width=0.8\linewidth]{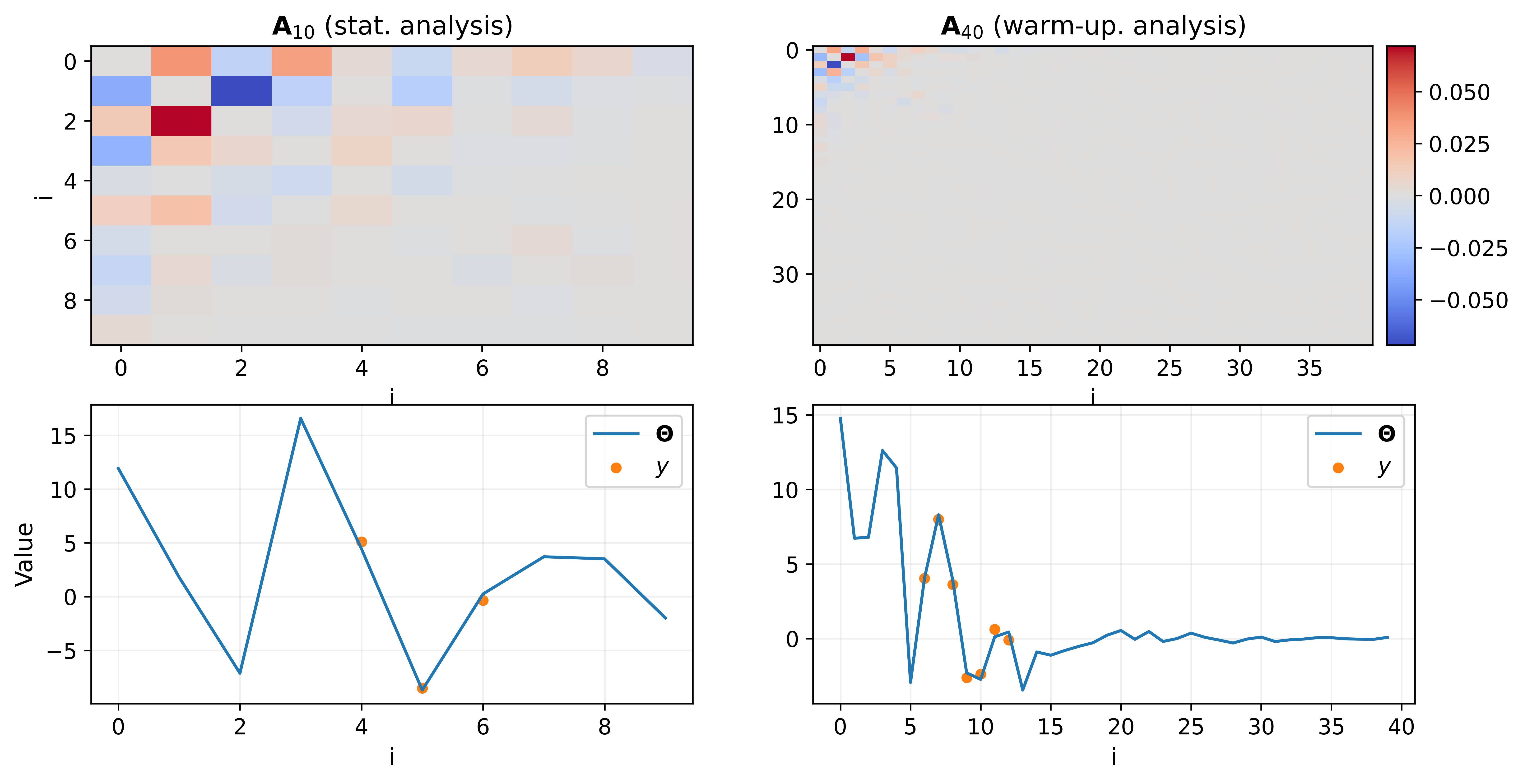}
    \caption{Advection matrices, solution vectors and observations simulated from the model for the example datasets used in the warm-up and stationary phase analyses in Section~\ref{subsec:warm-up} and Section~\ref{subsec:solute:statio}.}
    \label{fig:ad_simulated_data_stat}
\end{figure}

\cref{fig:pairplots_stat_ep} provides an additional comparison between mpCN with 100 proposals and running 100 independent and simultaneous pCN chains, and then thinning the latter every $p$ samples. The resulting figure is computed on the same number of samples as \cref{fig:pairplots_stat_mpcn}, that in addition have been obtained with the same computational budget and the same wall-clock time. The results enforce the interpretation arising from \cref{fig:sweep_solute_ep}. 
\begin{figure}
    \includegraphics[width=0.7\linewidth, trim={0 0 0 2cm}, clip]{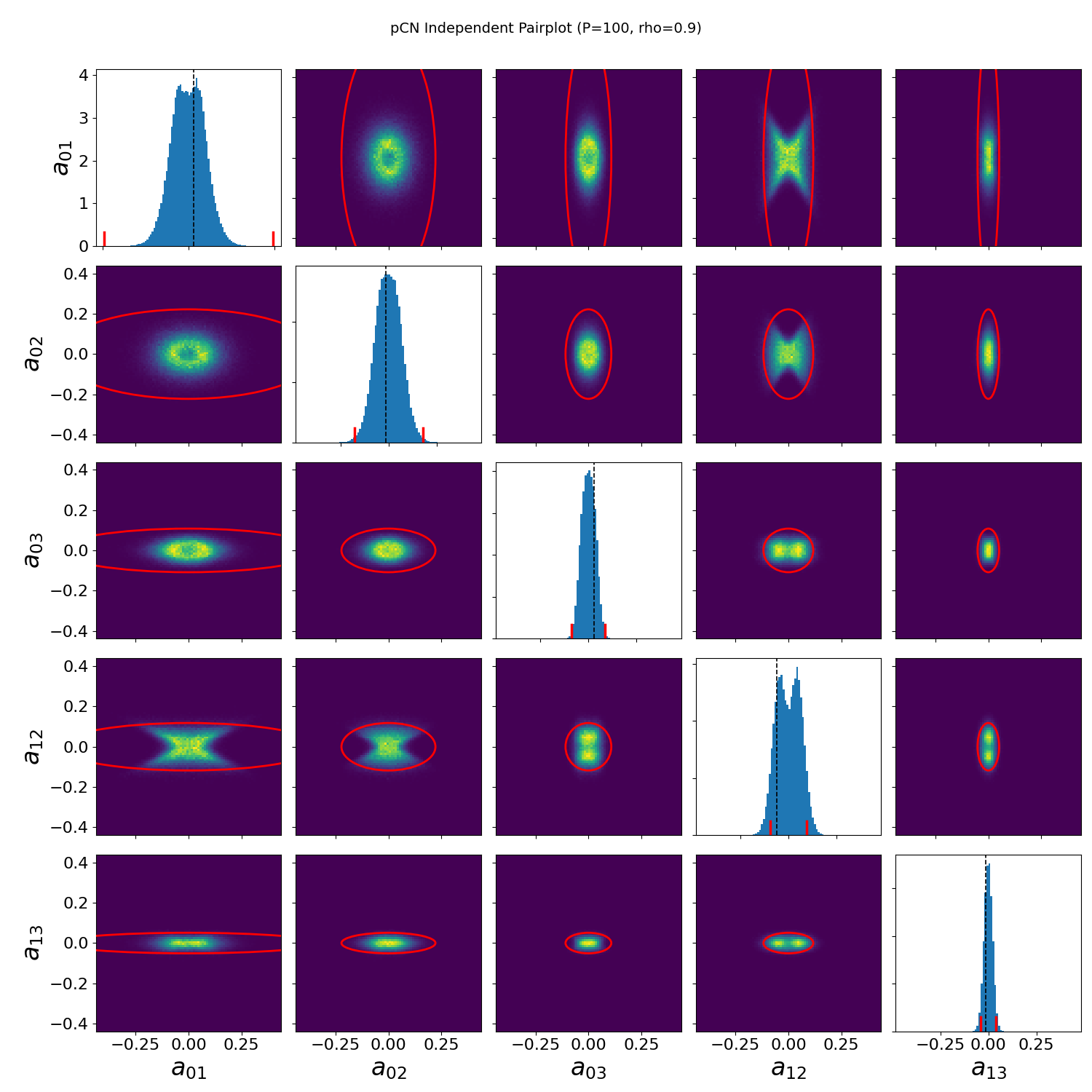}
\caption{Posterior marginal histograms and pairwise density plots for five components of the unknown vector. The figures are computed using samples from 100 embarassingly parallel pCN chains with 300k iterations each, thinned every 100 samples.}
\label{fig:pairplots_stat_ep}
\end{figure}

\section{About the $L^2_{\mu}$ spectral gap}\label{app:L2}

\begin{proof}[Proof of \cref{thm:L2gap}]
   
    \textbf{Step 1}: First we show that thanks to the Wasserstein contraction in $\td$ given by \cref{thm:our_harris}, for functions $f\in \Lip(\td)\cap L^\infty_\mu \subset L^2_\mu$ we have the following partial result: there exists a positive constant $C(f)$ such that 
    \begin{equation}\label{A:specgap1}
        \|P^n f - \mu(f)\|_2^2 \leq C(f)\lambda^{2n}.
    \end{equation}
    for all $n \geq n_1/2$, where $n_1$ and $\lambda$ are as in \cref{thm:our_harris}.
    
    Let us first consider non-negative observables $f\geq 0$. By some simple manipulation and the fact that $P$ is $\mu$-invariant we see that
    \begin{align*}
        \|P^n f - \mu(f) \|_2^2 &= \int \left( P^nf(x) - \mu(f)\right)^2\, \mu(dx)\\
        &= \int \left[ P^nf(x)\right]^2\, \mu(dx) + \mu(f)^2 - 2\mu(f) \int P^nf(x)\, \mu(dx) \\
        &= \int \left[ P^nf(x)\right]^2\, \mu(dx) - \mu(f)^2. 
    \end{align*}
    As $P$ is $\mu$-reversible, it is self-adjoint on $L^2_\mu$ and
    \begin{align*}
        \int \left[ P^nf(x)\right]^2\, \mu(dx) = \int f(x) P^{2n}f(x) \mu(dx) = \mu(P^{2n}f) \int f(x) \frac{P^{2n}f(x)}{\mu(P^{2n}f)} \mu(dx) .
    \end{align*}
    By the positivity of $f$ and the Markov property, the measure $\nu(dx) := \frac{P^{2n}f(x)}{\mu(P^{2n}f)} \mu(dx)$ is a well defined probability measure so that, using again the $\mu$-invariance of $P$,
    \begin{equation*}
        \int \left[ P^nf(x)\right]^2\, \mu(dx) - \mu(f)^2 = \mu(f) \nu(f) - \mu(f)^2 = \mu(f) \left[ \nu(f) - \mu(f)\right].
    \end{equation*}
    For any measure $\pi$ on $\qsp\times \qsp$ coupling of $\nu$ and $\mu$ then 
    \begin{align*}
        \nu(f) - \mu(f)= \iint f(x) \pi(dx, dy) - \iint f(y) \pi(dx,dy) \leq L_f \iint \td(x,y)\,\pi(dx, dy)
    \end{align*}
    where we used that $f$ is Lipschitz with respect to the semidistance $\td$ with constant $L_f$.
    Since this bound holds for any coupling of $\nu$ and $\mu$, in particular it holds for the Wasserstein distance associated to $\td$, leading to 
    \begin{equation}\label{eq:app:l2wass}
        \|P^n f - \mu(f) \|_2^2 \leq \mu(f) L_f W_{\td}(\nu, \mu).
    \end{equation}
    In order to use the spectral gap in Wasserstein distance, we want to write $\nu$ as $P\nu_1$ for some $\nu_1$. By definition of $\nu$, for a generic observable $\varphi$,
    \begin{align*}
        \int \varphi(x) \nu(dx) &= \int \varphi(x)\frac{P^{2n}f(x)}{\mu(P^{2n}f)} \mu(dx) \\
        &= \int P^{2n}\varphi(x) \frac{f(x)}{\mu(f)} \mu(dx) \quad \text{(by reversibility and invariance)}\\
        &= \int P^{2n}\varphi(x) \nu_f(dx) = \int \varphi(x) \left(\nu_f P^{2n}\right)(dx) 
    \end{align*}
    where we defined $\nu_f(dx) = \frac{f(x)}{\mu(f)} \mu(dx)$. Next, note that by \cref{thm:our_harris}, there exists $n_1>0$ and $\lambda\in (0,1)$ such that, for all $n\geq \frac{n_1}{2}$ it holds
    \begin{equation*}
        W_{\td}(\nu, \mu) = W_{\td}(\nu_f P^{2n}, \mu P^{2n}) \leq \lambda^{2n} W_{\td}(\nu_f, \mu) 
    \end{equation*}
    and, finally, from \eqref{eq:app:l2wass},
    \begin{equation*}
        \|P^n f - \mu(f) \|_2^2 \leq \lambda^{2n} \mu(f) L_f  W_{\td}(\nu_f, \mu) =:\lambda^{2n} C(f).
    \end{equation*}

    Last, if $f$ does not have a prescribed sign we can always consider the positive functions $f^+, f^-$. If $f\in \Lip(\td)\cap L^\infty_\mu$ it is easy to see that also its positive and negative parts are in such a space so that the first part of Step 1 holds. As a consequence we can write 
    \begin{align*}
        \|P^nf - \mu(f) \|_2^2 &= \|P^n(f^+ - f^-) - \mu (f^+ - f^-)\|_2^2\leq 
        2\|P^nf^+ - \mu(f^+) \|_2^2  + 2\|P^nf^- - \mu(f^-) \|_2^2 \\
        &\leq \lambda^{2n}\left[ \mu(f^+) L_{f^+}  W_{\td}(\nu_{f^+}, \mu) +  \mu(f^{-}) L_{f^{-}}  W_{\td}(\nu_{f^{-}}, \mu)\right] = \lambda^{2n}C(f)
    \end{align*}
    giving the desired bound \eqref{A:specgap1} for any function in $\Lip(\td)\cap L^\infty_\mu$.

     \textbf{Step 2}: Next we show that \eqref{A:specgap1} is sufficient to ensure  
    \begin{equation*}
        \|P^n f - \mu(f)\|_2^2 \leq \lambda^{2n}\|f - \mu(f)\|^2, \quad n\in \N
    \end{equation*}
    for all $f \in \Lip(\td)\cap L^\infty_\mu$.
    As $P$ is a $\mu$-reversible Markov kernel on the Hilbert space $\qsp$, it is bounded and selfadjoint in $L^2_\mu(\qsp)$. Then the spectral theorem for bounded selfadjoint operators on Hilbert spaces (see e.g. \cite[Chapter 7]{reed1980functional}) ensures that there exists a measurable space $(\mathcal{S}, \Sigma, \nu)$, an essentially bounded measurable function $m: \mathcal{S}\to \R$ and a unitary operator $U: \qsp\to L_\nu^2(\mathcal{S})$ such that 
    \begin{equation}\label{eq:app:PUTU}
        P = U^* T U \quad \text{with } (T\varphi)(x) = m(x) \varphi(x), \quad x \in \mathcal{S},
    \end{equation}
    namely $\left[ (UPU^*)\varphi\right](x) = m(x)\varphi(x)$. Note also that $P^n=(U^* T U)^n = U^* T^n U$ since $U$ is unitary.
    
    Set $f_0 = f - \mu(f)$ to be the centered version of $f$, then 
    \begin{align}\label{eq:app:sg1}
         \|P^n f - \mu(f)\|_2^2 = \int_\qsp \left[ (P^n f)(y) - \mu(f)\right]^2\, \mu(dy) = 
         \int_\qsp \left[(P^nf_0)(y)\right]^2\, \mu(dy)
    \end{align}
    and, by the spectral theorem just invoked,
    \begin{equation}\label{A:specgap2}\begin{split}
        \int_\qsp \left[(P^nf_0)(y)\right]^2\, \mu_0(dy) &=  \int_\qsp \left[ \left(U^* T^n U f_0\right)(y)\right]^2\, \mu_0(dy)\\
          &= \langle U^* T^n U f_0, U^* T^n U f_0 \rangle_{L^2_\mu(\qsp)} \\
          &=  \langle T^n U f_0, U U^* T^n Uf_0 \rangle_{L^2_\nu(\mathcal{S})} \\
        &=\langle T^n U f_0, T^n Uf_0 \rangle_{L^2_\nu(\mathcal{S})} \end{split}\end{equation}
        where we have used that $U$ is unitary. Then, using the definition of the multiplication operator $T$ \eqref{eq:app:PUTU}, it follows 
        \begin{align}\label{A:specgap3}
            \langle T^n U f_0, T^n Uf_0 \rangle_{L^2_\nu(\mathcal{S})} = \int_{\mathcal{S}} \left[ T^n (U f_0)(x)\right]^2 \, \nu(dx) =  \int_{\mathcal{S}}  \left[ m^n(x) (U f_0)(x)\right]^2 \, \nu(dx).
        \end{align}
        We introduce the factor $\nu((U f_0)^2) =\int (U f_0)^2(x)\, \nu(dx)$ so that 
        \begin{equation}\label{eq:app:nu0}
            \frac{(U f_0)^2(x)}{\nu((U f_0)^2)}\nu(dx) =:\nu_0 (dx)
        \end{equation}
        is a probability measure on $\mathcal{S}$. Moreover, note that 
        \begin{equation}\label{eq:app:sg4}
            \int_{\mathcal{S}} (U f_0)^2(x)\, \nu(dx) = \langle Uf_0, Uf_0 \rangle_{L^2_\nu(\mathcal{S})}= \langle f_0, U^* U f_0\rangle_{L^2_\mu(\qsp)} =  \int_{\qsp} f_0^2(x)\, \mu(dx).
        \end{equation}
        In summary from \eqref{eq:app:sg1},\eqref{A:specgap2},\eqref{A:specgap3}, using the definition \eqref{eq:app:nu0} we can write 
        \begin{align*}
            \|P^n f - \mu(f)\|_2^2 =  \int_{\mathcal{S}}  \left[ m^n(x) (U f_0)(x)\right]^2 \, \nu(dx) = \nu((U f_0)^2) \int_{\mathcal{S}}  m^{2n}(x) \, \nu_0 (dx) =  \mu(f_0^2) \int_{\mathcal{S}}  m^{2n}(x) \, \nu_0 (dx).
        \end{align*}
        where in the last equality we used \eqref{eq:app:sg4} with the usual compact notation for the integrals. 
        
       Next, thanks to Jensen's inequality, for any $k\in \N$ it holds 
        \begin{align*}
            \mu(f_0^2) \int_{\mathcal{S}}  m^{2n}(x) \, \nu_0 (dx)& = \mu(f_0^2) \int_{\mathcal{S}}  m^{\frac{2n(2n + 2k)}{2n + 2k}}(x) \nu_0 (dx)\\
            &\leq \mu(f_0^2)\left( \int_{\mathcal{S}}  m^{2n + 2k}(x) \, \nu_0(dx)\right)^{\frac{n}{n+ k}},
        \end{align*}
        and, recalling the definition \eqref{eq:app:nu0} of $\nu_0$ and \eqref{eq:app:sg4}
\begin{align*}
    = \mu(f_0^2)\left( \int_{\mathcal{S}}  m^{2n + 2k}(x) \,  \frac{(U f_0)^2(x)}{\mu(f_0^2)}\nu(dx)\right) ^{\frac{n}{n+ k}}\\
           & = \mu(f_0^2)^{1 - \frac{n}{n+k}}\left( \int_{\mathcal{S}}  m^{2n + 2k}(x)(U f_0)^2(x) \nu(dx)\right) ^{\frac{n}{n+ k}}.
\end{align*}
        
       Then with the same argument as in \eqref{A:specgap2}\eqref{A:specgap3} it can be easily derived that 
       \begin{equation*}
           \int_{\mathcal{S}}  m^{2n + 2k}(x)(U f_0)^2(x) \nu(dx) = \int_\qsp \left[(P^{n + k}f_0)(y)\right]^2\, \mu(dy) = \|P^{n + k}f - \mu(f)\|_2^2
       \end{equation*}
      hence 
       \begin{equation*}
            \|P^n f - \mu(f)\|_2^2\leq \mu(f_0^2)^{\frac{k}{n+k}}\|P^{n + k}f - \mu(f)\|_2^{\frac{2n}{n+k}}.
       \end{equation*}
        In particular choosing the arbitrary parameter $k$ to be larger than $n_1/2$ so that $n + k>n_1/2$ for any $n\in \N$, we can then use \eqref{A:specgap1} to get 
        \begin{equation*}
            \|P^n f - \mu(f)\|_2^2\leq \mu(f_0^2)^{\frac{k}{n+k}}C(f)^{\frac{n}{n+k}} \lambda^{2n}
        \end{equation*}
        and taking the limit $k\to \infty$
         \begin{equation*}
            \|P^n f - \mu(f)\|_2^2\leq  \lambda^{2n} \mu(f_0^2)= \lambda^{2n} \| f - \mu(f)\|_2^2,  \quad n \in \N
        \end{equation*}
        as desired. 

    \textbf{Step 3}: By step 1 and 2 we showed that for any $f\in \Lip(\td)\cap L^\infty_\mu$ it holds 
    \begin{equation*}
        \|P^n f - \mu(f)\|_{2}^2\leq \lambda^{2n} \|f - \mu(f)\|_2^2.
    \end{equation*}
    By the density of $\Lip(\td)\cap L^\infty_\mu$ in $L^2_{\mu}$ (see \cite[Theorem~2.15]{hairer2014spectral}), we can then extend the result to all $L^2_\mu$ observables. Indeed, for $f \in L^2_\mu$, let $(f_k)_{k\in \N}\subset \Lip(\td)\cap L^\infty_\mu$ be such that $\|f_k - f\|_2\to 0$ as $k \to \infty$. Then $\|P^n f - P^n f_k\|_2\to 0$ for $k\to \infty$, as $P^n$ is a bounded operator, and $|\mu(f_k) - \mu(f)| \to 0$ from the convergence in $L^2_\mu$, so that 
    \begin{align*}
         \|P^n f - \mu(f)\|_2 &\leq \|P^n f - P^n f_k\|_2 + \|P^n f_k - \mu(f_k)\|_2 + \|\mu(f_k) - \mu(f)\|_2  \\
         &\leq \lambda^{n} \lim_{k \to \infty}\|f_k - \mu(f_k) \|_2  =  \lambda^{n} \|f - \mu(f) \|_2
    \end{align*}
    for any $n\in \N$, as desired. 
\end{proof}

Next we state the strong Law of Large numbers for observables in $L^2_\mu$ under the assumption of $L^2_\mu$ spectral gap. We provide a proof as it does not seems readily available in the literature under this assumption. 

\begin{Theorem}[Strong Law of Large Numbers]\label{thm:SLLN:L2}
     Let $P$ be a Markov kernel with invariant probability measure $\mu$ with associated chain $(X_n)_{n\in \N}$. Assume $P$ is such that the $L^2_\mu$ spectral gap \eqref{eq:specgapL2} holds. Then, if $X_0\sim \mu$,
        \begin{equation*}
         \lim_{n\to \infty} \left|\frac{1}{n} \sum_{k = 0}^{n-1} f (X_k) -  \int f \, d\mu   \right| =0 \quad \bbP\text{-a.s.}
        \end{equation*}
    for any $f\in L^2_\mu$.
\end{Theorem}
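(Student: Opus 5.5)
The plan is to reduce to centred observables and combine a second-moment bound obtained from the spectral gap with a subsequence-plus-interpolation argument, in the spirit of the classical proof of the SLLN for weakly correlated sequences. Set $g = f - \mu(f)$, so that $g\in L^2_\mu$ with $\mu(g)=0$, and write $S_n = \sum_{k=0}^{n-1} g(X_k)$. Since $X_0\sim\mu$ and $\mu$ is invariant, every $X_k\sim\mu$, and for $j\le k$ the Markov property gives
\[
\E\, g(X_j)g(X_k) = \int g\, P^{k-j}g \, d\mu .
\]
By Cauchy--Schwarz and \eqref{eq:specgapL2} this is at most $\lambda^{k-j}\|g\|_2^2$. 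Summing the geometric series yields
\[
\E S_n^2 \le n\,\|g\|_2^2\,\frac{1+\lambda}{1-\lambda}.
\]
Note that only the gap is needed here, not reversibility.

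Next I would pass to the subsequence $n=m^2$. Chebyshev gives $\bbP(|S_{m^2}|>\eps m^2)\le C/(\eps^2 m^2)$, which is summable, so Borel--Cantelli yields $S_{m^2}/m^2\to 0$ almost surely.

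The main obstacle is controlling the fluctuations between consecutive squares, because $g$ is only in $L^2_\mu$ and so no boundedness of individual terms is available. For $m^2\le n<(m+1)^2$, define
\[
D_m = \max_{m^2\le n<(m+1)^2} |S_n - S_{m^2}|.
\]
I would bound this crudely by $D_m\le \sum_{k=m^2}^{(m+1)^2-1}|g(X_k)|$. By stationarity and Cauchy--Schwarz,
\[
\E D_m^2 \le (2m+1)^2\|g\|_2^2,
\]
which is too weak for summability after dividing by $m^4$. Instead I would exploit the correlation bound again on $|g|$ after centering. Write $|g| = h + \mu(|g|)$ with $h$ centred, so that $D_m \le \sum_{k=m^2}^{(m+1)^2-1} h(X_k) + (2m+1)\mu(|g|)$. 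The deterministic term divided by $m^2$ tends to zero. For the random part, the same covariance estimate applied to $h\in L^2_\mu$, using stationarity to shift the block to start at time zero, gives a second moment of order $m\|h\|_2^2$. Chebyshev divided by $\eps^2 m^4$ is then summable.

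Hence $D_m/m^2\to0$ almost surely. Combining this with the subsequence limit and $n\ge m^2$ gives $S_n/n\to0$ almost surely, which is the claim. The only delicate point I expect is verifying that $h=|g|-\mu(|g|)$ again satisfies the gap bound \eqref{eq:specgapL2}; it does, since \eqref{eq:specgapL2} holds for all of $L^2_\mu$.
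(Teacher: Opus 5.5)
Your proof is correct, and it takes a genuinely different route from the paper.

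The paper's proof:
\begin{itemize}
\item It solves the Poisson equation $\bar f=(1-P)g$ on $L^2_0(\mu)$; the invertibility of $1-P$ comes from the gap.
\item It writes $\bar f(X_k)=g(X_k)-g(X_{k+1})+D_{k+1}$, with martingale increments $D_{k+1}=g(X_{k+1})-Pg(X_k)$.
\item It disposes of the boundary term $g(X_n)/n$ by Chebyshev and Borel--Cantelli.
\item It handles $M_n/n$ with a martingale strong law (Chow's theorem, via $\sum_n \E D_n^2/n^2<\infty$).
\end{itemize}

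You use the gap only to get geometric decay of correlations, $|\E g(X_j)g(X_k)|\le\lambda^{|k-j|}\|g\|_2^2$. This gives $\E S_n^2\le Cn$, and you then run the classical Rajchman subsequence argument. Your route is more elementary: it needs only Chebyshev and Borel--Cantelli, with no martingale convergence theorem. It also applies verbatim to any stationary $L^2$ sequence with summable covariances. The paper's martingale decomposition is heavier, but it is exactly the structure that also yields the CLT and the asymptotic variance $\langle(1-P)^{-1}(1+P)\bar f,\bar f\rangle$. It therefore fits the Kipnis--Varadhan framework the paper invokes immediately afterwards.

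One simplification: you claim the crude bound $\E D_m^2\le(2m+1)^2\|g\|_2^2$ is too weak, but it is not. Chebyshev gives $\bbP(D_m>\eps m^2)\le (2m+1)^2\|g\|_2^2/(\eps^2 m^4)=O(m^{-2})$, which is summable, so $D_m/m^2\to 0$ almost surely already follows. Centring $|g|$ and applying the correlation bound a second time to $h$ is correct but unnecessary. The interpolation step then uses only stationarity and $g\in L^2_\mu$, and the spectral gap enters solely through the variance bound $\E S_n^2\le Cn$.
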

\begin{proof}
    Let $\baf$ be $f - \mu(f)$, then we want to show that 
    \begin{equation*}
        \lim_{n\to \infty}\frac{1}{n} \sum_{k = 0}^{n-1} \baf(X_k)  = 0, \quad \bbP\text{-a.s.}
    \end{equation*}
    Since $P$ exhibits a spectral gap $1 - \lambda$ in $L^2_\mu$, it follows that $1 - P$ is an invertible operator on the subset of $\mu$-centered functions, which we denote as $L^2_0(\mu)$. Then there exists a unique $g\in L^2_0(\mu)$ such that $\baf = (1 - P)g$ and $\|g\|_2 \leq (1 - \lambda)^{-1} \|\baf\|_2$. Set $D_{k+1} = g(X_{k+1}) - Pg(X_k)$, then we can write 
    \begin{equation*}
        \baf(X_k) = g(X_k) - Pg(X_k) = g(X_{k}) - g(X_{k+1}) + D_{k+1} 
    \end{equation*}
    and 
    \begin{equation}\label{eq:app:slln1}
         \lim_{n\to \infty}\frac{1}{n} \sum_{k = 0}^{n-1} \baf(X_k) =  \lim_{n\to \infty}\frac{1}{n}\left[ g(X_0) - g(X_n) \right] +  \lim_{n\to \infty}\frac{1}{n} \sum_{k = 0}^{n-1} D_{k+1}.
    \end{equation}
    We want to show that $\lim_{n \to \infty}g(X_n)/n$ and $\lim_{n\to \infty} \frac{1}{n} \sum_{k = 0}^{n-1} D_{k+1}$ are null. 

    We start from the first limit. Define $A_n = \lbrace |g(X_n)|>\varepsilon n\rbrace$ and, since we start in stationarity, 
    \begin{align*}
        \sum_{n = 1}^\infty \bbP(A_n) &= \sum_{n =1}^\infty \mu(|g|>\varepsilon n) \leq \sum_{n = 1}^\infty\frac{\mu(g^2)}{\varepsilon^2 n^2}= \sum_{n = 1}^\infty\frac{\|g\|^2}{\varepsilon^2 n^2}<\infty
    \end{align*}
    by the Markov inequality. 
    Since $g \in L^2_0(\mu)$, by Borel-Cantelli it follows that for any arbitrary $\varepsilon>0$ it holds $\limsup_{n \to \infty} |g(X_n)|/n < \varepsilon$ almost surely, hence $\lim_{n \to \infty} |g(X_n)|/n = 0$ as desired. 

    Next we want to show that $\lim_{n\to \infty} \frac{1}{n} \sum_{k = 0}^{n-1} D_{k+1} = 0$ almost surely. We start by showing that $\sum_{k = 0}^{n-1} D_{k+1} =:M_n$ is a mean-zero square-integrable martingale. By definition, for any $k\in \N$, 
    $$\E D_{k+1} = \E g(X_{k+1}) - \E Pg(X_{k}) = \E g(X_{k+1}) - \E \E\left( g(X_{k+1})| g(X_k)\right) = 0,$$
    hence $\E M_n = 0$. Next, to ensure $M_n$ is a martingale we look at
    \begin{align*}
        \E \left(M_{n+1}\, | \, \mathcal{F}_n\right) &=  \E \left(\sum_{k = 0}^{n} D_{k+1}\, | \, \mathcal{F}_n\right) 
        = \sum_{k = 0}^n \E \left( D_{k+1}\, | \, \mathcal{F}_n\right)\\
       & =  \sum_{k = 0}^{n-1} \E \left( D_{k+1}\, | \, \mathcal{F}_n\right)+ \E \left( D_{n+1}\, | \, \mathcal{F}_n\right).
    \end{align*}
    By Markovianity we can write 
    \begin{align*}
          \E \left(M_{n+1}\, | \, \mathcal{F}_n\right) &=  \E M_n +  \E \left( D_{n+1}\, | \, \mathcal{F}_n\right)\\
          &= \E M_n + \E \left(g(X_{n+1}) \, | \, \mathcal{F}_n\right) - \E \left( Pg(X_n) | \mathcal{F}_n\right) \\
          &= \E M_n
    \end{align*}
    giving the desired result. Next we want to ensure that $\E M_n^2 <\infty$:
    \begin{equation*}
        \E M_n^2 = \E \left( \sum_{j = 1}^n\left( M_j - M_{j -1}\right)\right)^2 \leq C(n) \sum_{j =1}^n \E \left( M_j -M_{j -1}\right)^2 = C(n) \sum_{j =1}^n \E D_j^2
    \end{equation*}
    for some positive constant $C(n)$. Now for any $k\in \N$
    \begin{align*}
        \E D_{k+1}^2 &= \E \left[  g(X_{k+1}) - Pg(X_k)\right]^2 = \E \left[  g(X_{k+1}) - \E \left( g(X_{k+1})| X_k\right) \right]^2\\
        &=\E \Var(g(X_{k +1})| X_k) =  \E \left[ \E \left( g(X_{k+1})^2 | X_k\right) - \E \left(g(X_{k+1} \, | \, X_k\right)^2 \right]\\
        &= \E \left[Pg^2(X_k) - [Pg(X_k)]^2\right]
    \end{align*}
    and, by stationarity and invariance
    \begin{equation*}
        \E D_{k+1}^2 = \int Pg^2(x) - (Pg)^2(x) \, \mu(dx) \leq \mu(g^2) = \|g\|_2^2 < \infty.
    \end{equation*}
   Then 
\begin{equation*}
    \sum_{n =1}^\infty \frac{\E (M_n - M_{n-1})^2}{n^2} =  \sum_{n =1}^\infty \frac{\E D_{n+1}^2}{n^2}  \leq  \sum_{n =1}^\infty \frac{\|g\|_2^2 }{n^2} <\infty
\end{equation*}
    and by a martingale convergence theorem \cite{Chow} it follows that 
    \begin{equation*}
        \lim_{n \to \infty} \frac{M_n}{n} = 0\quad \bbP-\text{a.s.}.
    \end{equation*}
    Finally the limit \eqref{eq:app:slln1} is zero as desired.
\end{proof}

Last we recall for completeness the full statement of the Central Limit Theorem in $L^2_\mu$ which can be traced back to \cite{Kipnis86}.
\begin{Theorem}[Central Limit Theorem in $L^2_\mu$]\label{thm:CLT:L2}
    Let $P$ be a Markov kernel with invariant probability measure $\mu$ with associated chain $(X_n)_{n\in \N}$. Assume $P$ is $\mu$-reversible and exhibits a strectral gap as in \eqref{eq:specgapL2}. Then, for any $f\in L^2_0(\mu)$,
    \begin{equation}\label{eq:sigmaL2}
        \sigma_{f,P}^2 := \left\langle (1 - P)^{-1}(1 + P) f, \, f\right\rangle_2 \leq \frac{2C\mu(f^2)}{1 - e^{-\gamma}} 
    \end{equation}
    and if $X_0\sim \mu$, then
        \begin{equation*}
        \frac{1}{\sqrt{n}} \sum_{k = 0}^{n-1} \left( f (X_k) - \int f \, d\mu \right) \xrightarrow{d} \cN(0, \sigma_{f,P}^2)\quad \text{for } n \to \infty
        \end{equation*}
    for any $f\in L^2_\mu$.
\end{Theorem}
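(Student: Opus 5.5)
The plan is the Kipnis--Varadhan route: reduce to a martingale CLT via the Poisson equation, using the $L^2_\mu$ spectral gap \eqref{eq:specgapL2} to solve that equation and to bound the asymptotic variance. Throughout, $f$ is centred, i.e.\ replaced by $f-\mu(f)\in L^2_0(\mu)$.

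\textbf{Step 1: variance bound and Poisson equation.} By reversibility, $P$ is self-adjoint on $L^2_\mu$. The spectral gap gives $\|P^n g\|_2\le\lambda^n\|g\|_2$ on $L^2_0(\mu)$, so the spectrum of $P$ restricted to $L^2_0(\mu)$ lies in $[-\lambda,\lambda]$. Consequently $(1-P)^{-1}=\sum_{n\ge 0}P^n$ converges in operator norm on $L^2_0(\mu)$, and $(1-P)^{-1}(1+P)$ is a bounded operator there. Taking the spectral representation \eqref{eq:app:PUTU} already used in the proof of \cref{thm:L2gap}, we get
\[
\sigma^2_{f,P}=\int \frac{1+m}{1-m}\,(Uf)^2\,d\nu\le \frac{1+\lambda}{1-\lambda}\,\mu(f^2)\le \frac{2\mu(f^2)}{1-\lambda}.
\]
This is \eqref{eq:sigmaL2}, reading the constants there as $C=1$ and $e^{-\gamma}=\lambda$.

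\textbf{Step 2: martingale decomposition.} Let $g=(1-P)^{-1}f\in L^2_0(\mu)$, exactly as in the proof of \cref{thm:SLLN:L2}, and set $D_{k+1}=g(X_{k+1})-Pg(X_k)$. Then
\[
\sum_{k=0}^{n-1} f(X_k)=M_n+g(X_0)-g(X_n),\qquad M_n=\sum_{k=0}^{n-1}D_{k+1}.
\]
Because $X_0\sim\mu$, the variables $g(X_0)$ and $g(X_n)$ all have law $g_*\mu$. Hence $\bigl(g(X_0)-g(X_n)\bigr)/\sqrt n\to0$ in $L^2$, and so in probability.

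\textbf{Step 3: martingale CLT.} Under $X_0\sim\mu$ the sequence $(D_k)$ is a stationary, ergodic, square-integrable martingale difference sequence. Ergodicity follows from the spectral gap: any invariant $L^2$ function is constant. The Billingsley--Ibragimov CLT therefore gives $M_n/\sqrt n\Rightarrow\cN(0,\E D_1^2)$. By invariance,
\[
\E D_1^2=\mu(g^2)-\mu\bigl((Pg)^2\bigr)=\langle (1-P)g,(1+P)g\rangle_2 .
\]
Substituting $g=(1-P)^{-1}f$ and using self-adjointness, together with the fact that $P$ commutes with $(1-P)^{-1}$, this equals $\langle (1-P)^{-1}(1+P)f,f\rangle_2=\sigma^2_{f,P}$. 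Slutsky's lemma then combines Steps 2 and 3 to give the claimed convergence in distribution.

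\textbf{Main obstacle.} The delicate point is the ergodicity and stationarity needed to apply the stationary martingale CLT. Stationarity is exactly where the hypothesis $X_0\sim\mu$ enters. If one prefers to avoid citing ergodicity, a Lindeberg-type martingale CLT can be used instead, checking that $\frac1n\sum_k \E[D_{k+1}^2\mid\mathcal F_k]\to\E D_1^2$. That conditional variance is a function $h(X_k)=Pg^2(X_k)-(Pg)^2(X_k)$ with $h\in L^1_\mu$, so the required convergence is an $L^1$ ergodic average. It can be obtained by approximating $h$ by $L^2$ functions and applying \cref{thm:SLLN:L2}, with an $L^1$ truncation argument to control the error.
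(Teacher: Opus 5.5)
Your proposal is correct, and it proves more than the paper does. The paper does not prove the convergence in distribution: it cites Kipnis--Varadhan \cite{Kipnis86} (and \cite{latuszynski2013clts}) for the CLT. It justifies only \eqref{eq:sigmaL2}, via the estimate $\|(1-P)^{-1}\|_{L^2_0\to L^2_0}\le\sum_{n\ge 0}\|P^n\|_{L^2_0\to L^2_0}\le C/(1-e^{-\gamma})$, followed by Cauchy--Schwarz and $\|1+P\|\le 2$.

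You differ on both counts.

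\textbf{Variance bound.} You use the spectral calculus. This needs self-adjointness, which the paper's Neumann-series bound does not, but it gives the sharper constant $(1+\lambda)/(1-\lambda)$. For self-adjoint $P$ one has $\|P\|_{L^2_0\to L^2_0}=\|P^n\|^{1/n}_{L^2_0\to L^2_0}\le C^{1/n}e^{-\gamma}\to e^{-\gamma}$. So your bound is never worse than the paper's $2C/(1-e^{-\gamma})$; note that $C\ge 1$ is implicit in the $n=0$ term of the paper's Neumann series.

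\textbf{The CLT itself.} What you describe is really Gordin's exact martingale approximation rather than Kipnis--Varadhan. You solve the Poisson equation $(1-P)g=f$ in $L^2_0(\mu)$, which is the same decomposition the paper uses in its proof of \cref{thm:SLLN:L2}, and then apply Billingsley--Ibragimov.

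The trade-off is as follows. Kipnis--Varadhan needs reversibility but only $\sigma^2_{f,P}<\infty$; it works with the approximate solutions $((1+\epsilon)-P)^{-1}f$ when the Poisson equation has no $L^2$ solution. Your argument needs the spectral gap to get $g\in L^2_0(\mu)$ and ergodicity. It then uses reversibility only to rewrite $\E D_1^2=\|g\|_2^2-\|Pg\|_2^2$ in the stated form $\langle(1-P)^{-1}(1+P)f,f\rangle_2$.

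\textbf{A detail to make explicit in Step 1.} The representation \eqref{eq:app:PUTU} lives on all of $L^2_\mu$, where $m$ equals $1$ on the constants. You should note that, for $f\in L^2_0(\mu)$, the identity $\int m^{2n}(Uf)^2\,d\nu=\|P^nf\|_2^2\le\lambda^{2n}\|f\|_2^2$ for all $n$ forces $|m|\le\lambda$ $(Uf)^2\,d\nu$-a.e. This is what makes $\frac{1+m}{1-m}$ well defined and bounded by $\frac{1+\lambda}{1-\lambda}$ on the relevant set. It also justifies $U(1-P)^{-1}h=\frac{Uh}{1-m}$ for $h\in L^2_0(\mu)$.
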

Note that the inverse of $1 - P$ in \eqref{eq:sigmaL2} is well defined on $L^2_0(\mu) = \{ f \in L^2_\mu \; :\; \mu(f) = 0\}$ thanks to the spectral gap assumption. In fact 
\begin{equation*}
    \| (1 - P)^{-1}\|_{L^2_0 \to L^2_0} \leq \sum_{n =0}^\infty \|P^n\|_{L^2_0 \to L^2_0} \leq C \sum_{n=0}^\infty e^{-\gamma n} = \frac{C}{1 - e^{-\gamma}}
\end{equation*}
where we denoted with $\|\cdot \|_{L^2_0 \to L^2_0} $ the norm of an operator on $L^2_0$.
Then, using Cauchy--Schwartz and the fact that $P$ has operator norm one, the bound in \eqref{eq:sigmaL2} follows. 

\section{About the weak Harris theorem}\label{app:harris}
Recall the definitions in \cref{sec:prelim}. Then the original weak Harris theorem in \cite{HMS11} when formulated in discrete times reads:

\begin{Theorem}[Weak Harris Theorem]
    \label{thm:general_harris}
        Let $P$ be a Markov kernel over a Polish space $\qsp$ with invariant measure $\mu$ and with continuous Lyapunov function $V$. Suppose there exist a distance-like function $d: \qsp \times \qsp \to [0,1]$ and $n_* \in \N$ such that $P^{n_*}$ is $d$-contracting, and the sublevel set $\lbrace x\in \qsp \, : \, V(x) \leq 4K_V \rbrace$ is $d$-small for $P^{n_*}$. Then $P$ has at most one invariant measure. Furthermore, defining $\td(x,y)^2 = d(x,y)(1 + V(x) + V(y))$, there exists $n \in \N$ such that
        \begin{equation}
        \label{eq:A:harris:contraction}
           W_{\td}(\nu_1 P^{n}, \nu_2 P^{n})\leq \frac{1}{2} W_{\td}(\nu_1,\nu_2)
        \end{equation}
        for all probability measures $\nu_1$ and $\nu_2$ on $\qsp$.
    \end{Theorem}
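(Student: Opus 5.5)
The proof follows the original argument of \cite{HMS11}, specialized to discrete time. It rests on a one-step estimate for $P^{n_*}$ of the form $W_{\td}(P^{n_*}(x,\cdot),P^{n_*}(y,\cdot))\le \alpha\,\td(x,y)$ with $\alpha<1$, after first replacing $V$ by a suitable rescaled Lyapunov function. Since $W_{\td}$ is defined through an infimum over couplings and $\td$ is only a semidistance, the pointwise contraction lifts to arbitrary $\nu_1,\nu_2$ by gluing. Take an optimal coupling of $\nu_1,\nu_2$ (it exists because $\td$ is lower semicontinuous) and, measurably in $(x,y)$, couplings of $P^{n}(x,\cdot),P^{n}(y,\cdot)$ that are optimal up to an arbitrary small error. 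Integrating the pointwise bound then gives \eqref{eq:A:harris:contraction}. Iterating the $n_*$-step Lyapunov bound $P^{n_*}V\le l_V^{n_*}V+K_V$ (and, if needed, replacing $n_*$ by a multiple) yields the contraction factor $1/2$.

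For the pointwise estimate, set $\beta>0$ small and work with $\td_\beta^2 = d(1+\beta V(x)+\beta V(y))$, which is equivalent to $\td$ up to constants. Write $c<1$ and $s<1$ for the contraction and smallness constants of $P^{n_*}$, and let $\bar l:=l_V^{n_*}$. Fix $x,y$ and split into three cases.

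\emph{Case 1: $d(x,y)<1$.} Use the Cauchy--Schwarz-type bound
\[W_{\td_\beta}(P^{n_*}_x,P^{n_*}_y)\le \sqrt{W_d(P^{n_*}_x,P^{n_*}_y)}\sqrt{1+\beta P^{n_*}V(x)+\beta P^{n_*}V(y)}.\]
The first factor is at most $\sqrt{c\,d}$. The second is at most $\sqrt{1+2\beta K_V+\beta \bar l(V(x)+V(y))}$. Choosing $\beta$ small enough that $c(1+2\beta K_V)<1$ gives contraction, since $\bar l<1$ handles the $V$ terms.

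\emph{Case 2: $d(x,y)=1$ and $V(x)+V(y)\ge 4K_V$.} Here the Lyapunov drift wins. Bound $d\le 1$, so $\td_\beta(P^{n_*})^2\le 1+\beta(\bar l(V(x)+V(y))+2K_V)$. Because $V(x)+V(y)$ is large compared to $K_V$, this is at most a factor $<1$ times $1+\beta(V(x)+V(y))$, which is $\td_\beta(x,y)^2$. This is where the threshold $4K_V$ enters.

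\emph{Case 3: $d(x,y)=1$ and $V(x)+V(y)<4K_V$.} Both points lie in the small set. The same Cauchy--Schwarz bound gives at most $\sqrt{s}\sqrt{1+\beta(4\bar lK_V+2K_V)}$, which is $<1\le \td_\beta(x,y)$ provided $\beta$ is small.

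Taking $\alpha$ to be the maximum of the three factors completes the pointwise estimate.

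The main obstacle is balancing $\beta$: Cases 1 and 3 need $\beta$ small, while Case 2 needs the ratio
\[\frac{1+\beta(\bar l\,u+2K_V)}{1+\beta u}\quad\text{for } u\ge 4K_V\]
to stay below $1$ uniformly. That ratio is $<1$ for any $\beta>0$ precisely when $\bar l\,u+2K_V<u$, which holds since $u\ge 4K_V$ and $\bar l<1/2$. If $\bar l\ge 1/2$, we first pass to a power of $P^{n_*}$ to make $\bar l$ small, which preserves the properties $c<1$ and $s<1$ up to iteration.

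Uniqueness of the invariant measure follows because, if $\mu_1,\mu_2$ are both invariant, then $W_{\td}(\mu_1,\mu_2)\le\frac12 W_{\td}(\mu_1,\mu_2)$. This forces $W_{\td}(\mu_1,\mu_2)=0$ whenever it is finite, which holds by the integrability of $V$. Since $\td$ vanishes only on the diagonal, $\mu_1=\mu_2$.
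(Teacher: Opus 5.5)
Your proposal is correct and follows essentially the paper's route: the three-case estimate for $\tilde d_\beta$ from \cite{HMS11}, as reproduced in the proof of \cref{thm:our_harris} in \cref{app:harris}, lifted to arbitrary $\nu_1,\nu_2$ by gluing couplings. Two details deserve a line each in a write-up. First, passing to a power of $P^{n_*}$ is legitimate because $d\le 1$ together with $d$-contraction gives $W_d(\nu_1P^{n_*},\nu_2P^{n_*})\le W_d(\nu_1,\nu_2)$, so every $P^{kn_*}$ is $d$-contracting and $d$-small with the same constants $c,s$. Your requirement $l_V^{N}<\tfrac12$ in Case~2 is exactly right; the appendix's Case~2 likewise only closes once $l_V^n$ is small, since its factor should read $4l_V^n$ rather than $l_V^n/4$. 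Second, the factor $\tfrac12$ for $\tilde d$ itself comes from iterating the $\alpha$-contraction in $W_{\tilde d_\beta}$ until $\alpha^k\max(1,\beta)^{1/2}\min(1,\beta)^{-1/2}\le\tfrac12$, not from iterating the Lyapunov bound.
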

It is easy to see that the contraction \eqref{eq:A:harris:contraction} given by the weak Harris theorem can be iterated leading to 
\begin{equation*}
    W_{\td}\left( \nu_1 P^{kn}, \nu_2 P^{kn}\right)\leq \left( \frac{1}{2}\right)^k W_{\td}(\nu_1,\nu_2) \quad \mbox{for all }k \in \N.
\end{equation*}
However, this does not necessarily imply contraction for discrete times that are not multiples of $n$ without further assumptions as those used in \cref{thm:our_harris}. This extension, crucial for our analysis, is suggested in \cite[Remark~4.10]{HMS11}, and showed in details for the Hamiltonian Monte Carlo kernel in \cite[Theorem~6.1]{glatt2021mixing}. Here we describe again the argument for a generic kernel, not linked to a specific algorithm, with discrete times for the sake of completeness.

\begin{proof}[Proof of \cref{thm:our_harris}]
    It is easy to see that the Wasserstein contraction \eqref{eq:2:harris:contraction} implies the spectral gap \eqref{eq:2:harris:exp1} in $\Lip(\td)$. 
In fact by definition of the Lipschitz norm \eqref{eq:lipschitz:norm} 
        \begin{equation}\label{eq:gap-contr1}
            \| P^n f- \mu(f) \|_{\td} 
            =  \sup_{x\neq y} \frac{\left|P^n f(x) - P^n f(y)\right|}{\td(x,y)}\leq \sup_{x\neq y} \frac{\|f\|_d W_{\td}(P^n(x, \cdot), P^n(y, \cdot))}{\td(x,y)}.
        \end{equation}
         Here we have used the weak version of the Kantorovich-Rubinstein formula
         \begin{equation}\label{eq:kant-rub}
              \sup_{\|f\|_{\td}\leq 1} \left| \int f \,d\nu_1  -  \int f\, d \nu_2 \right| \leq  W_{\td}(\nu_1, \nu_2) 
         \end{equation}
         which holds also for $d$ being a distance-like function (the opposite inequality requires $d$ to satisfy the triangular inequality). 
         Then \eqref{eq:2:harris:contraction} with $\nu_1 = \delta_x$ and $\nu_2= \delta_y$ ensures that 
        \begin{equation}\label{eq:gap-contr2}
             \| P^n f - \mu(f) \|_{\td} \leq \|f\|_{\td}  \sup_{x\neq y} \frac{\lambda \td(x,y)}{\td(x,y)} = \lambda \|f - \mu(f)\|_{\td}.
        \end{equation}

    Regarding \eqref{eq:2:harris:exp}, if we take $\nu_1 = \delta_x$ and $\nu_2 = \mu$ the invariant measure in \eqref{eq:2:harris:contraction}, we get
    \begin{equation*}
          W_{\td}(P^{n}(x, \cdot), \mu )\leq \lambda^n W_{\td}(\delta_x,\mu). 
    \end{equation*}
    Using the definition of $\td$ and the fact that $d$ takes values in $[0,1]$ it follows
    \begin{align*}
        W_{\td}(\delta_x,\mu) = \int \td(x, y) \mu(dy) &= \int  \sqrt{d(x,y)(1 + V(x) + V(y))} \mu(dy)\\
         &\leq \left( 1 + V(x) + \int V(y) \, \mu(dy)\right)^{1/2}.
    \end{align*}
   It can be showed that Lyapunov functions $V$ of $P$ are integrable with respect to any invariant measure of $P$ (see e.g. \cite[Lemma~4.1]{butkovsky2014}), hence by \eqref{def:lyapunov} we can have an explicit upper bound of the integral 
    \begin{equation*}
        \mu(V) = \int V(y) \, \mu(dy) \leq \frac{K_V}{1 - l_V}
    \end{equation*}
    and 
    \begin{equation*}
        W_{\td}(\delta_x,\mu) \leq \left( 1 + \frac{K_V}{1 - l_V} + V(x)\right)^{1/2}.
    \end{equation*}
    We have then showed that \eqref{eq:2:harris:exp} holds with $C(x) = \left( 1 + \frac{K_V}{1 - l_V} + V(x)\right)^{1/2}$. 

    We now focus on showing \eqref{eq:2:harris:contraction}, namely we will show that there exists $n_1>0$ and $\lambda<1$ such that for all $n\geq n_1$
    \begin{equation}\label{eq:A:harris1}
        W_{\td}(P^n(x, \cdot), P^n(y, \cdot)) \leq \lambda^n \td(x, y) \quad \text{for all }x, y\in \qsp.
    \end{equation}
    In fact, since the distance-like function $\td$ is lower-semicontinuous and non-negative, it follows that (e.g. \cite[Theorem~4.8]{villani2009optimal})
    \begin{equation*}
         W_{\td}(\nu_1 P^n, \nu_2 P^n) \leq \inf_{\pi\in \mathfrak{C}(\nu_1, \nu_2)} \int W_{\td}(P^n(x, \cdot), P^n(y, \cdot))\, \pi(dx, dy)
    \end{equation*}
    hence if \eqref{eq:A:harris1} holds
    \begin{equation*}
         W_{\td}(\nu_1 P^n, \nu_2 P^n) \leq \lambda^n \inf_{\pi\in \fC(\nu_1, \nu_2)} \int  \td(x, y)\, \pi(dx, dy) = \lambda^n W_{\td}(\nu_1, \nu_2)
    \end{equation*}
    as desired. 

    We start by defining an auxiliary distance, modification of $\td$
    \begin{equation*}
        \td_\beta(x,y)^2 = d(x, y) \left( 1 + \beta V(x) + \beta V(y)\right) 
    \end{equation*}
    with $\beta>0$ to be specified later. It is easy to see that $\td_\beta$ is equivalent to $\td$ as there exist positive constants $k_\beta = \min(1, \beta)^{1/2}$ and $K_\beta = \max(1 , \beta)^{1/2}$ such that $k_\beta \td(x, y) \leq \td_\beta(x,y)\leq K_\beta \td(x,y)$. Then if we show \eqref{eq:A:harris1} for $\td_\beta$ and the associated semimetric $W_{\td_\beta} =: W_\beta$, for some $n_1(\beta)>0$ and $\lambda_\beta<1$, we can derive 
    \begin{equation*}
         W_{\td}(P^n(x, \cdot), P^n(y, \cdot)) \leq \frac{1}{k_\beta} W_{\beta}(P^n(x, \cdot), P^n(y, \cdot)) \leq \frac{1}{k_\beta} \lambda_\beta^n \td_\beta (x, y) \leq \frac{K_\beta}{k_\beta} \lambda_\beta^n\td (x, y),
    \end{equation*}
    for all $n > n_1(\beta)$. Let $n_*$ be the first integer strictly larger than
    \begin{equation*}
        \frac{\log K_\beta - \log k_\beta}{-\log \lambda_\beta}>0
    \end{equation*}
    so that, for all $n\geq n_*$,
    \begin{equation*}
        \lambda_\beta \left( \frac{K_\beta}{k_\beta}\right)^{1/n} \leq \lambda_\beta \left( \frac{K_\beta}{k_\beta}\right)^{1/n_*} < 1.
    \end{equation*}
    This implies that 
    \begin{equation*}
         W_{\td}(P^n(x, \cdot), P^n(y, \cdot)) \leq \lambda^n \td (x, y), \quad \text{for all }x,y\in \qsp,
    \end{equation*}
    with $\lambda = \lambda_\beta \left( \frac{K_\beta}{k_\beta}\right)^{1/n_*}$ being strictly smaller than one for all 
    \begin{equation*}
        n > \max\left( n_1(\beta), \frac{\log K_\beta - \log k_\beta}{-\log \lambda_\beta}\right) =:n_1 .
    \end{equation*}

    We divide the proof for $\td_\beta$ in three cases: (1) $x,y$ close to each other, namely $d(x,y) <1$, (2) $x,y$ such that $d(x,y) =1$ and they are not in the set $S$, (3) $x,y$ such that $d(x,y) =1$ and they are in the set $S$.
    \paragraph{Case 1: $d(x,y) <1$.} We are in the regime where there is $c(n)>0$ such that $c(n_0)<1$ and 
    \begin{equation}\label{eq:A:contract}
        W_{d}(P^n(x, \cdot), P^n(y, \cdot)) \leq c(n) d(x,y).
    \end{equation}
    By the definition of $W_\beta$ it follows 
    \begin{align}
        W_\beta(P^n(x, \cdot), P^n(y, \cdot))^2 &\leq \inf_\pi \int d (x', y') \pi(dx', dy') \int 1 + \beta V(x') + \beta V(y') \, \pi(dx', dy') \label{eq:A:harris2}\\
        &\leq W_{d} (P^n(x, \cdot), P^n(y, \cdot)) \left( 1 + \beta P^n V(x) + \beta P^nV(y)\right), \label{eq:A:harris3}
    \end{align}
    where the infimum is over all couplings $\pi$ of $P^n(x, \cdot), P^n(y, \cdot)$.
    Thanks to \eqref{eq:A:contract} and the definition of Lyapunov function \cref{def:lyapunov} we have
    \begin{align*}
         W_\beta(P^n(x, \cdot), P^n(y, \cdot))^2 \leq c(n) \, d(x,y) \left( 1 + \beta l_V^n V(x) + \beta l_V^n V(y) + 2 \beta K_V\right).
    \end{align*}
    We now select $\beta$ so to reconstruct $\td_\beta$ on the right hand side.  
    Since $l <1$ then for any $n\in \N$
    \begin{equation}\label{eq:A:harris4}
          W_\beta(P^n(x, \cdot), P^n(y, \cdot))^2 \leq c(n) (1 + 2\beta K_V) \, d(x,y) \left( 1 + \beta V(x) + \beta V(y) \right) =: c_1(n) \td_\beta(x,y)^2.
    \end{equation}
    If $n =n_0$ then $c(n_0) <1$ and we can choose $\beta$ small enough so that $c_1(n_0) = c(n_0) (1 + 2\beta K_V)< 1$, namely 
    \begin{equation*}
        \beta < \frac{1 - c(n_0)}{2 c(n_0) K_V}.
    \end{equation*}
    
    \paragraph{Case 2: $\td_\beta(x,y) = 1$ and $V(x) + V(y) \geq 4K_V$.} From \eqref{eq:A:harris2} and the definition of Lyapunov function it follows 
        \begin{align*}
            W_\beta(P^n(x, \cdot), P^n(y, \cdot))^2 &\leq  1 + \beta P^n V(x) + \beta P^nV(y) \\
            &\leq 1 + 2\beta K_V + \beta l_V^n V(x) + \beta l_V^n V(y) \\
            &\leq \frac{1 + 2 \beta K_V}{1 + 3 \beta K_V} \left(1 + 3 \beta K_V) + \beta l^n \left( V(x) + V(y)\right)\right) \\
            &\leq \max\left(  \frac{1 + 2 \beta K_V}{1 + 3 \beta K_V} , \frac{l^n }{4}\right)\left( 1 + 3 \beta K_V + \frac{\beta}{4} \left( V(x) + V(y)\right)\right).
            \end{align*}
        Since $V(x) + V(y) \geq 4K_V$
        \begin{align*}
             W_\beta(P^n(x, \cdot), P^n(y, \cdot))^2 &\leq c_2(n) \td_\beta(x,y)^2,
        \end{align*}
        with 
        \begin{equation*}
             c_2(n)= \max\left(\frac{1 + 2 \beta K_V}{1 + 3 \beta K_V} , \frac{l_V^n}{4}\right)
        \end{equation*}
        being strictly smaller than one for any $n\in \N$ and $\beta>0$.

    \paragraph{Case 3: $\td_\beta(x,y) = 1$ and $V(x) + V(y) \leq 4K_V$.} We are in the regime for which $x,y\in S = \{V\leq 4K_V\}$, the $d$-small set of $P^{n_0}$. Then from \eqref{eq:A:harris3}, 
    \begin{align*}
         W_\beta(P^n(x, \cdot), P^n(y, \cdot))^2&\leq W_d (P^n(x, \cdot), P^n(y, \cdot)) \left( 1 + \beta P^n V(x) + \beta P^nV(y)\right) \\
         &\leq s(n) \left( 1 + 2\beta K_V + \beta l_V^n  V(x) + \beta l_V^n V(y)\right) \\
         &\leq s(n) \left( 1 + 2\beta K_V + 8\beta l^n K_V \right)\\
         &= s(n) \left( 1 + 2\beta K_V( 1 + 4l_V^n) \right)\td_\beta(x,y)^2.
    \end{align*}
    Now, for $n = n_0$, we have $s(n_0) <1$, then there is a choice of $\beta$ for which $c_3(n) := s(n) \left( 1 + 2\beta K_V( 1 + 4l_V^{n}) \right)<1$ namely
    \begin{equation*}
        \beta < \frac{1 - s(n_0)}{2 s(n_0) K_V(1 + 4l_V^{n_0})}.
    \end{equation*}

    Therefore we showed that for any $n \in \N$ 
    \begin{equation}\label{eq:A:harris5}
         W_\beta(P^n(x, \cdot), P^n(y, \cdot)) \leq c_\beta(n) \td_\beta(x,y) \quad \text{for all }x,y\in \qsp,
    \end{equation}
    with 
    \begin{equation*}
        c_\beta(n) = \max(c_1(n, \beta), c_2(n, \beta), c_3(n, \beta))^{1/2}
    \end{equation*}
    and for $n = n_0$ we can choose 
        \begin{equation*}
            \beta < \min\left( \frac{1 - c(n_0)}{2c(n_0)K_V}, \frac{1 - s(n_0)}{2 s(n_0) K_V(1 + l_V^n)} \right)
        \end{equation*}
    so that $c_\beta(n_0)<1$ as desired.
    Next we show that having contractivity just at $n_0$ is enough to ensure it for all large enough times.
    
    If $n$ is a multiple of $n_0$, $n = k n_0$ for $k
    \geq 1$ then iterating \eqref{eq:A:harris5} at $n_0$ one gets 
    \begin{equation*}
        W_\beta(P^{kn_0}(x, \cdot), P^{kn_0}(y, \cdot)) \leq c_\beta(n_0)^k \td_\beta(x,y) \leq \left( c_\beta(n_0)^{\frac{1}{n_0}}\right)^n \td_\beta(x,y).
    \end{equation*}
    
    If instead $n = kn_0 + q$ with $1\leq q\leq n_0-1$, then using again \eqref{eq:A:harris5}
    \begin{align*}
         W_\beta(P^{kn_0+ q}(x, \cdot), P^{kn_0+q}(y, \cdot))\leq c_\beta(n_0)^k  W_\beta(P^{q}(x, \cdot), P^{q}(y, \cdot))
         \leq c_\beta(n_0)^k c_\beta(q) \td_\beta(x,y).
    \end{align*}
    With some simple manipulations we see
    \begin{align*}
      \left[ c_\beta(n_0)^k c_\beta(q) \right]^{1/n} = \left[ c_\beta(n_0)^{\frac{n}{n_0} - q} c_\beta(q) \right]^{1/n} 
        = c_\beta(n_0)^{\frac{1}{n_0}} \left(\frac{c_\beta(q)}{c_\beta(n_0)^q}\right)^{\frac{1}{n}}
    \end{align*}
    then, setting $L_\beta(n_0) = \max_{q\in [1, n_0-1]} c_\beta (q)$,
    \begin{equation*}
         \left[ c_\beta(n_0)^k c_\beta(q) \right]^{1/n}  \leq  c_\beta(n_0)^{\frac{1}{n_0}} \left(\frac{L_\beta(n_0)}{c_\beta(n_0)^{n_0-1}}\right)^{\frac{1}{n}} .
    \end{equation*}
    It is easy to see that the quantity on the right hand side stays strictly smaller than one for all 
    \begin{align}
         n >  \frac{n_0}{- \log c_\beta(n_0) \left[ L_\beta(n_0) -  (n_0-1)\log c_\beta(n_0)\right]}\label{eq:A:harris6}
    \end{align}
    where we have used again that $c_\beta(n_0)<1$ and $c_\beta(q)\geq 1$ for $1\leq q \leq n_0-1$ (otherwise we relabel $n_0$). 

    Next let $m_*$ be the first integer satisfying \eqref{eq:A:harris6} so that for all $n = k n_0 + q \geq m_*$
    \begin{equation*}
        \left[ c_\beta(n_0)^k c_\beta(q) \right]^{1/n} \leq  c_\beta(n_0)^{\frac{1}{n_0}} \left(\frac{L_\beta(n_0)}{c_\beta(n_0)^{n_0-1}}\right)^{\frac{1}{m*}} =: \kappa_\beta <1
    \end{equation*}
    and
    \begin{equation*}
          W_\beta(P^{kn_0+ q}(x, \cdot), P^{kn_0+q}(y, \cdot))
         \leq \kappa_\beta^n \td_\beta(x,y).
    \end{equation*}

    Finally, we showed that, for 
    \begin{equation}
        n > n_1(\beta) = \max\left( n_0; \frac{n_0}{- \log c_\beta(n_0) \left[ L_\beta(n_0) -  (n_0-1)\log c_\beta(n_0)\right]}\right),
    \end{equation}
    it holds 
     \begin{equation*}
          W_\beta(P^{n}(x, \cdot), P^{n}(y, \cdot))
         \leq \lambda_\beta^n \td_\beta(x,y) \quad \text{for all } x,y\in \qsp
    \end{equation*}
    with 
    \begin{equation*}
        \lambda_\beta = \max \left(  c_\beta(n_0)^{\frac{1}{n_0}}; \kappa_\beta\right)<1 .
    \end{equation*}
    \end{proof}

    Last we provide the statements of the Wasserstein SLLN and CLT to ease reference. See e.g.~\cite[Appendix A]{glatt2021mixing} for their proofs.
    
    \begin{Theorem}[Strong Law of Large Numbers]
        Let $P$ be a Markov kernel on $\qsp$ with invariant measure $\mu$ and associated chain $(X_n)_{n\in \N}$ such that \cref{thm:our_harris} holds. Then for any $f \in \Lip(\td)$ and arbitrary $x_0\in \qsp$
        \begin{equation*}
            \lim_{n \to \infty}\left| \frac{1}{n} \sum_{j = 0}^{n-1} f (X_n) - \int f \, d\mu \right| =0 \quad \text{almost surely.}
        \end{equation*}
    \end{Theorem}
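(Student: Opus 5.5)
The plan is to reduce the statement to the ergodic theorem for Markov chains started from an arbitrary point, using the Wasserstein contraction from \cref{thm:our_harris} to control the dependence on the initial condition. First I treat the case $X_0\sim\mu$. The chain is then stationary and, by uniqueness of the invariant measure (also given by \cref{thm:our_harris}), ergodic. Since $\Lip(\td)\subset L^2_\mu\subset L^1_\mu$ (shown earlier via integrability of $V$), Birkhoff's ergodic theorem gives $\frac1n\sum_{j<n} f(X_j)\to\mu(f)$ $\bbP$-a.s. when $X_0\sim\mu$. If one wants to avoid ergodic theory, in the reversible case \cref{thm:SLLN:L2} gives the same conclusion directly.

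Second, I pass from $\mu$ to $\delta_{x_0}$ through a coupling. Let $\Pi$ be the path-space law of the chain. Using \eqref{eq:2:harris:contraction} with $\nu_1=\delta_{x_0}$ and $\nu_2=\mu$, I build, step by step, a Markovian coupling $(X_n,Y_n)$ with $X_0=x_0$ and $Y_0\sim\mu$. For each $n\ge n_1$ and each pair of states, I choose near-optimal couplings of $P^{n}(x,\cdot)$ and $P^{n}(y,\cdot)$, applied along blocks of length $n_1$, so that $\E\,\td(X_{kn_1},Y_{kn_1})\le C(x_0)\lambda^{kn_1}$. Within a block I only need $\E\,\td(X_j,Y_j)$ summable. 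The cleaner choice is to use, for every $j\ge n_1$, the marginal bound $W_{\td}(P^j(x_0,\cdot),\mu)\le C(x_0)\lambda^j$ inside a single joint construction. For that I would couple block by block with \eqref{eq:2:harris:contraction} at time $n_1,\dots,2n_1-1$, getting $\sum_j \E\,\td(X_j,Y_j)<\infty$.

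Third, I conclude. Since $|f(X_j)-f(Y_j)|\le\|f\|_{\td}\,\td(X_j,Y_j)$, the sum $\sum_j \E|f(X_j)-f(Y_j)|$ is finite. Hence $\sum_j|f(X_j)-f(Y_j)|<\infty$ a.s., and Kronecker/Cesàro gives $\frac1n\sum_{j<n}(f(X_j)-f(Y_j))\to0$ a.s. Combined with the first step applied to $(Y_j)$, this yields the claim for the chain started at $x_0$. The marginal of $(X_j)$ is the chain from $x_0$, so the almost-sure statement transfers.

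The main obstacle is the second step. $W_{\td}$ is only a semimetric (no triangle inequality), so one must verify that near-optimal couplings can be selected measurably and glued into a genuine Markovian coupling whose one-block expected distance still contracts. The paper's proof of \cref{thm:our_harris} supplies the pointwise bound $W_{\td}(P^n(x,\cdot),P^n(y,\cdot))\le\lambda^n\td(x,y)$. Measurable selection of optimal couplings holds because $\td$ is lower semicontinuous on a Polish space (Villani, Cor.~5.22). Iterating on blocks of length $n_1$ then gives geometric decay, and the intermediate times $j=kn_1+q$ are handled by starting a separate block coupling at offset $q$ for each $q<n_1$ — that is, $n_1$ separate Cesàro arguments, each on its own coupling, whose averages combine.
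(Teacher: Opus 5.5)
Your overall architecture is sound: Birkhoff for the stationary chain, a coupling of the chain from $x_0$ with the stationary one such that $\sum_j \E\,\td(X_j,Y_j)<\infty$, and transfer of an event that depends only on the $X$-path. It is also a genuinely different route from the one the paper relies on. The gap is in how you handle times that are not multiples of $n_1$.

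The ``single joint construction'' does not work as described. The bounds $W_{\td}(P^j(x_0,\cdot),\mu)\le C(x_0)\lambda^j$ hold time by time, and near-optimal couplings at different times cannot in general be glued into one coupling of the path laws that respects the transitions. A Markovian coupling built from one-step couplings need not contract at all, since only $P^n$ with $n\ge n_1$ contracts. Filling in between skeleton times with bridges leaves the intermediate distances uncontrolled. So you are left with the offset version, and there the combination step is not justified by what you proved.

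Each offset-$q$ coupling lives on its own probability space and yields only $\frac1K\sum_{k<K}\big[f(X_{q+kn_1})-f(Y_{q+kn_1})\big]\to 0$. You cannot add the $Y$-parts from different spaces to recover the full Birkhoff average of Step~1. To turn each offset into a statement about $X$ alone, you need $\frac1K\sum_{k<K}f(Y_{q+kn_1})\to\mu(f)$ a.s.\ for the stationary chain, i.e.\ ergodicity of the $n_1$-skeleton $(P^{n_1},\mu)$. This follows neither from Step~1 nor from uniqueness of the $P$-invariant measure. For the swap chain on two points, $\mu$ is unique and the full averages converge, but the even-time averages equal $f(Y_0)$, which is random.

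In the present setting the missing fact is true and easy to supply. For $m\ge n_1$, any $P^m$-invariant $\nu$ integrates $V$, because the Lyapunov bound also holds for $P^m$; hence $W_{\td}(\nu,\mu)<\infty$. Then $W_{\td}(\nu,\mu)=W_{\td}(\nu P^{km},\mu P^{km})\le\lambda^{km}W_{\td}(\nu,\mu)\to0$ forces $\nu=\mu$, since an optimal coupling for the lower semicontinuous $\td$ exists and must sit on the diagonal. So $\mu$ is the unique, hence extremal, invariant measure of $P^m$, and the stationary skeleton is ergodic. Running Step~1 for $P^m$ then makes your offset argument go through. Each $q$ gives $\frac1K\sum_{k<K}f(X_{q+km})\to\mu(f)$ a.s.\ for the chain from $x_0$, and the finitely many offsets combine. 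Also check that the measurable-selection result you cite covers a merely lower semicontinuous cost; in the paper's concrete setting $\td_\varepsilon$ is continuous, so this is not an issue there.

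For comparison, the paper gives no proof of its own and defers to \cite[Appendix~A]{glatt2021mixing}. That argument works directly from $x_0$, along the same lines as the paper's proof of \cref{thm:SLLN:L2}:
\begin{itemize}
\item The corrector $\chi=\sum_{k\ge0}P^k(f-\mu(f))$ converges, with $|\chi|\le C(1+V)^{1/2}$ thanks to $W_{\td}(P^k(x,\cdot),\mu)\le C(x)\lambda^k$.
\item One writes $\sum_{k<n}(f(X_k)-\mu(f))=\chi(X_0)-\chi(X_n)+M_n$ with $M_n$ a martingale.
\item The bound $\sup_n\E V(X_n)<\infty$ feeds a martingale SLLN for $M_n/n$ and Borel--Cantelli for $\chi(X_n)/n$.
\end{itemize}
That route needs no ergodic theorem, no skeleton and no path-space coupling, and it is quantitative and reusable for the CLT. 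Yours is softer, but once skeleton ergodicity is added it is a valid alternative.
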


    \begin{Theorem}[Central Limit Theorem]\label{thm:clt}
        Let $P$ be a Markov kernel on $\qsp$ with invariant measure $\mu$ and associated chain $(X_n)_{n\in \N}$ such that \cref{thm:our_harris} holds. Suppose that 
        \begin{equation*} 
            W_{\td_2} (\nu_1P^n, \nu_2 P^n) \leq \lambda^n W_{\td_2}(\nu_1, \nu_2) \quad\text{for all }\nu_1, \nu_2\in \cM_1(\qsp)
        \end{equation*}
        holds also for the semidistance $\td_2(x,y) = \sqrt{d(x,y)(1 + V(x)^2 + V(y)^2)}$ with a possibly different $\lambda<1$. 
        Then for $f \in \Lip(\td)$ and every initial condition $x_0\in \qsp$
        \begin{equation*}
            \frac{1}{\sqrt{n}} \sum_{k = 0}^{n-1} \left( f (X_k) - \int f \, d\mu \right) \xrightarrow{d} \cN(0, \sigma(f)^2)\quad \text{for } n \to \infty
        \end{equation*}
        where if $\Bar{f} := f - \int f \, d\mu$
        \begin{equation*}
            \sigma(f)^2 = \int \E \left[ \bar{f} (X_1(x_0))  - \sum_{k = 1}^\infty \left(P^{k}\bar{f} (X_1(x_0)) - P^k \bar{f} (x_0)\right) \right]^2 \, \mu(dx_0).
        \end{equation*}
    \end{Theorem}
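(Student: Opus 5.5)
This is the Wasserstein central limit theorem for chains satisfying \cref{thm:our_harris}. I would prove it by the standard martingale (Poisson equation) decomposition. The key change from the $L^2_\mu$ version in \cref{thm:CLT:L2} is that all estimates are made pointwise in $\Lip(\td)$ and $\Lip(\td_2)$ norms rather than in $L^2_\mu$, so that an arbitrary deterministic $x_0$ is allowed.

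\textbf{Step 1: the corrector.} Set $\bar f = f-\mu(f)$ and define
\[
g=\sum_{k\ge0}P^k\bar f .
\]
By \eqref{eq:2:harris:exp1} we have $\|P^k\bar f\|_{\td}\le \lambda^k\|\bar f\|_{\td}$ for $k\ge n_1$, and each $P^k\bar f$ is centered under $\mu$. Hence $|P^k\bar f(x)|\le \lambda^k\|\bar f\|_{\td}\,W_{\td}(\delta_x,\mu)\le C\lambda^k(1+V(x))^{1/2}$. The series therefore converges pointwise, $g\in\Lip(\td)$, and $g-Pg=\bar f$. This gives the decomposition
\[
\sum_{k=0}^{n-1}\bar f(X_k)=M_n+g(X_0)-g(X_n),
\qquad
M_n=\sum_{k=1}^{n}\bigl(g(X_k)-Pg(X_{k-1})\bigr).
\]

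\textbf{Step 2: the boundary term.} I need $g(X_n)/\sqrt n\to0$ in probability. This follows from $|g(x)|\lesssim(1+V(x))^{1/2}$ together with the Lyapunov bound $\sup_n\E V(X_n)\le V(x_0)+K_V/(1-l_V)$.

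\textbf{Step 3: the martingale CLT.} Apply the martingale CLT (Brown / Hall–Heyde) to $M_n/\sqrt n$. Its increments have conditional variance $h(X_{k-1})$, where
\[
h(x)=Pg^2(x)-(Pg)^2(x)=\E\bigl[g(X_1(x))-Pg(x)\bigr]^2 .
\]
Writing $g(X_1)-Pg(x_0)=\bar f(X_1)+\sum_{k\ge1}\bigl(P^k\bar f(X_1)-P^k\bar f(x_0)\bigr)$ and integrating $h$ against $\mu$ gives exactly the stated $\sigma(f)^2$; the sign in the displayed formula should be read with this convention. Two conditions must be checked:
\begin{enumerate}
\item[(a)] the law of large numbers $\frac1n\sum_{k<n} h(X_k)\to\mu(h)$ in probability;
\item[(b)] the Lindeberg condition.
\end{enumerate}

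\textbf{Step 4: the main obstacle.} Condition (a) is where the extra hypothesis on $\td_2$ enters, and I expect it to be the hardest step. The difficulty is that $h$ is quadratic in $g$, so $h$ is not in $\Lip(\td)$. My first attempt is to show $h\in\Lip(\td_2)$. Using $g(x)^2-g(y)^2=(g(x)-g(y))(g(x)+g(y))$ gives a bound of order
\[
\sqrt{d}\,(1+V(x)+V(y)),
\]
which is dominated by $\td_2=\sqrt{d\,(1+V^2(x)+V^2(y))}$. Then the SLLN in $\Lip(\td_2)$ yields (a): the SLLN proof from \cite[Appendix~A]{glatt2021mixing} applies verbatim once contraction in $W_{\td_2}$ is available, by the same corrector argument combined with an $L^2$ martingale SLLN. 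The same argument with $\td$ in place of $\td_2$ gives the SLLN theorem stated just before.

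For (b), the increments are bounded by $C(1+V(X_k)+V(X_{k-1}))^{1/2}$. Uniform integrability of their squares follows from uniform moment bounds $\sup_k\E V(X_k)^{1+\delta}<\infty$. I would obtain these by running the Lyapunov estimate for $V^2$, which is again of one of the admissible forms in \eqref{eq:choiceV}, possibly with a smaller $v$. If that fails, I would fall back on a truncation argument using the $V^2$-Lyapunov structure implicit in the $\td_2$ contraction hypothesis.
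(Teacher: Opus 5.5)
Your proposal is correct and is essentially the martingale--corrector argument the paper defers to in \cite[Appendix~A]{glatt2021mixing}: the corrector $g=\sum_{k\ge0}P^k\bar f$, the martingale CLT, the law of large numbers in $\Lip(\td_2)$ for the conditional variance $Pg^2-(Pg)^2$, and Lindeberg from moments of $V$. You are also right that the sign inside $\sigma(f)^2$ must be $+$; the displayed $-$ is a genuine typo rather than a convention, since for the scalar kernel $P(x,\cdot)=\cN(x/2,3/4)$ with $\mu=\cN(0,1)$ and $f(x)=x$ the displayed formula gives $1$ instead of the true asymptotic variance $3$. The one point to tighten is the moment bound stronger than $\sup_k\E V(X_k)<\infty$ that you use, e.g.\ $\sup_k\E V(X_k)^2<\infty$, which you need both for the $L^2$-martingale SLLN in $\Lip(\td_2)$ and for Lindeberg: the $W_{\td_2}$ contraction alone does not supply it, because $\td_2\le 1+V(x)+V(y)$ only controls observables of at most linear growth in $V$. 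So your fallback of extracting $V^2$-Lyapunov structure from that hypothesis does not work, and the Lyapunov property of $V^2$ should instead be read as part of the hypothesis; it is exactly what one uses to obtain the $\td_2$ contraction from \cref{thm:our_harris}, and it holds for the choices in \eqref{eq:choiceV}, with $v$ small enough in the Gaussian case.
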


\end{document}